\documentclass[11pt]{amsart}
\usepackage[top=1.2in,bottom=1.2in,left=1.2in,right=1.2in,marginpar=1in]{geometry}
\usepackage{amsfonts}
\usepackage{amsmath}
\usepackage{amsthm}
\usepackage{nccmath}
\usepackage{url}
\usepackage[all]{xy}
\usepackage{latexsym}
\usepackage{amssymb}
\usepackage[cp850]{inputenc}
\usepackage{graphicx}
\usepackage{dsfont}
\usepackage{float}
\usepackage{stackrel}

\usepackage{caption}
\usepackage{subcaption}
\usepackage{lipsum}
\usepackage{setspace}
\usepackage{bigints}
\usepackage{stmaryrd} 
\usepackage{calrsfs}
\usepackage{comment}
\usepackage{relsize}
\usepackage{array}
\usepackage{multirow}

\usepackage[dvipsnames]{xcolor}
\usepackage[cal=euler]{mathalfa}
\usepackage{mathtools} %:=
\usepackage{braket}
\usepackage{tikz-cd}

\let\oldtocsection=\tocsection

\let\oldtocsubsection=\tocsubsection

\let\oldtocsubsubsection=\tocsubsubsection

\renewcommand{\tocsection}[2]{\hspace{0em}\oldtocsection{#1}{#2}}
\renewcommand{\tocsubsection}[2]{\hspace{1em}\oldtocsubsection{#1}{#2}}
\renewcommand{\tocsubsubsection}[2]{\hspace{2em}\oldtocsubsubsection{#1}{#2}}

\newcommand*{\Lfaktor}[2]{% \leftfaktor{#1}{#2} -> #2\#1
    {}_{#2}\!\backslash\!^{#1}
}
\newcommand*{\Rfaktor}[2]{% \faktor{#1}{#2} -> #1/#2
	^{ #1}\!/\!_{ #2}% Denominator
}

\newtheorem{sat}{Theorem}[section]		
\newtheorem{lem}[sat]{Lemma}
\newtheorem{kor}[sat]{Corollary}			
\newtheorem{prop}[sat]{Proposition}
\newtheorem{defi}[sat]{Definition}
\newtheorem*{defi*}{Definition}			
\newtheorem*{bei*}{Example}
\newtheorem*{sat*}{Theorem}				
\newtheorem*{kor*}{Corollary}
\newtheorem*{rmk*}{Remark}				
	
\newtheorem*{quest*}{Question}	
\newtheorem{fact}[sat]{Fact}	
\newtheorem*{ex}{Example}

\let\ssection=\section
\renewcommand{\section}{\setcounter{equation}{0}\ssection}

\newtheorem*{namedtheorem}{\theoremname}
\newcommand{\theoremname}{testing}
\newenvironment{named}[1]{\renewcommand{\theoremname}{#1}\begin{namedtheorem}}{\end{namedtheorem}}

\newtheorem*{namedtheoremr}{\theoremnamer}
\newcommand{\theoremnamer}{testing}

\newcommand{\BA}{\mathbb A} 			 
\newcommand{\BC}{\mathbb C}

			\newcommand{\BN}{\mathbb N}
			
			\newcommand{\BR}{\mathbb R}
\newcommand{\BS}{\mathbb S}			\newcommand{\BT}{\mathbb T}

			\newcommand{\BZ}{\mathbb Z}

\newcommand{\CA}{\mathcal A}			\newcommand{\CB}{\mathcal B}

			\newcommand{\CL}{\mathcal L}
\newcommand{\CM}{\mathcal M}		\newcommand{\CN}{\mathcal N}

			\newcommand{\CV}{\mathcal V}
		\newcommand{\CX}{\mathcal X}
			
\newcommand{\CD}{\mathcal D}	

\newcommand{\FM}{\mathfrak m}

\newcommand{\FB}{\mathfrak b}

\newcommand{\FC}{\mathfrak c}

\newcommand{\hyp}{\mathrm{hyp}}
\newcommand{\fix}{\mathrm{fix}}
\newcommand{\ann}{\mathrm{ann}}
\newcommand{\ns}{\mathrm{ns}}
\newcommand{\nsl}{\mathrm{nsL}}
\newcommand{\flip}{\mathrm{flip}}
\newcommand{\Stab}{\mathrm{Stab}}
\usepackage{bm} %bold italic

\newcounter{cst}

\newcommand{\actson}{\curvearrowright}
\newcommand{\D}{\partial}

\DeclareMathOperator{\Diff}{Diff}	%	Diffeomorphimen einer Mf
\DeclareMathOperator{\Id}{Id}		%	Identit\"at
\DeclareMathOperator{\Map}{Map}
\DeclareMathOperator{\PMap}{PMap}

\DeclareMathOperator{\Ker}{Ker}

\DeclareMathOperator{\Aut}{Aut}

\DeclareMathOperator{\Fix}{Fix}
\DeclareMathOperator{\area}{area}

\DeclareMathOperator{\Thu}{Thu}

\DeclareMathOperator{\supp}{Supp}

\DeclareMathOperator{\hal}{\mathbf{half}}
\DeclareMathOperator{\neigh}{\mathbf{neigh}}

\DeclareMathOperator{\Leb}{Leb}
\DeclareMathOperator{\sep}{sep}
\DeclareMathOperator{\sepl}{sepL}
\DeclareMathOperator{\sym}{sym}

\DeclareMathOperator{\Rib}{{\bf Rib}}

\DeclareMathOperator{\product}{prod}

\DeclareMathOperator{\sys}{sys}

\newcommand{\fsubd}{\mathrel{{\scriptstyle\searrow}\kern-1ex^d\kern0.5ex}}
\newcommand{\bsubd}{\mathrel{{\scriptstyle\swarrow}\kern-1.6ex^d\kern0.8ex}}

\renewcommand{\le}{\leqslant}
\renewcommand{\ge}{\geqslant}
\renewcommand{\emptyset}{\varnothing}

\renewcommand{\geq}{\geqslant}
\renewcommand{\leq}{\leqslant}

\definecolor{mygrey}{RGB}{119,122,120}

\definecolor{mypink}{RGB}{219,24,121}

\usepackage{hyperref}
\begin{document}

\title[]{Large genus asymptotics of frequencies of non-simple curves}
\author{Mingkun Liu}
\address{LAGA, Universit\'e Sorbonne Paris Nord}
\email{mingkun.liu@math.univ-paris13.fr}
\author{Kasra Rafi}
\address{Department of Mathematics, University of Toronto}
\email{rafi@math.toronto.edu}
\author{Juan Souto}
\address{IRMAR, Universit\'e de Rennes}
\email{juan.souto@univ-rennes.fr}
\author{Marie Trin}
\address{Max-Plank-Institut f\"ur Mathematik in den Naturwissenschaften}
\email{marie.trin@mis.mpg.de}

\begin{abstract} We give an expression for the frequency of non-simple curves in closed surfaces and exploit it to study relative frequencies of such curves in large genus. This extend to the case of non-simple curves Mirzakhani's expressions of frequencies in terms of Konsevitch polynomials and Delecroix-Goujard-Zograf-Zorich large genus asymptotics for those frequencies. In particular, with $K$ fixed, we identify which types of curves with $K$ intersections are most common.   
\end{abstract}

\maketitle

\section{Introduction} 
The aim of this paper is to determine how typical curves with a given number of self-intersections look like in surfaces of large genus. All our surfaces $X$ are going to be closed and orientable. We consider curves up to free homotopy, and we always assume them to be simple and primitive. In particular, when we endow $X$ with an arbitrary hyperbolic metric, we can identify curves with their unique geodesic representatives. Two curves are {\em of the same type} if they differ by a self-homeomorphism of the surface. Rather, since we identify homotopic curves, $\gamma$ {\em is of type} $\gamma_0$ if it belongs to the mapping class group orbit $\Map(X)\cdot\gamma_0$.
\medskip

Mirzakhani \cite{Maryam simple} proved that randomly chosen, very long, simple closed curves in a hyperbolic surface of genus $g=2$ are $48$ times less likely to be separating than non-separating. To make this statement precise, recall that if $\gamma_0$ is a curve, simple or not, in a closed hyperbolic surface $X$ of genus $g$, then the limit 
\begin{equation}\label{eq absolute frequency exists}
\lim_{L\to\infty}\frac 1{L^{6g-6}}\#\{\gamma\text{ of type }\gamma_0\text{ with }\ell_X(\gamma)\le L\} = \FC(X)\frac{\FC_g(\gamma_0)}{\FB_g}
\end{equation}
exists and is positive \cite{Maryam simple,Maryam general curves,Kasra-Juan,book}. Here $\FB_g$ is a positive constant which only depends on the genus $g$, $\FC(\cdot)$ a proper function on the moduli space of surfaces of genus $g$, and $\ell_X(\gamma)$ is the hyperbolic length of the unique geodesic representative in the homotopy class of $\gamma$. What is key is that, while the length function $\ell_X(\cdot)$ and the quantity $\FC(X)$ depend on the hyperbolic metric, the constant $\FC_g(\gamma_0)$, the {\em frequency of $\gamma_0$}, just depends on the type of $\gamma_0$ and of the fact that we are in a genus $g$ surface. In fact, \eqref{eq absolute frequency exists} remains true if one replaces the hyperbolic length by many other reasonable functions measuring the complexity of curves, replacing $\FC(X)$ by an appropriate number depending on the chosen function---see \cite{ESP,book,didac} for details and many examples of such functions. 

\begin{rmk*}
Due to different choices of normalization, one finds in the literature different values for $\FC_g(\gamma_0)$ and $\FB_g$ in \eqref{eq absolute frequency exists}. In this paper we follow the conventions of \cite{book}. See Section \ref{sec frequency} for details.
\end{rmk*}

Armed with \eqref{eq absolute frequency exists} we can state precisely the above mentioned $\frac 1{48}$ result of Mirzakhani. What she proved is that {\em if $X$ is a closed hyperbolic surface of genus $2$, and $\gamma_{\text{sep}}$ and $\gamma_{\ns}$ are respectively a separating and a non-separating simple closed curve, then we have
$$\frac{\FC_2(\gamma_{\text{sep}})}{\FC_2(\gamma_{\ns})}=\lim_{L\to\infty}\frac{\#\{\gamma\text{ separating simple curve with }\ell_X(\gamma)\le L\}}
{\#\{\gamma\text{ non-separating simple curve with }\ell_X(\gamma)\le L\}}=\frac 1{48}.$$}

\noindent Note that every simple curve in the genus $2$ surface $X$ is of either type $\gamma_{\ns}$ or $\gamma_{\text{sep}}$.
\medskip

\noindent \textbf{Separating vs. non-separating--large genus asymptotics.} Delecroix, Goujard, Zograf, and Zorich investigated how the quantity $\frac 1{48}$ changes when the genus $g$ increases. Let us be more precise. Denote by $\FC_g(\text{simple, sep})$ the sum of $\FC_g(\gamma)$ over all types of separating simple curves $\gamma$ in a closed surface of genus $g$ and let $\FC_g(\text{simple, }\ns)=\FC_g(\gamma_{\ns})$ be the frequency of some--and hence any because there is just one type--non-separating simple curve $\gamma_{\ns}$. In \cite{DGZZ, DGZZ2}, Delecroix-Goujard-Zograf-Zorich showed that
\begin{equation}\label{eq DGZZ asymptotic}
\frac{\FC_g(\text{simple, sep})}{\FC_g(\text{simple, ns})}\sim\sqrt{\frac 2{3\pi g}}\cdot 4^{-g}.
\end{equation}
Here, and in the sequel, the symbol $\sim$ means that the ratio between both sides tends to $1$ when $g\to\infty$. One can thus rephrase \eqref{eq DGZZ asymptotic} as asserting that {\em when the genus is large, the overwhelming majority of all simple closed curves are non-separating}. 

Our first result is that the same happens for curves with any fixed number of intersections. Let us start with a comment on the topology of surfaces. While homotopy implies isotopy for simple curves, this fails when curves intersect themselves. It even fails when we assume that the involved curves have minimal number of self-intersections in their homotopy class. What is still true is that when $\gamma,\gamma'$ are homotopic curves in general position and minimizing the number of self-intersections in their homotopy class, then the complement of $\gamma$ is connected if and only if that of $\gamma'$ is connected. It follows that it makes sense to speak of separating and non-separating homotopy classes of non-simple curves in surfaces.

\begin{rmk*}
    When $K\ge 3$ and the genus is large, then there is more than one mapping class group orbit of non-separating curves with $K$ self-intersections--see with Figure \ref{fig:Curves4Int}. Altogether there are, for any fixed $K$, finitely many types of curves with $K$ self-intersections.
\end{rmk*}

Now that the terminology is clear, note that we get from \eqref{eq absolute frequency exists} that, for given $g$ and $K$, the non-separating curves are a definite proportion of the set of all curves with $K$ self-intersections in a surface of genus $g$. More precisely we have
$$\lim_{L\to\infty}\frac{\#\{\gamma\text{ separating curve with }\iota(\gamma,\gamma)=K\text{ and }\ell_X(\gamma)\le L\}}{\#\{\gamma\text{ non-separating curve with }\iota(\gamma,\gamma)=K\text{ and }\ell_X(\gamma)\le L\}}=
\frac{\FC_g(\iota=K,\sep)}{\FC_g(\iota=K,\ns)},$$
where $\FC_g(\iota=K,\sep)$ (resp. $\FC_g(\iota=K,\ns)$) is the sum of $\FC_g(\gamma)$ over all types of separating (resp. non-separating) curves $\gamma$ with $\iota(\gamma,\gamma)=K$ in a closed surface of genus $g$. Analogous to Delecroix-Goujard-Zograf-Zorich \cite{DGZZ, DGZZ2}, we describe for $K\ge 1$ fixed, how the ratio between $\FC_g(\iota=K,\sep)$ and $\FC_g(\iota=K, \ns)$ behaves when $g$ grows:

\begin{named}{Theorem \ref{thm:main type fixed int number}}
    For all $K\ge 1$ we have
\[\frac{\FC_g(\iota=K,\sep)}{\FC_g(\iota=K,\ns)}=\mathrm{O}\left( \frac{1}{g}  \right)\]
as $g\to\infty$.
\end{named}

As it was the case for simple curves, we can summarize the statement of Theorem \ref{thm:main type fixed int number} in the following way:
\begin{quote}
    \emph{In large genus, the overwhelming majority of curves with $K$ self-intersections are non-separating.}
\end{quote}
There is however a difference between the simple and the non-simple cases. While Delecroix-Goujard-Zograf-Zorich get that for simple curves the probability of being separating decays exponentially as the genus grows, we only get polynomial decay from Theorem \ref{thm:main type fixed int number}. We do not know if the decay rate $O(g^{-1})$ is optimal, but one can definitively not do any better than $O(g^{-2})$--compare with Theorem \ref{kor asymp fix int} below and the curves $\gamma^K_{\max-1}$ in Table \ref{tab:4int}. Indeed, in the setting of non-simple curves, polynomial factors are a feature and not a bug. Our life would be easier if this were not the case.
\medskip

\noindent \textbf{Total frequency--large genus asymptotics.}
Returning now for a moment to the setting of simple curves, one also gets from \cite{DGZZ, DGZZ2} asymptotics for the {\em total simple frequency } $\FC_g(\text{simple})=\FC_g(\text{simple},\sep)+\FC_g(\text{simple, }\ns)$ in genus $g$, that is the sum of the frequencies of all types of curves:
\begin{equation}\label{eq DGZZ asymptotic2}
\FC_g(\text{simple}) \asymp \frac{1}{2^{2g}}\left( \frac{e}{3g} \right)^{4g}g^{2}
\end{equation}
where the symbol $\asymp$ means that the ratio tends to a positive constant when $g\to\infty$. When we consider curves with self-intersections we get the following asymptotics for the {\em total frequency
$$\FC_g(\iota=K)=\FC_g(\iota=K,\sep)+\FC_g(\iota=K,\ns)$$
of curves with $K$ self-intersections in a surface of genus $g$}:

\begin{named}{Theorem \ref{kor asymp fix int}} For any $K\ge0$, we have
    \[   \FC_g(\iota=K) \asymp   \dfrac{1}{2^{2g}}\left( \frac{e}{3g}\right)^{4g} g^{K+2} \]
    as $g\to\infty$. 
\end{named}

As we see, both the total frequency of curves with $K$ self-intersections and the total simple frequency differ, asymptotically, by a constant multiple of the polynomial factor $g^K$.

\begin{rmk*}
    In the course of the proof of Theorem \ref{thm:main type fixed int number} we actually get that if $g$ is large and $\gamma$ is a non-separating curve with $\iota(\gamma,\gamma)=K$ in a closed surface of genus $g$, then the frequency $\FC_g(\gamma)$ is of the order of $\FC_g(\iota=K)$. However, we do not know if for any sequence $(\gamma_g)$ of such curves we have
    $$\FC_g(\gamma_g)\asymp \dfrac{1}{2^{2g}}\left( \frac{e}{3g} \right)^{4g}g^{K+2}$$
    or not. In other words, there could be $c>1$ and two sequences $(\gamma_g)$ and $(\gamma_g')$ with both $\gamma_g$ and $\gamma_g'$ non-separating in the surface of genus $g$, both with $K$ self-intersections, and with $\FC_g(\gamma_g)>c\cdot\FC_g(\gamma_g')$ for all $g$.
\end{rmk*}

\medskip

\noindent \textbf{Frequencies of local types--large genus asymptotics.}
In order to obtain such results as Theorem \ref{thm:main type fixed int number} and Theorem \ref{kor asymp fix int}, we need to understand how the frequencies $\FC_g(\cdot)$ behave when $g\to\infty$. It makes however no sense to say that curves in surfaces of different genus are of the same type. Motivated by work of 
Anantharaman--Monk \cite{AM}, and by conversations with them, we are led to the concepts of {\em local type}  and of {\em realization} of a local type. We will give a precise definition in Section \ref{sec:local type}, but for now we can think of a {\em local type} as a pair $(\Sigma,\gamma_0)$ consisting of a compact surface $\Sigma$ and of a filling curve $\gamma_0\subset\Sigma$, and of a {\em realization} of the local type $(\Sigma,\gamma_0)$ as a $\pi_1$-injective embedding $\phi:\Sigma\to X$. A curve $\gamma$ in surface $X$ is {\em of local type} $(\Sigma,\gamma_0)$ if there is a realization $\phi:\Sigma\to X$ with $\gamma$ homotopic to $\phi(\gamma_0)$. The curve $\gamma$ of local type $(\Sigma,\gamma_0)$ is {\em of non-separating local type} if $X\setminus\phi(\Sigma)$ is connected\footnote{The difference between what we are calling here {\em local type}, {\em realization}, and {\em non-separating local type}, and what we will formally define in Section \ref{sec:local type} is the following. Later on we will decorate the pair $(\Sigma,\gamma_0)$ with an involution $\flip_\Sigma$ of the set of connected components of $\D\Sigma$ and realizations $\phi:\Sigma\to X$ will be assumed to map curves in a $\flip_\Sigma$-orbit to homotopic curves. If $\flip_\Sigma\neq\Id$, this forces the presence of annuli in $X\setminus\phi(\Sigma)$. We will thus say that a curve will the be of non-separating local type when, other than those annuli, $X\setminus\phi(\Sigma)$ is connected. All of this will be important for the bookkeeping in the later parts of the paper, but the reader can safely ignore it for now.}.

\begin{ex}\label{bei figure-8 intro}
The pair $(Y,{\bf 8})$ consisting of a pair of pants and of a figure-8 in there, is a local type. Any curve $\gamma\subset X$ with self-intersection 1 is of local type $(Y,{\bf 8})$--it is of non-separating local type $(Y,{\bf 8})$ if and only if $X\setminus\gamma$ is connected.
\end{ex}

\begin{rmk*}
    In Example \ref{bei figure-8 intro} we have that the notions of being "non-separating" and "of non-separating local type" agree for curves of local type $(Y,{\bf 8})$. In general this is not the case: think of curves of local type $(Y,\gamma_0)$ where $Y$ is still a pair of pants and $\gamma_0$ is anything other than a figure-8.
\end{rmk*}

Given a local type $(\Sigma,\gamma_0)$, we denote by $\FC_g(\Sigma,\gamma_0)$ the sum of $\FC_g(\gamma)$ over all types of curves $\gamma$ of local type $(\Sigma,\gamma_0)$ in a surface of genus $g$. If we take the sum over all types of curves $\gamma$ of non-separating local type $(\Sigma,\gamma_0)$ we get $\FC(\Sigma,\gamma_0,\nsl)$.

In some sense, our main result gives, for every local type $(\Sigma,\gamma_0)$, the asymptotic behavior of $\FC_g(\Sigma,\gamma_0,\nsl)$:

\begin{named}{Theorem \ref{thm:final asym}}
     Let $(\Sigma,\gamma_0)$ be a local type then
    $$\FC_g(\Sigma,\gamma_0,\nsl)\asymp \dfrac{1}{2^{2g}} \left(\dfrac e{3g}\right)^{4g}\cdot \dfrac{g^{\chi(\Sigma)+\mu_0+2}}{\iota_0^{6g}}$$
    where 
    \begin{align*}
        \iota_0&=\min\{\iota(\alpha,\gamma_0)\vert\ \alpha\subset\Sigma\text{ essential simple arc}\},\text{ and}\\
        \mu_0&=\max\{k\vert\exists \text{ disjoint non-parallel simple arcs }\alpha_1,\dots,\alpha_k\subset\Sigma\text{ with }\iota(\alpha_i,\gamma_0)=\iota_0\}.
    \end{align*}
\end{named}

\begin{rmk*}
In the course of the proof of Theorem \ref{thm:final asym} we do not extract an expression for the constant hidden in the symbol $\asymp$. In specific cases such as that of the figure-8 it is possible to get explicit values for that constant, but doing that in general seems much harder, if possible at all.
\end{rmk*}

It turns out that curves of non-separating local type $(\Sigma,\gamma_0)$ have maximal contribution among all curves of local type $(\Sigma,\gamma_0)$, see Theorem \ref{sat dominant type} and Theorem \ref{essential is dominant}. Hence, the above theorem produces an asymptotic for $\FC_g(\Sigma,\gamma_0)$:

\begin{named}{Corollary \ref{kor final asyp loc type}}
    If $(\Sigma,\gamma_0)$ is a local type then we have
    $$\FC_g(\Sigma,\gamma_0)\asymp \dfrac{1}{2^{2g}} \left(\dfrac e{3g}\right)^{4g}\cdot \dfrac{g^{\chi(\Sigma)+\mu_0+2}}{\iota_0^{6g}}$$
    where $\iota_0$ and $\mu_0$ are as in Theorem \ref{thm:final asym}.
\end{named}

Recalling that the total simple frequency $\FC_g(\text{simple})$ in genus $g$ is given by \eqref{eq DGZZ asymptotic2}, we can rewrite the statement of Corollary \ref{kor final asyp loc type} as
    $$\FC_g(\Sigma,\gamma_0)\asymp \FC_g(\text{simple})\cdot \dfrac{g^{\chi(\Sigma)+\mu_0}}{\iota_0^{6g}}.$$
We can thus think of the total simple frequency as the baseline, and of the second term as a correction factor  determined by the topology of the local type. The reader can amuse themselves to calculate these correction factors for the examples of local types given in Figure~\ref{fig:examples of local types intro}. 

\begin{figure}[h]
    \centering
   \includegraphics[width=0.8\linewidth]{ 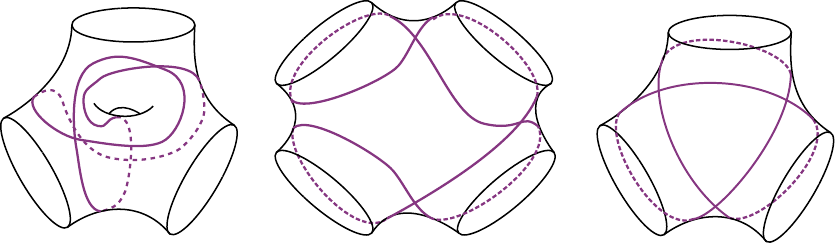}
    \caption{Examples of local types.}
    \label{fig:examples of local types intro}
\end{figure}
\medskip

\noindent \textbf{An expression for the frequency.}
For simple curves $\gamma_0$, Mirzakhani expressed in \cite{Maryam simple} the frequency $\FC_g(\gamma_0)$ in \eqref{eq absolute frequency exists} in terms of the Kontsevich polynomial
$V_{X\setminus\gamma_0}(b_1,b_2)$ of the complement of $\gamma_0$, namely
\begin{equation}\label{eq maryam frequency kontsevich}
\FC_g(\gamma_0)=\frac {1}{\sym(\gamma_0)}\int\limits_{0\le b \le 1}V_{X\setminus\gamma_0}\left( b, b\right)\cdot b\ d b
\end{equation}
where $\sym(\gamma_0)\in\{1,2\}$, depending on whether there are self-homeomorphisms of $X\setminus\gamma_0$ exchanging both boundary components. 

Having an expression for the frequency for simple curves in terms of the Kontsevich polynomial is the starting point of the work of Delecroix-Goujard-Zograf-Zorich. Getting such an expression for the frequency of non-simple curves is the starting point of our work here. 

The frequency $\FC(\gamma_0)$ can be expressed, for both simple and non-simple curves $\gamma_0$, as the Thurston measure of a suitable subset of the space $\CM\CL(X)$ of measured laminations---see \cite{Kasra-Juan} for the case that $\gamma_0$ is filling, and \cite{book} for the general case. In Theorem \ref{thm Kasra formula} below we give an explicit description of this set in terms of train tracks. Anyways, the Thurston measure is by construction the scaling limit of the counting measure over the set $\CM\CL_\BZ(X)$ of integral multicurves. It follows that we can obtain $\FC_g(\gamma_0)$ by counting certain integral simple multicurves. To facilitate this counting, we observe that, up to a twist, every simple multicurve can be recovered from the integral arc systems obtained by intersecting it with the subsurface $\Sigma=\Sigma(\gamma_0)$ filled by $\gamma_0$, and with its complement $X\setminus\Sigma$. When one writes things out, one obtains an expression for $\FC_g(\gamma_0)$ as a limit of weighted finite sums over the set of integral arc systems in $\Sigma$--see Theorem \ref{thm frequency counting} below. Arc systems are exactly the same as metric ribbon graphs, and what the Kontsevich polynomial actually does is to calculate the number of integral ribbon graphs with fixed boundary lengths \cite{Kontsevich,Norbury}. The following is the expression we obtain for $\FC_g(\gamma_0)$ in terms of the Kontsevich polynomial:

\begin{named}{Theorem \ref{thm constant c in terms of intersection numbers}}
    Let $X$ be a closed hyperbolic surface of genus $g\ge 3$ and $\gamma_0\subset X$ a non-simple curve filling a subsurface $\Sigma$ such that $Z=X\setminus\Sigma$ contains no annuli. Denote by $\BA(\Sigma)$ the arc-complex of $\Sigma$ and let ${\bf I}_{\gamma_0}:\BA(\Sigma)\to\BR_{\ge 0}$ be the intersection function with $\gamma_0$. Setting
$$\Delta_\Sigma(\gamma_0) :=\{\bar a \in \BA(\Sigma) : {\bf I}_{\gamma_0}(\bar a)\le 1 \}$$
we have
\begin{equation}  \label{eq:freq int intro}
     \FC_g(\gamma_0)=\frac {1}{\sym(\gamma_0)}\bigintsss\limits_{\Delta_\Sigma(\gamma_0)}V_Z\left({\bf b}(\bar a)\right)\cdot \left(\prod_{x\in{\bf b}(\bar a)}x\right)  d{\FM_{\BA(\Sigma)}}
\end{equation}
where $V_Z(\cdot)$ is the Kontsevich volume polynomial associated to the surface $Z$, where $\FM_{\BA(\Sigma)}$ is the canonical Lebesgue measure on $\BA(\Sigma)$, and where ${\bf b}:\BA(\Sigma)\to\BR_{\ge 0}^{\D\Sigma}$ is the map which sends arc systems to the vector whose entries are the weights the system puts on each individual boundary component.
\end{named}

Before moving on, let us add a few comments on Theorem \ref{thm constant c in terms of intersection numbers}:
\smallskip

\noindent{\bf (1.)} The actual statement of Theorem \ref{thm constant c in terms of intersection numbers} in Section \ref{sec:kontsevich} is more involved than the one we presented here. For starters, we drop there the assumption that $X\setminus\Sigma$ contains no annuli. We also allow $\gamma_0$ to be a multicurve which in particular might have isolated simple components. This is also the reason why the statements of Theorem \ref{thm:final asym} and Corollary \ref{kor final asyp loc type} in Section \ref{sec asymptotic} are more involved than the ones we gave above.\smallskip

\noindent{\bf (2.)} The assumption that $X$ in Theorem \ref{thm constant c in terms of intersection numbers} has at least genus $3$ is there to avoid the annoyance arising from the fact that genus $2$ surfaces are hyper-elliptic. This poses no problem for us because our aim is to study relative frequencies when $g\to\infty$. Still, since everything in \cite{book} is done allowing the genus to be 2, all what we do can be adapted to that case. The interested reader can find remarks on the places where modifications are needed in the document. In the Appendix we show how Mirzakhani's $\frac 1{48}$ follows from the general form of Theorem \ref{thm constant c in terms of intersection numbers}.\smallskip

\noindent{\bf (3.)} In \cite{Maryam general curves} Mirzakhani proves--at least for filling $\gamma_0$--that $\FC_g(\gamma_0)$ is a rational number. Unfortunately, we are unable to get the rationality of $\FC(\gamma_0)$ from our work. Or maybe we would just need to work some more. Indeed, \eqref{eq:freq int intro} suggests that $\FC_g$ is rational, as the integral over a simplex with rational coordinates of a polynomial with rational coefficients is rational. However, $\Delta_\Sigma$ is not properly a simplex but rather an infinite union of simplexes. It seems therefore unclear whether one can use this simple minded argument to ensure the rationality of $\FC_g$. The only case where it is clear that one can deduce the rationality of $\FC_g$ from Theorem \ref{thm constant c in terms of intersection numbers} is when $\Delta_\Sigma$ decomposes as a finite union, that is when $\Sigma=\Sigma(\gamma_0)$ is a pair of pants. Indeed, we compute an example in the Appendix where $\Sigma$ is a pair of pants. Other examples, also with $\Sigma$ a pair of pants have been computed in \cite{Victor} using a different approach.
\medskip

\noindent \textbf{Extracting asymptotics.}
As can be seen in \eqref{eq:freq int intro}, the Kontsevich polynomial is central to understanding the frequency $\mathfrak{c}_g(\gamma_0)$.
Kontsevich \cite{Kontsevich} showed that these coefficients admit an algebro-geometric interpretation,
as intersection numbers of certain cohomology classes of the Deligne--Mumford compactification of the moduli space of Riemann surfaces.
It was conjectured by Witten \cite{Witten}, and shortly thereafter proved by Kontsevich \cite{Kontsevich}, that these intersection numbers can be computed recursively.
Other recursive relations have since been discovered.
Nevertheless, due to the complexity of these recursive relations,
little can be said about these numbers for general $g$ and $n$.
However, these numbers become more tractable in the large genus limit.
An asymptotic formula for these number as $g \to \infty$ was conjectured by Delecroix--Goujard--Zograf--Zorich \cite{DGZZ} based on numerical data,
and was proved soon after by Aggarwal \cite{Aggarwal} (see also the remark preceeding Theorem \ref{Aggarwal}).
This result provides a means to understand the frequency in the large genus regime.

Anyways, plugging this asymptotic formula into \eqref{eq:freq int intro}, we get a more friendly version of the frequency $\mathfrak{c}_g(\gamma_0)$ in the large genus regime.
We encode these frequencies (indexed by $g$) into a carefully chosen generating function,
and apply standard singularity analysis.
The key idea is that the asymptotic behavior of the generating function's coefficients is determined by the analytic properties of the generating function's singularities
(see \cite[Chapter~VI]{Flajolet-Sedgewick} for an introduction of this method).
In particular, the constants $\iota_0$ and $\mu_0$ in Theorem~\ref{thm:final asym} appear, respectively, as the modulus and the multiplicities of the singularities closest to the origin.
\medskip

\noindent \textbf{Plan of the paper.} In Section \ref{sec arcs} we recall the terminology and background associated to arc systems, measured laminations and train tracks. Starting with the expression for the frequency $\FC_g(\gamma_0)$ as the Thurston measure of a fundamental domain for the action of $\Stab_{\Map(X)}(\gamma_0)$ on the set $\{\lambda\in\CM\CL(X),\ \iota(\lambda,\gamma_0)\le 1\}$, we derive in Section \ref{sec frequency} an expression for $\FC(\gamma_0)$ in terms of counting some explicit sets of simple multicurves, the stress being here on the word "explicit". This allows us, in Section \ref{sec:FrequencyCounting}, to express $\FC(\gamma_0)$ as a limit of counts of weighted arc systems in the surface $\Sigma=\Sigma(\gamma_0)$ filled by $\gamma_0$. In Section \ref{sec:kontsevich} we replace this limit of weighted counts by an integral. Since arc systems and ribbon graphs are essentially the same thing, it is here were the Konstsevich polynomial appears, leading to Theorem \ref{thm constant c in terms of intersection numbers}.

In Section \ref{sec:local type} we introduce formally the notions of local type and realization. We also introduce the dual graph to a realization and rephrase the statement of Theorem \ref{thm constant c in terms of intersection numbers} in those terms. While the notation in terms of the dual graph is less intuitive from the point of view of topology, it comes very handy when investigating large genus asymptotics. This is what we do in the pretty long and intense Section \ref{sec asymptotic}, whose content we just summarized under the rubric {\em Extracting asymptotics}.

Finally, in a much more relaxed Section \ref{sec int K}, we get Theorem \ref{thm:main type fixed int number} and Theorem \ref{kor asymp fix int} as corollaries of the results of the previous section. Finally, in the appendix we present some explicit calculations for simple curves and curves with one self intersection. 
\medskip

\noindent \textbf{Acknowledgments.} The authors are particularly indebted to Linxiao~Chen; the proof of Lemma~\ref{lem:Linxiao} is due to him. They are also very grateful to Elise Goujard for recognising the frequency of a simple curve as the common factor in the frequency of any curve in Theorem~\ref{thm:final asym}.
They thank A.~Alharbi, C.~Banderier, S.~Bronstein, Y.~Chaubet, A.~Contat, T.~Monteil, B.~Petri, M.~\"Unel and A.~Zorich for useful discussions, and are grateful to the Fields Institute, the Institut Henri Poinacr\'{e}, the Max Planck Institute in Leipzig, the University of Luxembourg, the Universit\'{e} de Rennes, the Universit\'{e} Sorbonne Paris Nord and the University of Toronto which gave them the opportunity to visit each other. 

This work is funded by the Luxembourg National Research Fund OPEN grant O19/13865598 through the first author, NSERC Discovery grant, RGPIN-05507 through the second author, the France 2030 program, Centre Henri Lebesgue ANR-11-LABX-0020-01 through the third author and the MPI MiS through the last author.

\tableofcontents

\section{Background}\label{sec arcs}
Through out this section, let $\Sigma$ be an oriented, compact, possibly disconnected surface all of whose connected components have boundary and are either annuli or have negative Euler characteristic. We denote by $\Sigma_{\ann}$ the union of the annular components of $\Sigma$ and its complement by $\Sigma_{\hyp}=\Sigma\setminus\Sigma_{\ann}$. For reference, endow $\Sigma_{\ann}$ with a flat metric with totally geodesic boundary and $\Sigma_{\hyp}$ with a hyperbolic metric, again with totally geodesic boundary. 

Under the mapping class group we understand the group
$$\Map(\Sigma)= {\Rfaktor{ \Diff^+(\Sigma)}{  \Diff^+_0(\Sigma)}},$$
where we do not assume that neither elements in $\Diff^+(\Sigma)$ nor in $\Diff^+_0(\Sigma)$ fix the boundary point-wise. In other words, $\Map(\Sigma)$ does not include Dehn-twists along the boundary components of $\Sigma$. 

Elements in $\Map(\Sigma)$ might permute the connected components of $\D \Sigma$. The pure mapping class group $\PMap(\Sigma)$ is the finite index subgroup of $\Map(\Sigma)$ preserving all individual boundary components. Since we are assuming that all components of $\Sigma$ have boundary, we automatically get that the pure mapping class group $\PMap(\Sigma)$ acts trivially on the set $\pi_0(\Sigma)$ of connected components of $\Sigma$. 

As customary in the field, we identify curves with their free homotopy classes. We allow curves to be boundary parallel. We stress this unusual convention:

\begin{defi} \label{def: Curves}
    Let $\Sigma$ be a surface with boundary, a \emph{curve} is a free-homotopy class of primitive and non-homotopically trivial immersed closed loops. A multicurve $\Gamma$ is a finite family of distinct curves.  Hence, for $\Gamma$ a multi-curve, $\gamma\in\Gamma$ is a curve.
\end{defi}

As always, the mapping class group acts on the sets of curves and multicurves, preserving the geometric intersection number $\iota(\cdot,\cdot)$. 

\subsection{The (flipped-) arc complex} \label{sec:arc complex}
A {\em topological arc system} is a collection of simple, disjoint, essential and non-parallel proper arcs in $\Sigma$ where proper means that the endpoints of the arcs lie in $\D \Sigma$---properly isotopic arc systems will be identified. In particular, each annular component of $\Sigma$ has a single arc system made of one arc. Every arc system in $\Sigma_{\hyp}$ has a unique representative which is (ortho)geodesic with respect to the fixed hyperbolic metric. A topological arc system is {\em maximal}  if it meets every annular component and  cuts the components of $\Sigma_{\hyp}$ into a collection of hexagons. It is {\em filling} if it cuts $\Sigma$ into a collection of polygons. Every maximal arc system is filling and has exactly $3\cdot\vert\chi(\Sigma)\vert+\vert\pi_0(\Sigma_{\ann})\vert$ components, where $\chi(\Sigma)$ is the Euler characteristic of $\Sigma$. We record this fact for future reference. 

\begin{fact} \label{fact: number of arcs}
    Any maximal arc system in a surface with boundary $\Sigma$ consists of $3\cdot\vert\chi(\Sigma)\vert+\vert\pi_0(\Sigma_{\ann})\vert$ distinct arcs.
\end{fact}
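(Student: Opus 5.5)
The count is additive over the components of $\Sigma$, so I would first reduce to connected $\Sigma$. Annular components are immediate: an annulus has a unique essential arc up to isotopy, which is then a maximal arc system with exactly one arc, matching $3\cdot 0+1$. It therefore suffices to treat a connected $\Sigma$ with $\chi(\Sigma)<0$ and to show that a maximal arc system has exactly $3|\chi(\Sigma)|$ arcs.

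For such $\Sigma$, let $A$ be a maximal arc system with $a$ arcs. By definition, cutting along $A$ decomposes $\Sigma$ into $h$ hexagons. Their sides alternate between arc-sides and boundary-sides: the arcs are orthogeodesic, and the complementary pieces are right-angled hexagons whose sides alternately lie on arcs and on $\D\Sigma$.

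I would then compute $\chi(\Sigma)$ from this decomposition. Take the cell structure whose $2$-cells are the $h$ hexagons. The $1$-cells are the $a$ arcs together with the boundary segments; since each arc contributes $2$ endpoints on $\D\Sigma$, these cut $\D\Sigma$ into $2a$ segments. The $0$-cells are the $2a$ arc endpoints. This gives
$$\chi(\Sigma)=2a-(a+2a)+h=h-a.$$

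Finally, count sides. Each hexagon has $3$ arc-sides, and each arc borders exactly two hexagon sides (possibly both in the same hexagon), so $3h=2a$. Substituting $h=2a/3$ gives $\chi(\Sigma)=-a/3$, that is, $a=3|\chi(\Sigma)|$. Summing over components yields the formula in Fact \ref{fact: number of arcs}.

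The only delicate point is justifying the alternating structure of the hexagons. This is where maximality is used: non-parallel essential arcs cut each complementary region into a polygon whose sides alternate between arc-sides and boundary-sides, and a region with more than $3$ arc-sides could be further subdivided by a new arc, contradicting maximality. Regions with fewer than $3$ arc-sides cannot occur, since they would be discs bounded by one arc (the arc would be inessential) or by two arcs (the two arcs would be parallel). I expect no real obstacle here; alternatively, this fact is standard and can be quoted.
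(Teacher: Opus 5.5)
Your argument is correct. The paper gives no proof here: it records the statement as a standard fact right after defining a maximal arc system as one that meets every annulus and cuts $\Sigma_{\mathrm{hyp}}$ into hexagons. Your count therefore supplies exactly the justification the paper leaves to the reader. That count is $\chi(\Sigma)=h-a$ from the cell structure, together with $3h=2a$ from counting arc-sides. Your reduction to components and your treatment of annuli match the paper's conventions.

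Two points are worth tightening.

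First, the claim that the $2a$ endpoints cut $\partial\Sigma$ into exactly $2a$ segments needs every boundary circle to carry at least one endpoint. This is also what makes your decomposition a genuine cell structure. It holds because a circle avoided by all arcs would be a full boundary component of the complementary region containing it. That region could then be a disc only if the whole component of $\Sigma$ were a disc.

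Second, with the paper's definition your last paragraph is unnecessary. Hexagons are part of what ``maximal'' means, and the alternation of sides is automatic, since disjoint arcs meet $\partial\Sigma$ only at their distinct endpoints.

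If you instead want to derive the hexagons from maximality under inclusion, the sentence claiming that non-parallel essential arcs always cut $\Sigma$ into polygons is false. A single arc joining two different boundary components of a pair of pants leaves an annulus. You would also need to show that every complementary region of an inclusion-maximal system is a disc. One way: a non-disc region contains a non-separating arc with endpoints on $\partial\Sigma$. Such an arc is automatically essential in $\Sigma$ and not parallel to any arc of the system, contradicting maximality.
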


We will denote by $\CA(\Sigma)$ the countable set of (isotopy classes of) maximal arc systems in $\Sigma$. Note that
$$\CA(\Sigma)=\CA(\Sigma_1)\times\dots\times\CA(\Sigma_s)$$
where $\Sigma_1,\dots,\Sigma_s$ are the connected components of $\Sigma$. Since there is only one maximal arc system in each annular component we get that, as long as $\Sigma_{\hyp}\neq\emptyset$, the projection map $\CA(\Sigma)\to\CA(\Sigma_{\hyp})$ is a bijection. For later on use, let us introduce notation for its inverse: 
\begin{equation} \label{eq:completAS}
     \CA(\Sigma_{\hyp})\to\CA(\Sigma),\ \sigma \mapsto \tilde \sigma. 
\end{equation}

\subsubsection{The arc-complex} \label{}

A {\em weighted arc system} $\bar a$ is a topological arc system $\sigma$ together with a non-negative weight associated to each one of its components. The {\em support} of a weighted arc system is the set of arcs of positive weight in the underlying topological arc system. A weighted arc system is {\em filling} if its support is. Equivalently, $\bar a$ is filling if we have $\iota(\bar a,\gamma)>0$ for every curve $\gamma$ in $\Sigma$.

\begin{rmk*}
    We can associate to every weighted arc system a new weighted arc system by forgetting the components of the underlying topological arc system with $0$ weight. Slightly abusing terminology, we will not differentiate between these two arc systems. The reason is that we will be mostly interested in the intersections between arc systems and curves, and from that point of view both arc systems are identical.
\end{rmk*}

We will denote by $\bar\BA(\Sigma)$ the space of all weighted arc systems and by 
$$\BA(\Sigma)=\{\bar a\in\bar\BA(\Sigma)\text{ filling}\}$$
the subset consisting of filling weighted arc systems. Although this terminology might differ from what is sometimes used in the literature, we will refer to $\BA(\Sigma)$ as the {\em arc complex} of $\Sigma$.

We endow $\bar\BA(\Sigma)$ with the finest topology with respect to which the map
\begin{center}
  \begin{tabular}{ccl}
    $\BR_{\ge 0}^\sigma$ &$\to $&$\bar\BA(\Sigma)$\\
    $\bar w=(w_a)_{a\in\sigma}$&$\mapsto$&$\sum_{a\in\sigma}w_a\cdot a$
\end{tabular}  
\end{center}
is a homeomorphism onto its image $\bar\BA(\sigma)$ for every maximal topological arc system $\sigma\in\CA(\Sigma)$. It follows from Fact \ref{fact: number of arcs} that $\bar\BA(\Sigma)$ is the union of countably many quadrants 

\begin{equation} \label{eq dim no flip}
    \bar\BA(\sigma)\simeq\BR_{\ge 0}^\sigma\simeq\BR_{\ge 0}^{3\cdot \vert\chi(\Sigma)\vert+\vert\pi_0(\Sigma_{\ann})\vert},
\end{equation} glued together via linear maps along faces of lower dimension. The arc complex $\BA(\Sigma)$ is an open full dimensional subset of $\bar\BA(\Sigma)$. 

If $\sigma\in\CA(\Sigma)$ is a maximal arc system, then we denote by $\BA(\sigma)=\bar\BA(\sigma)\cap\BA(\Sigma)$ the set of those filling weighted arc systems whose support is contained in $\sigma$. Since the cover 
$$\BA(\Sigma)=\cup_{\sigma\in\CA(\Sigma)}\BA(\sigma)$$
is locally finite, it follows that
\begin{equation}
    \label{DimArcComplex}
    \dim(\BA(\Sigma))=3\cdot\vert\chi(\Sigma)\vert+\vert\pi_0(\Sigma_{\ann})\vert.
\end{equation}
It also follows that we have 
\begin{equation}\label{eq product decompostion of arc complex}
\BA(\Sigma)=\BA(\Sigma_1)\times\dots\times\BA(\Sigma_s)
\end{equation}
where again $\Sigma_1,\dots,\Sigma_s$ are the connected components of $\Sigma$. 

The mapping class group $\Map(\Sigma)$ acts on the set of topological arc systems. The action $\Map(\Sigma)\actson\CA(\Sigma)$ on the set of maximal arc systems has finitely many orbits and their stabilizers are finite. The mapping class group also acts via homeomorphisms on $\bar\BA(\Sigma)$, and this action preserves $\BA(\Sigma)$. In fact, the induced action $\Map(\Sigma)\actson\BA(\Sigma)$ on the arc complex is proper. In particular the stabilizer $\Stab_{\Map(\Sigma)}(\bar a)$ of every $\bar a\in\BA(\Sigma)$ is finite.

\subsubsection{Boundary map}
We have a well-defined map, the {\em boundary map},  
\begin{equation}\label{eq boundary map}
{\bf b}:\bar\BA(\Sigma)\to\BR_{\ge 0}^{\D\Sigma}
\end{equation}
associating to each weighted arc system and to each component of $\D S$ the sum, with multiplicity, of the weights of the arcs incident to the said component. Here, ``with multiplicity'' means that we count with weight 2 if both ends of the arc lie on our component, with weight 1 if only one of them does, and finally with weight 0 otherwise. 

The boundary map ${\bf b}$ is continuous and invariant under the pure mapping class group $\PMap(\Sigma)$. Moreover, it is linear in the sense that if $\sigma\in\CA(S)$ is a maximal arc system, then the composition
$$\BR_{\ge 0}^\sigma\simeq\bar\BA(\sigma)\subset\bar\BA(\Sigma)\stackrel{{\bf b}}\longrightarrow\BR_{\ge 0}^{\D\Sigma}$$
is the restriction of a linear map $\BR^\sigma\to\BR^{\D\Sigma}$. If $\Sigma=\Sigma_{\hyp}$ does not have annular components, then it has maximal rank. We record this fact:

\begin{lem}\label{lem epimorphism}
Let $\sigma\in\CA(\Sigma_{\hyp})$ be a maximal arc system in $\Sigma_{\hyp}$. The restriction to $\bar\BA(\sigma)\simeq\BR_{\ge 0}^\sigma$ of the boundary map ${\bf b}:\bar\BA(\Sigma_\hyp)\to\BR_{\ge 0}^{\D\Sigma_\hyp}$ is the restriction of a linear map of maximal rank.\qed
\end{lem}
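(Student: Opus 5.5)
The plan is to reduce to a single connected component of $\Sigma_{\hyp}$ and then show that the boundary map, written as a matrix over the arcs of $\sigma$, is surjective onto $\BR^{\D\Sigma_{\hyp}}$. The reduction works because $\sigma$ splits as a union of maximal arc systems, one in each component, and ${\bf b}$ is block diagonal with respect to this splitting. So from now on $\Sigma$ is connected of negative Euler characteristic with boundary components $\beta_1,\dots,\beta_n$. We will have $\vert\sigma\vert=3\vert\chi(\Sigma)\vert\ge n$ by Fact \ref{fact: number of arcs}, since $\vert\chi(\Sigma)\vert=2g-2+n\ge n/3\cdot 3$ for $n\ge1$ (the only tight-looking case, the pair of pants, has $3\ge 3$). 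The linear map in question is $L:\BR^\sigma\to\BR^n$, $L(e_a)=\sum_{\text{endpoints } p \text{ of } a} e_{\beta(p)}$. Its image is spanned by vectors of the form $e_i+e_j$ for $i\ne j$ and $2e_i$, where each vector records the two boundary components on which an arc ends.

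Form the graph $G$ with vertex set $\{\beta_1,\dots,\beta_n\}$ and one edge for each arc of $\sigma$ joining the components containing its endpoints; arcs with both endpoints on the same component become loops. A standard linear-algebra fact says the span of the vectors $e_i+e_j$ (edges) and $2e_i$ (loops) is all of $\BR^n$ exactly when every connected component of $G$ either contains a loop or is non-bipartite. Indeed, for a connected component the incidence vectors span $\BR^{V}$ unless it is bipartite and loop-free, in which case they span only the hyperplane orthogonal to the vector that is $+1$ on one side and $-1$ on the other. So two things need checking: that $G$ is connected, and that $G$ contains an odd cycle or a loop.

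For connectivity: $\sigma$ cuts $\Sigma$ into hexagons, so $\Sigma$ deformation retracts onto the union of $\D\Sigma$ and the arcs, up to attaching the hexagonal discs. Since $\Sigma$ is connected, the boundary components must be joined through arcs, which gives connectivity of $G$.

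For the odd cycle, use a single hexagon. Its sides alternate between three arc sides and three boundary segments lying on components $\beta_{i_1},\beta_{i_2},\beta_{i_3}$. Each arc side joins two consecutive boundary segments, so these three arcs give the closed walk $\beta_{i_1}\to\beta_{i_2}\to\beta_{i_3}\to\beta_{i_1}$ in $G$. This walk has length $3$, which is odd. A closed walk of odd length always contains either an odd cycle or a loop, since it can be shortened to one. Hence $G$ is not bipartite-without-loops, and $L$ is surjective, i.e. it has maximal rank $n$.

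The main obstacle is making sure this degenerate case is really excluded. If two of the boundary segments of a hexagon lie on the same component, the corresponding arc is a loop of $G$. That is harmless, since a loop gives $2e_i$ directly. So the only step needing care is that the odd closed walk argument survives repeated vertices, and it does, by the shortening remark above.
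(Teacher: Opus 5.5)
Your proof is correct. The paper records this lemma without any proof, treating it as evident, so there is no competing argument to compare against. Your incidence-graph argument fills that gap soundly:
\begin{itemize}
\item the columns of the matrix of ${\bf b}$ on $\BR^\sigma$ are the unsigned incidence vectors of $G$, with loops contributing $2e_i$;
\item these span $\BR^n$ exactly when no component of $G$ is loopless and bipartite;
\item the closed walk of length three read off around any hexagon of $\Sigma\setminus\sigma$ rules that out, including when arc sides of the hexagon repeat.
\end{itemize}

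Three cosmetic remarks.
\begin{itemize}
\item The inequality $|\chi(\Sigma)|=2g-2+n\ge n$, as literally written, fails for planar $\Sigma$: a pair of pants has $|\chi|=1<3$. What you mean is $3|\chi(\Sigma)|\ge n$. The count is superfluous anyway, since surjectivity of $L$ already forces $n\le|\sigma|$ and hence rank $n=\min(|\sigma|,n)$.
\item Your connectivity step should not say that $\Sigma$ deformation retracts onto $\D\Sigma\cup\sigma$. The correct point is that attaching discs along connected curves cannot merge components.
\item Connectivity of $G$ is in fact unnecessary. Every boundary component contains a boundary side of some hexagon, so every component of $G$ already carries an odd closed walk, and that is all your rank criterion needs.
\end{itemize}
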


Lemma \ref{lem epimorphism} implies that for $\sigma\in\CA(\Sigma_{hyp})$ and $\bar b\in\BR_{\ge 0}^{\D \Sigma_\hyp}$, the set 
$$\BA(\sigma,\bar b)\stackrel{\text{def}}=\{\bar a\in\BA(\sigma)\text{ with }{\bf b}(\bar a)=\bar b\}$$
is the intersection of the quadrant $\BA(\sigma)$ with an affine subspace of codimension $\vert\D \Sigma_\hyp\vert$. This means that 
$$\dim(\BA(\sigma,\bar b))\le 3\cdot\vert\chi(\Sigma_{\hyp})\vert-\vert\D \Sigma_\hyp\vert.$$

One should point out that, since we are only intersecting a quadrant, the inequality can be strict, even if $\sigma$ is maximal. For example, suppose that $\gamma_1$ and $\gamma_2$ are two boundary components of $\Sigma_{\hyp}$ and that $\sigma$ is such that every arc with an end in $\gamma_1$ has its second end in $\gamma_2$. Then we have $\BA(\sigma,\bar b)=\emptyset$ for any $\bar b\in\BR_{\ge 0}^{\D\Sigma_{\hyp}}$ with $\bar b(\gamma_1)>\bar b(\gamma_2)$. There is also $\bar b'$ for which $\BA(\sigma,\bar b')\neq\emptyset$ but has smaller dimension than expected.  \medskip

A weighted arc system is {\em integral} if all the coefficients are not only non-negative but also integral. We denote by $\BA_\BZ(\Sigma)\subset\BA(\Sigma)$ the discrete subset of integral filling weighted arc systems. Accordingly, we denote by $\BA_\BZ(\Sigma,\bar b)=\BA_\BZ(\Sigma)\cap\BA(\Sigma,\bar b)$ the set of integral weighted arc systems $\bar a$ with ${\bf b}(\bar a)=\bar b\in\BR_{\ge 0}^{\D\Sigma}$. 

There are simple conditions on $\bar b$ which ensure that the set $\BA_\BZ(\Sigma,\bar b)$ is empty:
\begin{enumerate}
    \item If $\bar b\notin\BZ_{\ge 0}^{\D \Sigma}$, then $\BA_\BZ(\Sigma,\bar b)=\emptyset$.
    \item If there is a connected component $\Sigma'$ of $\Sigma$ such that $\Vert \bar b_{\vert\D \Sigma'}\Vert$ is odd, then $\BA_\BZ(\Sigma,\bar b)=\emptyset$. Here $\bar b_{\vert\D \Sigma'}$ is the vector in $\BR_{\ge 0}^{\D \Sigma'}$ whose entries are the entries of $\bar b$ corresponding to $\D \Sigma'\subset\D\Sigma$.
    \item If there are an annular component $\Sigma'\subset \Sigma_{\ann}$ of $\Sigma$ with $b_{\vert\D \Sigma'}=(a,a')$ with $a\neq a'$, then $\BA_\BZ(\Sigma,\bar b)=\emptyset$.
\end{enumerate}

\begin{defi}\label{def admissible}
We will say that $\bar b\in\BZ_{\ge 0}^{\D \Sigma}$ is {\em admissible} if it does not satisfy (1), (2), and (3) above.
\end{defi}

Note that if $\Sigma$ is the disjoint union of $\Sigma_1$ and $\Sigma_2$ then $\bar b =(\bar b_1,\bar b_2)\in\BZ_{\ge 0}^{\D\Sigma_1}\times\BZ_{\ge 0}^{\D\Sigma_2}=\BZ_{\ge 0}^{\D\Sigma}$ is admissible if and only if $\bar b_1$ and $\bar b_2$ are.
\medskip

Still with the same notation, observe that for any $\bar a\in\BA(\Sigma,\bar b)$, we have $\Vert\bar b\Vert=2\cdot\Vert\bar a\Vert$ where $\Vert\cdot\Vert$ stands for the $\ell_1$-norm. It follows that for all $\bar b\in\BR_{\ge 0}^{\D\Sigma}$ and any maximal arc system $\sigma\in\CA(\Sigma)$, there are at most finitely many elements in $\BA_\BZ(\sigma,\bar b)$. Denote by 
\[ N_{\sigma}(\bar b) = | \BA_\BZ(\sigma,\bar b )  |    \]
the number of such elements. Noting now that the whole arc complex is covered by finitely many pure mapping class groups orbits of sets of the form $\BA(\sigma)$ with $\sigma\in\CA(\Sigma)$, define
\begin{equation}\label{eq number of integral arc systems with given boundary values}
N_{\Sigma}(\bar b)= 
\sum_{\sigma \in\Lfaktor{\CA(\Sigma,\bar b)}{\PMap(\Sigma)}}
\frac {N_\sigma(\bar b)}{\vert\Stab_{\PMap(\Sigma)}(\sigma)\vert}.
\end{equation}  
This quantity will play a key role in this paper. We record two facts which one should keep in mind:
\begin{itemize}
    \item $\bar b$ is admissible if and only if $N_\Sigma(\bar b)\neq 0$.
    \item If $\Sigma$ is the disjoint union of $\Sigma_1$ and $\Sigma_2$ then 
    $N_\Sigma(\bar b)=N_{\Sigma_1}(\bar b_1)\cdot N_{\Sigma_2}(\bar b_2)$
    for any $\bar b=(\bar b_1,\bar b_2)\in\BZ_{\ge 0}^{\D\Sigma_1}\times\BZ_{\ge 0}^{\D\Sigma_2}=\BZ_{\ge 0}^{\D\Sigma}$.
\end{itemize}

\subsubsection{Flips} \label{sec:flips}
Suppose that we have an involution $\flip:\D\Sigma\to\D\Sigma$ which exchanges both components of the boundary of every annular component of $\Sigma$. We impose no further conditions on $\flip$. In particular, $\flip$ might have fixed points, as it will in all cases of interest. Note however that we have
$$\flip(\D \Sigma_\ann)=\D \Sigma_\ann\text{ and hence }\flip(\D \Sigma_\hyp)=\D \Sigma_\hyp.$$

The involution $\flip$ of $\D\Sigma$ induces a linear involution of $\BR_{\ge 0}^{\D\Sigma}$ which we will denote by $\flip^\BR$. We will denote by 
$$\bar\BA(\Sigma,\flip)=\{\bar a\in\bar\BA(\Sigma)\text{ with }\flip^\BR({\bf b}(\bar a))={\bf b}(\bar a)\}$$
the set consisting of those weighted arc systems whose image under the boundary map {\bf b} as in \eqref{eq boundary map} is fixed by the involution $\flip^\BR$. Said differently
$$\bar\BA(\Sigma,\flip)=\text{ preimage under }{\bf b}\text{ of the fixed point set of }\flip^\BR\text{ acting on }\BR^{\D\Sigma}.$$
As we pointed out above, $\flip$ preserves the set of boundary components of the hyperbolic part of $\Sigma$, inducing thus an involution $\flip_{\hyp}$ of $\D\Sigma_{\hyp}$ is an involution of the latter set. With this notation we have the decomposition
\begin{equation}\label{eq I have a lot of nice cats} 
\bar\BA(\Sigma,\flip)=\bar\BA(\Sigma_{\hyp},\flip_{\hyp})\times\bar\BA(\Sigma_{\ann}) \equiv \bar\BA(\Sigma_{\hyp},\flip_{\hyp})\times \BR_{\ge 0}^{|\pi_0(\Sigma_{\ann})|}.
\end{equation}
Consistently with our earlier notation, set 
$$\BA(\Sigma,\flip)=\BA(\Sigma)\cap\bar\BA(\Sigma,\flip)\text{ and }\BA_\BZ(\Sigma,\flip)=\BA_\BZ(\Sigma)\cap\bar\BA(\Sigma,\flip).$$ 
Along the same lines set
$$\BA(\sigma,\flip)=\BA(\sigma)\cap\BA(\Sigma,\flip)\text{ and }\BA_{\BZ}(\sigma,\flip)=\BA(\sigma)\cap\BA_{\BZ}(\Sigma,\flip)$$
for any $\sigma\in\CA(\Sigma)$. The subsets $\bar\BA(\Sigma,\flip)$, $\BA(\Sigma,\flip)$, and $\BA_{\BZ}(\Sigma,\flip)$ of $\bar\BA(\Sigma)$ are invariant under the pure mapping class group $\PMap(\Sigma)$.

Lemma \ref{lem epimorphism} implies that for every maximal arc system $\sigma\in\CA(\Sigma_{\hyp})$, the set $\bar\BA(\sigma,\flip)=\bar\BA(\sigma)\cap\bar\BA(\Sigma_{\hyp},\flip)$ is the intersection of the positive quadrant $\bar\BA(\Sigma_{\hyp},\flip)\simeq\BR_{\ge 0}^\sigma$ with a linear subspace of codimension $\vert\D \Sigma_{\hyp}\vert-\vert\Lfaktor{\D \Sigma_{\hyp}}{\flip}\vert$. This means that the codimension of $\bar\BA(\sigma,\flip)$ in $\bar\BA(\sigma)$ is at least $\vert\D\Sigma_\hyp\vert-\vert\Lfaktor{\D\Sigma_\hyp}{\flip}\vert$. For certain maximal arc systems $\sigma$, we have equality. This is for example the case if $\sigma$ is such that for every component of $\D \Sigma_{\hyp}$ there is a component of $\sigma$ which has both endpoints there in. In light of \eqref{eq I have a lot of nice cats} we get 
\begin{align} \label{eq defi dim flip} 
\dim(\bar\BA(\Sigma,\flip))&= 3\cdot\vert\chi(\Sigma_{\hyp})\vert+\vert\pi_0(\Sigma_{\ann})\vert-(\vert\D\Sigma_\hyp\vert-\vert\Lfaktor{\D\Sigma_\hyp}{\flip}\vert )\\
&=3\cdot\vert\chi(\Sigma)\vert-\vert\D\Sigma_\hyp\vert+\vert \Lfaktor{\D\Sigma}{\flip}\vert. \notag
\end{align}

\subsubsection{Measures on the arc complex}  \label{sec:measure in arcs}

The space $\bar\BA(\Sigma,\flip)$ is endowed with a canonical $\PMap(\Sigma)$-invariant Radon measure of full measure. It is obtained as a weak-$*$-limit. Let us recall what that means. First, a Borel measure is Radon if it is locally finite, and both inner and outer regular. Second, the weak-$*$-topology on the space of Radon measures on a locally compact space is such that a sequence $(\FM_n)_{n\in\BN}$ of Radon measures converges to another such measure $\FM$ if and only if we have
$$\int f\ d\FM=\lim_{n\to\infty}\int f\ d\FM_n$$
for every compactly supported continuous function $f$. The convergence is expressed in terms of compactly supported continuous functions, but one gets from the Portmanteau theorem that $\FM_n\to\FM$ if and only if we have $\FM(V)=\lim_n\FM_n(V)$ for any measurable subset $V$ satisfying $\FM(\overline V \setminus \mathring{V}) = 0$, where $\overline V$ an $\mathring{V}$ are respectively the closure and the interior of $V$. A last general remark about measures: independently of the ambient space we are in, we will always denote by $\delta_x$ the Dirac probability measure centered at $x$. 
\medskip

To get measure on $\bar\BA(\Sigma,\flip)$ we proceed as in the construction of the Thurston measure on the space of measured laminations---see for example \cite[Theorem 4.16]{book}.
Suppose that $\sigma\in\CA(\Sigma)$ is a maximal arc system and identify, as above, $\bar\BA(\sigma,\flip)$ with the intersection of the positive quadrant $\BR_{\ge 0}^\sigma$ with a linear subspace $V(\sigma,\flip)$ of $\BR^\sigma$. The vector space $V(\sigma,\flip)$ is rational, in the sense that it is given by a collection of equations with rational (in fact, integral) coefficients. It follows that the weak-$*$-limit 
\begin{equation*} 
    \FM_{V(\sigma,\flip)}=\lim_{L\to\infty}\frac 1{L^{\dim(\BA(\Sigma,\flip))}}\sum_{\bar a\in V_\BZ(\sigma,\flip)} \delta_{\bar a}
\end{equation*}
where $V_\BZ(\sigma,\flip)$ is the set of integral points in $V(\sigma,\flip)$, exists in the space of Radon measures on $V(\sigma,\flip)$. It is non-zero as soon as $\dim(\BA(\Sigma,\flip))=\dim(V(\sigma,\flip))$ and the restriction $\FM_{\BA(\sigma,\flip)}$ of $\FM_{V(\sigma,\flip)}$ to $\BA(\Sigma,\flip)$ is the Radon measure given by
$$\FM_{\BA(\sigma,\flip)}=\lim_{L\to\infty}\frac 1{L^{\dim(\BA(\Sigma,\flip))}}\sum_{\bar a\in \BA_\BZ(\sigma,\flip)} \delta_{\bar a}.$$
Finally, since the sets $\BA(\sigma,\flip)$ with $\sigma\in\CA(\Sigma)$ form a locally finite cover of $\BA(\Sigma,\flip)$ and they intersect along subspaces of strictly lower dimension, we get that also the weak-$*$-limit
    \begin{equation}\label{eq measure simplex arc}
    \FM_{\BA(\Sigma,\flip)}=\lim_{L\to\infty}\frac 1{L^{\dim(\BA(\Sigma,\flip))}}\sum_{\bar a\in\BA_\BZ(\Sigma,\flip)} \delta_{\bar a}
    \end{equation}
exists in the space of Radon measures on $\BA(\Sigma,\flip)$. The limit measure $\FM_{\BA(\Sigma,\flip)}$ is $\Map(\Sigma,\flip)$-invariant because all members of the sequence are, and it is non-trivial because already its restriction $\FM_{\BA(\sigma,\flip)}$ to $\BA(\sigma,\flip)$ is non-trivial for any $\sigma\in\CA(\Sigma)$ with $\dim(\BA(\sigma,\flip))=\dim(\BA(\Sigma,\flip))$. Summing up, we have the following:

\begin{prop}\label{prop existence of thurston like measure on flip-invariant arc complex}
    The limit \eqref{eq measure simplex arc} exists with respect to the weak-$*$-topology on the space of Radon measures on $\BA(\Sigma,\flip)$. Moreover, the limiting measure $\FM_{\BA(\Sigma,\flip)}$ is non-trivial and invariant under $\PMap(\Sigma)$.
\end{prop}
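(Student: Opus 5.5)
The proof proceeds one maximal arc system at a time, then glues the pieces using local finiteness. Fix $\sigma\in\CA(\Sigma)$ and identify $\bar\BA(\sigma)$ with $\BR^\sigma_{\ge0}$. By Lemma \ref{lem epimorphism} and the decomposition \eqref{eq I have a lot of nice cats}, $\bar\BA(\sigma,\flip)$ is the intersection of this quadrant with the linear subspace
$$V(\sigma,\flip)=\{\bar w\in\BR^\sigma : \flip^\BR({\bf b}(\bar w))={\bf b}(\bar w)\}.$$
This subspace is cut out by the integral equations ${\bf b}(\bar w)_\eta={\bf b}(\bar w)_{\flip(\eta)}$. Hence $V_\BZ(\sigma,\flip)=V(\sigma,\flip)\cap\BZ^\sigma$ is a lattice of full rank in $V(\sigma,\flip)$.

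The standard lattice-point counting argument then applies. For a lattice $\Lambda$ in a $d$-dimensional real vector space $V$, the rescaled counting measures $L^{-d}\sum_{\lambda\in\Lambda}\delta_{\lambda/L}$ converge weak-$*$ to $\mathrm{covol}(\Lambda)^{-1}\cdot\Leb_V$. To prove this, test against the indicator of a box adapted to a basis of $\Lambda$, and extend to compactly supported continuous functions by uniform approximation. (I read the sum in \eqref{eq measure simplex arc} in this rescaled sense, as in the construction of the Thurston measure \cite[Theorem 4.16]{book}.) Two cases arise:

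\begin{itemize}
\item If $\dim V(\sigma,\flip)=\dim\BA(\Sigma,\flip)$, the limit is a nonzero multiple of Lebesgue measure on $V(\sigma,\flip)$.
\item If $\dim V(\sigma,\flip)<\dim\BA(\Sigma,\flip)$, the normalisation by the larger exponent forces the limit to be $0$.
\end{itemize}

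Restricting to the open subset $\BA(\sigma,\flip)$ gives the measure $\FM_{\BA(\sigma,\flip)}$. Its boundary inside the quadrant consists of faces of the polyhedral cone, which are Lebesgue-null, so the Portmanteau criterion lets us pass the limit through the restriction.

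To globalise, take a compactly supported continuous $f$ on $\BA(\Sigma,\flip)$. The cover $\{\BA(\sigma,\flip)\}_\sigma$ is locally finite, because the cover of $\BA(\Sigma)$ by the $\BA(\sigma)$ is. So $\supp f$ meets only finitely many pieces, say $\sigma_1,\dots,\sigma_m$. Write $f$ via inclusion–exclusion over these pieces; alternatively, choose a partition of unity subordinate to the closed cells and account for overlaps. Each pairwise intersection $\BA(\sigma_i)\cap\BA(\sigma_j)$ is a common face of strictly lower dimension, so its integral points contribute $o(L^{\dim})$ on compact sets, and the overlap terms vanish in the limit. Consequently $\frac1{L^{\dim}}\sum\delta$ applied to $f$ converges to $\sum_i\int f\,d\FM_{\BA(\sigma_i,\flip)}$, with overlaps counted once. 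This proves existence of the limit \eqref{eq measure simplex arc}.

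For the remaining properties:

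\begin{itemize}
\item \emph{Non-triviality.} By \eqref{eq defi dim flip} there exists $\sigma$ achieving the dimension, for instance one with an arc having both endpoints on each boundary component of $\Sigma_\hyp$. Its piece contributes a nonzero Lebesgue multiple on an open set, so the limit is nonzero.
\item \emph{Invariance.} $\PMap(\Sigma)$ acts by homeomorphisms that send integral arc systems to integral arc systems and preserve the ${\bf b}$-condition. Each approximating measure is therefore invariant, and invariance passes to the weak-$*$ limit.
\end{itemize}

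The main obstacle is the bookkeeping in the gluing step: one must check that the limit is locally finite, which follows from local finiteness of the cover, and that lower-dimensional overlaps are negligible in the normalised count. I would handle the latter with the elementary bound that a lattice in a $k$-dimensional cone has $O(L^k)$ points in a ball of radius $L$.
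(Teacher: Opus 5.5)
Your proposal is correct and follows the paper's own argument. You realize each $\bar\BA(\sigma,\flip)$ as the orthant cut by the rational subspace $V(\sigma,\flip)$ and obtain Lebesgue measure as the limit of rescaled lattice counts, reading the Dirac masses in \eqref{eq measure simplex arc} at $\frac1L\bar a$ as the paper implicitly does. You then glue the pieces using local finiteness of the cover and the negligibility of lower-dimensional overlaps, with invariance inherited from the approximating measures and non-triviality coming from a full-dimensional cone.

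One small precision: by Lemma \ref{lem epimorphism}, $V(\sigma,\flip)$ always has the full dimension, so your two-case dichotomy should be phrased in terms of the cone $V(\sigma,\flip)\cap\BR_{\ge 0}^\sigma$. That is also what your non-triviality step actually uses.
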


Note at this point that above we were careful enough to consider all measures as defined on $\BA(\Sigma,\flip)$ instead of $\bar\BA(\Sigma,\flip)$. The reason is that, since $\bar\BA(\Sigma,\flip)$ is not locally compact, it does not make much sense to take weak-$*$-limits of measures there. On the other hand, the subset $\bar\BA(\sigma,\flip)$ of $\bar\BA(\Sigma,\flip)$ is locally compact for any maximal arc system $\sigma\in\CA(\Sigma)$. Since $\bar\BA(\sigma,\flip)\setminus\BA(\sigma,\flip)$ is just a union of finitely many lower-dimensional subspaces we have the following:

\begin{lem}
    For any $\sigma\in\CA(\Sigma)$ the limit
    $$\FM_{\bar\BA(\sigma,\flip)}=\lim_{L\to\infty}\frac 1{L^{\dim(\bar\BA(\Sigma,\flip))}}\sum_{\bar a\in \bar\BA_\BZ(\sigma,\flip)} \delta_{\bar a}$$
    exists in the space of Radon measures on $\bar\BA(\sigma,\flip)$ and $\FM_{\bar\BA(\sigma,\flip)}(U)=\FM_{\BA(\sigma,\flip)}(U\cap\BA(\sigma,\flip))$ for all $U\subset\bar\BA(\sigma,\flip)$.\qed
\end{lem}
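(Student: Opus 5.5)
The plan is to work entirely inside the fixed quadrant $\bar\BA(\sigma)\simeq\BR_{\ge 0}^\sigma$, where $\bar\BA(\sigma,\flip)=\BR_{\ge 0}^\sigma\cap V(\sigma,\flip)$ is a closed polyhedral cone, hence locally compact. I will first show that the rescaled counting measures converge on the whole closed cone. The rescaling in the sum is understood as the usual one: we sum over $\bar a$ in the integral points of $\bar\BA(\sigma,\flip)$ and push forward by $\bar a\mapsto \bar a/L$, exactly as in \eqref{eq measure simplex arc}. Then I will identify the limit with $\FM_{\BA(\sigma,\flip)}$ on the open part.

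\emph{Step 1: existence.} $V(\sigma,\flip)$ is a rational subspace of $\BR^\sigma$, so $V_\BZ(\sigma,\flip)$ is a lattice of full rank in $V(\sigma,\flip)$. The measures $\frac{1}{L^{d}}\sum_{v\in V_\BZ(\sigma,\flip)}\delta_{v/L}$, with $d=\dim V(\sigma,\flip)$, converge weak-$*$ to a multiple of Lebesgue measure on $V(\sigma,\flip)$. This is the standard Riemann-sum fact, and it is what was already used to define $\FM_{V(\sigma,\flip)}$.

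The closed cone $C=\bar\BA(\sigma,\flip)$ is a closed subset of $V(\sigma,\flip)$. Its topological boundary inside $V(\sigma,\flip)$ is contained in finitely many coordinate hyperplanes $\{w_a=0\}$, so it is Lebesgue-null. By the Portmanteau statement recalled above, the restrictions to $C$ therefore converge. This gives the restriction of $\FM_{V(\sigma,\flip)}$ to $C$ in the case $d=\dim\bar\BA(\Sigma,\flip)$.

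If instead $d<\dim\bar\BA(\Sigma,\flip)$, the normalization $L^{-\dim\bar\BA(\Sigma,\flip)}$ decays faster than $L^{-d}$, so the limit is the zero measure. This case is consistent with the claimed identity because then $\FM_{\BA(\sigma,\flip)}$ is also zero.

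\emph{Step 2: identification.} Write $\bar\BA(\sigma,\flip)\setminus\BA(\sigma,\flip)$ as the set of non-filling weighted arc systems supported in $\sigma$. A weighted system with support in $\sigma$ is filling exactly when its support $\{a: w_a>0\}$ fills. Hence the non-filling locus is the union, over the finitely many non-filling subsets $\tau\subsetneq\sigma$, of the faces $\{w_a=0 \text{ for } a\notin\tau\}$. Each such face is a proper face because $\sigma$ itself fills. This locus is therefore a finite union of lower-dimensional linear pieces and has limit measure zero. The measure is zero even though the counting measures may charge these faces: by the $L^{-d}$ scaling, each face carries only $O(L^{d-1})$ lattice points in any compact set, so its mass tends to $0$ uniformly on compacts.

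For $U\subset\bar\BA(\sigma,\flip)$ measurable, we then have $\FM_{\bar\BA(\sigma,\flip)}(U)=\FM_{\bar\BA(\sigma,\flip)}(U\cap\BA(\sigma,\flip))$. The right-hand side equals $\FM_{\BA(\sigma,\flip)}(U\cap\BA(\sigma,\flip))$, since on the open set $\BA(\sigma,\flip)$ both measures are weak-$*$ limits of the same counting measures. Equivalently, both are restrictions of $\FM_{V(\sigma,\flip)}$.

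\emph{Main obstacle.} The only delicate point is the bookkeeping of local compactness. One needs that $\BA(\sigma,\flip)$ is open in $\bar\BA(\sigma,\flip)$, so that testing against compactly supported functions on the open set is compatible with testing on the closed cone. One also needs that no mass escapes to the removed faces. Both follow from the polyhedral description, since the faces are closed, finitely many, and Lebesgue-null.
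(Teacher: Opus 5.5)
Your proposal is correct and follows the same route as the paper. The paper justifies the lemma only by noting that the closed cone $\bar\BA(\sigma,\flip)$ is locally compact and that $\bar\BA(\sigma,\flip)\setminus\BA(\sigma,\flip)$ is a finite union of lower-dimensional subspaces; you have written out the Riemann-sum and Portmanteau steps behind that.

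One point to tighten. A face $\{w_a=0 \text{ for } a\notin\tau\}$ with $\tau\subsetneq\sigma$ is lower-dimensional \emph{inside} $V(\sigma,\flip)$ only if no coordinate $w_a$ vanishes identically on $V(\sigma,\flip)$. This does not follow from $\tau\neq\sigma$ alone. It does hold here: a direct check on the hexagons cut out by $\sigma$ shows that no $w_a$ can be written as $\mu\circ{\bf b}$ with $\mu$ anti-invariant under $\flip$. The paper uses this fact tacitly as well.
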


To conclude this discussion, note that via the decomposition \eqref{eq I have a lot of nice cats} of $\BA(\Sigma,\flip)$ we get a decomposition of the measure $\FM_{\BA(\Sigma,\flip)}$ as the product measure of $\FM_{\BA(\Sigma_{\hyp},\flip_{\hyp})}$ and Lebesgue measure:

\begin{lem} \label{lem: measure desintegration} 
    Denote by $\flip_{\hyp}$ the restriction of $\flip$ to $\D\Sigma_{\hyp}$ and as in \eqref{eq I have a lot of nice cats} consider the decomposition $\bar\BA(\Sigma,\flip)=\bar\BA(\Sigma_{\hyp},\flip_{\hyp})\times \BR_{\ge 0}^{|\pi_0(\Sigma_{\ann})|}$. Then 
    \[  \FM_{\BA(\Sigma,\flip)}=\FM_{\BA(\Sigma_{\hyp},\flip_{\hyp})}\otimes\Leb^{|\pi_0(\Sigma_{\ann})|}  \]
    where $\Leb^n$ stands for the standard Lebesgue measure on $\BR^n$.
\end{lem}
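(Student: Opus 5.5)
The plan is to compare the counting measures that define both sides, using the product structure of the integral points, and then pass to the limit.

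\emph{Splitting the integral points.} Under \eqref{eq I have a lot of nice cats}, an arc system in $\Sigma$ is a pair: an arc system in $\Sigma_{\hyp}$, and a weight on the unique arc of each annular component.

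\begin{itemize}
\item The flip condition on annular components forces both boundary values of an annulus to be equal. This holds automatically, since the single arc crosses from one boundary to the other.
\item For $\bar a=(\bar a_{\hyp},t)$, the condition $\flip^\BR({\bf b}(\bar a))={\bf b}(\bar a)$ therefore reduces to $\flip_{\hyp}^\BR({\bf b}(\bar a_{\hyp}))={\bf b}(\bar a_{\hyp})$. Here we use that $\flip$ preserves $\D\Sigma_{\hyp}$ and $\D\Sigma_{\ann}$.
\item Filling is checked componentwise. On an annulus, filling just means the weight $t_i$ is positive.
\end{itemize}

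Hence we get a bijection
$$\BA_\BZ(\Sigma,\flip)=\BA_\BZ(\Sigma_{\hyp},\flip_{\hyp})\times\BZ_{>0}^{|\pi_0(\Sigma_{\ann})|}.$$

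\emph{Matching dimensions.} By \eqref{eq defi dim flip} applied to $\Sigma$ and to $\Sigma_{\hyp}$,
$$\dim\BA(\Sigma,\flip)=\dim\BA(\Sigma_{\hyp},\flip_{\hyp})+|\pi_0(\Sigma_{\ann})|.$$

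\emph{Factoring the measures.} Rather than literally rescaling by $L$, we read \eqref{eq measure simplex arc} as a sum over the lattice $\frac1L\BA_\BZ$ weighted by $L^{-\dim}$. Then the approximating measure for $\Sigma$ is exactly the product of two measures:
\begin{itemize}
\item the approximating measure for $(\Sigma_{\hyp},\flip_{\hyp})$;
\item the measure $L^{-n}\sum_{k\in\BZ_{>0}^n}\delta_{k/L}$, with $n=|\pi_0(\Sigma_{\ann})|$.
\end{itemize}
The first factor converges weak-$*$ to $\FM_{\BA(\Sigma_{\hyp},\flip_{\hyp})}$ by Proposition \ref{prop existence of thurston like measure on flip-invariant arc complex}. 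The second is a Riemann-sum measure and converges weak-$*$ to $\Leb^n$ on $\BR_{>0}^n$.

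To conclude, we use that weak-$*$ convergence of two sequences of Radon measures on locally compact spaces implies weak-$*$ convergence of their products. It suffices to test on products $f\otimes g$ of compactly supported continuous functions. Their linear span is dense, in the uniform norm on compacts, in compactly supported continuous functions on the product (Stone--Weierstrass). Uniform local bounds on the masses of the approximating measures let us pass from these generators to arbitrary test functions. By uniqueness of limits, $\FM_{\BA(\Sigma,\flip)}$ equals the product measure.

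\emph{Main obstacle.} The only delicate point is this last density-plus-uniform-boundedness step. The needed local mass bounds come from local finiteness of the cover by the quadrants $\BA(\sigma,\flip)$, with finitely many lattice points per compact set scaling like $L^{\dim}$. A secondary check is the bookkeeping that an annular boundary component is never $\flip$-paired with a hyperbolic one. This follows from the hypothesis that $\flip$ exchanges the two boundary components of each annulus, together with $\flip$ being an involution.
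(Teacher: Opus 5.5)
Your argument is correct and is exactly the reasoning the paper relies on. The paper gives no separate proof: it reads the lemma directly off the product decomposition \eqref{eq I have a lot of nice cats}, under which the integral points and the normalizing exponent split, so the approximating counting measures factor. You supply the details the paper omits: the flip and filling conditions on annuli hold automatically, the dimensions match, and factorwise weak-$*$ convergence passes to the product (the uniform local mass bounds you need there also follow directly from the weak-$*$ convergence of each factor).
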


Along the same lines, note that as soon as the flip map does not exchange any boundary component of $\Sigma_{\hyp}$ we can identify $\bar\BA(\sigma,\flip)=\bar\BA(\sigma)\equiv \BR_{\ge0}^{3|\chi(\Sigma)|+|\pi_0(\Sigma_{\ann})|}$ for any filling arc system $\sigma\in\CA(\Sigma)$. After this identification we have
\begin{equation} \label{eq: measure=lebesgue}
\FM_{\BA(\sigma)}=\Leb^{3|\chi(\Sigma)|+|\pi_0(\Sigma_{\ann})|}.
\end{equation}

\subsection{Measured laminations and train-tracks} \label{sec:ML}

We discuss here the few facts we will need about measures laminations and train tracks. The interested reader can find more on these topics in \cite{Penner-Harer, Hatcher, book}.
\medskip

If $X$ is a closed hyperbolic surface then a \emph{measured lamination} on $X$ is a closed subset $\lambda$ of $X$, the {\em support }of the measure lamination, foliated by disjoint simple complete geodesics  called \emph{leaves} together with a transverse measure. Here, a transverse measure is family $(\lambda_t)_{t\in\mathfrak{T}}$ of Radon measures indexed by the set $\mathfrak{T}$ of smooth segments in general position with respect to $\lambda$ and with the following properties. First, the measure $\lambda_t$ is positive if and only if $\lambda\cap t\neq \emptyset$. Second, if $t'\subset t$, then $\lambda_{t'}=\lambda_t\vert_{ t'}$. Finally and most importantly, if $t$ and $t'$ are homotopic to each other relative to $\lambda$, then $\lambda_{t'}$ is the push forward of $\lambda_{t}$ under the final map of this homotopy. We denote by $\CM\CL(X)$ the space of measured laminations endowed by the topology induced by the weak-$*$ topology on the spaces of Radon measures on the individual arcs. 

\subsubsection{Laminations carried by train tracks}\label{sec:Lam carried by TT} Recall that a \textit{train-track} on $X$ is a smoothly embedded 1-dimensional complex whose complement contains neither disks, once punctured disks, monogons, or bigons--see \cite{Penner-Harer} for details. 

\begin{figure}[ht!]
    \centering
    \includegraphics[width=0.6\linewidth]{ 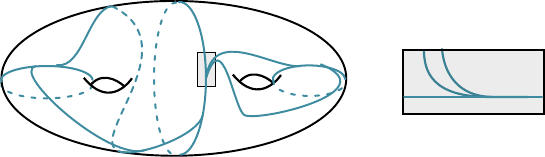}
    \caption{Example of train-track on a surface}
    \label{fig:TT}
\end{figure}

Train-tracks are a very important tool to study measured laminations. A measured lamination is {\em carried} by a train track $\tau$ if its support can be smoothly isotoped to $\tau$. Let us briefly recall the structure of the set $\CM\CL(\tau)$ of all measured laminations carried by $\tau$.

First note that we can decompose the set of half-edges of $\tau$ adjacent to a vertex $v$ into two disjoint sets with the property that two half-edges belong to the same one if and only if they are tangent at $v$. We can thus speak of incoming and outgoing edges $v$. Taking the difference between the sum of the entries corresponding to the incoming half edges and that of the entries corresponding to the outgoing half edges, we get a linear map $\BR^{E(\tau)}\to\BR$ where $E(\tau)$ is the of edges of $\tau$. We say that $w\in\BR^{E(\tau)}$ is a solution of the {\em switch equations} if it belongs to the kernel of all those maps. Denote by 
$$W(\tau)=\{w\in \BR_{\ge 0}^{E(\tau)}\text{ solution of the switch equations of }\tau\}$$ 
the quadrant of non-negative solutions of the switch equations. Given now a point $w$ in $W(\tau)$ take for each edge $e$ of $\tau$ a horizontally foliated Euclidean rectangle of height $w_e$ and glue them (via partially defined isometries) following the pattern determined by the train track. In this way we get a partial foliation of $X$ endowed with a transverse measure. To obtain a lamination, pull first tight the non-singular horizontal leaves of this foliation, and take then the closure of the so obtained subset of $X$. The push forward of the transverse measure on the foliation is a transverse measure on the lamination. See Figure \ref{fig:TTtoML} for an illustration of this process.

\begin{figure}[ht!]
    \centering
    \includegraphics[width=0.4\linewidth]{ 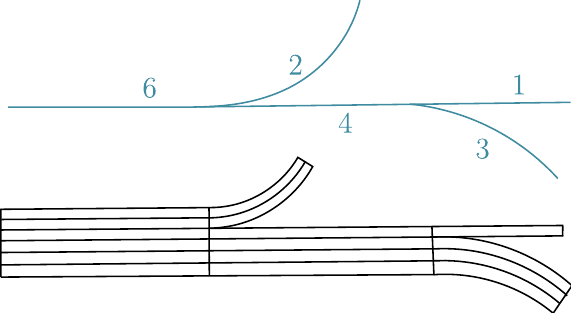}
    \caption{Getting a measured lamination from a solution of the switch equations.}
    \label{fig:TTtoML}
\end{figure}

Denote by $\Phi_\tau(w)\in\CM\CL(X)$ the measured lamination associated to $w\in W(\tau)$ and note that, very much by construction, $\Phi_\tau(w)$ is carried by $\tau$. Indeed, the map $\Phi_\tau:W(\tau)\to\CM\CL(X)$ is a homeomorphism onto the set $\CM\CL(\tau)\subset\CM\CL(X)$ of measured laminations carried by $\tau$. If $\tau$ is maximal, that is if it is not properly contained in any other train track, then $W(\tau)$ has dimension $6g-6$ where $g$ is the genus of the closed surface $X$. Moreover, $\CM\CL(\tau)$ is the closure of its interior in $\CM\CL(X)$. Via this construction one obtains a mapping class group invariant piecewise linear structure on $\CM\CL(X)$.

\begin{sat}[Thurston] \label{thm:thurston ML}Let $X$ be a closed surface of genus $g\ge 2$. The space $\CM\CL(X)$ is homeomorphic to $\BR^{6g-6}$ and for every $\lambda\in\CM\CL(X)$, there is a maximal train track $\tau_\lambda$ such that $\lambda$ belongs to the interior of $\CM\CL(\tau_\lambda)$. Moreover, there are finitely many maximal train tracks $\tau_1....\tau_n$ with the following properties: $\CM\CL(X)=\bigcup\limits_i \CM\CL(\tau_i)$ and $\Phi_i^{-1}(\CM\CL(\tau_i)\cap\CM\CL(\tau_j))$ is a lower dimensional face of the cone $W(\tau_i)$ for all $i\neq j$.  \hfill $\blacksquare$
\end{sat}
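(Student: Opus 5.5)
The plan is to build everything on Thurston's standard train-track machinery, which the paper cites via \cite{Penner-Harer, book}. There are three things to prove: the homeomorphism $\CM\CL(X)\simeq\BR^{6g-6}$, the existence for each $\lambda$ of a maximal train track $\tau_\lambda$ with $\lambda$ in the interior of $\CM\CL(\tau_\lambda)$, and the finite cover by maximal tracks meeting along faces.

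\textbf{Homeomorphism.} Fix a pants decomposition $P=\{\gamma_1,\dots,\gamma_{3g-3}\}$ of $X$. Every measured lamination is determined by its Dehn--Thurston coordinates relative to $P$: the intersection numbers $m_i=\iota(\lambda,\gamma_i)$ and the twist parameters $t_i$.
\begin{itemize}
\item I would first establish these coordinates on the dense set of weighted simple multicurves. There they are given by the classical combinatorial normal forms in each pair of pants, using standard arcs.
\item The resulting map to $\BR^{6g-6}$, after identifying each pair $(m_i,t_i)$ with the half-plane $\{m_i>0\}\cup\{m_i=0\}\times\BR$ glued to a plane, is injective and positively homogeneous.
\item It is continuous in the weak-$*$ topology, since intersection numbers with finitely many test curves are continuous.
\item It extends to $\CM\CL(X)$ by density and homogeneity, and a properness argument makes it a homeomorphism.
\end{itemize}

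\textbf{A maximal track containing $\lambda$ in its interior.} Given $\lambda$, I would collapse a thin train neighbourhood of its support to obtain a track $\tau$ carrying $\lambda$ with all weights positive. I would then add branches in the complementary regions until the track is maximal, meaning all complementary regions are trigons. The new branches receive weight $0$, so at this stage $\lambda$ lies on a face of $W(\tau)$, not in its interior. To move $\lambda$ into the interior, I would choose among the finitely many ways of completing $\tau$: for each added branch, pick which side of the adjacent switch it attaches to. One completion places $\lambda$ in the interior of the union of the cones $W$ of two such completions, and I would then replace this by a single maximal track obtained by diagonal extension. This is the step where Penner--Harer's ``complete train track'' argument is needed, and I expect it to be the main technical obstacle. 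In particular the care lies in making sure that positivity of weights on the original branches survives.

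\textbf{Finite cover and faces.} The mapping class group acts on maximal tracks with finitely many orbits. I would fix the pants decomposition $P$ and use the standard finite family of maximal tracks adapted to $P$ (Penner--Harer's standard models). In each pair of pants there are finitely many models, combined with the two twist directions about each $\gamma_i$. In Dehn--Thurston coordinates, $\CM\CL(\tau_j)$ for these models is exactly a closed orthant-type cone: each $m_i$ satisfies the appropriate triangle-type inequalities, and each $t_i$ is restricted to $t_i\ge0$ or $t_i\le0$. These cones cover $\BR^{6g-6}$. Two distinct models differ in at least one defining sign or inequality, so the intersection of their cones lies in a hyperplane. Finally, $\Phi_i$ is linear from $W(\tau_i)$ to these coordinates, so this intersection pulls back to a lower-dimensional face of $W(\tau_i)$, as claimed.
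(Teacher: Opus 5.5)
The paper does not prove this statement; it quotes it from Thurston via Penner--Harer. Your first and third parts follow that standard route: Dehn--Thurston coordinates read off from the finitely many Penner--Harer standard models for a pants decomposition. They are acceptable as a sketch. The genuine gap is the middle claim, and your proposed repair cannot work.

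Let $\tau$ be any diagonal extension of the track $\tau_0$ obtained by collapsing the support of $\lambda$. Then $\lambda$ is carried by $\tau_0\subset\tau$, so, by injectivity of $\Phi_\tau$, its weight vector in $W(\tau)$ vanishes on every added branch. For recurrent $\tau$ this is a proper face of the $(6g-6)$-dimensional cone $W(\tau)$. By invariance of domain, $\Phi_\tau$ sends the relative interior of $W(\tau)$ onto the interior of $\CM\CL(\tau)$, and sends points of proper faces to non-interior points. Hence $\lambda\in\mathrm{int}\,\CM\CL(\tau)$ holds exactly when $\lambda$ puts positive weight on every branch of $\tau$. No choice of completion achieves this, and neither does ``replacing the union of two cones by a single diagonal extension''; both fail for the very reason you noted one sentence earlier. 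Diagonal extensions only give Thurston's charts, in which the union of the cones of all extensions is a neighbourhood of $\lambda$. The statement instead asks for a single maximal track that fully carries $\lambda$.

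The missing idea is to zip rather than extend. In each complementary region $Q$ of $\tau_0$ that is not a trigon, choose a maximal family of disjoint arcs with endpoints in the interiors of smooth sides of $Q$. These arcs cut $Q$ into discs whose old cusps plus arc-sides number three. Thicken each arc to a strip, extend the ties of the two branches it joins across the strip, and collapse. The strands of $\lambda$ on the two sides become parallel in a new branch of weight $w_{b_1}+w_{b_2}>0$. The two sides of the strip parallel to the arc collapse to two new cusps, one in each adjacent piece. No zero-weight branch appears and every complementary region becomes a trigon, so you obtain a maximal track $\tau_\lambda$ fully carrying $\lambda$. For a simple closed curve $\gamma$, this amounts to zipping $\gamma$ to itself along an arc system cutting $X\setminus\gamma$ into hexagons.

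Two smaller corrections elsewhere:
\begin{itemize}
\item The $(m_i,t_i)$-parameter space is the closed half-plane $\{m_i\ge 0\}$ with $(0,t)\sim(0,-t)$, not a half-plane ``glued to a plane''. The identification is needed because twists $\pm T$ with $m_i\to 0$ both converge to $T\gamma_i$.
\item Because of this fold, the overlap of the two twist-sign models at $\gamma_i$ contains both $\{t_i=0\}$ and $\{m_i=0\}$. It is therefore a union of two facets of $W(\tau_i)$, not a subset of a single hyperplane or a single face. Your argument really shows that $\Phi_i^{-1}(\CM\CL(\tau_i)\cap\CM\CL(\tau_j))$ lies in a finite union of proper faces, which is all the paper uses (overlaps of zero Thurston measure).
\end{itemize}
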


\subsubsection{Measure on the space of measured laminations} \label{sec:Thurston Measure}
The space $\CM\CL(X)$ is endowed with a natural Radon measure, the {\em Thurston measure} $\FM_{\Thu}$. If $\tau$ is a maximal train track then $\FM_{\Thu}$ corresponds under the identifications $\CM\CL(\tau)\simeq W(\tau)$ to Lebesgue measure on $W(\tau)$. Intrinsically, the Thurston measure can be described as follows: for $U\subset\CM\CL(X)$ piece-wise linear (or more generally, with $\FM_{\Thu}(\D U)=0$) we have
\begin{equation}\label{eq def thurston measure}
\FM_{\Thu}(U)=\lim_{L\to\infty}\frac {\vert L\cdot U\cap\CM\CL_\BZ(X)\vert}{L^{6g-6}}
\end{equation}
where $\CM\CL_\BZ(X)$ stands for those measured laminations whose support is a submanifold of $X$ and which give a weight in $\BZ_{\ge 0}$ to every component. We refer to elements in $\CM\CL_\BZ(X)$ as simple integral weighted multicurves. See \cite[Chapter 4]{book} for the proof of the existence of the Thurston measure, and for some of its properties, such as the fact that it is mapping class group invariant. One can also refers to \cite{MT2} for more details about the different definitions and the link between them.

\subsubsection{Adapted train tracks}\label{sec: adapted TT}
Suppose now that $\Gamma\subset X$ is a simple multicurve and let $\CN(\Gamma)$ be a small open regular neighborhood of $\Gamma$. In particular, $\CN(\Gamma)$ is the union of open annuli. Consider  $\alpha\in\CA(X\setminus\CN(\Gamma))$ a maximal arc system in $X\setminus\CN(\Gamma)$. In \cite[Chapter 4]{book} the authors explained how to associate to the pair $(\Gamma,\alpha)$ a specific train track $\tau_{\alpha}\subset X$: the {\em $\Gamma$-adapted train track associated to $\alpha$}. Let us recall now this construction.

We start by fixing a collection of points $p_1,\dots,p_{2\vert\Gamma\vert}$, one in each connected components of the boundary $\D\CN(\Gamma)$ of the regular neighborhood $\CN(\Gamma)$. Then the train track $\tau_\alpha$ is such that:
\begin{itemize}
\item[(i)] the set $\{p_1,\dots,p_{2\vert\Gamma\vert}\}$ is the set of switches of $\tau_{\alpha}$,
\item[(ii)] the intersection of $\tau_{\alpha}$ with each one of the components of $\CN(\Gamma)$ is as in Figure \ref{ch4-fig4}, and
\item[(iii)] the intersection of $\tau_{\alpha}$ with each component of $X\setminus\CN(\Gamma)$ is a collection of simple arcs representing the arc system $\alpha$.
\end{itemize}
\begin{figure}[h!]
\begin{tikzpicture}
\path (0,0) node {\includegraphics[width=4cm]{ 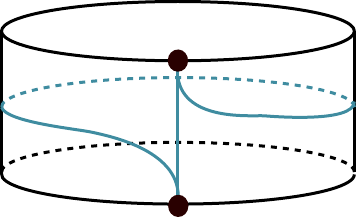}} ;
\draw(0,8mm) node {$p_i$}
(0mm,-14mm) node {$p_{i+1}$}; 
\end{tikzpicture} 
\caption{Local model for adapted train tracks in an annulus}
\label{ch4-fig4}
\end{figure}
See Figure \ref{ch4-fig5} for an example of a train track adapted to a separating curve in a surface of genus $2$.
\begin{figure}[h!]
\includegraphics[width=7cm]{  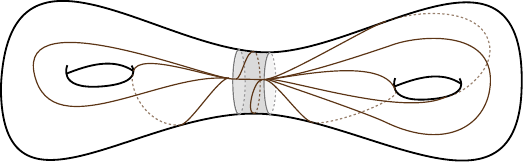}
\caption{Example of a train track adapted to $\Gamma$, a.k.a. $\Gamma$--adapted train track, where $\Gamma$ is the core curve of the shaded annulus. If we remove this annulus, then what it left of the train-track is a collection of arcs representing the given arc system.}
\label{ch4-fig5}
\end{figure}
We record here some important fact about adapted train tracks.
\begin{fact} \label{fact:Adapted TT are incompatible}
    Let $\alpha,\beta$ be distinct maximal arc systems in $X\setminus\CN(\Gamma)$. The train track $\CM\CL(\tau_\alpha)$ is maximal. The intersection $\CM\CL(\tau_\alpha)\cap\CM\CL(\tau_\beta)$ occurs along faces of lower dimension, in particular it has vanishing Thurston measure. 
\end{fact}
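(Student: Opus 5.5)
We have to prove Fact \ref{fact:Adapted TT are incompatible}: for a simple multicurve $\Gamma$ and distinct maximal arc systems $\alpha\neq\beta$ in $X\setminus\CN(\Gamma)$, the train track $\tau_\alpha$ is maximal, and $\CM\CL(\tau_\alpha)\cap\CM\CL(\tau_\beta)$ is a union of lower dimensional faces, hence Thurston-null. I would prove the two claims separately.

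\textbf{Maximality.} The plan is to count dimensions and check the complementary regions. Set $S=X\setminus\CN(\Gamma)$. Since $\alpha$ is maximal, it cuts $S$ into hexagons, and by Fact \ref{fact: number of arcs} it has $3|\chi(S)|=6g-6$ arcs. The complementary regions of $\tau_\alpha$ in $X$ are these hexagons, with their boundary sides in $\D\CN(\Gamma)$ slightly modified by the annular pieces of Figure \ref{ch4-fig4}. Each hexagon then becomes a trigon with smooth cusps at its three boundary segments. Every region is therefore a trigon, which is the standard criterion for a train track to be maximal (complete). As a cross-check, $W(\tau_\alpha)$ is parametrized by the $6g-6$ arc weights together with the $|\Gamma|$ twist/weight parameters in the annuli, subject to the switch equations at the $2|\Gamma|$ switches. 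The resulting count should come out to $6g-6$, matching Theorem \ref{thm:thurston ML}.

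\textbf{Incompatibility.} Take $\lambda$ in the interior of both $\CM\CL(\tau_\alpha)$ and $\CM\CL(\tau_\beta)$, that is, with all weights positive. Then $\lambda$ is transverse to $\Gamma$ with $\iota(\lambda,\gamma)>0$ for every $\gamma\in\Gamma$. Intersecting the support of $\lambda$ with $S$, after pulling tight rel boundary, gives a weighted arc system whose support is exactly $\alpha$: positivity on every edge means every arc of $\alpha$ is used, and the annular pieces contribute no arcs in $S$. By the same reasoning the support is also exactly $\beta$. The weighted arc system $\lambda\cap S$ is an invariant of the isotopy class of $\lambda$ (it is the standard orthogeodesic/arc decomposition), so $\alpha=\beta$, a contradiction.

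It follows that the interiors of the two cones are disjoint. Both $\CM\CL(\tau_\alpha)$ and $\CM\CL(\tau_\beta)$ are full-dimensional cones in the PL structure. Hence their intersection lies in the boundary of each cone, which is a finite union of lower dimensional faces and so has Lebesgue, i.e.\ Thurston, measure zero.

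\textbf{Main obstacle.} The step I expect to be hardest is justifying that the arc system obtained from $\lambda\cap S$ is well defined, and that it equals the arc system of the carrying track. The risk is that a lamination carried with full support by $\tau_\alpha$ might be isotopic to one carried by $\tau_\beta$ with the arcs rearranged by twisting in $\CN(\Gamma)$. I would address this by realizing $\lambda$ geodesically and $\Gamma$ geodesically, and using that the arcs of $\lambda\cap S$ are then determined up to homotopy rel $\D S$. The twisting around $\Gamma$ happens only inside the annuli and does not change the arc classes in $S$. Minimal position of $\lambda$ and $\Gamma$ guarantees that no bigons, and so no spurious arcs, appear.
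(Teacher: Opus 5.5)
The paper does not prove this Fact; it quotes it from \cite[Chapter 4]{book}. Your strategy is the natural one: the trigon criterion for maximality, then the restriction map $\pi$ to $X\setminus\CN(\Gamma)$ to separate the cones. However, one claim in it is false as stated.

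\textbf{Full-dimensionality.} Maximality of $\tau_\alpha$ does not make $\CM\CL(\tau_\alpha)$ full-dimensional. That also needs recurrence, i.e.\ a solution of the switch equations with all weights positive, and adapted tracks need not be recurrent. Take a pair-of-pants component of $X\setminus\CN(\Gamma)$ with boundary curves $c,c',d$, where $c,c'$ bound the same annulus of $\CN(\Gamma)$ with core $\gamma$. Let $\alpha$ contain there the loop-arc at $c$ and the arcs $cc'$ and $cd$. Then $b_c=x_{cc'}+2x_{cc}+x_{cd}$ and $b_{c'}=x_{cc'}$, so the switch equations $b_c=s_\gamma+t_\gamma=b_{c'}$ force $x_{cc}=x_{cd}=0$. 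This is the phenomenon behind $\BA(\alpha_2,\flip_\Sigma)$ being one-dimensional in the Appendix. No lamination then has all $\tau_\alpha$-weights positive, and $\CM\CL(\tau_\alpha)$ is lower-dimensional although $\tau_\alpha$ is maximal. The background sentence giving $\dim W(\tau)=6g-6$ for maximal $\tau$ tacitly assumes recurrence.

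The Fact survives, since such a cone is already Thurston-null. But your step ``interiors disjoint and both cones full-dimensional, hence the intersection lies in both boundaries'' is valid only when both tracks are recurrent, so you must split off this case. Your dimension cross-check computes the dimension of the space of real solutions, not of $W(\tau_\alpha)$. It also miscounts: there are $2|\Gamma|$ annular branches (weights $s_\gamma,t_\gamma$), not $|\Gamma|$. With the $2|\Gamma|$ switch equations independent by Lemma \ref{lem epimorphism}, one gets $(6g-6)+2|\Gamma|-2|\Gamma|=6g-6$, whereas your count gives $6g-6-|\Gamma|$.

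\textbf{The restriction step.} You flag the right crux, but minimal position of geodesic representatives is automatic and is not what is needed. What you need is that the $\tau_\alpha$-carried representative of $\lambda$ is in minimal position with $\Gamma$. This holds because $\tau_\alpha$ meets each component of $\Gamma$ exactly twice (once on the straight branch, once on the twisty one) and $X\setminus(\tau_\alpha\cup\Gamma)$ has no bigons, as in the proof of Lemma \ref{lem intersection given by linear form}. Consequently, in the chart $\CM\CL(\tau_\alpha)\simeq W(\tau_\alpha)$, the map $\pi$ is the projection $(\bar x,\bar s,\bar t)\mapsto\bar x\in\bar\BA(\alpha)$.

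This also lets you avoid interiors altogether. The map $\pi$ sends $\CM\CL(\tau_\alpha)\cap\CM\CL(\tau_\beta)$ into $\bar\BA(\alpha)\cap\bar\BA(\beta)$, the weighted arc systems supported on $\alpha\cap\beta$. So in the $\tau_\alpha$-chart the intersection lies in the face $\{x_a=0\}$ for some $a\in\alpha\setminus\beta$, which is nonempty because both systems are maximal. That face is lower-dimensional when $\tau_\alpha$ is recurrent, and otherwise $\CM\CL(\tau_\alpha)$ is itself null.

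\textbf{Trigons.} In the trigon count, do not forget the pieces inside $\CN(\Gamma)$. The two branches cut each annulus into two triangles, each with a single cusp, located at the switch on the far side. Sliding the arc endpoints on a boundary curve $c$ into its switch $p$ collapses all but one boundary segment of $c$ to cusps at $p$. Along the remaining segment $c\setminus\{p\}$, a hexagon is glued to an annular triangle, which supplies the cusp. Hence each boundary side of each hexagon yields exactly one cusp, and the complementary regions are $4g-4$ trigons.
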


\section{The frequency \texorpdfstring{$\mathfrak{c}_g(\gamma_0)$}{c_X}}\label{sec frequency}
We are interested in non-simple curves in closed surfaces. The case of genus 2 will ask a specific treatment, we will keep track of the needed modifications in remarks along the document, for a first lecture we suggest to reader to skip those technicalities and focus on the case of higher genus. 

In this section let $X$ be a closed orientable genus $g\ge 3$ hyperbolic surface, let $\gamma_0$ be a multicurve in $X$, and denote by $\Sigma\subset X$ be the smallest $\pi_1$-injective subsurface containing $\gamma_0$ with the property that no annular component of $\Sigma$ is adjacent to an annular component of $X\setminus\Sigma$. The surface $\Sigma$ is unique up to isotopy, see an example in figure \ref{fig:exampleSigma}.  

\begin{figure}[!h]
\centering
\begin{tikzpicture}
\path (0,0) node {\includegraphics[width=100mm]{ 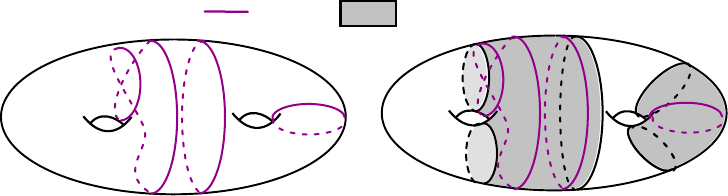}} ;
\draw
(9mm,11.5mm) node {$\Sigma$} 
(-12mm,11.5mm) node {$\gamma_0$};
\end{tikzpicture} 
\caption{Subsurface associated to a multicurve}
    \label{fig:exampleSigma}
\end{figure}

Note that $\gamma_0$ fills $\Sigma$ and that both $\Sigma$ and $X\setminus\Sigma$ are allowed to have annular components. Letting $\CN(\Sigma)$ be an open regular neighborhood of $\Sigma$, we will denote by
\begin{equation}\label{eq:defZ}
    Z=(X\setminus\CN(\Sigma))_{\hyp}
\end{equation}
the union of all the hyperbolic components of $X\setminus\CN(\Sigma)$. Note that $\CN(\Gamma):=X\setminus(\Sigma_{\hyp}\cup Z)$ is a regular neighborhood of the multicurve
\begin{equation}\label{eq:defGamma}
    \Gamma=\text{multicurve consisting of those simple curves in }X\text{ homotopic into }\D\Sigma.
\end{equation}
Note that if $\gamma_0$ has an isolated simple components $\gamma$ then $\Sigma$ will contain annular components and $\gamma$ will also be a part of $\Gamma$. See Figure \ref{fig:full description} for a full illustration of the decomposition of $\Sigma$ induced by $\gamma_0$ we just described.
It will be key that $\Sigma,Z$ and $\Gamma$ are uniquely determined by $\gamma_0$. In particular they are invariant under the stabilizer $\Stab_{\Map(X)}(\gamma_0)$ in the mapping class group of $X$.

\begin{figure}[!h]
\centering
\begin{tikzpicture}
\path (0,0) node {\includegraphics[width=100mm]{  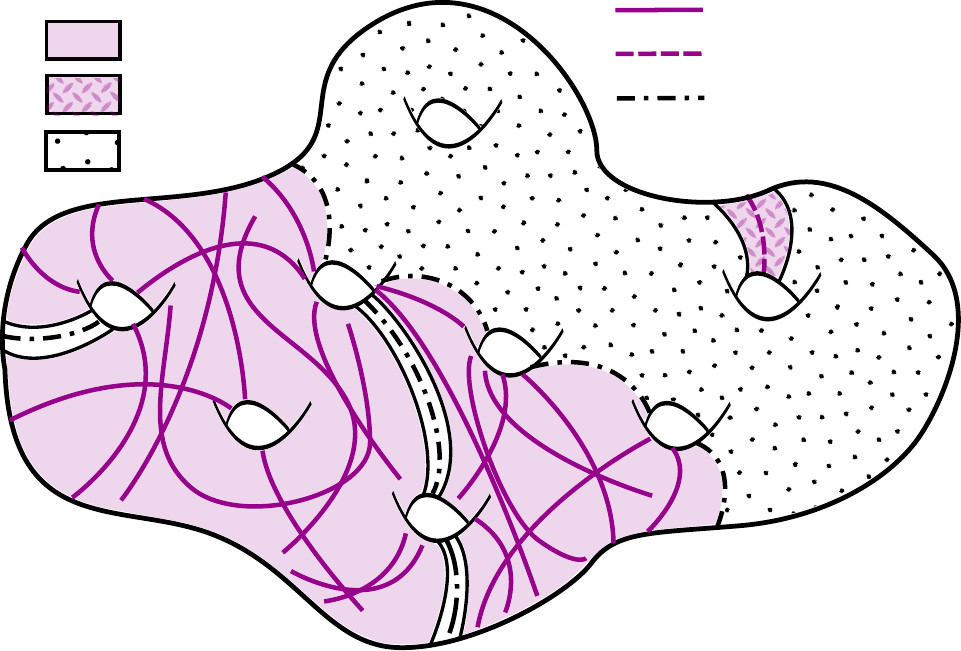}} ;
\draw
(29mm,23mm) node {$\Gamma \setminus \gamma_0$} 
(31mm,28mm) node {$\Gamma\cap\gamma_0^\ann$}
(28mm,33mm) node {$\gamma_0$}
(-32mm,29mm) node {$\Sigma_{\hyp}$}
(-32mm,23.5mm) node {$\Sigma_{\ann}$}
(-35mm,18mm) node {$Z$};
\end{tikzpicture} 
\caption{A non-filling curve $\gamma_0$, the associated surfaces $\Sigma$ and $Z$, the corresponding multicurve $\Gamma$ with $\gamma_0^\ann=\gamma_0\cap\Sigma_\ann$}\label{fig:full description}
\end{figure}

\subsection{Counting curves}
Recall that the mapping class group $\Map(X)$ acts on the set of multicurves in $X$. Two multicurves are {\em of the same type} if they belong to the same mapping class group orbit. In other words, $\Map(X)\cdot\gamma_0$ is the set of multicurves of type $\gamma_0$. 13

The starting point of our discussion is the following counting results for the number of multicurves $\gamma\in\Map(X)\cdot\gamma_0$ of length $\ell_X(\gamma)\le L$. The length of a curve is the hyperbolic length of its geodesic representative in $X$ and the length function extends to multicurves by linearity.

\begin{sat}[Mirzakhani \cite{Maryam simple, Maryam general curves}, Erlandsson-Souto \cite{book}]\label{thm counting}
For every $g\ge 2$ there is a positive constant $\FB_g$ such that for every genus $g$ hyperbolic surface $X$  and every multicurve $\gamma_0$ in $X$ there are positive constants $\FC(X)$ and $\FC_g(\gamma_0)$ with
$$\lim_{L\to\infty}\frac 1{L^{6g-6}}\vert\{\gamma\in\Map(X)\cdot\gamma_0,\ \ell_X(\gamma)\le L\}\vert=\frac{\FC_g(\gamma_0)\cdot\FC(X)}{\FB_g}.$$
Note that $\FC_g(\gamma_0)$ depends only on $\gamma_0$ and the genus of $X$. \hfill $\blacksquare$
\end{sat}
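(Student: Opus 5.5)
The plan is to deduce the statement from the ergodic-theoretic counting argument of \cite{book}, in the form that also covers non-filling multicurves. First treat the case where $\gamma_0$ is filling. Consider the set of multicurves $\Map(X)\cdot\gamma_0$ and, for each $L$, the counting measures
$$\nu_L=\frac 1{L^{6g-6}}\sum_{\gamma\in\Map(X)\cdot\gamma_0}\delta_{\frac 1L\gamma}$$
on the space of geodesic currents $\mathcal C(X)$. The first step is precompactness: any curve $\gamma$ of type $\gamma_0$ with $\ell_X(\gamma)\le L$ lies in a compact set of currents after rescaling. The number of such curves is at most a polynomial of degree $6g-6$ in $L$; this follows by comparing with simple multicurves via $\iota(\gamma,\eta)$ for a fixed filling simple multicurve $\eta$ and using finiteness of $\Map(X)$-orbits of bounded-complexity configurations. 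The second step is that every limit $\nu$ of a subsequence is $\Map(X)$-invariant and supported on $\CM\CL(X)$, since $\iota(\gamma,\gamma)/L^2\to0$. The third step is absolute continuity with respect to $\FM_{\Thu}$, which I would obtain by bounding $\nu(U)$ above by a constant times the number of integral points in cones over train-track charts (Theorem \ref{thm:thurston ML}).

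The key input is then the Lindenstrauss--Mirzakhani/Hamenst\"adt classification of locally finite $\Map(X)$-invariant measures on $\CM\CL(X)$ that are absolutely continuous with respect to the Thurston measure: such a measure is a constant multiple of $\FM_{\Thu}$, by ergodicity of the Thurston measure (Masur). Hence $\nu=\kappa\cdot\FM_{\Thu}$ for some constant $\kappa$. Evaluating on the set $\{\lambda:\ell_X(\lambda)\le1\}$, whose boundary has Thurston measure zero, gives the limit $\kappa\cdot\FM_{\Thu}(\{\ell_X\le1\})$. Set $\FC(X)=\FM_{\Thu}(\{\ell_X\le1\})$, which is a proper function on moduli space.

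To identify $\kappa$ and show it is independent of $X$ and of the subsequence, I evaluate $\nu$ instead on $\{\lambda:\iota(\lambda,\gamma_0)\le1\}$, by swapping the roles of the two objects. This yields $\kappa=\FC_g(\gamma_0)/\FB_g$, where $\FC_g(\gamma_0)$ is the Thurston measure of a fundamental domain for $\Stab_{\Map(X)}(\gamma_0)$ acting on $\{\iota(\cdot,\gamma_0)\le1\}$, and $\FB_g=\int_{\CM_g}\FC(X)\,dX$ is the normalizing factor. This quantity is intrinsic, so the limit is unique and $\FC_g(\gamma_0)$ depends only on the type and on $g$. Positivity follows because the fundamental domain contains an open set.

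For non-filling $\gamma_0$, the set $\{\iota(\cdot,\gamma_0)\le1\}$ has infinite measure, and the stabilizer contains the mapping classes supported on $X\setminus\Sigma$ together with twists along $\Gamma$. The step I expect to be the main obstacle is precisely this: showing that the quotient by the stabilizer still has finite positive Thurston measure, and controlling escape of mass at the boundary. I would first handle the pieces separately, treating the part of the lamination in $X\setminus\Sigma$ via Mirzakhani's simple-curve count and then combining the contributions through adapted train tracks (Fact \ref{fact:Adapted TT are incompatible}). The precompactness estimate, which has to be uniform in these degenerate directions, is where I expect the real work to lie.
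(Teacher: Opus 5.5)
The paper does not prove this statement; it imports it from \cite{Maryam simple, Maryam general curves, book}. The relevant comparison is therefore with the geodesic-currents argument of \cite{book}. You reproduce its architecture correctly: orbit measures $\nu_L$, limits supported on $\CM\CL(X)$, identification with $\FM_{\Thu}$, and a separate computation of the constant. As a proof, however, the sketch skips each of the genuinely hard inputs.

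\textbf{The polynomial upper bound.} Precompactness and your third step both rest on $|\{\gamma\in\Map(X)\cdot\gamma_0:\ell_X(\gamma)\le L\}|=O(L^{6g-6})$, and the mechanism you suggest cannot give it. For a filling $\eta$, $\iota(\cdot,\eta)$ is comparable to $\ell_X$. So $\{\gamma:\iota(\gamma,\eta)\le CL\}$ contains every closed geodesic of length at most $C'L$, and there are exponentially many of those.

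The bound has to use that $\iota(\gamma,\gamma)=\iota(\gamma_0,\gamma_0)$ is fixed. One way is to encode each such $\gamma$, with controlled ambiguity, by simple data (for instance obtained by resolving its self-intersections) whose number grows like $L^{6g-6}$; this is a substantial part of \cite{book}.

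That same encoding is what your absolute-continuity step lacks. $\nu_L$ is carried by points $\frac1L\gamma$ with $\gamma$ non-simple, and these are not lattice points of any train-track chart. ``Bounding $\nu(U)$ by integral points in cones'' therefore presupposes a way to pass from $\frac1L\gamma$ to nearby integral simple multicurves at bounded cost in multiplicity.

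A minor point: once $\nu\ll\FM_{\Thu}$, Masur's ergodicity alone gives $\nu=\kappa\FM_{\Thu}$. The Lindenstrauss--Mirzakhani/Hamenst\"adt classification is what you would use if you instead showed that $\nu$ does not charge non-filling laminations.

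\textbf{The identification of $\kappa$.} This step does not work as written. Evaluating $\nu=\lim\nu_L$ on $\{\iota(\cdot,\gamma_0)\le 1\}$ only re-expresses a different unknown count, namely of $\gamma\in\Map(X)\cdot\gamma_0$ with $\iota(\gamma,\gamma_0)\le L$. It also produces $\FM_{\Thu}(\{\iota(\cdot,\gamma_0)\le1\})$ rather than the measure of a fundamental domain for the stabilizer, and nothing in it can produce $\FB_g$.

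The swap that works exchanges $\gamma_0$ with the Liouville current $\lambda_X$. Since $\ell_X(\phi\gamma_0)=\iota(\phi^{-1}\lambda_X,\gamma_0)$, your count is essentially the number of points of $\Map(X)\cdot\lambda_X$ in $\{\iota(\cdot,\gamma_0)\le L\}$, counted modulo $\Stab_{\Map(X)}(\gamma_0)$. To finish you then need a separate theorem: that the rescaled orbit measures of $\lambda_X$ converge to $\frac{\FC(X)}{\FB_g}\FM_{\Thu}$. That theorem ultimately rests on Mirzakhani's integration of the counting function over $\CM_g$. It is the only place the normalization $\FB_g=\int_{\CM_g}\FC(X)\,dX$ can come from, and it is also what makes the limit independent of the subsequence. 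Your sketch asserts this rather than deriving it.

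\textbf{The non-filling case.} For non-filling $\gamma_0$, the fundamental domain of $\Stab_{\Map(X)}(\gamma_0)$ on $\{\iota(\cdot,\gamma_0)\le1\}$ is non-compact. The weak-$*$ limit cannot simply be evaluated there without a uniform estimate ruling out escape of mass toward laminations with small intersection with $\gamma_0$. You correctly name this as the main obstacle but leave it open.

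The adapted train tracks you invoke (Fact \ref{fact:Adapted TT are incompatible}, Section \ref{sec Kasra}) do not address this. The paper uses them only downstream, to evaluate the Thurston measure of the fundamental domain once \eqref{eq formula for c(gamma) in terms of thurston measure 0} has been taken from \cite{book}. What you have is an accurate roadmap of the literature, with its three substantive steps still missing.
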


Let us discuss for a moment the history of this theorem. Mirzakhani considered first, in \cite{Maryam simple}, the case where $\gamma_0$ is simple. She then considered the general case in \cite{Maryam general curves}, proving only the existence of the limit.
A value for the limit was obtained in \cite{Kasra-Juan} for $\gamma_0$ filling. A complete proof of the theorem above appears in \cite{book}.

\begin{rmk*}
Since the quantities $\FB_g$, $\FC(X)$, and $\FC_g(\gamma_0)$ in Theorem \ref{thm counting} can be normalized in different ways, we stress that we will follow the conventions in \cite{book}. 
\end{rmk*}

\begin{defi} \label{def:frequency}
    Let $\gamma_0$ be a multicurve of a closed hyperbolic surface $X$ of genus $g\ge 2$. The \emph{frequency of $\gamma_0$} is the quantity $\FC_g(\gamma_0)$ given by Theorem \ref{thm counting}.
\end{defi}

The goal of this section is to explain how to give an expression for the constant $\FC_g(\gamma_0)$. In fact, loosely speaking, the goal of this paper is to understand how $\FC_g(\gamma_0)$ depends on $g$.  Recall that the geometric intersection number $\iota(\cdot,\cdot)$ between multicurves extends continuously to $\CM\CL(X)$, and this extension is mapping class group invariant. We will say that a measured lamination $\lambda\in\CM\CL(X)$ {\em fills with $\gamma_0$} if the function
$$\CM\CL(X)\setminus\{0\}\to\BR_{\ge 0},\ \eta\mapsto\iota(\eta,\lambda)+\iota(\eta,\gamma_0)$$
is positive. Equivalently, but using a language we will not need here, $\lambda$ fills with $\gamma_0$ if $\lambda+\gamma_0$ is a filling current (see \cite[Chapter 3]{book} for definitions and basic facts about currents). 

The set $\{\lambda\in\CM\CL(X)|\lambda\text{ fills with }\gamma_0\}$ of those measured laminations which fill with $\gamma_0$ has full Thurston measure and is invariant under the stabilizer $\Stab_{\Map(X)}(\gamma_0)$ of $\gamma_0$ in the mapping class group. The action of $\Stab_{\Map(X)}(\gamma_0)$
is proper and the quotient has finite Thurston measure.

The reason why all of this is relevant for us is that we have 
\begin{equation}\label{eq formula for c(gamma) in terms of thurston measure 0}
\FC_g(\gamma_0)=\FM_{\Thu}\left(\Lfaktor{\{\lambda\in\CM\CL(X)\text{ fills with }\gamma_0\text{ and }\iota(\lambda,\gamma_0)\le 1\}}{\Stab_{\Map(X)}(\gamma_0)}\right).
\end{equation}
This formula is due to Mirzakhani \cite{Maryam simple} when $\gamma_0$ is simple, to the second and third authors \cite{Kasra-Juan} when it is filling, and to Erlandsson and the third author \cite{book} in general.

\begin{rmk*}
    As the reader might remember, we are assuming that $X$ has genus at least $3$. In genus $2$ the presence of the hyper-elliptic involution forces, for certain multicurves $\gamma_0$, the presence of a additional factor in \eqref{eq formula for c(gamma) in terms of thurston measure 0} given by:
    \begin{equation} \label{eq:CstGenus2}
        \frac{\left|\Ker(\Map(X)\actson\CM\CL(X)) \right|}{\left|\Ker(\Map(X)\actson\CM\CL(X)) \bigcap \Stab_{\Map(X)}(\gamma_0)  \right|}.
    \end{equation}
    See \cite[pp.146-148]{book} for more details.
\end{rmk*}

Instead of working with the set of measured laminations which fill with $\gamma_0$ we will consider the slightly smaller set
$$\CM\CL_{\gamma_0}(X)=\{\lambda\in\CM\CL(X)\text{ fills with }\gamma_0\text{ and has no component contained in }\Sigma\text{ or }Z\}$$
of those laminations which fill with $\gamma_0$ and which have the property that all the leaves in their support meet $\Gamma$. Since the set $\CM\CL_{\gamma_0}(X)$ has full Thurston measure and is $\Stab_{\Map(X)}(\gamma_0)$-invariant, we get directly from \eqref{eq formula for c(gamma) in terms of thurston measure 0} the following:

\begin{sat}[\cite{Maryam simple,Kasra-Juan,book}]\label{thm first formula for c}
Let $X$ be a closed surface of genus greater that $3$ and $\gamma_0$ a multicurve of $X$. The quantity $\FC_g(\gamma_0)$ in Theorem \ref{thm counting} is given by 
$$\FC_g(\gamma_0)=\FM_{\Thu}\left(\Lfaktor{\{\lambda\in\CM\CL_{\gamma_0}(X),\iota(\lambda,\gamma_0)\le 1\}}{\Stab_{\Map(X)}(\gamma_0)}\right)$$
where $\Stab_{\Map(X)}(\gamma_0)$ is the stabilizer of $\gamma_0$ in the mapping class group, and where $\iota(\cdot,\cdot)$ is the geometric intersection form. The quantity $\FC_g(\gamma_0)$ is finite.  \hfill $\blacksquare$
\end{sat}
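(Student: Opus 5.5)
The plan is to deduce Theorem \ref{thm first formula for c} directly from \eqref{eq formula for c(gamma) in terms of thurston measure 0}. The sets
$$A=\{\lambda\in\CM\CL(X)\text{ fills with }\gamma_0,\ \iota(\lambda,\gamma_0)\le 1\},\qquad B=\{\lambda\in\CM\CL_{\gamma_0}(X),\ \iota(\lambda,\gamma_0)\le 1\}$$
satisfy $B\subset A$. It therefore suffices to show two things: that $B$ is $\Stab_{\Map(X)}(\gamma_0)$-invariant, and that $A\setminus B$ has vanishing Thurston measure. The quotient measure on $\Lfaktor{A}{\Stab_{\Map(X)}(\gamma_0)}$ is computed by integrating over a measurable fundamental domain $D\subset A$ for the proper action. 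Then $D\cap B$ is a fundamental domain for the action on $B$, and it differs from $D$ by a null set. So both quotient measures agree, and finiteness is inherited from the finiteness of the right-hand side of \eqref{eq formula for c(gamma) in terms of thurston measure 0}.

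Invariance is immediate. The surfaces $\Sigma$, $Z$ and the multicurve $\Gamma$ are determined by $\gamma_0$ up to isotopy, so every element of the stabilizer maps them to isotopic copies. The mapping class group action on $\CM\CL(X)$ preserves intersection numbers. It also preserves the properties "fills with $\gamma_0$" and "has a component supported in $\Sigma$ or in $Z$", since the action sends the support of a lamination to the support of its image.

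For the null-set statement, a lamination in $A\setminus B$ has some component whose support is contained in $\Sigma$ or in $Z$, up to isotopy. The first observation is that every measured lamination has finitely many components, and every component is either a weighted simple closed curve or a minimal lamination. A component supported in a proper subsurface $W\in\{\Sigma,Z\}$ (and contained in a connected component of $W$) is either a simple closed curve or supported in a proper essential subsurface. Hence $A\setminus B$ is contained in the countable union, over simple closed curves $c$ and over essential proper subsurfaces $W$ with $\partial W\neq\emptyset$, of the sets
$$\{\lambda:\ c\text{ is a component of }\lambda\}\quad\text{and}\quad\{\lambda:\ \lambda\text{ has a component carried in }W\}.$$

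The main step is to check that each of these sets is Thurston-null. Any such $\lambda$ has zero intersection with the curve $c$, respectively with the components of $\partial W$. I would therefore show that for a fixed simple closed curve $\eta$ the set $\{\lambda:\iota(\lambda,\eta)=0\}$ has Thurston measure zero. Its integral points are the integral multicurves disjoint from $\eta$. These are parametrized by multicurves on $X\setminus\eta$, together with a weight on $\eta$, and they form a piecewise linear cone of dimension at most $6g-7$. By \eqref{eq def thurston measure}, their number in $L\cdot U$ grows at most like $L^{6g-7}=o(L^{6g-6})$. Equivalently, in train track charts from Theorem \ref{thm:thurston ML}, the set is a finite union of lower-dimensional polyhedral cones.

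Countable subadditivity then yields $\FM_{\Thu}(A\setminus B)=0$, which completes the argument. The only delicate point I anticipate is making the dimension count rigorous via \eqref{eq def thurston measure}, which requires the boundary of the relevant set to be null. Using train track charts, where the condition $\iota(\cdot,\eta)=0$ is piecewise linear, avoids this issue.
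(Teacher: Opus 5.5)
Your proposal is correct and follows the paper's route. The paper deduces the theorem in one line from \eqref{eq formula for c(gamma) in terms of thurston measure 0}, observing that $\CM\CL_{\gamma_0}(X)$ is $\Stab_{\Map(X)}(\gamma_0)$-invariant and has full Thurston measure; your countable covering of the complement by the null sets $\{\lambda:\iota(\lambda,\eta)=0\}$ supplies the justification of that full-measure claim, which the paper leaves implicit. Two things to tighten: take $W$ to be the subsurface filled by the offending component (for a larger $W$, other components of $\lambda$ can cross $\partial W$), and note that, like the paper, you tacitly need $\gamma_0$ to be non-filling so that $\Sigma\neq X$ (for filling $\gamma_0$ the set $\CM\CL_{\gamma_0}(X)$ would be trivial).
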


To evaluate the constant $\FC_g(\gamma_0)$ using the expression in Theorem \ref{thm first formula for c}, one needs to be able to give a fundamental domain for the action of $\Stab_{\Map(X)}(\gamma_0)$ on $\CM\CL_{\gamma_0}(X)$. 

\subsection{Replacing \texorpdfstring{$\mathrm{Stab}_{\mathrm{Map}(X)}(\gamma_0)$}{Stab_{Map(X)}(phi(gamma_0))} by one of its subgroups} \label{sec : subgroup}

Recall that our current aim is to give a fundamental domain for the action of $\Stab_{\Map(X)}(\gamma_0)$ on $\CM\CL_{\gamma_0}(X)$. This becomes easier if one replaces $\Stab_{\Map(X)}(\gamma_0)$ by one of its subgroups $G$. To describe $G$, note that since both $\Sigma_{\hyp}$ and $Z$ are $\Stab_{\Map(X)}(\gamma_0)$-invariant, we get a homomorphism
\begin{equation} \label{eq homomorphism}
    \pi_*:\Stab_{\Map(X)}(\gamma_0)\to\Map(\Sigma_\hyp)\times\Map(Z)
\end{equation}
whose kernel consists of the group $\BT$ of multi-twists along the components of $\Gamma$.

The image of the composition of $\pi_*$ with the projection of $\Map(\Sigma_\hyp)\times\Map(Z)$ to the first factor is finite because $\gamma_0\cap\Sigma_\hyp$ fills. On the other hand the image of the composition of $\pi_*$ with the projection onto the second factor is very much not finite because it contains the pure mapping class group $\PMap(Z)$ of $Z$. The subgroup we care about is 
\begin{equation}\label{eq convenient subgroup}
G=\pi_*^{-1}\big(\{\Id_{\Sigma_\hyp}\}\times \PMap(Z)\big)\subset\Stab_{\Map(X)}(\gamma_0).
\end{equation}
By construction, $G$ is a normal subgroup of $\Stab_{\Map(X)}(\gamma_0)$ of finite index
\begin{equation}\label{eq constant sym}
\sym(\gamma_0)=\left[\Stab_{\Map(X)}(\gamma_0):G\right].
\end{equation}
It follows that 
$$\Lfaktor{\{\lambda\in\CM\CL_{\gamma_0}(X),\iota(\lambda,\gamma_0)\le 1\}}{\Stab_{\Map(X)}(\gamma_0)}=\Lfaktor{\left(\Lfaktor{\{\lambda\in\CM\CL_{\gamma_0}(X),\iota(\lambda,\gamma_0)\le 1\}}{G}\right)}{\left(\Lfaktor{\Stab_{\Map(X)}(\gamma_0)}{G}\right)}.$$
It follows from the finiteness of $\FC_g(\gamma_0)$ that the quantity
\begin{equation}\label{eq original c}
\FC^G(\gamma_0):=\FM_{\Thu}\left(\Lfaktor{\{\lambda\in\CM\CL_{\gamma_0}(X),\iota(\lambda,\gamma_0)\le 1\}}{G}\right)
\end{equation}
is also finite and can be related to the constant $\FC_g(\gamma_0)$ by the following lemma.

\begin{lem}  \label{lem:measure of a fund dom}
    Let $H$ be a finite group acting on a measured space $(\CX,\FM)$ such that the measure is $H$-invariant and gives no weight to the boundary of any closed set. If $\CD$ is a fundamental domain for $H\actson\CX$ then
    \[ \FM\left( \CD \right) = \frac{\FM(\CX)}{\left| \Lfaktor{H}{\Ker(H\actson \CX)} \right|}.  \]
\end{lem}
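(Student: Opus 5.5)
First pass to the effective quotient. Set $K=\Ker(H\actson\CX)$ and $\bar H=\Lfaktor{H}{K}$, a finite group acting effectively on $\CX$. The elements of $K$ act trivially, so a fundamental domain for $H$ is the same as one for $\bar H$. It therefore suffices to show $\FM(\CD)=\FM(\CX)/|\bar H|$ for an effective action. We read "fundamental domain'' as a closed set $\CD$ with two properties: $\bigcup_{h\in\bar H}h\CD=\CX$, and the translates $h\CD$, $h'\CD$ for $h\neq h'$ meet only along their boundaries, i.e.\ $h\mathring\CD\cap h'\mathring\CD=\emptyset$.

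The main step is then measure-theoretic. Since $\FM$ gives no weight to boundaries of closed sets, $\FM(\D\CD)=0$, so $\FM(\CD)=\FM(\mathring\CD)$. By $H$-invariance, also $\FM(h\CD)=\FM(\CD)$ and $\FM(h\mathring\CD)=\FM(\mathring\CD)$ for every $h$.

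Next we decompose $\CX$. It is the union of the $|\bar H|$ sets $h\mathring\CD$, which are pairwise disjoint, together with the set $\bigcup_h h\D\CD$. The latter is a finite union of null sets, because each $h\D\CD=\D(h\CD)$ is the boundary of a closed set. Additivity then gives
$$\FM(\CX)=\sum_{h\in\bar H}\FM(h\mathring\CD)=|\bar H|\cdot\FM(\CD),$$
which is the claim. If $\FM(\CX)=\infty$ the same computation gives $\FM(\CD)=\infty$, and the statement still holds.

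The only delicate point is the effectiveness. If $\bar H$ did not act effectively, distinct elements could give the same translate, and the count of disjoint translates would be off. Quotienting by the kernel at the start removes this problem. In the application we will take $H=\Lfaktor{\Stab_{\Map(X)}(\gamma_0)}{G}$ acting on $\CX=\Lfaktor{\{\lambda\in\CM\CL_{\gamma_0}(X),\ \iota(\lambda,\gamma_0)\le 1\}}{G}$. There one must also check that the Thurston measure charges no boundaries of the relevant piecewise-linear sets, which holds because those boundaries are lower-dimensional. That verification belongs to the use of the lemma, not to its proof.
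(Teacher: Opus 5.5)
The paper states this lemma without proof and applies it immediately with $H=\Stab_{\Map(X)}(\gamma_0)/G$, so there is no argument of the authors to compare against. Your proof is the standard one, and it is correct.

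Once you pass to $\bar H=H/\Ker(H\actson\CX)$, the translates $h\mathring\CD$ for $h\in\bar H$ are pairwise disjoint. They cover $\CX$ up to $\bigcup_h h\,\D\CD$, which is null by $\FM(\D\CD)=0$ and invariance alone; you do not even need $h\,\D\CD=\D(h\CD)$. Additivity then gives $\FM(\CX)=|\bar H|\,\FM(\CD)$.

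Three small comments.

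First, the convention you make explicit, that $\CD$ is closed and its translates by distinct elements of $\bar H$ have disjoint interiors, is genuinely needed and not just a convenience. For a one-point-per-orbit fundamental domain the statement fails. Take $\CX=\{a,b,c\}$, discrete, with counting measure, and $\BZ/2$ swapping $a,b$ and fixing $c$. Then $\{a,c\}$ has measure $2\neq 3/2$. Under your convention no fundamental domain exists in this example, so the lemma remains consistent.

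Second, your argument uses the boundary hypothesis only for $\D\CD$. That is the form actually available in the application. The literal hypothesis, that every closed set has null boundary, fails for Lebesgue-type measures such as the Thurston measure, because fat Cantor sets are closed, nowhere dense and of positive measure.

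Third, in your last paragraph, ``if $\bar H$ did not act effectively'' should read $H$.
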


We apply this lemma with lemma with $\CX = \Lfaktor{\{\lambda\in\CM\CL_{\gamma_0}(X),\iota(\lambda,\gamma_0)\le 1\}}{G} $ and $H= \Lfaktor{\Stab_{\Map(X)}(\gamma_0)}{G}$. In that case, assuming the genus is greater that $2$, the kernel of the action is trivial and we get
\begin{equation}\label{eq0}
\FC_g(\gamma_0)=\frac 1{\sym(\gamma_0)}\cdot\FC^G(\gamma_0).
\end{equation}
The reason why it is easier to calculate $\FC^G(\gamma_0)$ than $\FC_g(\gamma_0)$ is that it is much easier to give a fundamental domain for the action of $G$ on $\CM\CL_{\gamma_0}(X)$ than for the action of $\Stab_{\Map(X)}(\gamma_0)$. We discuss next the main tool we will use to give that fundamental domain.

\begin{rmk*}
    As mentioned above the kernel is trivial for $g\geq 3$. In genus $2$ the hyper-elliptic involution has again to be taken into account and the size of the kernel appears as an supplementary factor in \eqref{eq0}. The kernel is of cardinality
    \begin{equation} \label{eq:CstGenus2bis}
        \frac{\left|\Ker(\Map(X)\actson\CM\CL(X) \right|}{\left|\Ker(\Map(X)\actson\CM\CL(X)) \bigcap G  \right|}.
    \end{equation}
\end{rmk*}

\subsection{$\FC_g(\gamma_0)$ in terms of train-track charts}\label{sec Kasra}
Continuing with the same notation, recall that our aim is to build a fundamental domain for the action of the group $G\subset\Map(X)$ from \eqref{eq convenient subgroup} on the set $\CM\CL_{\gamma_0}(X)$ of measured laminations which fill with $\gamma_0$ and have no components contained in either $\Sigma$ or $Z$.

The key idea is to consider the map 
$$\pi:\CM\CL_{\gamma_0}(X)\to\BA(\Sigma_{\hyp})\times\BA(Z)=\BA(X\setminus\CN(\Gamma))$$
sending the lamination $\lambda\in\CM\CL_{\gamma_0}(X)$ to its restriction to the subsurface $\Sigma_{\hyp}\cup Z=X\setminus\CN(\Gamma)$. More precisely note that, for every $\lambda\in\CM\CL_{\gamma_0}(X)$, when we cut the support of $\lambda$ along $\Gamma$, we obtain a bunch of simple and disjoint arcs in $X\setminus \CN(\Gamma)\simeq\Sigma_{\hyp}\cup Z$. In general we get uncountably many arcs, but they all fall into finitely homotopy classes $\kappa_1,\dots,\kappa_k$ which altogether forms a filling arc system $\kappa$. Moreover, for each $i$ there is a transverse arc $I_i\subset X\setminus\CN(\Gamma)$ which at the same time 
\begin{enumerate}
    \item meets all arcs of type $\kappa_i$ exactly once and
    \item does not meet any arc homotopic to $\kappa_j$ for $j\neq i$. 
\end{enumerate}

\begin{figure}[!ht]
\centering
\begin{tikzpicture}
\path (0,0) node {\includegraphics[width=70mm]{ 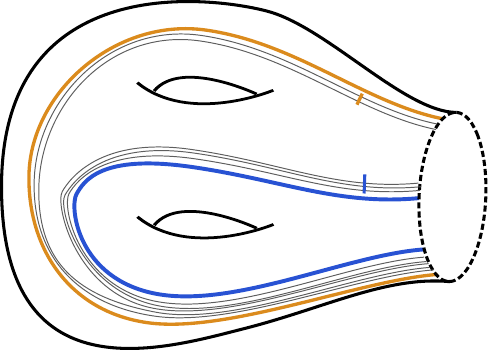}} ;
\draw(15mm,8mm) node {$I_1$}
(2mm,21mm) node {$\kappa_1$} 
(-35mm,18mm) node {$\Sigma_{\hyp}$} 
(20mm,1mm) node {$I_2$}
(2mm,-3mm) node {$\kappa_2$} 
;
\end{tikzpicture} 
\caption{Arcs and associated transverse arcs in $\Sigma_\hyp$ for a truncated measured lamination of $X$}
    \label{fig:From lam to arcs}
\end{figure}

The weighted arc system $\pi(\lambda)$ is the defined to be
$$\pi(\lambda)=\sum_i\iota(I_i,\lambda)\cdot\kappa_i$$
where $\iota(I_i,\lambda)$ is the total measure of the measure on $I_i$ induced by $\lambda$ is positive.
 
A key fact is that $\pi$ is equivariant under the homomorphism $\pi_*$ from \eqref{eq homomorphism},
meaning that
\[
\pi(\phi(\lambda))=\pi_*(\phi)(\pi(\lambda)),
\quad
\text{for every }
\Stab_{\Map(X)}(\gamma_0).
\]
See \cite[Chapter 11]{book} for details. Moreover, when we identify $\PMap(Z)\simeq \{\Id_{\Sigma_\hyp}\}\times \PMap(Z)$, $G$ fits in the exact sequence
\begin{equation}\label{eq exact sequence for subgroup}
0\to\BT\to G\stackrel{\pi_*}\to \PMap_Z\to 0.
\end{equation}

Now, as $\BA(X\setminus\CN(\Gamma))$ decomposes as $\bigcup \limits_{\alpha\in\CA(X\setminus\CN(\Gamma))} \BA(\alpha)$ we can write 
$$ \CM\CL_{\gamma_0}(X) =     \bigcup \limits_{\alpha\in\CA(X\setminus\CN(\Gamma))} \pi^{-1}( \BA(\alpha) ).  $$
Choosing a representative of each orbit of $\pi_*(G)\actson \CA(X\setminus\CN(\Gamma))$, we get an identification:
$$\Lfaktor{\CA(X\setminus\CN(\Gamma))}{\pi_*(G)} =\begin{array}{l}\text{set of representatives of maximal arc systems in }X\setminus\CN(\Gamma)\\
\text{under the action }\pi_*(G)\actson\CA(X\setminus\CN(\Gamma)).\end{array}$$
Observe that, unless $\Sigma_{hyp}$ is a finite union of pairs of pants, the group $\pi_*(G)$ acts with infinitely many orbits on $\CA(X\setminus\CN(\Gamma))$. Anyways, keeping in mind that some of these arc systems have non-trivial stabilizer we get that if $K_\alpha \equiv \Lfaktor{\BA(\alpha)}{\Stab_{\pi*(G)}(\alpha)}$ is a fundamental domain for $\Stab_{\pi*(G)}(\alpha)$ acting on $\BA(\alpha)$ then
 \[   \bigcup\limits_{\alpha \in \Lfaktor{\CA(X\setminus\CN(\Gamma))}{\pi_*(G)}}
 K_\alpha
 \]
is a fundamental domain for the action of $\pi_*(G)$ on $\BA(X\setminus\CN(\Gamma))$.

%%%%%%%%% Action of twst %%%%%%%%%%%

Let's now look at the action of $\BT$ on $$K = \pi^{-1}(\bigcup\limits_{\alpha \in \Lfaktor{\CA(X\setminus\CN(\Gamma))}{\pi_*(G)}}
 K_\alpha) = \bigcup\limits_{\alpha \in \Lfaktor{\CA(X\setminus\CN(\Gamma))}{\pi_*(G)}}
 \pi^{-1}(K_\alpha).$$

For every $\bar a \in \BA(\alpha)$ the action of $\BT$ on $\CM\CL(X)$ preserves $\pi^{-1}(\bar a)$ and by \cite[Exercise 11.3]{book}, the subset of $\CM\CL(\tau_\alpha)$ consisting of those measured laminations carried by the $\Gamma$-adapted train track $\tau_{\alpha}$ associated to the arc system $\alpha$ is a fundamental domain for the action of $\BT\actson\pi^{-1}(\BA(\alpha))$. Hence, the following is a fundamental domain for the action of $\BT$ on $K$:
\[   \bigcup\limits_{\alpha \in \Lfaktor{\CA(X\setminus\CN(\Gamma))}{\pi_*(G)}}
 \pi^{-1}(K_\alpha)\cap \CM\CL(\tau_\alpha).   \]
Now, given that $\BT$, $G$ and $\PMap(Z)$ fits in the exact sequence \eqref{eq exact sequence for subgroup} it is also domain for the action of $G$ on $\CM\CL_{\gamma_0}(X)$. It follows that a fundamental domain for the action of $G$ on $\{ \lambda \in \CM\CL(\gamma_0) | i(\lambda,\gamma_0)\le 1\}$ is given by  
\begin{equation} \label{eq: expression of a fundamental domain}
    \bigcup\limits_{\alpha \in \Lfaktor{\CA(X\setminus\CN(\Gamma))}{\pi_*(G)}}
 \pi^{-1}(K_\alpha)\cap \{ \lambda \in \CM\CL(\tau_\alpha) | i(\lambda,\gamma_0)\le 1\}.
\end{equation}
We are now ready to prove the main theorem of this section.

\begin{sat}\label{thm Kasra formula}
Let $X$ be a closed surface of genus $g\ge 3$ and $\gamma_0\subset X$ a multicurve, and let $\Sigma$ and $Z$ be as introduced in the opening of Section \ref{sec frequency}. We have
\[
    \FC_g(\gamma_0)
    =
    \frac{1}{\sym(\gamma_0)}
    \sum_{\sigma \in \CA(\Sigma_{\hyp})}
    \sum_{\zeta \in \Lfaktor{\CA(Z)}{P \Map(Z)}}
    \frac{\FM_{\Thu}(\{ \mu \in \mathcal{ML}(\tau_{\sigma \cup \zeta}) : \iota(\mu, \gamma_0) \leq 1 \})}{\left| \Stab_{\PMap(Z)}(\zeta) \right|}
\]
where $\tau_{\sigma\cup\zeta}$ is the $\Gamma$-adapted train track associated to the arc system $\sigma\cup\zeta$,  and $\sym(\gamma_0)$ is as in \eqref{eq constant sym}.
\end{sat}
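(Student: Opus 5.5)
The plan is to start from \eqref{eq0}, $\FC_g(\gamma_0)=\frac1{\sym(\gamma_0)}\FC^G(\gamma_0)$, and to evaluate $\FC^G(\gamma_0)$ as the Thurston measure of the explicit fundamental domain \eqref{eq: expression of a fundamental domain} for the action of $G$ on $\{\lambda\in\CM\CL_{\gamma_0}(X),\ \iota(\lambda,\gamma_0)\le 1\}$. The pieces of this domain are indexed by $\alpha\in\Lfaktor{\CA(X\setminus\CN(\Gamma))}{\pi_*(G)}$. Distinct $\alpha$ give adapted train tracks whose carried sets meet only along lower-dimensional faces (Fact \ref{fact:Adapted TT are incompatible}). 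The pieces therefore intersect in Thurston-null sets, and $\FC^G(\gamma_0)$ is the sum over $\alpha$ of the measures of the pieces.

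The first step is to identify the index set. By \eqref{eq product decompostion of arc complex} we have $\CA(X\setminus\CN(\Gamma))=\CA(\Sigma_\hyp)\times\CA(Z)$, so each $\alpha$ can be written as $\sigma\cup\zeta$. The group $\pi_*(G)=\{\Id_{\Sigma_\hyp}\}\times\PMap(Z)$ acts trivially on the first factor. Hence its orbits are exactly the pairs $(\sigma,[\zeta])$ with $\sigma\in\CA(\Sigma_\hyp)$ arbitrary and $[\zeta]\in\Lfaktor{\CA(Z)}{\PMap(Z)}$, and the stabilizer of $\sigma\cup\zeta$ is $\Stab_{\PMap(Z)}(\zeta)$. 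One should also check that when $\Sigma_\hyp$ or $Z$ is empty the convention $\CA(\emptyset)=\{\emptyset\}$ is harmless.

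The second step is to compute the measure of one piece, $\pi^{-1}(K_\alpha)\cap\{\lambda\in\CM\CL(\tau_\alpha),\ \iota(\lambda,\gamma_0)\le1\}$, where $K_\alpha$ is a fundamental domain for the finite group $\Stab_{\PMap(Z)}(\zeta)$ acting on $\BA(\alpha)$. Restricted to $\CM\CL(\tau_\alpha)$, the map $\pi$ is equivariant, and the set $\{\iota(\cdot,\gamma_0)\le1\}$ is invariant under $\Stab_{\PMap(Z)}(\zeta)$ because elements of $G$ fix $\gamma_0$. I will check that this stabilizer preserves $\CM\CL(\tau_\alpha)$, up to a null set or up to its lift by twists. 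Then the sets $\pi^{-1}(hK_\alpha)\cap\CM\CL(\tau_\alpha)\cap\{\iota\le1\}$ for $h$ in the stabilizer tile $\{\mu\in\CM\CL(\tau_\alpha),\ \iota(\mu,\gamma_0)\le1\}$ up to measure zero, and they all have the same Thurston measure.

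The main obstacle is exactly this step. The stabilizer element lifts to $G$ only up to multitwists in $\BT$, and such a lift need not map $\CM\CL(\tau_\alpha)$ to itself. I plan to handle this as follows. The lift maps $\CM\CL(\tau_\alpha)\cap\pi^{-1}(\BA(\alpha))$ to another $\BT$-fundamental domain for $\pi^{-1}(\BA(\alpha))$. Composing with a suitable element of $\BT$, which is measure-preserving and preserves $\iota(\cdot,\gamma_0)$ because $\BT\subset\Stab(\gamma_0)$, gives a measure-preserving bijection between the two pieces, up to a null set. One must also rule out elements acting trivially on $\BA(\alpha)$, which would shrink the effective group size. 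The standing hypothesis $g\ge3$, which makes the relevant kernel trivial as in Lemma \ref{lem:measure of a fund dom}, is exactly what rules this out. Granting this, each piece has measure $\FM_{\Thu}(\{\mu\in\CM\CL(\tau_{\sigma\cup\zeta}),\ \iota(\mu,\gamma_0)\le1\})/|\Stab_{\PMap(Z)}(\zeta)|$. Summing over $\sigma$ and $[\zeta]$ and multiplying by $1/\sym(\gamma_0)$ gives the formula.
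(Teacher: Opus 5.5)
Your outline matches the paper's: sum over the pieces of \eqref{eq: expression of a fundamental domain}, split $\CA(X\setminus\CN(\Gamma))=\CA(\Sigma_\hyp)\times\CA(Z)$, and show each piece has measure $\FM_{\Thu}(\{\mu\in\CM\CL(\tau_{\sigma\cup\zeta}):\iota(\mu,\gamma_0)\le 1\})/|\Stab_{\PMap(Z)}(\zeta)|$. However, the step where you divide by the stabilizer has a genuine gap.

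Tiling $\BA(\alpha)$ by translates $hK_\alpha$ requires $\Stab_{\PMap(Z)}(\zeta)$ to act effectively on $\BA(\sigma\cup\zeta)$, and you claim that $g\ge 3$ guarantees this. It does not. The hypothesis $g\ge3$ makes $\Map(X)\actson\CM\CL(X)$ faithful, which says nothing about $\PMap(Z)\actson\BA(Z)$. Suppose some component $Z_1$ of $Z$ is a one-holed torus; this is possible in every genus, e.g.\ when $\gamma_0$ is a simple curve cutting off a torus. Then the elliptic involution of $Z_1$ is a nontrivial element of $\PMap(Z)$ fixing every arc in $Z_1$. The stabilizer of a maximal arc system $\zeta_1$ of $Z_1$ is cyclic of order $6$, but it acts on $\BA(\zeta_1)\simeq\BR^3_{\ge 0}$ only through a quotient of order $3$. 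So no $K_\alpha$ with your tiling property exists. If you instead take $K_\alpha$ to be a fundamental domain for the effective quotient, your per-piece value is too large by a factor $2$ for each such torus component. In that case the union of the sets $\pi^{-1}(K_\alpha)\cap\CM\CL(\tau_\alpha)$ is not a fundamental domain for $G$.

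The missing point is that the lift $\tilde h\in G$ of the elliptic involution, although trivial on $\BA(\alpha)$, acts nontrivially on the fibres of $\pi$. On the annulus of $\CN(\Gamma)$ adjacent to $Z_1$ it is a half twist, and $\tilde h^2$ is the Dehn twist about that component of $\Gamma$, which lies in $\BT$. Hence the division by $|\Stab_{\PMap(Z)}(\zeta)|$ must be done on $(\pi^{-1}(\BA(\alpha))\cap\{\iota(\cdot,\gamma_0)\le 1\})/\BT$, which is modelled on $\{\mu\in\CM\CL(\tau_\alpha):\iota(\mu,\gamma_0)\le1\}$, and not on $\BA(\alpha)$.

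That is what the paper's proof does. It applies Lemma~\ref{lem:measure of a fund dom} to $\pi_*^{-1}(\Stab_{\pi_*(G)}(\sigma\cup\zeta))/\BT$, a group of order $|\Stab_{\PMap(Z)}(\zeta)|$, acting on this set. The kernel is trivial because $\pi_*^{-1}(\Stab_{\pi_*(G)}(\sigma\cup\zeta))$ acts faithfully on $\CM\CL(X)$ when $g\ge3$. This, not effectiveness on $\BA(\alpha)$, is where the genus hypothesis enters.

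Relatedly, a single multitwist need not carry $\tilde h(\CM\CL(\tau_\alpha))$ onto $\CM\CL(\tau_\alpha)$. Use instead the fact that any two fundamental domains for $\BT$ on $\pi^{-1}(\BA(\alpha))$ have equal measure after intersecting with a $\BT$-invariant set.
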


\begin{proof}
    To begin with, observe that it follows directly from \eqref{eq original c}, \eqref{eq: expression of a fundamental domain} and Fact \ref{fact:Adapted TT are incompatible} that  
    \[   \FC^G(\gamma_0)
=\sum_{\alpha \in \Lfaktor{\CA(X\setminus\CN(\Gamma))}{\pi_*(G)} }
\FM_{\Thu}\left(\pi^{-1}(K_\alpha)\cap\{ \lambda \in  \CM\CL(\tau_\alpha) | i(\lambda,\gamma_0)\le 1 \}  \right). \]
Recall also that the decomposition $X\setminus\CN(\Gamma)\simeq \Sigma_{\hyp}\cup Z$ implies the following decomposition for $\CA(X\setminus\CN(\Gamma))$:
$$\CA(X\setminus\CN(\Gamma))=\CA(\Sigma_{\hyp})\times\CA(Z).$$
Given that $\pi_*(G)=\{\Id_{\Sigma_\hyp}\}\times \PMap(Z)$, we have that the action of $\pi_*(G)$ on $\CA(X\setminus\CN(\Gamma))$ is trivial on the factor $\CA(\Sigma_\hyp)$, and via $\PMap(Z)\actson \CA(Z)$ on the other factor. We can thus rewrite our previous expression for $\FC^G(\gamma_0)$ as follows:
\[   \FC^G(\gamma_0)
=\sum_{\sigma \in \CA(\Sigma_{\hyp})}
    \sum_{\zeta \in \Lfaktor{\CA(Z)}{P \Map(Z)}}
\FM_{\Thu}\left(\pi^{-1}(K_{\sigma\cup\zeta})\cap\{ \lambda \in  \CM\CL(\tau_{\sigma\cup\zeta}) | i(\lambda,\gamma_0)\le 1 \}  \right). \]
Recalling now that $K_{\sigma\cup\zeta} \equiv \Lfaktor{\BA({\sigma\cup\zeta})}{\Stab_{\pi_*(G)}({\sigma\cup\zeta})}$ is a fundamental domain for $\Stab_{\pi*(G)}(\alpha)$ acting on $\BA(\alpha)$, we have an identification 
\[  \pi^{-1}(K_{\sigma\cup\zeta})\cap\{ \lambda \in  \CM\CL(\tau_{\sigma\cup\zeta}) | i(\lambda,\gamma_0)\le 1 \}   \equiv \Lfaktor{\{ \lambda \in  \CM\CL(\tau_{\sigma\cup\zeta}) | i(\lambda,\gamma_0)\le 1 \}}{\left(\Lfaktor{\pi_*^{-1}(\Stab_{\pi_*(G)}({\sigma\cup\zeta}))}{\BT}\right)}.     \]
The action of $\pi_*^{-1}(\Stab_{\pi_*(G)}({\sigma\cup\zeta}))$ on $\CM\CL(X)$ has, for all $\sigma,\zeta$, trivial kernel. Moreover, 
$$\left|\Lfaktor{\pi_*^{-1}(\Stab_{\pi_*(G)}(\sigma\cup\zeta))}{\BT}\right|= |\Stab_{\pi_*(G)}(\sigma\cup\zeta)|= |\Stab_{\PMap(Z)(\zeta)}|$$ 
and Lemma \ref{lem:measure of a fund dom} ends the proof.
\end{proof}

\section{$\FC_g(\gamma_0)$ in terms of counting} \label{sec:FrequencyCounting}
Continuing with the same notation, suppose that $\gamma_0$ is a multicurve in $X$, and let $\Gamma$, $\Sigma$, $Z$ and $\CN(\Gamma)$ be as described at the beginning of Section \ref{sec frequency}. Consider
$$\flip_{\Gamma}:\D\CN(\Gamma)\to\D\CN(\Gamma)$$
switching the two boundary components of each component of $\CN(\Gamma)$ as described in the left part of Figure~\ref{fig:flip gamma}.

\begin{figure}[!ht]
\centering
\begin{tikzpicture}
\path (0,0) node {\includegraphics[width=\linewidth]{  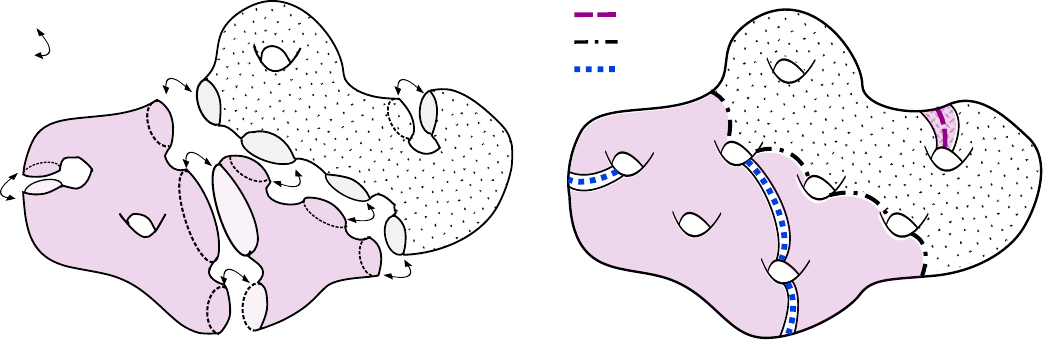}} ;
\draw
(-58mm,18mm) node {action of $\flip_\Gamma$} 
(19.5mm,18mm) node {$\Gamma_{\fix}$} 
(20mm,23mm) node {$\Gamma_{\ann}$}
(20mm,13mm) node {$\Gamma_{\flip}$}
;
\end{tikzpicture}
\caption{Description of the action of $\flip_\Gamma$ for the setting corresponding to Figure \ref{fig:full description} and the corresponding decomposition of $\Gamma$}
    \label{fig:flip gamma}
\end{figure}

The involution $\flip_\gamma$ induces the involution \eqref{eq involution induced for Sigma}, described in Figure \ref{fig:flipSigma} on $\D\Sigma$.
\begin{figure} [ht!]
\begin{minipage}{0.6\textwidth}
\begin{equation} \label{eq involution induced for Sigma}
\begin{array}{crcl}
     \flip_{\Sigma}:& \D\Sigma &\longrightarrow &\D\Sigma \\
     &  c &\longmapsto&
    \begin{cases}
        \flip_\Gamma(c) & \text{if } \flip_\Gamma(c) \in \D\Sigma, \\
        c & \text{otherwise.}
    \end{cases}
\end{array}
\end{equation} 
\end{minipage}
\hfill
\begin{minipage}{0.35\textwidth}
    \centering
    \includegraphics[width=\linewidth]{  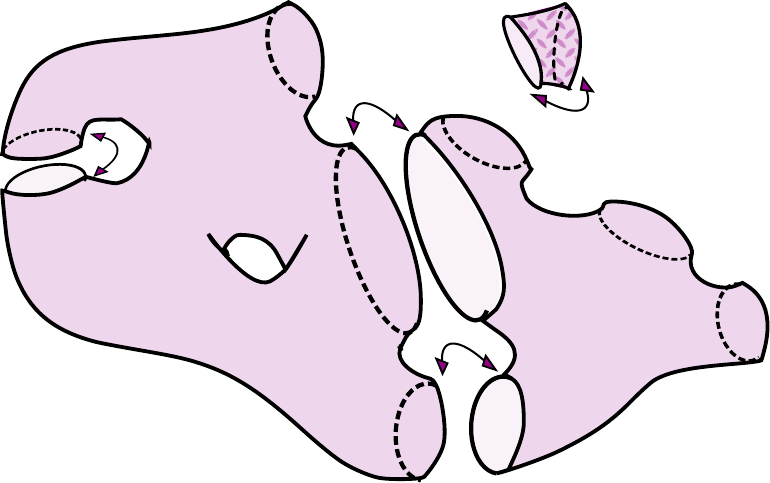}
\end{minipage}
    \caption{Description of $\flip_\Sigma$}
    \label{fig:flipSigma}
\end{figure}

Note that we can identify the multicurve $\Gamma$ with both the set of orbits of $\flip_\Gamma$ and the set of orbits of $\flip_\Sigma$. From this second view point the curve $\Gamma$ can be divided into 3 distinct multicurves (see the decomposition in Figure \ref{fig:flip gamma}):
\begin{enumerate}
    \item $\Gamma_\ann$ the curve homotopic to the two boundary components of an annuli component of~$\Sigma$, i.e. the orbits for the action of $\flip_\Sigma$ restricted to $\Sigma_{\ann}$,
    \item $\Gamma_{\flip}$ the curves homotopic to two distinct boundaries of $\Sigma_\hyp$, i.e. the $\flip_\Sigma$ orbits of size two in $\D\Sigma_{\hyp}$, and
    \item $\Gamma_{\fix}$ the curves homotopic to a unique boundary component of $\Sigma$, i.e. the $\flip_\Sigma$ orbits of size one in $\D\Sigma$.
\end{enumerate}

Recall that $\flip_\Sigma^\BR:\BR^{\D\Sigma}\to \BR^{\D\Sigma}$ denote the linear map induced by $\flip_\Sigma$ on $\BR^{|\D\Sigma|}$, and that $\bar\BA(\Sigma,\flip_\Sigma)$ is the subset of $\bar\BA(\Sigma)$ consisting of points whose image under the boundary map ${\bf b}:\bar\BA(\Sigma)\to\BR_{\ge 0}^{\D\Sigma}$ is $\flip_\Sigma^\BR$-invariant. Then, there is a unique map $${\bf w}_\Sigma:\bar\BA(\Sigma,\flip_\Sigma)\to \BR^\Gamma$$ making the following diagram commute:
\begin{equation} \label{eq:defWSigma}
    \xymatrix{\bar\BA(\Sigma,\flip_\Sigma)\ar[d]_{\bf b}\ar[dr]^{{\bf w}_\Sigma} & \\ \Fix(\flip^\BR_\Sigma) \ar[r]^{\sim} & \BR^\Gamma.}
\end{equation}
Noting that all components of $\D Z$  can be seen as a subset of $\D \Sigma$, we can define a canonical injection $\D Z\hookrightarrow\D\Sigma$. We denote by 
\begin{equation}\label{eq definition of boundary map restricted to Z}
{\bf b}\vert_{\D Z}:\bar\BA(\Sigma,\flip_\Sigma)\to\BR_{\ge 0}^{\D Z}    
\end{equation}
the composition of the boundary map ${\bf b}$ and the projection $\BR_{\ge 0}^{\D\Sigma}\to\BR_{\ge 0}^{\D Z}$ induced by the inclusion.

Before finally being able to state the theorem we are going to prove in this section, we need to introduce the linear form
\begin{equation}\label{eq intersection with gamma0}
\begin{array}{ccrcl}
     {\bf I}_{\gamma_0}& :& \bar\BA(\Sigma, \flip_\Sigma) &\longrightarrow &\BR \\
     & & \sum_i x_i\cdot\sigma_i &\longmapsto & \sum_i \iota(\sigma_i,\gamma_0) \cdot x_i.
\end{array}
\end{equation}
and a piece of notation: given a vector $\bar u=(u_i)\in\BR^n$ we set $\product_{+1}(u)=\prod_i(u_i+1)$.

The following result is the goal of this section:

\begin{sat}\label{thm frequency counting}
Let $\gamma_0$ be a multicurve in a closed surface $X$ of genus $g\ge 3$. We have 
$$\FC_g(\gamma_0)=
\frac 1{\sym(\gamma_0)}
\lim_{L\to\infty}
\frac 1{L^{6g-6}}
\sum_{\substack{\bar x\in\BA_\BZ(\Sigma,\flip_\Sigma)\\ {\bf I}_{\gamma_0}(\bar x)\le L}}
N_Z({\bf b}\vert_{\D Z}(\bar x))\cdot
\product_{+1}({\bf w}_\Sigma(\bar x)),$$
where $\Sigma\subset X$ is the smallest subsurface containing $\gamma_0$, where $Z$ is the union of the hyperbolic components of $X\setminus\Sigma$, where $\flip_\Sigma$ is the involution \eqref{eq involution induced for Sigma}, where ${\bf I}_{\gamma_0}$ is given by \eqref{eq intersection with gamma0}, and where we are identifying $ {\D Z}$ with a subset of $\D\Sigma$.
\end{sat}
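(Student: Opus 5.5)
Starting from Theorem \ref{thm Kasra formula}, I will replace each Thurston measure $\FM_{\Thu}(\{\mu\in\CM\CL(\tau_{\sigma\cup\zeta}):\iota(\mu,\gamma_0)\le 1\})$ by its definition \eqref{eq def thurston measure} as a scaling limit of integral points. Since $\tau_{\sigma\cup\zeta}$ is maximal (Fact \ref{fact:Adapted TT are incompatible}) and the measure is Lebesgue measure on $W(\tau_{\sigma\cup\zeta})$, each term becomes $\lim_L L^{-(6g-6)}$ times the number of integral solutions $w$ of the switch equations of $\tau_{\sigma\cup\zeta}$ with $\iota(\Phi(w),\gamma_0)\le L$. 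The region is a polyhedral cone truncated by a linear inequality, so its boundary has measure zero. I then want to interchange the sum over $\sigma,\zeta$ with the limit and regroup the integral points according to their restrictions to $X\setminus\CN(\Gamma)$.

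The key combinatorial step is to describe an integral point of $W(\tau_{\sigma\cup\zeta})$. Its restriction to $\Sigma_\hyp\cup Z$ is an integral weighted arc system $\bar x_\Sigma+\bar z$ with $\bar x_\Sigma$ supported on $\sigma$ and $\bar z$ on $\zeta$. Inside each annulus of $\CN(\Gamma)$ the adapted model of Figure \ref{ch4-fig4} leaves one free nonnegative integer: the twisting branch weight. Once the arc weights are fixed, this is the only remaining freedom, and the switch conditions force the total weights on the two sides of each annulus to agree. This compatibility is exactly the condition $\flip^\BR_\Sigma$-invariance of ${\bf b}$ (on $\Sigma$, after identifying $\D Z\subset\D\Sigma$), together with ${\bf b}(\bar z)={\bf b}\vert_{\D Z}(\bar x)$. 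For an annulus in $\Gamma$ with side weight $m={\bf w}_\Sigma(\bar x)_\gamma$, I expect the number of admissible twist parameters, or equivalently the number of ways of matching the strands, to be $m+1$ rather than unbounded. This should come from the normalisation in \cite[Exercise 11.3]{book} that $\CM\CL(\tau_\alpha)$ is a fundamental domain for $\BT$, so twists only range over one period. This produces the factor $\product_{+1}({\bf w}_\Sigma(\bar x))$. Annular components of $\Sigma$ (the curves $\Gamma_\ann$) are absorbed into $\BA(\Sigma,\flip_\Sigma)$ through \eqref{eq I have a lot of nice cats}. Also, $\iota(\Phi(w),\gamma_0)={\bf I}_{\gamma_0}(\bar x_\Sigma)$ because $\gamma_0\subset\Sigma$, and the twist in $\CN(\Gamma)$ can be pushed off $\gamma_0$ except along $\Gamma_\ann$ components of $\gamma_0$, where it is the arc weight itself.

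With this bijection in hand, summing over $\zeta\in\Lfaktor{\CA(Z)}{\PMap(Z)}$ with weights $1/|\Stab_{\PMap(Z)}(\zeta)|$ and over integral $\bar z$ with ${\bf b}(\bar z)={\bf b}\vert_{\D Z}(\bar x)$ gives exactly $N_Z({\bf b}\vert_{\D Z}(\bar x))$ by \eqref{eq number of integral arc systems with given boundary values}. Summing over $\sigma\in\CA(\Sigma_\hyp)$ and integral $\bar x_\Sigma$ supported on $\sigma$ gives the sum over $\BA_\BZ(\Sigma,\flip_\Sigma)$. Points on common faces of different $\sigma$ are overcounted, but they are $O(L^{6g-7})$ and vanish in the limit. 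Non-filling arc systems correspond to laminations outside $\CM\CL_{\gamma_0}(X)$ and are likewise negligible.

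The main obstacle is justifying the exchange of $\lim_L$ with the infinite sum over $\sigma\in\CA(\Sigma_\hyp)$ and over $\zeta$-orbits. I would handle it by dominated convergence: bound the $L$-th count for fixed $\sigma$ by $C$ times the Thurston measure of a slightly enlarged region, say with $\iota(\cdot,\gamma_0)\le 1+\epsilon$, uniformly in $L$. Monotonicity then lets me compare with the finite quantity $\FC_g(\gamma_0)$ from Theorem \ref{thm first formula for c}. Equivalently, I would argue that the whole count is the lattice-point count of a single $G$-fundamental domain whose total measure is finite and whose boundary is null. A secondary point to check carefully is the exact $m+1$ twist count, including when $m=0$.
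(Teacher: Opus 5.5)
Your proposal is correct and follows the paper's own proof essentially step by step. The shared steps are these:
\begin{itemize}
\item start from Theorem~\ref{thm Kasra formula} and count integral solutions of the switch equations of each $\tau_{\sigma\cup\zeta}$;
\item use $s_\gamma+t_\gamma={\bf w}_\gamma$ to get exactly ${\bf w}_\gamma+1$ choices (this switch relation, rather than the fundamental-domain property, is what fixes the count, including when ${\bf w}_\gamma=0$);
\item use the linear form of Lemma~\ref{lem intersection given by linear form} for $\iota(\cdot,\gamma_0)$;
\item split chart points into compatible $\Sigma$- and $Z$-parts, with the annuli absorbed via $\tilde\sigma$;
\item sum over $\zeta$ to obtain $N_Z$.
\end{itemize}

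The one place you go further is the exchange of $\lim_{L}$ with the infinite sum over $\sigma\in\CA(\Sigma_{\hyp})$, together with the overcount on shared faces; the paper performs this exchange without comment. Your dominated-convergence plan does close this. Any integral point with ${\bf I}_{\gamma_0}\le L$ forces every arc of $\sigma$ to meet $\gamma_0$ at most $L$ times, so unit cells around lattice points stay in a fixed dilate of $\Delta_\sigma(\gamma_0)$. This bounds each $\sigma$-term, uniformly in $\sigma$ and $L$, by a constant times its limit, and those limits are summable.
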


Theorem \ref{thm frequency counting} follows from Theorem \ref{thm Kasra formula} and the idea that Thurston measures can be calculated by counting points and taking a limit as described in \eqref{eq def thurston measure}. In that light, it is highly unsurprising. We suggest the reader to, in a first reading, skip its proof, that is the rest of the section.

\subsection{Measured laminations carried by a $\Gamma$-adapted train track}
Fixing maximal arc systems $\sigma\in\CA(\Sigma_{\hyp})$ and $\zeta\in\CA(Z)$, let $\tau_{\sigma\cup\zeta}\subset X$ be the $\Gamma$-adapted train track associated to $(\sigma,\zeta)\in\CA(\Sigma_{hyp})\times\CA(Z)=\CA(\Sigma_{\hyp}\cup Z)$ as described in Subsection \ref{sec: adapted TT}.
As for every train track, we can identify the set $\CM\CL(\tau_{\sigma\cup\zeta})$ of measured laminations it carries, with the set $W(\tau_{\sigma\cup\zeta})\subset\BR_{\ge 0}^{E(\tau_{\sigma\cup\zeta})}$ of solutions of the switch equations. 
In $E(\tau_{\sigma\cup\zeta})$ there is one edge per arcs in $\sigma\cup\zeta$ and two edges for each $\gamma\in\Gamma$, one going strait and one twisting, see Figure \ref{fig:SwitchAdaptedTT} or Figure \ref{ch4-fig4}. In particular $R^{E(\tau_{\sigma\cup\zeta})}$ identifies with $\bar\BA({\sigma\cup\zeta})\times\BR_{\ge 0}^{\Gamma}\times\BR_{\ge 0}^{\Gamma}$ and writing $\bar\BA(\sigma\cup\zeta)=\bar\BA(\sigma)\times\bar\BA(\zeta)$, we write points in $\bar\BA({\sigma\cup\zeta})\times\BR_{\ge 0}^{\Gamma}\times\BR_{\ge 0}^{\Gamma}$ as $((\bar x,\bar z),\bar s,\bar t)$.  Here for $\gamma\in\Gamma$, we think of the $\gamma$-th component $s_\gamma$ of $\bar s$ (resp. $t_\gamma$ of $\bar t$) as giving the weight for the \underline{s}traight (resp. \underline{t}wisty) edge in the component of $\CN(\Gamma)$ corresponding to $\gamma$ as illustrated in Figure~\ref{fig:SwitchAdaptedTT}.
\begin{figure}[!ht]
\centering
\begin{tikzpicture}  
\path (0,0) node {\includegraphics[width=0.25\linewidth]{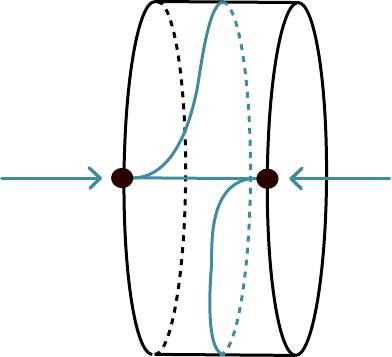}} ;
\draw
(-28mm,1.5mm) node {${\bf w}_\Gamma(\bar x,\bar z)_\gamma$}
(26mm,1.5mm) node {${\bf w}_\Gamma(\bar x,\bar z)_\gamma$}
(0mm,20mm) node {$\CN(\gamma)$}
(1mm,2mm) node {$s_\gamma$}
(4mm,-10mm) node {$t_\gamma$}
;
\end{tikzpicture}
\caption{Switch equations around $\gamma\in\Gamma$ for a $\Gamma-$adapted train-track}
    \label{fig:SwitchAdaptedTT}
\end{figure}
In these coordinates, the switch equations are given by $\mathbf{w}_\Gamma(\bar{x}, \bar{z}) = \bar{s} + \bar{t}$ and then
\begin{equation}\label{eq total form of W(tau)}
W(\tau_{\sigma \cup \zeta})
=
\{ ((\bar{x}, \bar{z}), \bar{s}, \bar{t}) \in \mathbb{A}(\alpha \cup \zeta, \flip_\Gamma) \times \mathbb{R}_{\geq 0}^\Gamma \times \mathbb{R}_{\geq 0}^\Gamma : \mathbf{w}_\Gamma(\bar{x}, \bar{z}) = \bar{s} + \bar{t} \}
\end{equation}
where we are thinking of $\flip_\Gamma$ as acting on $\D\Sigma_{\hyp}\sqcup\D Z=\D\CN(\Gamma)$, and where $$\mathbf{w}_\Gamma:\bar\BA(\sigma\cup\alpha,\flip_\Gamma)\to\BR^\Gamma$$  
is the map making commute the diagram 
$$\xymatrix{\bar\BA(\sigma\cup\alpha,\flip_\Gamma)\ar[d]_{\bf b}\ar[dr]^{{\bf w}_\Gamma} & \\ \Fix(\flip^\BR_\Gamma) \ar[r]^{\sim} & \BR^\Gamma,}$$
where $\flip^\BR_\Gamma:\BR^{\D(\Sigma\cup Z)}\to \BR^{\D(\Sigma\cup Z)}$ is the linear involution induced by $\flip_\Gamma$. We record this fact.

\begin{lem}\label{lem solutions switch equation adapted train track}
For $(\sigma,\zeta)\in\CA(\Sigma_{\hyp})\times\CA(Z)$ there is a bijection
\begin{equation}\label{eq sick of this}
\CM\CL(\tau_{{\sigma\cup\zeta}})\simeq
\left\{((\bar x,\bar z),\bar s,\bar t)
\in\bar\BA({\sigma\cup\zeta},\flip_{\Gamma})\times\BR_{\ge 0}^{\Gamma} \times\BR_{\ge 0}^{\Gamma} \ 
\middle\vert \ 
{\bf w}_\Gamma(\bar x,\bar z)=\bar s+\bar t\right\}.
\end{equation} 
This bijection restricts to a bijection between $\CM\CL_\BZ(\tau_{\sigma\cup\alpha})$ and the set of integral points on the right.\qed
\end{lem}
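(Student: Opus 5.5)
The lemma is the general identification $\CM\CL(\tau)\simeq W(\tau)$ (via $\Phi_\tau$, recalled in Section \ref{sec:Lam carried by TT}), specialized to the $\Gamma$-adapted train track $\tau_{\sigma\cup\zeta}$. The plan is to write $W(\tau_{\sigma\cup\zeta})$ explicitly in the coordinates introduced above. First list the edges. By properties (i)--(iii) of adapted train tracks, every edge of $\tau_{\sigma\cup\zeta}$ is of one of two kinds. Either it lies in $X\setminus\CN(\Gamma)$ and represents one arc of $\sigma\cup\zeta$, or it lies in a component $\CN(\gamma)$ of $\CN(\Gamma)$, where the local model of Figure \ref{ch4-fig4} gives exactly one straight edge and one twisty edge. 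This gives the identification $\BR_{\ge0}^{E(\tau_{\sigma\cup\zeta})}=\bar\BA(\sigma\cup\zeta)\times\BR_{\ge0}^\Gamma\times\BR_{\ge0}^\Gamma$, with a point written $((\bar x,\bar z),\bar s,\bar t)$.

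Next write the switch equations. The switches are the points $p_i$, one on each component $c$ of $\D\CN(\Gamma)=\D\Sigma_\hyp\sqcup\D Z$. At $p_i\in c$, one side consists of the ends of the arcs of $\sigma\cup\zeta$ that land on $c$. These are counted with multiplicity, since an arc with both ends on $c$ contributes two half-edges at $p_i$. The other side consists of the straight and twisty edges of $\CN(\gamma)$, where $\gamma$ is the curve that $c$ bounds. I need to check from Figure \ref{ch4-fig4} that the twisty edge meets each of the two switches of $\CN(\gamma)$ exactly once, on the side opposite to the arcs, exactly as the straight edge does. Granting this, the switch equation at $p_i$ reads ${\bf b}(\bar x,\bar z)_c=s_\gamma+t_\gamma$.

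Now combine the two switches of each annulus. For the two boundary components $c,\flip_\Gamma(c)$ of $\CN(\gamma)$, the two equations together say that ${\bf b}(\bar x,\bar z)_c={\bf b}(\bar x,\bar z)_{\flip_\Gamma(c)}$ and that this common value equals $s_\gamma+t_\gamma$. The first condition over all $\gamma\in\Gamma$ says precisely that $(\bar x,\bar z)\in\bar\BA(\sigma\cup\zeta,\flip_\Gamma)$. The common value is by definition ${\bf w}_\Gamma(\bar x,\bar z)_\gamma$, so the second condition is ${\bf w}_\Gamma(\bar x,\bar z)=\bar s+\bar t$. Hence $W(\tau_{\sigma\cup\zeta})$ coincides with the right-hand side of \eqref{eq sick of this}. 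Composing with $\Phi_{\tau_{\sigma\cup\zeta}}$, which is a homeomorphism onto $\CM\CL(\tau_{\sigma\cup\zeta})$, gives the bijection.

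For the integral statement, note first that integral solutions of the switch equations yield integral weighted multicurves, by the rectangle construction with integer heights. Conversely, a multicurve in $\CM\CL_\BZ(X)$ carried by $\tau_{\sigma\cup\zeta}$ induces integral edge weights, because each edge weight is the number of strands crossing that edge. Since $\Phi_\tau$ is injective, integral points correspond exactly to integral carried multicurves.

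The only delicate point is the local bookkeeping at the switches: how the twisty edge attaches, and the multiplicity-two convention for arcs with both ends on the same boundary component. This has to be matched against the definition of ${\bf b}$. I would settle it directly from the local model in Figure \ref{ch4-fig4} and the construction in \cite[Chapter 4]{book}.
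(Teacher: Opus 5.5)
Your proposal is correct and follows the paper's route. The paper likewise identifies $\BR^{E(\tau_{\sigma\cup\zeta})}$ with $\bar\BA(\sigma\cup\zeta)\times\BR_{\ge 0}^{\Gamma}\times\BR_{\ge 0}^{\Gamma}$, reads off the switch equations as ${\bf w}_\Gamma(\bar x,\bar z)=\bar s+\bar t$ (the $\flip_\Gamma$-invariance coming from the two switches of each annulus, as you spell out), and composes with $\Phi_{\tau_{\sigma\cup\zeta}}$. Your handling of the multiplicity-two convention and of integral points makes explicit what the paper leaves to the reader.
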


\subsection{Intersection form}
Fix some $(\sigma,\zeta)\in\CA(\Sigma_{\hyp})\times\CA(Z)$, a key observation is that under the bijection \eqref{eq sick of this},
the function
\begin{center}
$\begin{array}{rcl}
     \CM\CL(\tau_{\sigma\cup\zeta})& \longrightarrow &\BR_{\ge 0}  \\
     \lambda& \longmapsto& \iota(\lambda,\gamma_0)
\end{array}$
\end{center}
is the restriction of the composition of the projection
\[
\CM\CL(\tau_{\sigma\cup\zeta})
\simeq
W(\tau_{\sigma\cup\zeta}) \longrightarrow \bar\BA({\sigma\cup\zeta},\flip_{\Gamma})
\]
with the a linear form ${\bf J}_{\gamma_0}$ (that we describe bellow). Before describing this form note that the multicurve $\Gamma_{\ann}$ introduced in the introduction to this section agrees with the set of $\flip_\Gamma$-orbits contained in the invariant subset $\D\Sigma_{\ann}$.

Denoting by $y_\gamma$ the entry of $\bar y\in\BR^\Gamma$ corresponding to $\gamma\in\Gamma$, and writing $\bar x\in\BA(\sigma)$ as
$$\bar x=\sum_{\sigma_i\in\sigma}x_i\cdot\sigma_i$$
where the $\sigma_i$'s are the components of $\sigma$, consider the linear form
\begin{equation}\label{eq intersection with gamma0 linear form}
\begin{array}{cccl}
     {\bf J}_{\gamma_0} :&\bar\BA(\sigma\cup\zeta,\flip_\Gamma)& \longrightarrow &\BR_{\ge 0}  \\
     & (\bar x,\bar a) &\longmapsto &\sum_{\sigma_i\in\sigma}\iota(\sigma_i,\gamma_0)\cdot x_i+\sum_{\gamma\in\Gamma_{\ann}}{\bf w}_\Gamma(\bar x,\bar a)_\gamma.
\end{array}
\end{equation}
With this notation we have:

\begin{lem}\label{lem intersection given by linear form}
For every $((\bar x,\bar z),\bar s,\bar t)\in\bar\BA(\sigma\cup\zeta,\flip_\Gamma)\times\BR_{\ge 0}^{\Gamma}\times\BR_{\ge 0}^{\Gamma}$ with $\bar s+\bar t={\bf w}_\Gamma(\bar x,\bar z)$, we have
\[
\iota(\lambda_{(\bar x,\bar z),\bar s,\bar t},\gamma_0)={\bf J}_{\gamma_0}(\bar x,\bar z)
\]
Here $\lambda_{(\bar x,\bar z),\bar s,\bar t}$ is the measured lamination corresponding to $((\bar x,\bar z),\bar s,\bar t)$ under the bijection~\eqref{eq sick of this}.
\end{lem}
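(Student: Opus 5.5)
We will compute $\iota(\lambda,\gamma_0)$ for $\lambda=\lambda_{(\bar x,\bar z),\bar s,\bar t}$ by putting $\lambda$ and $\gamma_0$ in minimal position relative to the decomposition $X=\Sigma_{\hyp}\cup\CN(\Gamma)\cup Z$ and summing contributions piece by piece. Write $\gamma_0=\gamma_0^{\hyp}\cup\gamma_0^{\ann}$, where $\gamma_0^{\hyp}\subset\Sigma_{\hyp}$ and $\gamma_0^{\ann}$ consists of the isolated simple components, i.e. the cores of the annular components of $\Sigma$. These cores correspond exactly to $\Gamma_{\ann}$. Since $\gamma_0$ is disjoint from $Z$ and from the components of $\CN(\Gamma)$ that correspond to $\Gamma_{\flip}\cup\Gamma_{\fix}$, the only possible intersections occur inside $\Sigma_{\hyp}$ and inside the annuli of $\Sigma_{\ann}$. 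Intersection is additive over components of $\gamma_0$, so we treat the two kinds separately.

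\textbf{Annular components.} Take $\gamma\in\Gamma_{\ann}$. Every leaf of $\lambda$ that enters the corresponding annulus crosses it, and the leaves entering are counted by ${\bf w}_\Gamma(\bar x,\bar z)_\gamma=s_\gamma+t_\gamma$. A simple curve always realizes the geometric intersection number, and the adapted train track $\tau_{\sigma\cup\zeta}$ crosses the core efficiently: there are no bigons, since the twisty and straight branches both cross the core once. Hence $\iota(\lambda,\gamma)={\bf w}_\Gamma(\bar x,\bar z)_\gamma$. We will also use that $\bar s,\bar t$ only redistribute how the leaves twist, not how many cross.

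\textbf{Hyperbolic part.} Realize $\gamma_0^{\hyp}$ geodesically in $\Sigma_{\hyp}$ with respect to a metric with geodesic boundary. Realize $\lambda\cap\Sigma_{\hyp}$ as $x_i$-weighted bands of orthogeodesic arcs parallel to $\sigma_i$. The count in $\Sigma_{\hyp}$ is then $\sum_i x_i\,\iota(\sigma_i,\gamma_0)$, where $\iota(\sigma_i,\gamma_0)$ is the minimal intersection of the arc $\sigma_i$ (rel endpoints sliding on $\D\Sigma$) with $\gamma_0$. In the annuli of $\CN(\Gamma)$ adjacent to $\Sigma_{\hyp}$, the leaves twist around $\gamma\in\Gamma$, but $\gamma_0$ stays inside $\Sigma$, so these twists contribute nothing. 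Altogether this shows that the configuration meets $\gamma_0$ exactly ${\bf J}_{\gamma_0}(\bar x,\bar z)$ times, so ${\bf J}_{\gamma_0}(\bar x,\bar z)$ is an upper bound for $\iota(\lambda,\gamma_0)$.

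\textbf{Minimality (main obstacle).} It remains to show that this configuration is in minimal position, so that the count equals $\iota(\lambda,\gamma_0)$. We will apply the bigon criterion for a lamination and a curve. Suppose there were an embedded bigon between a leaf of $\lambda$ and $\gamma_0$. It cannot lie in $\Sigma_{\hyp}$, because geodesic arcs and orthogeodesic arcs bound no bigons there. It could only arise from a leaf entering and then leaving $\Sigma$ through the same boundary component, with the bigon crossing $\D\Sigma$. But $\gamma_0$ lies in the interior of $\Sigma$ and $\D\Sigma$ is essential, so such a bigon would force a boundary-parallel portion of $\gamma_0$ or a non-essential arc. This is impossible because $\gamma_0$ fills $\Sigma$ and the arcs of $\sigma$ are essential.

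An alternative route to minimality is to lift to the universal cover. Each lift of a leaf crosses a sequence of lifts of $\D\Sigma$ and arcs, and distinct lifts of geodesics meet at most once. For the annular cores, the crossing count equals the intersection of each leaf with the core, since both are simple and in minimal position by construction of the adapted track.

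Since both sides are linear in $(\bar x,\bar z)$ on the cone, it is enough to check the equality on integral points, that is, on multicurves. For those, the bigon criterion is standard, and continuity of $\iota$ on $\CM\CL(X)$ then extends the equality to all of $\CM\CL(\tau_{\sigma\cup\zeta})$.
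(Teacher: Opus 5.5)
Your proposal is correct and follows essentially the paper's route: reduce by homogeneity and continuity to integral multicurves, realize the carried curve so that its strands in $\Sigma_{\hyp}$ are in minimal position with $\gamma_0$ (your orthogeodesic bands are exactly curves carried by $\tau_{\sigma\cup\zeta}$ built from arcs in minimal position with $\gamma_0$), count ${\bf J}_{\gamma_0}(\bar x,\bar z)$ intersection points, and show they are all essential by excluding bigons, which the paper phrases as ``$\tau_{\sigma\cup\zeta}$ and $\gamma_0$ form no bigons, hence neither do carried curves and $\gamma_0$''. Two minor remarks. First, the left-hand side is not known to be linear a priori; only homogeneity and continuity are needed for the reduction. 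Second, the embedded bigon criterion is a statement about simple curves while $\gamma_0$ need not be simple, so the version of your minimality step worth writing out is the universal-cover one: since all strands are essential, a lift of a leaf crosses each lift of $\D\Sigma_{\hyp}$ at most once, hence meets each convex lift of $\Sigma_{\hyp}$ in a single strand, which crosses each geodesic lift of $\gamma_0$ at most once.
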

\begin{proof}
To begin with note that up to isotopy we might assume that $\gamma_0\cap\CN(\Gamma)=\emptyset$ and that the representatives of the arcs in $\sigma\cup\zeta$ used to build $\tau_{\sigma\cup\zeta}$ are transversal to $\gamma_0$, with minimal possible number of intersections. It follows that $\tau_{\sigma\cup\zeta}$ is transversal to $\gamma_0$ and that no component of $X\setminus(\tau_{\sigma\cup\zeta}\cup\gamma_0)$ is a bigon.

Note now that, since both sides of the equation we want to prove are continuous and homogeneous, it suffices to prove the claim when both $\bar x,\bar z$ and $\bar s$ are integers, meaning that $\lambda_{(\bar x,\bar z),\bar s,\bar t}$ is an integral multicurve. In fact, since both sides are additive, we can assume that $\lambda_{(\bar x,\bar z),\bar s,\bar t}$ is a single curve of weight $1$. Now, since $\lambda_{(\bar x,\bar z),\bar s,\bar t}\in\CM\CL(\tau_{\sigma\cup\alpha})$, we can isotope it to an immersion, which we still denote by $\lambda_{(\bar x,\bar z),\bar s,\bar t}$, of $\BS^1$ into $\tau_{\sigma\cup\zeta}$. Direct inspection shows that this immersion meets $\gamma_0$ transversely in exactly ${\bf J}_{\gamma_0}(\bar x,\bar z)$ points. Since $\tau_{\sigma\cup\zeta}$ and $\gamma_0$ form no bigons, we get that neither do $\lambda_{(\bar x,\bar z),\bar s,\bar t}$ and $\gamma_0$. It follows that all the intersections are essential, and the claim follows.
\end{proof}

\subsection{Thurston measure on $\tau_{\sigma\cup\zeta}$}
Lemma \ref{lem intersection given by linear form} is the first ingredient needed to be able to evaluate the right side of Theorem \ref{thm Kasra formula}. What is left is to understand how to calculate the Thurston measure of the set 
\begin{equation}\label{eq I am tired of this shit}
U_{\sigma\cup\zeta}=\{ \mu \in \mathcal{ML}(\tau_{\sigma \cup \zeta}) : \iota(\mu, \gamma_0) \leq 1 \}.
\end{equation}
Note that we get for example from Lemma \ref{lem intersection given by linear form} that this set is piece-wise linear, in particular its boundary has vanishing Thurston measure.
We deduce thus from \eqref{eq def thurston measure} that
\begin{equation} \label{measureToCounting}
    \FM_{\Thu}(U_{\sigma\cup\zeta})=\lim_{L\to\infty}\frac{\vert\{\lambda\in \mathcal{ML}_\BZ(\tau_{\sigma \cup \zeta})\text{ with }\iota(\lambda,\gamma_0)\le L\}\vert}{L^{6g-6}}.
\end{equation}
Since the bijection provided by Lemma \ref{lem solutions switch equation adapted train track} induces a bijection between the corresponding sets of integer points, we get that
$$\FM_{\Thu}(U_{\sigma\cup\zeta})
=\lim_{L\to\infty}\frac{1}{L^{6g-6}}
\left\vert\left\{\begin{array}{l}
    ((\bar x,\bar z),\bar s,\bar t)\in\bar\BA_\BZ({\sigma\cup\zeta},\flip_{\Gamma})\times\BZ_{\ge 0}^{\Gamma} \times\BZ_{\ge 0}^{\Gamma}\\
    \text{with }{\bf w}_\Gamma(\bar x,\bar z)=\bar s+\bar t\text{ and }\iota(\lambda_{(\bar x,\bar z),\bar s,\bar t},\gamma_0)\le L
\end{array}\right\}\right\vert.$$
Invoking Lemma \ref{lem intersection given by linear form}, and noting that $\bar\BA(\sigma\cup\zeta,\flip_\Gamma)\setminus\BA(\sigma\cup\zeta,\flip_\Gamma)$ has lower dimension, we can rewrite this as
$$\FM_{\Thu}(U_{\sigma\cup\zeta})=
\lim_{L\to\infty}\frac{1}{L^{6g-6}}
\left\vert\left\{\begin{array}{l}
((\bar x,\bar z),\bar s,\bar t)\in\BA_\BZ({\sigma\cup\zeta},\flip_{\Gamma})\times\BZ_{\ge 0}^{\Gamma} \times\BZ_{\ge 0}^{\Gamma}\\ 
\text{with }{\bf w}_\Gamma(\bar x,\bar z)=\bar s+\bar t\text{ and }{\bf J}_{\gamma_0}(\bar x,\bar z)\le L\end{array}
\right\}\right\vert.$$
Observing that for every $(\bar x,\bar z)\in\BA_\BZ({\sigma\cup\zeta})$ there are precisely 
$$\product_{+1}({\bf w}_\Gamma(\bar x,\bar z))=
    \prod_{\gamma\in\Gamma}({\bf w}_\Gamma(\bar x,\bar z)_\gamma+1)$$
pairs $(\bar s,\bar t)\in \BZ_{\ge 0}^{\Gamma} \times\BZ_{\ge 0}^{\Gamma}$ with ${\bf w}_\Gamma(\bar x,\bar z)=\bar s+\bar t$, we can rewrite the last expression for $\FM_{\Thu}(U)$ as follows:
\begin{multline}\label{eq almost there}
\FM_{\Thu}(\{ \mu \in \mathcal{ML}(\tau_{\sigma \cup \zeta}) : \iota(\mu, \gamma_0) \leq 1 \})=\\ =\lim_{L\to\infty}
\frac 1{L^{6g-6}}
\sum_{\substack{(\bar x,\bar z)\in\BA_\BZ(\sigma\cup\alpha,\flip_{\Gamma})\\ {\bf J}_{\gamma_0}(\bar x,\bar z)\le L}}
\product_{+1}({\bf w}_\Gamma(\bar x,\bar z)).
\end{multline}
The main issue with \eqref{eq almost there} is that we are adding over a pretty complicated set.

\subsection{Decomposing $(\BA(\sigma\cup\alpha),\flip_\Gamma)$} 
The involution $\flip_\Gamma$ does not preserve the decomposition $\D(\Sigma_{hyp}\sqcup Z)=\D\Sigma_{\hyp}\sqcup \D Z$. It still induces the following two involutions on $\D\Sigma_\hyp$ and $\D Z$ as described in Figure \ref{fig:flip hyp and Z}:
\begin{align*}
\flip_{\Sigma_{\hyp}} : \D\Sigma_\hyp &\longrightarrow \D\Sigma_\hyp\\
c &\longmapsto  
\begin{cases}
   \flip_\Gamma(c), & \text{if }\flip_\Gamma(c)\in\D\Sigma_\hyp,\\[2pt]
   c, & \text{otherwise},
\end{cases}
\end{align*}

\begin{align*}
\flip_Z : \D Z &\longrightarrow \D Z\\
c &\longmapsto 
\begin{cases}
   \flip_\Gamma(c), & \text{if }\flip_\Gamma(c)\in\D Z,\\[2pt]
   c, & \text{otherwise}.
\end{cases}
\end{align*}

\begin{figure}[!ht]
\centering
\begin{tikzpicture}
\path (0,0) node {\includegraphics[width=100mm]{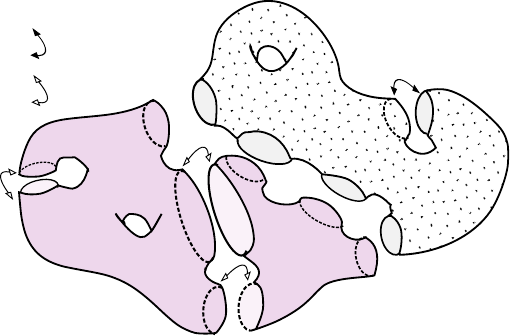}} ;
\draw
(-26mm,16mm) node {action of $\flip_{\Sigma_\hyp}$} 
(-28mm,25mm) node {action of $\flip_{Z}$}
;
\end{tikzpicture} 
\caption{Description of the action of $\flip_{\Sigma_\hyp}$ and $\flip_Z$ for the setting corresponding to Figure \ref{fig:full description}}
    \label{fig:flip hyp and Z}
\end{figure}

We will denote by $\D_{\fix}\Sigma_{\hyp}\subset\D\Sigma_{\hyp}$ and $\D_{\fix} Z \subset\D Z$ union of connected components fixed by $\flip_{\Sigma_{\hyp}}$ and $\flip_Z$ respectively: $\D_{\fix}\Sigma_{\hyp}=\Fix(\flip_{\Sigma_{\hyp}})$ and $\D_{\fix}Z=\Fix(\flip_Z)$. We also decompose
$$\D Z\setminus\D_{\fix}Z=\D_LZ\sqcup\D_RZ$$
where $\flip_Z(\D_LZ)=\D_RZ$. We can summarize this as follows:
$$\D\CN(\Gamma)=\overbrace{\supp(\flip_{\Sigma_{\hyp}})\sqcup\D_{\fix}\Sigma_{\hyp}}^{\D\Sigma_{\hyp}}\sqcup\overbrace{\D_{\fix}Z\sqcup\underbrace{\D_LZ\sqcup\D_RZ}_{\supp(\flip_Z)}}^{\D Z}.$$
Both multicurves $\D_{\fix}\Sigma_{hyp}$ and $\D_{\fix}Z$ are isotopic in $X$ to the same submulticurve $\Gamma_{\fix}$ of $\Gamma$. Similarly, the two multicurves $\D_LZ$ and $\D_RZ$ are isotopic in $X$ to one and the same submulticurve $\Gamma_{\ann}$ of $\Gamma$. We thus get an identification
\begin{equation}\label{eq juan needs badly a beer}
\begin{split}
\BR^{\D(X\setminus\CN(\Gamma))}
&=\overbrace{\BR^{\supp(\flip_{\Sigma_{\hyp}})}}^{\text{(1)}}\times\overbrace{\BR^{\D_{\fix}\Sigma_{\hyp}}\times\BR^{\D_{\fix}Z}}^{\text{(2)}}\times\overbrace{\BR^{\supp(\flip_{Z})}}^{\text{(3)}}\\
&=\overbrace{\BR^{\supp(\flip_{\Sigma_{\hyp}})}}^{\text{(1)}}\times\overbrace{\BR^{\Gamma_{\fix}}\times\BR^{\Gamma_{\fix}}}^{\text{(2)}}\times\overbrace{\BR^{\Gamma_{\ann}}\times\BR^{\Gamma_{\ann}}}^{\text{(3)}}.
\end{split}
\end{equation}

The linear map $\flip_\Gamma^{\BR}$ induced by the involution $\flip_\Gamma$ preserves each one of three overbraced factors in \eqref{eq juan needs badly a beer}, acting like $\flip_{\Sigma_{\hyp}}^{\BR}$ on (1), and exchanging both factors within (2) and (3) (see Figure \ref{fig:flip gamma}).

Recall now that $\BA(\Sigma_{\hyp},\flip_{\Sigma_\hyp})$ and  $\BA(Z,\flip_Z)$ are the preimages under the corresponding boundary map
\begin{align*}
{\bf b}_{\Sigma_{\hyp}}&:\BA(\Sigma_{\hyp})\to\BR_{\ge 0}^{\D\Sigma_{\hyp}}\simeq\BR^{\supp(\flip)}\times\BR^{\Gamma_{\fix}}\text{ and }\\
{\bf b}_Z&:\BA(Z)\to\BR_{\ge 0}^{\D Z}\simeq\BR^{\Gamma_{\fix}}\times\BR^{\supp(\flip_Z)}
\end{align*}
of the set of fixed points of the involutions of the target induced by $\flip_{\Sigma_{\hyp}}$ and $\flip_Z$ respectively. Denote by
$${\bf b}_{\Sigma_{\hyp}}\vert_{\fix}:\BA(\Sigma_{\hyp})\to\BR^{\D_{\fix}\Sigma_{\hyp}}=\BR^{\Gamma_{\fix}}\text{ and }{\bf b}_Z\vert_{\fix}:\BA(Z)\to\BR^{\D_{\fix}Z}=\BR^{\Gamma_{\fix}}$$
obtained by composing the respective boundary map with the projection onto the $\BR^{\Gamma_{\fix}}$-factors. With this notation we have that the image of
$$(\bar x,\bar z)\in\BA(\sigma)\times\BA(\zeta)=\BA(\sigma\cup\zeta)$$
under the boundary map ${\bf b}_{X\setminus\CN(\Gamma)}:\BA(\Sigma_{\hyp}\sqcup Z)\to\BR^{\D(\Sigma_{\hyp}\sqcup Z)}=\BR^{\D\CN(\Gamma)}$ is invariant under $\flip_\Gamma$ if and only if $\bar x\in\BA(\Sigma_{\hyp},\flip_{\Sigma_{\hyp}})$, $\bar z\in\BA(Z,\flip_Z)$, and ${\bf b}_{\Sigma_{\hyp}}\vert_{\fix}(\bar x)={\bf b}_{Z}\vert_{\fix}(\bar z)$.

Everything we have done maps integer points to integer points. In particular we get:

\begin{lem}\label{lem bijection arc complexes new}
We have a bijection
$$\BA_{\BZ}(\sigma\cup\alpha,\flip_\Gamma)\simeq\{(\bar x,\bar z)\in\BA_{\BZ}(\sigma,\flip_{\Sigma_{\hyp}})\times\BA_{\BZ}(\zeta,\flip_Z)\text{ with }{\bf b}_{\Sigma_{hyp}}\vert_{\fix}(\bar x)={\bf b}_Z\vert_{\fix}(\bar a)\}.$$
\qed
\end{lem}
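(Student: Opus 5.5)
The argument is bookkeeping: unwind the definition of $\BA_\BZ(\sigma\cup\zeta,\flip_\Gamma)$ using the product structure of the arc complex and the coordinate splitting \eqref{eq juan needs badly a beer}. First, since $X\setminus\CN(\Gamma)=\Sigma_{\hyp}\sqcup Z$ is a disjoint union, \eqref{eq product decompostion of arc complex} identifies $\bar\BA(\sigma\cup\zeta)$ with $\bar\BA(\sigma)\times\bar\BA(\zeta)$. Under this identification, an element is integral (resp. filling) if and only if both coordinates $\bar x$ and $\bar z$ are integral (resp. filling), because filling is checked componentwise. The boundary map splits accordingly as ${\bf b}_{X\setminus\CN(\Gamma)}(\bar x,\bar z)=({\bf b}_{\Sigma_{\hyp}}(\bar x),{\bf b}_Z(\bar z))$.

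Second, we unpack the condition $\flip_\Gamma^\BR$-invariance of this vector, using the three blocks of \eqref{eq juan needs badly a beer}.
\begin{itemize}
\item[(1)] On block (1), $\flip_\Gamma^\BR$ acts as $\flip_{\Sigma_{\hyp}}^\BR$. Invariance there is exactly the statement that ${\bf b}_{\Sigma_{\hyp}}(\bar x)$ is fixed by $\flip_{\Sigma_{\hyp}}^\BR$, because on $\D_{\fix}\Sigma_{\hyp}$ the involution $\flip_{\Sigma_{\hyp}}$ is trivial. Hence this block gives $\bar x\in\BA(\sigma,\flip_{\Sigma_{\hyp}})$.
\item[(2)] On block (2), $\flip_\Gamma^\BR$ swaps $\BR^{\D_{\fix}\Sigma_{\hyp}}$ and $\BR^{\D_{\fix}Z}$. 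Invariance is therefore ${\bf b}_{\Sigma_{\hyp}}\vert_{\fix}(\bar x)={\bf b}_Z\vert_{\fix}(\bar z)$.
\item[(3)] On block (3), $\flip_\Gamma^\BR$ swaps the $\D_LZ$ and $\D_RZ$ coordinates, which is exactly what $\flip_Z^\BR$ does. Since $\flip_Z$ fixes $\D_{\fix}Z$, invariance is equivalent to $\bar z\in\BA(\zeta,\flip_Z)$.
\end{itemize}
This is precisely the characterization stated just before the lemma.

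Third, all identifications involved (the product splitting and the coordinate blocks) are linear with integer coefficients and send the integer lattice bijectively to the integer lattice. So restricting the real bijection to integral points yields the claimed bijection. Here one also notes that the right-hand side should read ${\bf b}_Z\vert_{\fix}(\bar z)$, and that $\alpha$ in the statement means $\zeta$.

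The only step requiring care is the second one, and specifically the claim that each $\flip_\Gamma$-orbit lies inside one of the three blocks. I would check this by going through the orbit types listed with Figure \ref{fig:flip gamma}:
\begin{itemize}
\item an orbit pairing two boundary components of $\Sigma_{\hyp}$ lies in block (1);
\item an orbit pairing a boundary component of $\Sigma_{\hyp}$ with one of $Z$ gives $\Gamma_{\fix}$ and lies in block (2);
\item an orbit pairing two boundary components of $Z$ across an annulus of $X\setminus\Sigma$ lies in block (3).
\end{itemize}
Once this is verified, the rest of the argument is immediate.
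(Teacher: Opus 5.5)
Your proposal is correct and follows the paper's own argument: the paper derives the lemma from exactly this block-by-block reading of $\flip_\Gamma^\BR$-invariance through \eqref{eq juan needs badly a beer}, together with the remark that all identifications send integer points to integer points, and you are right that $\alpha$ and $\bar a$ in the statement should read $\zeta$ and $\bar z$. One harmless slip in your final check: orbits with both ends in $\D Z$ are paired across annular components of $\Sigma$ (they form $\Gamma_{\ann}$), not of $X\setminus\Sigma$, whose annular components have both boundaries in $\D\Sigma_{\hyp}$ and give block (1); since the block an orbit falls in is determined only by where its two ends lie (and $\flip_\Gamma$ has no fixed points), this does not affect the argument.
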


\subsection{Separation of variables}
Lemma \ref{lem bijection arc complexes new} allows us to express a sum over $(\bar x,\bar z)\in\BA_{\BZ}(\sigma\cup\zeta,\flip_\Gamma)$ as two nested sums: a first one over $\bar x$, and a second one over those $\bar z$ satisfying ${\bf b}_{\Sigma_{\hyp}}\vert_{\fix}(\bar x)={\bf b}_Z\vert_{\fix}(\bar z)$. When we apply this idea and let $U_{\sigma\cup\zeta}$ be as in \eqref{eq I am tired of this shit}, we can rewrite \eqref{eq almost there} as follows:
\begin{equation}\label{eq just got a printer}
    \FM_{\Thu}(U_{\sigma \cup \zeta})
    =
    \lim_{L \to \infty} \frac{1}{L^{6g-6}}
    \sum_{(\bar{x}, \bar{z})}
    \product_{+1}(\mathbf{w}_\Gamma(\bar{x}, \bar{z})),
\end{equation}
where $((\bar{x}, \bar{z}))$ runs over the set
\[
    \{ (\bar{x}, \bar{z}) \in \BA_\BZ(\sigma,\flip_{\Sigma_{\hyp}}) \times \BA_\BZ(\zeta,\flip_Z) : {\bf b}_{\Sigma_{\hyp}}\vert_{\fix}(\bar x) = {\bf b}_Z\vert_{\fix}(\bar z),\ {\bf J}_{\gamma_0}(\bar{x}, \bar{z}) \leq L\}.
\]
To rewrite this in a more useful way recall that $\D Z$ is the disjoint union of $\D_{\fix}Z\simeq\Gamma_{\fix}$, $\D_LZ\simeq\Gamma_{\ann}$ and $\D_RZ\simeq\Gamma_{\ann}$, and that $\flip_Z$ fixes the first set pointwise and exchanges the two others. It follows that the image under the boundary map ${\bf b}_Z$ of any $\bar z\in\BA(Z,\flip_Z)$ is for the form $(\bar u,\bar v,\bar v)\in \BR^{\Gamma_{\fix}}\times\BR^{\Gamma_{\ann}}\times\BR^{\Gamma_{\ann}}=\BR^{\D Z}$. There are thus unique maps $${\bf w}_{\fix}:\BA(Z,\flip_Z)\to\BR^{\Gamma_{\fix}}\quad \text{and} \quad {\bf w}_{\ann}:\BA(Z,\flip_Z)\to\BR^{\Gamma_{\ann}}$$ making the following diagram commute
$$\xymatrix{
& \BA(Z,\flip_Z)\ar[d]_{{\bf b}_Z}\ar[dr]^{{\bf w}_{{\ann}}} \ar[dl]_{{\bf w}_{{\fix}}}& \\ 
\BR^{\Gamma_{\fix}} & \BR^{\Gamma_{\fix}}\times\BR^{\Gamma_{\ann}}\times\BR^{\Gamma_{\ann}} \ar[r]\ar[l]& \BR^{\Gamma_{\ann}}}$$
where the horizontal arrows are the projections onto the first and last factors respectively. 

Recall that $\|{\bf w}_{{\ann}}(\bar z)\|$ is the $\ell^1$ norm ${\bf w}_{{\ann}}(\bar z)$ so its the sum of all its entries in the vector, and for $\sigma\in\CA(\Sigma_{\hyp})$ we write $\bar x\in\BA(\sigma)$ as $\bar x=\sum_{\sigma_i\in\sigma}x_i\cdot\sigma_i$, then we have
$${\bf J}_{\gamma_0}(\bar x,\bar z)=\sum_{\sigma_i\in\sigma}\iota(\sigma_i,\gamma_0)\cdot x_i+\|{\bf w}_{{\ann}}(\bar z)\|.$$
In other words, if we denote
\begin{equation}\label{eq intersection with gamma0 linear form1}
\begin{array}{cccc}
    {\bf J}_{\gamma_0}': & \BA(\sigma,\flip_{\Sigma_{\hyp}})\times\BR^{\Gamma_{\ann}} & \to &\BR \\
     & \left(\sum_{\sigma_i\in\sigma}x_i\cdot\sigma_i,\bar v\right) & \mapsto & \sum_{\sigma_i\in\sigma}\iota(\sigma_i,\gamma_0)\cdot x_i+\|\bar v\|
\end{array}
\end{equation}
we have that the commutative diagram
$$\xymatrix{\BA(\Sigma_{\hyp}\cup Z,\flip_\Gamma)\ar[d]_{\mathrm{proj}\times{\bf w}_{{\ann}}}\ar[drr]^{{\bf J}_{\gamma_0}} & & \\ \BA(\Sigma_{\hyp},\flip_{\Sigma_{\hyp}})\times\BR^{\Gamma_{\ann}} \ar[rr]_{\hspace{1.2cm} {\bf J}'_{\gamma_0}}& & \BR}$$
where $\mathrm{proj}$ is the map induced by the projection of $\BA(\Sigma_{\hyp}\cup Z)=\BA(\Sigma_{\hyp})\times\BA(Z)$ to the first factor. With this notation we can rewrite \eqref{eq just got a printer} as
\begin{equation}\label{eq just got a printer2}
\FM_{\Thu}(U_{\sigma\cup\zeta})
=\lim_{L\to\infty}\frac 1{L^{6g-6}}\sum
\left\vert\left\{\substack{\bar z\in\BA_\BZ(\zeta,\flip_Z)\\ \text{with }{\bf b}_Z(\bar z)=({\bf b}_{\Sigma_\hyp|\fix}(\bar x),\bar v,\bar v)}\right\}\right\vert\cdot\product_{+1}({\bf w}_{\Sigma_{\hyp}}(\bar x),\bar v),
\end{equation}
where the sum is over all pairs $(\bar x,\bar v)\in\BA_\BZ(\sigma,\flip_{\Sigma_{\hyp}})\times \BZ_{\ge 0}^{\Gamma_{\ann}}$ with ${\bf J}'_{\gamma_0}(\bar x,\bar v)\le L$.

To rewrite this in a more compact way, recall that to the maximal arc system $\sigma\in\CA(\Sigma_{\hyp})$ we can associate a unique maximal arc system $\tilde\sigma\in\CA(\Sigma)$: the arc system $\tilde\sigma$ is the union of $\sigma$ and of a collection of arcs, one for each connected component of $\Sigma_{\ann}$ (see  \eqref{eq:completAS}). Identifying $\pi_0(\Sigma_{\ann})$ with $\Gamma_{\ann}$ we get the bijection
\[    \begin{array}{ccc}
     \BA(\sigma,\flip_{\Sigma_{\hyp}})\times\BR_{\ge 0}^{\Gamma_{\ann}} &  \to  &\BA(\tilde\sigma,\flip_\Sigma) \\
     \bar x, \bar v& \mapsto & (\bar x,\bar v).
\end{array}
\]

Under this identification, the linear map ${\bf J}'_{\gamma_0}$ from \eqref{eq intersection with gamma0 linear form1} becomes the form ${\bf I}_{\gamma_0}$ from \eqref{eq intersection with gamma0}. Moreover, the map 
\[ {\bf b}= {\bf b}_{\Sigma} : \BA(\tilde\sigma,\flip_\Sigma)  \to   \BR_{\ge}^{\D\Sigma}\]
corresponds to the map 
\[ \begin{array}{ccc}
    \BA(\sigma,\flip_{\Sigma_{\hyp}})\times\BR_{\ge 0}^{\Gamma_{\ann}} & \to &  \BR^{\Rfaktor{\D\Sigma_{\hyp}}{\flip_{\Sigma_{\hyp}}}} \times \BR^{\Gamma_{\ann}} \times \BR^{\Gamma_{\ann}} \\
    (\bar x, \bar v) & \mapsto & ({\bf b}_{\Sigma_\hyp}(\bar x),\bar v,\bar v), 
\end{array}    \]
then  $({\bf b}_{\Sigma_\hyp|\fix}(\bar x),\bar v,\bar v)$ becomes ${\bf b}\vert_{\D z}(\bar x,\bar v)$ as defined in in \eqref{eq definition of boundary map restricted to Z}, and $({\bf w}_{\Sigma_{\hyp}}(\bar x),\bar v)$ becomes ${\bf w}_\Sigma(\bar x,\bar v)$. We thus can rewrite \eqref{eq just got a printer2} as follows:

\begin{lem}\label{lem calculating thurston measure intersection adapted train track}
For $(\sigma,\zeta)\in\CA(\Sigma_{\hyp})\times\CA(Z)$ we have
\begin{multline*}
\FM_{\Thu}(\{ \mu \in \mathcal{ML}(\tau_{\sigma \cup \zeta}) : \iota(\mu, \gamma_0) \leq 1 \})=\\
=\lim_{L\to\infty}\frac 1{L^{6g-6}}\sum_{\substack{\bar x\in\BA_\BZ(\tilde\sigma,\flip_{\Sigma})
\\ {\bf I}_{\gamma_0}(\bar x)\le L}}\left\vert\left\{\substack{\bar z\in\BA_\BZ(\zeta,\flip_Z) \text{ with}\\{\bf b}_Z(\bar z)={\bf b}\vert_{\D Z}(\bar x)}\right\}\right\vert\cdot\product_{+1}({\bf w}_\Sigma(\bar x)),
\end{multline*}
where $\tau_{\sigma\cup\zeta}$ is the $\Gamma$-adapted train track associated to ${\sigma\cup\zeta}$, and where $\tilde\sigma\in\CA(\Sigma)$ is the  unique maximal arc system in $\Sigma$ with $\sigma=\tilde\sigma\cap\Sigma_{\hyp}$.\qed
\end{lem}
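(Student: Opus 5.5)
The statement is obtained by rewriting \eqref{eq just got a printer2}. Its proof therefore consists of three steps: an identification of parameter spaces, a translation of each ingredient of the summand under that identification, and a reindexing of the sum.

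\textbf{Step 1: identify the parameter spaces.} First I will set up the bijection
$$\BA_\BZ(\sigma,\flip_{\Sigma_{\hyp}})\times\BZ_{\ge 0}^{\Gamma_{\ann}}\to\BA_\BZ(\tilde\sigma,\flip_\Sigma),\qquad (\bar x,\bar v)\mapsto(\bar x,\bar v).$$
Here the coordinate $v_\gamma$ is the weight placed on the unique arc of the annular component of $\Sigma$ whose core is $\gamma\in\Gamma_{\ann}$. Three facts are needed.

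\begin{enumerate}
\item The map is a bijection on integral points. This holds because $\tilde\sigma$ is $\sigma$ together with exactly one arc per annulus, as in \eqref{eq:completAS}.
\item The $\flip_\Sigma$-invariance condition splits correctly. Since $\flip_\Sigma$ exchanges the two boundary components of each annulus, an annular arc always gives equal weight $v_\gamma$ to both sides, so the condition is automatic on $\Sigma_{\ann}$. On $\D\Sigma_{\hyp}$, $\flip_\Sigma$ agrees with $\flip_{\Sigma_{\hyp}}$, so the condition there is exactly membership in $\BA(\sigma,\flip_{\Sigma_{\hyp}})$.
\item Filling is preserved. A weighted arc system on $\Sigma$ is filling if and only if it is filling on $\Sigma_{\hyp}$ and positive on every annulus. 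The integral points with some $v_\gamma=0$ form a lower-dimensional set, so they do not affect the normalized limit. I will note this explicitly; alternatively, the sum in \eqref{eq just got a printer2} already ranges over $\bar v\in\BZ_{\ge0}^{\Gamma_{\ann}}$, matching the convention that zero weights are forgotten.
\end{enumerate}

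\textbf{Step 2: translate each ingredient.} Next I will check, term by term, that the summand of \eqref{eq just got a printer2} becomes the summand of the lemma.

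\begin{enumerate}
\item ${\bf J}'_{\gamma_0}(\bar x,\bar v)={\bf I}_{\gamma_0}(\bar x,\bar v)$. The essential arc of an annulus with core $\gamma\in\Gamma_{\ann}$, where $\gamma$ is a component of $\gamma_0$, meets $\gamma_0$ exactly once, so it contributes $v_\gamma$. The arcs of $\sigma$ contribute $\iota(\sigma_i,\gamma_0)x_i$, as before.
\item ${\bf b}\vert_{\D Z}(\bar x,\bar v)=({\bf b}_{\Sigma_{\hyp}}\vert_{\fix}(\bar x),\bar v,\bar v)$. The components of $\D Z$ are identified with $\D_{\fix}\Sigma_{\hyp}\sqcup\D_LZ\sqcup\D_RZ$, and the latter two correspond to the two boundary components of each annulus, each receiving weight $v_\gamma$.
\item ${\bf w}_\Sigma(\bar x,\bar v)=({\bf w}_{\Sigma_{\hyp}}(\bar x),\bar v)$. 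This is read off from the diagram \eqref{eq:defWSigma}: orbits of $\flip_\Sigma$ are either orbits in $\D\Sigma_{\hyp}$ or pairs of boundary components of an annulus.
\end{enumerate}

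\textbf{Step 3: reindex the sum.} Finally I will substitute these identities into \eqref{eq just got a printer2} and reindex the sum over $(\bar x,\bar v)$ as a sum over $\bar x\in\BA_\BZ(\tilde\sigma,\flip_\Sigma)$ with ${\bf I}_{\gamma_0}(\bar x)\le L$. The counting factor for $\bar z$ becomes $\vert\{\bar z\in\BA_\BZ(\zeta,\flip_Z):{\bf b}_Z(\bar z)={\bf b}\vert_{\D Z}(\bar x)\}\vert$, and the product factor becomes $\product_{+1}({\bf w}_\Sigma(\bar x))$. This is precisely the claimed formula.

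\textbf{Main obstacle.} The only point requiring care is the bookkeeping in Step 1(iii) and Step 2(ii): the identification of $\D Z$ inside $\D\Sigma$ must match $\D_LZ$ and $\D_RZ$ with the two sides of the corresponding annulus, and the convention for filling must be consistent on annular components. I would handle both by unwinding the definitions of $\flip_\Gamma$, $\flip_\Sigma$ and $\flip_Z$, and by noting that boundary-of-quadrant contributions vanish in the limit.
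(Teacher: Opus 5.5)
Your proposal is correct and follows the paper's argument. The paper likewise identifies $\BA(\sigma,\flip_{\Sigma_{\hyp}})\times\BR_{\ge 0}^{\Gamma_{\ann}}$ with $\BA(\tilde\sigma,\flip_\Sigma)$ and notes that ${\bf J}'_{\gamma_0}$, $({\bf b}_{\Sigma_{\hyp}}\vert_{\fix}(\bar x),\bar v,\bar v)$ and $({\bf w}_{\Sigma_{\hyp}}(\bar x),\bar v)$ become ${\bf I}_{\gamma_0}$, ${\bf b}\vert_{\D Z}(\bar x,\bar v)$ and ${\bf w}_\Sigma(\bar x,\bar v)$, and then rewrites \eqref{eq just got a printer2}. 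Your extra remark about zero annular weights is harmless and in fact unnecessary: terms with some $v_\gamma=0$ already contribute nothing, because a filling $\bar z$ puts positive weight on every component of $\D Z$, a point the paper leaves implicit.
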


\subsection{Proof of Theorem \ref{thm frequency counting}}
Let us now see how we can rewrite the right side of Theorem \ref{thm Kasra formula} in terms of Lemma \ref{lem calculating thurston measure intersection adapted train track}. Recalling the definition of $N_Z(\cdot)$ in \eqref{eq number of integral arc systems with given boundary values} we get  
$$N_Z(\bar u,\bar v,\bar v)=\sum_{\zeta \in \Lfaktor{\CA(Z)}{P \Map(Z)}}\frac{\left\vert\left\{\substack{\bar z\in\BA_\BZ(\zeta,\flip_Z) \text{ with}\\{\bf b}_Z(\bar z)=(\bar u,\bar v,\bar v)}\right\}\right\vert}{|\Stab_{P \Map(Z)}(\zeta)|}$$
for $\bar u\in\BR^{\Gamma_{\fix}}$ and $\bar v\in\BR^{\Gamma_{\ann}}$.
Fixing $\sigma\in\CA(\Sigma_{hyp})$, we then get from Lemma \ref{lem calculating thurston measure intersection adapted train track} that 
\begin{multline*}
\sum_{\zeta \in \Lfaktor{\CA_{Z}}{P \Map(Z)}}\frac{\FM_{\Thu}(\{ \mu \in \mathcal{ML}(\tau_{\sigma \cup \zeta}) : \iota(\mu, \gamma_0) \leq 1 \})}{|\Stab_{P \Map(Z)}(\zeta)|}=\\
=\lim_{L\to\infty}\frac 1{L^{6g-6}}\sum_{\substack{\bar x\in\BA_\BZ(\tilde\sigma,\flip_{\Sigma})\\ {\bf I}_{\gamma_0}(\bar x)\le L}}N_Z({\bf b}\vert_{\D Z}(\bar x))\cdot\product_{+1}({\bf w}_\Sigma(\bar x)).
\end{multline*}
When we now add over all $\sigma\in\CA(\Sigma_{\hyp})$ and invoking  Theorem \ref{thm Kasra formula} we get
$$\FC_g(\gamma_0)=\frac 1{\sym(\gamma_0)}\lim_{L\to\infty}\frac 1{L^{6g-6}}\sum_{\substack{\bar x\in\BA_\BZ(\Sigma,\flip_\Sigma)\\ {\bf I}_{\gamma_0}(\bar x)\le L}}N_Z({\bf b}\vert_{\D Z}(\bar x))\cdot\product_{+1}({\bf w}_\Sigma(\bar x)),$$
as we needed to prove.\qed

\section{The frequency $\FC(\gamma_0)$ in terms of the Kontsevich polynomial}\label{sec:kontsevich}
The title is self-explanatory, but what we might need to explain before stating the main result of this section is what is the Kontsevich polynomial.

Suppose that $Z$ is a compact orientable surface, all of whose connected components are hyperbolic. We suppose that the boundaries of $Z$ have been labeled and denote by $g_Z$ and $n_Z$ the total genus, that is the sum of the genera of all connected components, and the number of boundary components of $Z$. Now, let $\hat Z$ be the closed surface obtained by closing up each components $\D_iZ$ of the boundary to a point $x_i\in\hat Z$, and denote by
$$\CM_{g_Z,n_Z}=\CM(\hat Z,x_1,\dots,x_{n_Z})$$
the moduli space of Riemann surface structures on $\hat Z$ with the points $x_1,\dots,x_{n_Z}$ marked. The moduli space $\CM$ is neither a manifold nor compact, but it can be compactified to a good complex orbifold of complex dimension $3g_Z-3+n_Z$, and hence of real dimension $6g_Z-6+2n_Z$ (see \cite{Deligne-Mumford,Looijenga - Intersection theoryonDeligne-Mumfordcompactifications}). 
Points in $\overline{\CM}\smallsetminus \CM$ are so-called stable nodal curves. What is important is that in these nodal curves we still have our labeled points, and that they are smooth points of the given nodal curve. Now, each one of our labeled points $x_i$ determines a holomorphic line bundle $L_i\to\overline\CM$: the fiber of $L_i$ over $X\in\overline\CM$ is the cotangent space of $X$ at the marked point $x_i$. Denote by $c_1(L_i)\in H^2_{\mathrm{dR}}(\overline\CM)$ the first Chern class of the line bundle $L_i$. 

Suppose now that $d_1,\dots,d_{n_Z}\in\BZ_{\ge 0}$ are such that $d_1+\dots+d_{n_Z}=3g_Z-3+n_Z=\dim_{\BC}\overline\CM$. For mystifying reasons, it is standard to denote by $\langle \tau_{d_1} \cdots \tau_{d_{n_Z}} \rangle$ the intersection number of the classes $c_1(L_1)^{d_1},\dots,c_1(L_{n_Z})^{d_{n_Z}}$:
\begin{equation}\label{eq intersection number chern classes}
\langle \tau_{d_1} \cdots \tau_{d_n} \rangle =\int_{\overline\CM_{g_Z,n_Z}}c_1(L_1)^{d_1}\dots c_1(L_{n_Z})^{d_{n_Z}}.
\end{equation}
The {\em Kontsevich polynomial} is the degree $6g_Z-6+2n_Z$ homogeneous polynomial in $n_Z$ variables $b_1,\dots,b_{n_Z}$ given by 
\begin{equation}\label{eq Konsevich polynomial}
    V_Z(b_1, \dots, b_n)
    \coloneqq
    \sum_{\substack{(d_1, \dots, d_{n_Z}) \in \mathbb{Z}_{\geq 0}^{n_Z} \\ d_1 + \cdots + d_{n_Z} = 3g_Z-3+n_Z}}
    \langle \tau_{d_1} \cdots \tau_{d_{n_Z}} \rangle
    \prod_{i=1}^{n_Z} \frac{b_i^{2d_i}}{2^{d_i} d_i!}
\end{equation}
The reason why one denotes the variables as $b_1,\dots,b_n$ is that one wants to think of them as boundary lengths. 

\begin{rmk*}
There are different normalizations for the Kontsevich polynomial. We follow the convention of \cite{ABCGLW} because this makes the Kontsevich polynomial multiplicative: if $Z$ is the union of $Z_1$ and $Z_2$ then $V_Z=V_{Z_1}\cdot V_{Z_2}$. Indeed, the extra factors vanish because in a given surface, as soon as $(d_1, \dots, d_{n_Z}) \in \mathbb{Z}_{\geq 0}^{n_Z}$ with $ d_1 + \cdots + d_{n_Z} > 3g_Z-3+n_Z$ then $\langle \tau_{d_1} \cdots \tau_{d_{n_Z}} \rangle=0$. 
\end{rmk*}

We are now ready to state the main result of this section:

\begin{sat}\label{thm constant c in terms of intersection numbers}
Let $\gamma_0$ be a multicurve in a closed surface $X$ of genus $g\ge 3$, let $\Sigma\subset X$ be the smallest subsurface containing $\gamma_0$, and $Z$ be the union of the hyperbolic components of $X\setminus\Sigma$. Let $\flip_\Sigma$ also be the involution \eqref{eq involution induced for Sigma}, ${\bf I}_{\gamma_0}$ be the linear form \eqref{eq intersection with gamma0}, and ${\bf b}$ the boundary map on $\Sigma$ and ${\bf w}_\Sigma$ its quotient by the flip as defined in \eqref{eq:defWSigma}. Finally, set
$$\Delta_\Sigma(\gamma_0) :=\{\bar a \in \BA(\Sigma,\flip_\Sigma) : {\bf I}_{\gamma_0}(\bar a)\le 1 \}.$$
With this notation we have
$$ \FC_g(\gamma_0)=
\frac {2^{\chi(Z)+|\pi_0(Z)|}}{\sym(\gamma_0)}
\int\limits_{\Delta_\Sigma(\gamma_0)}
V_Z\left({\bf b}(\bar a)_{|\D Z}\right) \product({\bf w}(\bar a)) \, d{\FM_{\BA(\Sigma,\flip_\Sigma)}}(\bar a),$$
where $V_Z(\cdot)$ is the Kontsevich volume polynomial associated to the surface $Z$, and where $\FM_{\BA(\Sigma,\flip_\Sigma)}$ is the measure provided by Proposition \ref{prop existence of thurston like measure on flip-invariant arc complex}. 
\end{sat}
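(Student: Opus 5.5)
We start from Theorem \ref{thm frequency counting}, which already gives
$$\FC_g(\gamma_0)=\frac 1{\sym(\gamma_0)}\lim_{L\to\infty}\frac 1{L^{6g-6}}\sum_{\substack{\bar x\in\BA_\BZ(\Sigma,\flip_\Sigma)\\ {\bf I}_{\gamma_0}(\bar x)\le L}}N_Z({\bf b}\vert_{\D Z}(\bar x))\cdot\product_{+1}({\bf w}_\Sigma(\bar x)).$$
The plan is to turn this weighted lattice count into a Riemann sum for the integral in the statement. Two inputs are needed. The first is the Kontsevich--Norbury asymptotic for $N_Z$: for admissible $\bar b\in\BZ_{\ge 0}^{\D Z}$ with all entries large,
$$N_Z(\bar b)=2^{\chi(Z)+|\pi_0(Z)|}\,V_Z(\bar b)+\text{(lower order terms)}.$$
Here $N_Z$ is a quasi-polynomial whose top-degree part is, up to this normalizing power of $2$, the Kontsevich polynomial. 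This is where the constant $2^{\chi(Z)+|\pi_0(Z)|}$ comes from, and I would fix it by comparing with \cite{Norbury} (Norbury's count of integral ribbon graphs, weighted by automorphisms, matches our definition \eqref{eq number of integral arc systems with given boundary values}). It is also multiplicative over components, so it suffices to check it on connected $Z$.

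The second input is the parity constraint. $N_Z(\bar b)$ vanishes unless $\bar b$ is admissible, i.e.\ unless $\|\bar b|_{\D Z'}\|$ is even for each component $Z'$. So on average over lattice points only a proportion of them contributes, and this must be absorbed consistently into the power of $2$. I would handle both inputs together by writing $N_Z=\mathbf{1}_{\text{adm}}\cdot 2^{\,\cdot}V_Z+O(\|\bar b\|^{6g_Z-7+2n_Z})$, and noting that the admissibility condition is a union of cosets of a sublattice of index $2^{|\pi_0(Z)|}$ in the relevant lattice. Equidistribution of lattice points in each coset then reduces the sum to the integral times the correct density. I expect the precise bookkeeping of the factor $2^{\chi(Z)+|\pi_0(Z)|}$ to be the main obstacle: Norbury's leading constant, the index of the parity sublattice, and the normalization of $\FM_{\BA(\Sigma,\flip_\Sigma)}$ through integral points of $V(\sigma,\flip)$ all have to be matched. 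I would check the combined constant against the simple-curve case \eqref{eq maryam frequency kontsevich} (Mirzakhani's formula), where $\Sigma$ is an annulus.

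With these inputs in place, homogeneity does the rest. We have $\product_{+1}({\bf w}_\Sigma(\bar x))=\product({\bf w}_\Sigma(\bar x))+$ lower degree, and $V_Z$ is homogeneous of degree $6g_Z-6+2n_Z$. Counting degrees, $\dim\BA(\Sigma,\flip_\Sigma)$ from \eqref{eq defi dim flip}, plus $|\Gamma|$ from the product, plus the degree of $V_Z$, equals $6g-6$; this is a check via Euler characteristics $\chi(X)=\chi(\Sigma)+\chi(Z)$. Substituting $\bar x=L\bar a$, the sum becomes $L^{6g-6}$ times a Riemann sum of
$$\mathbf 1_{\Delta_\Sigma(\gamma_0)}\,V_Z({\bf b}(\bar a)|_{\D Z})\,\product({\bf w}(\bar a))$$
against the normalized counting measures of \eqref{eq measure simplex arc}, whose limit is $\FM_{\BA(\Sigma,\flip_\Sigma)}$ by Proposition \ref{prop existence of thurston like measure on flip-invariant arc complex}.

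Two technical points remain. First, the lower-order terms vanish after dividing by $L^{6g-6}$. Second, weak-$*$ convergence applies even though $\Delta_\Sigma(\gamma_0)$ is an infinite union of simplices and is not compact in $\BA(\Sigma,\flip_\Sigma)$. For both, I would use the finiteness of $\FC_g(\gamma_0)$ from Theorem \ref{thm first formula for c} together with monotone convergence. Truncate to finitely many cells $\BA(\sigma,\flip)$, where the Portmanteau argument applies because $\Delta_\Sigma$ is piecewise linear with null boundary. Pass to the limit cell by cell, so that the error terms are uniformly of lower degree on each cell. Finally let the truncation exhaust, using that all summands are nonnegative and that the total is bounded by the finite quantity given by Theorem \ref{thm Kasra formula} to control the tails.
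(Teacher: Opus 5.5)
Your route is the paper's route. You start from Theorem~\ref{thm frequency counting}, use $6g-6=\dim\BA(\Sigma,\flip_\Sigma)+\deg V_Z+|\Gamma|$ to rescale by $L$, and replace $N_Z$ by its leading term from Theorem~\ref{thm KN}. You then pass to the limit cell by cell against the measures $\FM^L_\sigma$ and finally sum over $\sigma\in\CA(\Sigma)$. The gap is the parity step. You rightly flag it, but you defer it as ``bookkeeping'' on the assumption that the density can be absorbed into $2^{\chi(Z)+|\pi_0(Z)|}$. In general it cannot.

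\textbf{The index.} It is not $2^{|\pi_0(Z)|}$. For $\bar x\in\BA_\BZ(\sigma,\flip_\Sigma)$ the total $\sum_{c\in\D Z}{\bf b}(\bar x)_c$ is automatically even, for three reasons:
\begin{itemize}
\item the sum of ${\bf b}$ over $\D\Sigma_{\hyp}$ is twice the total arc weight in $\Sigma_\hyp$;
\item $\flip_\Sigma$-paired components of $\D\Sigma_\hyp$ carry equal values;
\item each annular weight is counted on two components of $\D Z$.
\end{itemize}
So the admissible points form a sublattice of index $2^r$ with $r\le|\pi_0(Z)|-1$. When $Z$ is connected, every lattice point is admissible and Theorem~\ref{thm KN} applies pointwise with no density correction. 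This is exactly what the paper's proof does, without comment: it substitutes the leading term of $N_Z$ at every point of $\frac1L\BA_\BZ(\sigma,\flip_\Sigma)$.

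\textbf{Disconnected $Z$.} Here the density $2^{-r}$ is a genuine extra factor, and nothing compensates for it. The measure $\FM_{\BA(\Sigma,\flip_\Sigma)}$ is normalized by all integral points of $V(\sigma,\flip_\Sigma)$. Norbury's leading constant on admissible points is already the whole of $2^{\chi(Z)+|\pi_0(Z)|}$.

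For example, take a separating simple curve. Then $\BA_\BZ(\Sigma,\flip_\Sigma)=\BZ_{\ge0}$, and ${\bf b}(v)=(v,v)$ has one entry on each of $Z_1$ and $Z_2$. The count $N_Z(v,v)=N_{Z_1}(v)N_{Z_2}(v)$ vanishes for odd $v$. Theorem~\ref{thm frequency counting} together with Theorem~\ref{thm KN} therefore gives $\frac{2^{\chi(Z)+1}}{\sym(\gamma_0)}\int_0^1V_Z(b,b)\,b\,db$, which is half of what the statement asserts.

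Carried out correctly, your argument proves the formula when $Z$ is connected. That is the case used for non-separating realizations in Section~\ref{sec asymptotic}; separating realizations enter there only through upper bounds, so an overestimate is harmless. For disconnected $Z$, your argument proves the formula with $2^{\chi(Z)+|\pi_0(Z)|-r}$ in place of $2^{\chi(Z)+|\pi_0(Z)|}$, not the stated constant. Your proposed check against Mirzakhani's simple-curve formula would expose this only in the separating case, since a non-separating simple curve has connected complement $Z$.
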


Evidently, the difference between Theorem \ref{thm frequency counting} and Theorem \ref{thm constant c in terms of intersection numbers} is that we passed from a limit of sums to an integral. In some sense, we will be distributing the exponent of $L$ in Theorem \ref{thm frequency counting} as follows
$$6g-6=3\cdot|\chi(X)|= \dim(\BA(\Sigma,\flip_\Sigma))+\deg(V_Z)+|\Gamma|.$$

Then, the main ingredient to the proof of Theorem \ref{thm constant c in terms of intersection numbers} is the duality between arcs systems and ribbon graphs. This will allow us to use work of Norbury \cite{Norbury} to make the link between the quantities $N_Z$ and $V_Z$. 

\subsection{Ribbon graphs}  \label{sec:ribbon graphs}
There is a perfect dictionary between arc systems in hyperbolic surfaces and ribbon graphs. The reason we discuss the latter here is that the work of Kontsevich and Norbury below is framed in the latter setting.
\medskip

A {\em ribbon graph} $R=(E(R),V(R))$ is a (possibly disconnected) finite trivalent graph with set of vertices $V(R)$ 
and such that for every vertex $v\in V(R)$ the set $\hal_v$ of half-edges adjacent to $v$ is endowed with a cyclic ordering. 

The etymology of the name ``ribbon graph'' is not transparent from the definition. The reason they are called like that is that {\em ribbon graphs} are basically graphs where the edges have been thickened to ribbons. This is why they are also known, less politely but maybe more suggestively, as {\em fat graphs}, such graphs are also called \emph{maps} in the literature.

Let us recall how to thicken a ribbon graph to a surface. For every vertex $v\in V(R)$, take an oriented solid polygon $P_v$ with $2\cdot\deg(v)$ sides. Color the sides of each $P_v$ alternatively in white and black. The orientation of $P_v$ induces an orientation of its boundary, and hence a cyclic order on the set of black sides of $P_v$. Fix a bijection, preserving cyclic orders, between $\hal_v$ and the set of black sides of $P_v$. Now, for every edge $e\in E(R)$ of $R$ let $v_1,v_2$ be the two (possibly equal) vertices at which the two half-edges $\vec e_1\in\hal_{v_1}$ and $\vec e_2\in\hal_{v_2}$ corresponding to $e$ are based. We identify in an orientation reversing way the $\vec e_1$-edge of $\D P_{v_1}$ with the $\vec e_2$-edge of $\D P_{v_2}$. Proceeding like this for all edges of $R$, we end up with a compact oriented surface, the {\em thickening} $\neigh(R)$ of $R$. Note that the boundary of the thickening $\neigh(R)$ is white and that each identified pair of black edges yields an arc in $\neigh(R)$. When taken together, these arcs form a maximal arc system $\zeta_R$ in $\neigh(R)$. It follows directly from the construction of $\neigh(R)$ that the graph $R$ can be embedded in $\neigh(R)$ in such a way that the image, which we still call $R$, and the arc system $\zeta_R$ are dual to each other. See Figure \ref{fig:ribToSurface} for an example.

\begin{figure}[!h] 
    \centering
        \includegraphics[scale=0.8]{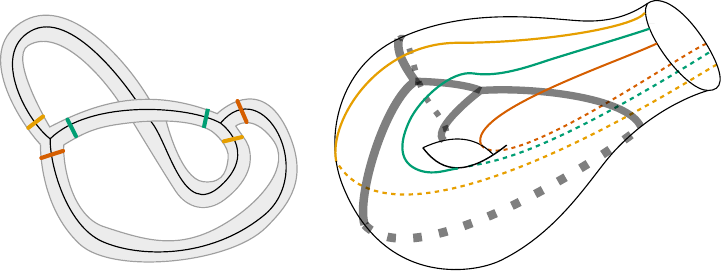}
    \caption{A ribbon graph of type $(1,1)$ with vertices oriented clockwise in the plane together with its thickening and an embedding in $S_{1,1}$}
    \label{fig:ribToSurface}
\end{figure}

By construction, the inclusion $R\to\neigh(R)$ of the ribbon graph $R$ into its thickening is a homotopy equivalence. It follows that the components of $\neigh(R)$ has negative Euler characteristic. Moreover,  we can identify free homotopy classes of curves in $R$ and in $\neigh(R)$. In particular, we can think of the boundary components of $\neigh(R)$ as curves in $R$ that we will call the {\em boundary components} of $R$. Consistently with what we did in earlier sections, we identify curves and their free homotopy classes.

\begin{quote}
{\bf Convention:} From now on, we assume that ribbon graphs come with a labeling of their boundary components. Equivalently, the boundary components of their thickenings are labeled. 
\end{quote}

Suppose now that $Z$ is a compact, oriented surface, and label its boundary components. We say that a ribbon graph $R$ {\em is of type }$Z$ if there is an orientation preserving homeomorphism $\phi:Z\to\neigh(R)$ which also preserves the labeling of the boundary components. When we pull back to $Z$ the preferred topological arc system $\zeta_R$ in $\neigh(R)$, we get an arc system on $Z$. If we pull $\zeta_R$ back under another homeomorphism $\psi:Z\to\neigh(R)$ then the two topological arc systems $\phi^{-1}(\zeta_R)$ and $\psi^{-1}(\zeta_R)$ differ by a self-homeomorphism of $Z$. In particular, the pure mapping class orbit $\PMap(Z)\cdot\phi^{-1}(\zeta_R)$ is well-defined.

Conversely, suppose that $\zeta$ is a maximal arc system in $Z$ and let $R_\zeta$ be its dual graph. We upgrade $R_\zeta$ into a ribbon graph by endowing $\hal_v$ with the cyclic ordering induced by the orientation of $Z$. We can thus see $Z$ as being itself $\neigh(R_\zeta)$, and $\zeta$ being the arc system associated to $R_\zeta$. If we replace $\zeta$ by $\phi(\zeta)$ for some self-homeomorphism $\phi$ of $Z$, then $R_\zeta$ and $R_{\phi(\zeta)}$ are isomorphic ribbon graphs. Finally, it is clear that the map sending the pure mapping class orbit of $\zeta$ to the isomorphism class of $R_\zeta$ is the inverse of the map sending the isomorphism class of $R$ to the pure mapping class orbit of $\zeta_R$. Let $\Rib^Z$ be the set of isomorphism classes of ribbon graphs of type $Z$ if follows from the discussion above the following lemma.

\begin{lem} \label{lem:BijectionArcRib}There is a bijection between the set $\Rib^Z$ of ribbon graphs of type $Z$ up to isomorphism, and the set of $\PMap(Z)$-orbits of filling topological arc systems in $Z$:
\[  \Rib^Z \longleftrightarrow \Lfaktor{\CA(Z)}{\PMap(Z)}.   \]

Moreover, if $\zeta$ is a maximal arc system in $\CA(Z)$ and $R_\zeta$ the associated ribbon graph then there is the following bijection
\[ \Aut(R_\zeta) \longleftrightarrow \Stab_{\PMap(Z)}(\zeta).  \]
\hfill \qed
\end{lem}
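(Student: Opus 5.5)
The statement to prove is Lemma \ref{lem:BijectionArcRib}. The plan is to make the two constructions described just before it into mutually inverse maps, and then to read off the stabilizer statement from the same dictionary.

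\emph{The two maps.} For $R$ of type $Z$, choose a labeling-preserving orientation-preserving homeomorphism $\phi:Z\to\neigh(R)$. Set $\Phi([R])=\PMap(Z)\cdot\phi^{-1}(\zeta_R)$. Conversely, for $\zeta\in\CA(Z)$, set $\Psi(\PMap(Z)\cdot\zeta)=[R_\zeta]$. Here $R_\zeta$ is the dual graph, with cyclic orders induced by the orientation of $Z$ and boundary labels inherited from $Z$.

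\emph{Well-definedness.} I first check that $R_\zeta$ is trivalent, so that it is a ribbon graph in the sense above. Each complementary hexagon of a maximal arc system in $Z$ has three arc-sides, so its dual vertex has degree three. Next, $\Phi$ is independent of $\phi$. If $\psi$ is another choice, then $\psi^{-1}\circ\phi$ is an orientation-preserving self-homeomorphism of $Z$ preserving the boundary labels. It therefore lies in $\PMap(Z)$, and it carries $\phi^{-1}(\zeta_R)$ to $\psi^{-1}(\zeta_R)$. Similarly, an isomorphism of ribbon graphs $R\to R'$ thickens to a homeomorphism $\neigh(R)\to\neigh(R')$ carrying $\zeta_R$ to $\zeta_{R'}$. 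So $\Phi$ depends only on the isomorphism class of $R$. Finally, $\Psi$ is well defined because any $f\in\PMap(Z)$ induces a ribbon isomorphism $R_\zeta\to R_{f(\zeta)}$ that preserves labels.

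\emph{Inverse maps.} Thickening $R_\zeta$ recovers $Z$. The polygon $P_v$ is the hexagon dual to $v$, and its black sides are the arcs of $\zeta$. This gives $\Phi\Psi=\Id$. In the other direction, $\zeta_R$ is by construction dual to $R\subset\neigh(R)$ with the induced cyclic orders, so $R_{\phi^{-1}(\zeta_R)}\cong R$. This gives $\Psi\Phi=\Id$.

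\emph{Stabilizers.} Take $f\in\Stab_{\PMap(Z)}(\zeta)$. It permutes the arcs and the hexagons of $\zeta$, preserving orientation and boundary labels, and hence induces an automorphism $f_*$ of $R_\zeta$. The assignment $f\mapsto f_*$ is a homomorphism. It is surjective: an automorphism of $R_\zeta$ thickens to a homeomorphism of $\neigh(R_\zeta)=Z$ preserving $\zeta$ and the labels.

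\emph{Main obstacle: injectivity.} I need that if $f_*=\Id$ then $f$ is trivial in $\PMap(Z)$. If $f$ fixes every arc and every hexagon, then it maps each hexagon to itself by an orientation-preserving map fixing each side. Such a map is isotopic to the identity relative to the sides. Gluing these isotopies across the arcs gives an isotopy from $f$ to the identity, and this isotopy is allowed to move the boundary, consistent with our definition of $\Map(\Sigma)$, which excludes boundary twists. One subtlety is that the automorphism group must be defined as automorphisms acting on half-edges, rather than on the underlying vertex/edge graph. This matters for degenerate graphs, such as the theta graph, where an automorphism can fix all vertices and edges while reversing the edges, that is, swapping half-edges. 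With that convention the map $f\mapsto f_*$ is a bijection, as claimed.
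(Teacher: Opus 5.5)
Your proposal is correct and follows the same route as the paper, which treats the lemma as an immediate consequence of the thickening/dual-graph dictionary set up just before it; you make explicit the well-definedness, the mutual inverses, and the stabilizer identification (via the Alexander trick, with boundary-moving isotopies matching the paper's convention for $\Map$), all of which the paper leaves implicit. One small correction to your closing remark: in the theta graph, an automorphism fixing both vertices and all three edges automatically fixes every half-edge, since each edge has distinct endpoints, so the example you describe cannot occur; the degenerate automorphism you have in mind is the hyperelliptic involution of the one-holed torus, which fixes every edge but swaps the two vertices, and it is this one that would be invisible if one recorded only the permutation of edges.
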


\subsection{Metric ribbon graphs}
A {\em metric ribbon graph} $(R,m)$ is a ribbon graph $R$ endowed with a metric $m$, that is a non-negative function $m:E(R)\to\BR_{\ge 0}$ subject to the condition that for any cycle $(e_1,\dots,e_r)$ in $R$ the sum of the values corresponding to its edges is positive: $\sum m(e_i)>0$. We think of $m(e)$ as the length of $e\in E(R)$.

Now, let $R$ be a ribbon graph of type $Z$, if $m$ is a metric on $R$, then, via the duality between the graph $R\subset\neigh(R)$ and the associated maximal arc system $\zeta_R$, we get from the metric $m$ a weight for every component of $\zeta_R$ as follows: every component $\zeta_0$ of $\zeta_R$ meets a single edge $e_0$ of $R$, and we give $\zeta_0$ the weight $m(e_0)$. Let us denote by $\zeta_{(R,m)}$ the so obtained weighted arc system, the condition on the length of cycles for the metric $m$ ensures that the support of this weighted arc system is filling.

Let's denote by $\Rib^Z(R)$ the space of metrics on $R$, the same argument as above leads now to a bijection 
\begin{equation} \label{eq:duality arcs graph}
    \Rib^Z(R) \longleftrightarrow \BA(\zeta_R).
\end{equation}

\begin{rmk*}
    We allowed the length function $m$ to take $0$ values because we wanted \eqref{eq:duality arcs graph} to hold. However, the reader might safely think that $m$ takes only positive values.
\end{rmk*}

As we pointed out earlier, we think of $m(e)$ as the length of the edge $e\in E(R)$. Let us go a step further and endow $R$ with a length metric $d_m$, with respect to which $m(e)$ the actual $d_m$-length of $e\in E(R)$. The {\em length} $\ell_m(\gamma)$ of a curve $\gamma$ in $R$ is defined as the minimum over all the $d_m$-lengths of curves in $R$ freely homotopic to $\gamma$. As suggested by the notation, $\ell_m(\gamma)$ depends uniquely on $m$. In terms of the weighted arc system $\zeta_{(R,m)}$ we have 
$$\ell_m(\gamma)={\bf I}(\zeta_{(R,m)},\gamma).$$
Anyways, if $R$ is of type $Z$, then every component of $\D Z$ determines a free homotopy class of curves in $R$. We thus get a map
$$  
\begin{array}{ccccl}
     {\bf b}&:&\Rib^Z(R)&\to &\BR_{> 0}^{\D Z}  \\
     &&(R,m)&\mapsto&(\ell_m(\gamma))_{\gamma\in\D Z}.
\end{array}
$$
The reason we denote this map in the same way the boundary map on the space of weighted arc systems is that the following diagram commutes:
\[
\begin{tikzcd}
\xymatrix{\Rib^Z(R)} \arrow[rd, "\mathbf{b}"'] \arrow[rr] &                        & \BA(\zeta_R) \arrow[ld, "\mathbf{b}"] \arrow[ll] \\
                                                        & \BR_{> 0}^{\pi_0(Z)} &                                                              
\end{tikzcd}
\]
Consistently, for $\bar b\in\BR_{\ge 0}^{\D S}$ we denote by 
$$\Rib^Z(R,\bar b)=\{m\in\Rib^Z(R)\text{ with }{\bf b}(m)=\bar b\}$$
the set of metric ribbon graphs over $R$ whose image under the boundary map ${\bf b}$ is $\bar b$.

A metric ribbon graph $(R,m)$ is {\em integral} if the metric $m$ takes values in $\BZ_{\ge 0}$. Evidently, the arc system $\zeta_{(R,m)}$ associated to an integral metric ribbon graph is integral. We thus get from the bijection above that there is a bijection between the set $\Rib^Z_\BZ(R,\bar b)$ of integer points of $\Rib^Z(R,\bar b)$ and $\BA_\BZ(\zeta_R)$. It follows from this and Lemma \ref{lem:BijectionArcRib} that the quantity $N_S(\bar b)$ introduced in \eqref{eq number of integral arc systems with given boundary values}  can now be written as
\begin{equation}\label{eq number of integral arc systems with given boundary values 2}
N_Z(\bar b)=\sum_{R\in\Rib^Z}\frac {\vert \Rib^Z_\BZ(R,\bar b)  \vert }{\vert\Aut(R)\vert}.
\end{equation}

Now that we dispose of this duality between weighted arc systems and metric ribbon graphs we can state the main theorem we are interested in, which is due to Norbury \cite{Norbury} and strongly related to work of Kontsevich \cite{Kontsevich}.

\begin{sat} \label{thm KN} \emph{\cite{Norbury}} Let $Z$ be surface made of only hyperbolic components. For any $\bar b\in \BZ_{\ge 0}^{\partial Z}$ admissible,
\[ N_{Z}(\bar b) = 2^{\chi(Z)+|\pi_0(Z)|}\cdot V_{Z}(\bar b)+ O(\|\bar b\|^{\deg(V_Z)-1 }).  \]
Moreover, the term hidden in the $O$ belongs to a finite family of polynoms.  \hfill $\blacksquare$
\end{sat}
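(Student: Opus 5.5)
This is Norbury's theorem \cite{Norbury}, cited as a black box. If one wanted to rederive it from the facts assembled so far, the natural route is a lattice point count in the polytopes dual to ribbon graphs, combined with Kontsevich's volume computation \cite{Kontsevich}.

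\textbf{Reduction to connected $Z$.} Recall that $N_Z$ is multiplicative over connected components, and $V_Z$ is multiplicative with the convention of \cite{ABCGLW}. The exponent $\chi(Z)+|\pi_0(Z)|$ is additive. So it suffices to treat a connected $Z$ of genus $g_Z$ with $n_Z$ boundary components.

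\textbf{Ehrhart-type count.} By \eqref{eq number of integral arc systems with given boundary values 2}, $N_Z(\bar b)$ is a finite sum over $R\in\Rib^Z$ of $|\Rib^Z_\BZ(R,\bar b)|/|\Aut(R)|$. Via \eqref{eq:duality arcs graph}, each $\Rib^Z(R,\bar b)$ is the intersection of the quadrant $\BR_{\ge0}^{E(R)}$ with the affine subspace ${\bf b}=\bar b$. By Lemma \ref{lem epimorphism} this subspace is cut out by a linear map of maximal rank with integer coefficients. Hence each $\Rib^Z(R,\bar b)$ is a rational polytope of dimension $|E(R)|-n_Z=6g_Z-6+2n_Z=\deg V_Z$, depending linearly on $\bar b$ in each chamber.

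Standard (vector partition function / Ehrhart) theory then shows that $\bar b\mapsto|\Rib^Z_\BZ(R,\bar b)|$ is, on each chamber and each residue class of $\bar b$ modulo $2$, a polynomial. Its leading homogeneous part is the relative volume of $\Rib^Z(R,\bar b)$ with respect to the lattice $\BZ^{E(R)}\cap{\bf b}^{-1}(\bar b)$. The lower-order terms form the claimed finite family of polynomials of degree $\le\deg V_Z-1$.

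\textbf{Identifying the leading term.} Summing the leading terms over $R$ with weights $1/|\Aut(R)|$ gives the lattice volume of the fiber $\bigsqcup_R\Rib^Z(R,\bar b)/\Aut(R)$. That fiber is the combinatorial model of $\CM_{g_Z,n_Z}$ (Harer--Mumford--Penner--Thurston). Kontsevich showed that the volume of this fiber for the top power of his $2$-form $\omega$, divided by the appropriate factorial, is $\sum\langle\tau_{d_1}\cdots\tau_{d_n}\rangle\prod b_i^{2d_i}/(2^{d_i}d_i!)$, which is $V_Z(\bar b)$. In the same step one checks that the leading term is a single polynomial, independent of chamber; this is a consequence of Kontsevich's theorem that the fiber volume is this one polynomial.

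\textbf{Main obstacle: the normalization.} The hardest step is to compare the lattice Lebesgue measure on each cell with Kontsevich's $\omega^{d}/d!$. This requires computing, cell by cell, the determinant relating the two. On top of that one must account for the index-$2$ sublattice forced by the parity constraint (condition (2) of Definition \ref{def admissible}). Together these should produce exactly the factor $2^{\chi(Z)+1}=2^{2-2g_Z-n_Z+1}$.

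To get this constant I would first verify it directly on the pair of pants and on $S_{1,1}$, where $N_Z$ is explicitly computable. I would then argue that the ratio is cell-independent, because both measures are $\PMap$-invariant and locally given by integral forms with the same lattice of periods. Alternatively, one can bypass the cell-by-cell comparison altogether. Norbury's route is to prove that $N_Z$ satisfies a Tutte-type recursion whose top-degree part is the Witten--Kontsevich (Virasoro) recursion for $V_Z$, and then match the base cases.
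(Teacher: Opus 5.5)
Your proposal is correct and matches what the paper does: the paper also takes Norbury's theorem as a black box, and adds only what you add, namely the multiplicativity of $N_Z$ and $V_Z$ over connected components and the remark that $2^{\chi(Z)+1}$ records the change from Norbury's normalization of the volume polynomial to the one used here. If you ever carry out your Ehrhart sketch, note that the constant does not come from checking $S_{0,3}$ and $S_{1,1}$ and invoking $\PMap(Z)$-invariance, since cells of non-isomorphic ribbon graphs are not related by $\PMap(Z)$ and the constant depends on $(g_Z,n_Z)$; it comes from Kontsevich's identity $\frac{\omega^{d}}{d!}\wedge db_1\wedge\dots\wedge db_{n_Z}=2^{2g_Z-2+n_Z}\prod_e dl_e$ on every cell, which together with your index-$2$ lattice gives exactly $2\cdot 2^{-(2g_Z-2+n_Z)}=2^{\chi(Z)+1}$.
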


\begin{rmk*}
Both Kontsevich and Norbury considered only the case of a connected $Z$, but everything remains true for disconnected surfaces because both $N_Z(\bar b)$ and $V_Z(\bar b)$ are multiplicative in the sense that if $Z_1$ and $Z_2$ are two surfaces then
\begin{align*}
    N_{Z_1\sqcup Z_2}(\bar b_1,\bar b_2)&=N_{Z_1}(\bar b_1)\cdot N_{Z_2}(\bar b_2),\text{ and}\\
    V_{Z_1\sqcup Z_2}(\bar b_1,\bar b_2)&=V_{Z_1}(\bar b_1)\cdot V_{Z_2}(\bar b_2)
\end{align*}
for any admissible $\bar b_1\in\BR^{\D Z_1}$ and $\bar b_2\in\BR^{\D Z_2}$.
Note also that our version of Theorem \ref{thm KN} differs from the version of Norbury by a multiplicative factor $2^{\chi(Z)+1}$, this is due to the fact that we don't use the same renormalization of the Kontsevitch polynomial.
\end{rmk*}

\subsection{Proof of Theorem \ref{thm constant c in terms of intersection numbers}}\label{sec minikuns epiphany}

In the rest of this section $\gamma_0$ is a non-filling curve of a closed hyperbolic surface $X$ of genus greater that 3, and $\Sigma$ and $Z$ are subsurfaces of $X$ defined as in the previous sections as well as the multicurve $\Gamma$ and the flip-maps $\flip, \flip_\Sigma$ and $\flip_Z$.

Let us recall the definitions introduced in section 2.4 of measures on $\BA(\Sigma,\flip_\Sigma)$, for any $L>0$ and $\sigma\in\CA(\Sigma)$ we have
$$\FM^L_{\sigma}=\frac{1}{L^{\dim(\BA(\Sigma,\flip_\Sigma))}}\sum_{\bar a\in\BA_{\BZ}(\sigma,\flip_\Sigma)}\delta_{\frac 1L\bar a}$$
where, $\delta_x$ stands for the Dirac probability measure centered at $x$. 
Recall that these measures converge to $\FM_{\BA(\sigma,\flip_\Sigma)}=\FM_{\BA(\Sigma,\flip_\Sigma)|\BA(\sigma,\flip_\Sigma)}$ as $L$ goes to infinity. 

With these reminders in mind, we come to the proof of Theorem \ref{thm constant c in terms of intersection numbers}.

\begin{proof}
From Theorem \ref{thm frequency counting} and via monotone convergence we have 
\begin{align*}
    \FC_g(\gamma_0) &=\frac 1{\sym(\gamma_0)}\lim_{L\to\infty}\frac 1{L^{6g-6}}\sum_{\substack{\bar x\in\BA_\BZ(\Sigma,\flip_\Sigma)\\ {\bf I}_{\gamma_0}(\bar x)\le L}}N_Z({\bf b}(\bar x)_{|\D Z})\cdot\product_{+1}({\bf w}_\Sigma(\bar x))\\
        & =\frac 1{\sym(\gamma_0)}
        \sum\limits_{\sigma\in\CA(\Sigma)}
        \lim_{L\to\infty}\frac 1{L^{6g-6}}\sum_{\substack{\bar x\in\BA_\BZ(\sigma,\flip_\Sigma)\\ {\bf I}_{\gamma_0}(\bar x)\le L}}N_Z({\bf b}(\bar x)_{|\D Z})\cdot\product_{+1}({\bf w}_\Sigma(\bar x)).
\end{align*}
Noting now that $6g-6=|\chi(X)|$ and that by \eqref{eq defi dim flip} and \eqref{eq Konsevich polynomial} we have
\begin{align*}
\dim(\BA(\Sigma,\flip_\Sigma))&=3\vert\chi(\Sigma)\vert-\vert\D\Sigma_{\hyp}\vert+\vert\Lfaktor{\D\Sigma}{\flip_\Sigma}\vert\\
& =3|\chi(\Sigma)|+2|\Gamma|-|\D\Sigma_{\hyp}|-|\Gamma|\\
& = 3|\chi(\Sigma)|+2|\Gamma_{\ann}|+|\Gamma_{\fix}|-|\Gamma|,\text{ and}\\
\deg(V_Z)&=3\vert\chi(Z)\vert-\vert\D Z\vert\\
& = 3\vert\chi(Z)\vert-(|\Gamma_\D|+2|\Gamma_{\ann}|), 
\end{align*}
we can decompose $3|\chi(X)|$ in the following way:
\begin{align*}
    3|\chi(X)| & =3\vert\chi(X\setminus\Gamma)\vert = 3|\chi(\Sigma)| +3|\chi(Z)| \\
               & =\dim(\BA(\Sigma,\flip_\Sigma))+\deg(V_Z)+\vert\Gamma\vert.
\end{align*}
It follows that for any $\sigma\in\BA(\Sigma)$ we can write 
\begin{align*}
    \frac 1{L^{6g-6}}\sum_{\substack{\bar x\in\BA_\BZ(\sigma,\flip_\Sigma)\\ {\bf I}_{\gamma_0}(\bar x)\le L}} &
    N_Z({\bf b}(\bar x)_{|\D Z})
    \cdot\product_{+1}({\bf w}_\Sigma(\bar x))\\
    &= \frac{1}{L^{\dim(\BA(\Sigma,\flip_\Sigma))}}
      \sum_{\substack{\bar x\in\frac{1}{L}\BA_\BZ(\sigma,\flip_\Sigma)\\ {\bf I}_{\gamma_0}(\bar x)\le 1}}
      \frac{N_Z(L{\bf b}(\bar x)_{|\D Z})}{L^{\deg(V_Z)}}\cdot
      \frac{\product_{+1}({L\bf w}_\Sigma(\bar x))}{L^{|\Gamma|}}\\
    &=\int\limits_{\Delta_\sigma(\gamma_0)}
    \frac{N_Z(L{\bf b}(\bar x)_{|\D Z})}{L^{\deg(V_Z)}}\cdot
      \frac{\product_{+1}({L\bf w}_\Sigma(\bar x))}{L^{|\Gamma|}}
      d\FM^L_\sigma(\bar x),
\end{align*}
where $\Delta_\sigma(\gamma_0)$ is defined by 
\[ \Delta_\sigma(\gamma_0) =\{\bar x \in \BA(\sigma,\flip_\Sigma) : {\bf I}_{\gamma_0}(\bar x) \le 1 \}.   \]
Now, using Theorem \ref{thm KN} and the fact that ${\bf w}_\Sigma(\bar x)\in\BR^\Gamma$ for any $\bar x\in\BA(\sigma,\flip_\Sigma)$ we get
\begin{multline*}
\frac{N_Z(L{\bf b}(\bar x)_{|\D Z})}{L^{\deg(V_Z)}}\cdot\frac{\product_{+1}({L\bf w}_\Sigma(\bar x))}{L^{|\Gamma|}} = \\
      = 2^{\chi(Z)+|\pi_0(Z)|}V_Z({\bf b}(\bar x)_{|\D Z})\product({\bf w}_\Sigma(\bar x))+ \frac{O(\|  L \bar x\|^{\deg(V_Z)+|\Gamma|-2})}{L^{\deg(V_Z)+|\Gamma|}}.
\end{multline*}
Moreover, the term hidden in the $O(\|  L \bar x\|^{\deg(V_Z)+|\Gamma|-2})$ belongs to a finite family of polynomials of degree at most $\deg(V_Z)+|\Gamma|-2$ hence the error term above can be bounded above and below by terms of the form $A/L^2$ over $\Delta_\sigma(\gamma_0)$  where $A$ is a constant. It follows that
\begin{multline*}
    \lim_{L\to\infty} \frac 1{L^{6g-6}}\sum_{\substack{\bar x\in\BA_\BZ(\sigma,\flip_\Sigma)\\ {\bf I}_{\gamma_0}(\bar x)\le L}} 
    N_Z({\bf b}(\bar x)_{|\D Z})
    \cdot\product_{+1}({\bf w}_\Sigma(\bar x))\\  
    = \int\limits_{\Delta_\sigma(\gamma_0)}
    2^{\chi(Z)+|\pi_0(Z)|}
    {V_Z({\bf b}(\bar x)_{|\D Z})}\cdot
      \product({\bf w}_\Sigma(\bar x))
      d\FM_{\BA(\Sigma,\flip_\Sigma)}(\bar x).
\end{multline*}

To conclude the proof, recall that the $\BA(\sigma,\flip_\Sigma)$ intersect along sets of zero $\FM_{\BA(\Sigma,\flip_\Sigma)}$--measure. Hence, for

\[ \Delta_\Sigma(\gamma_0)=\{\bar x \in \BA(\Sigma,\flip_\Sigma): {\bf I}_{\gamma_0}(\bar x) \le 1 \} = \bigcup\limits_{\sigma\in\CA(\Sigma)} \Delta_{\sigma}(\gamma_0), \]
we have
\begin{align*}
     \FC_g(\gamma_0)&=\frac{1}{\sym(\gamma_0)} \sum\limits_{\sigma\in\CA(\Sigma)} \int\limits_{\Delta_\sigma(\gamma_0)}
    2^{\chi(Z)+|\pi_0(Z)|}
    {V_Z({\bf b}(\bar x)_{|\D Z})}\cdot
      \product({\bf w}_\Sigma(\bar x))
      d\FM_{\BA(\Sigma,\flip_\Sigma)}(\bar x) \\
      & = \frac{2^{\chi(Z)+|\pi_0(Z)|}}{\sym(\gamma_0)}  \int\limits_{\Delta_\Sigma(\gamma_0)}
    {V_Z({\bf b}(\bar x)_{|\D Z})}\cdot
      \product({\bf w}_\Sigma(\bar x))
      d\FM_{\BA(\Sigma,\flip_\Sigma)}(\bar x).
\end{align*}
\end{proof}

\subsection{Particular cases of Theorem \ref{thm constant c in terms of intersection numbers}}
There are some specific cases of Theorem \ref{thm constant c in terms of intersection numbers} one may be interested in. Namely the case where $Z$ is connected, and the case where the curve $\gamma_0$ is a simple multicurve (and then $\Sigma=\Sigma_{\ann}$).

\subsubsection{\texorpdfstring{$\gamma_0$}{gamma_0} is a simple multicurve.}
This is a very degenerated case, where $\Sigma_{\hyp}=\emptyset$, $Z=X\setminus\CN(\gamma_0)$, $\Gamma=\Gamma_{\ann}=\gamma_0$. In this setting, the map $\pi_*$ from \eqref{eq homomorphism} should be written as 
\[  \pi_* : \Stab_{\Map(X)}(\gamma_0) \to \Map(Z),  \]
then $G=\pi_*^{-1}(\PMap(Z))$ and then $\sym(\gamma_0)=[\Stab_{\Map(X)}(\gamma_0):G]$ corresponds with the definition of $\sym(\gamma_0)$ in \cite{Maryam simple}.  Then, we obtain the following expression for $\FC_g(\gamma_0)$:

$$ \FC_g(\gamma_0)=\frac{2^{\chi(Z)+|\pi_0(Z)|}}{\sym(\gamma_0)}
\bigintsss\limits_{\tiny{\begin{array}{c}\bar b\in\BR_{\ge 0}^{\gamma_0}\\ \Vert\bar b\Vert\le 1\end{array}}}
V_Z\left(\bar b,\bar b\right)\cdot
\left(\prod_{x\in\bar b}x\right)\ d\bar b.$$
As expected, this formula is the same as obtained by Mirzakhani in \cite{Maryam simple}. Our expression differs from her by powers of two coming from different renormalization of the Kontsevich polynomial.

\subsubsection{\texorpdfstring{$Z$}{Z} connected.} In this setting, the formula in Theorem \ref{thm constant c in terms of intersection numbers} reads as follows:
\begin{equation} \label{eq:c_connected}
\FC_g(\gamma_0)=
\frac {2^{\chi(z)+1}}{\sym(\gamma_0)}
\bigintsss\limits_{\Delta_\Sigma(\gamma_0)}
{V_Z({\bf b}(\bar x)_{|\D Z})}\cdot
      \product({\bf w}_\Sigma(\bar x))
      d\FM_{\BA(\Sigma,\flip_\Sigma)}(\bar x).
\end{equation}
The difference with the general case is that the polynomial $V_Z$ is not a product of different Kontsevich polynomials.

\begin{ex}
    Some computations on given examples are made in the Appendix, specifically when the curve fills a pair of pants.
\end{ex}

\subsection{Finiteness of \texorpdfstring{$\Delta_\Sigma(\gamma_0)$}{Delta_Sigma(gamma_0)}} \label{finiteness}
In later sections we will use Theorem \ref{thm constant c in terms of intersection numbers} to investigate, in a well-defined way, the behavior of $\FC_g(\gamma_0)$ when the genus of $X$ tends to infinity. A key ingredient will be the fact that the subset $\Delta_\Sigma(\gamma_0)$ of the arc complex $\BA_\Sigma$ has finite $\FM_{\BA(\Sigma,\flip_\Sigma)}$-measure. This is what we discuss in this section. 

We will see later that curves for which $\flip_{\Sigma_{\hyp}}$ is non-trivial contribute less than the one for which it is trivial, hence, we restrict our study to the case where there is no flip outside of the annuli components of $\Sigma$.

\subsubsection{When $\gamma_0$ has no isolated simple closed component}
To start with, assume that the subsurface $\Sigma$ filled by $\gamma_0$ has no annuli components \emph{ie.} $\Sigma=\Sigma_{\hyp}$. To prove the finiteness of the volume of $\Delta_\Sigma(\gamma_0)$ we will compare the measure of this set to a measure over an easier to studied space.

 First of all, let's endow $\Sigma$ with a fixed hyperbolic metric with geodesic boundary, hence, every arc is uniquely represented by its unique othogeodesic representative. If the curve $\gamma_0$ is represented by its geodesic representative then the intersection number between $\gamma_0$ and any $\bar a \in \BA(\Sigma)$ is given by the intersections between the geodesic and (weighted) orthogeodesic representatives.
 
 The surface $\Sigma$ has $n$ boundary components and genus $g$, let $D\Sigma$ be its doubled, meaning the closed surface of genus $2g+n-1$.
 This surface is naturally coming from gluing $\Sigma$ to an orientation reversed copy of itself pointwise along the boundary as illustrated in Figure~\ref{fig:doubling}.
 Let's denote by $\Sigma^+$ and $\Sigma^-$ the two embedded copies of $\Sigma$ into $D\Sigma$, $i^+$ and $i^-$ the associated embeddings. Let $s:D\Sigma\to D\Sigma$ be the involution of $D\Sigma$ exchanging $\Sigma^+$ and $\Sigma^-$. 
 We will denote by $\CM\CL^s(D\Sigma)$ the set of all measured laminations of $D\Sigma$ stable through this involution. 

\begin{figure}[ht!]
    \centering
\begin{tikzpicture}
\path (0,0) node { \includegraphics[width=0.7\linewidth]{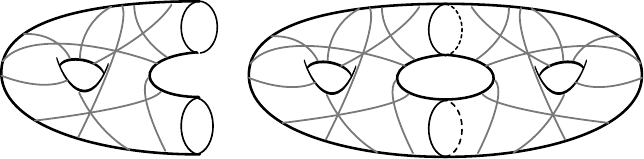}} ;
\draw
(-26mm,16mm) node {$\Sigma$} 
(-32mm,0mm) node {\textcolor{mygrey}{$\gamma_0$}}
(26mm,16mm) node {$D\Sigma$} 
(32mm,0mm) node {\textcolor{mygrey}{$\hat\gamma_0$}}
;
\end{tikzpicture} 
    \caption{Doubling of $\Sigma$ and of the filling curve $\gamma_0$.}
    \label{fig:doubling}
\end{figure}

Applying \cite[Proposition 1.5]{Hatcher}, in the same way as for $\CM\CL$ (\cite{Penner-Harer} or Theorem \ref{thm:thurston ML}) one gets that the space $\CM\CL^s(\D\Sigma)$ is a closed and locally compact space of dimension $6g-6+3n$: it is homeomorphic to $\BR^{6g-6+2n}\times\BR_{\ge 0}^n$, has a piecewise linear structure and can be seen as the union of finitely many maximal dimensional cones glued along their faces of lower dimension. Hence by proceeding as we already did previously the following measure is well-defined on $\CM\CL^s(D\Sigma)$:
\begin{equation} \label{eq:Thurston like measure}
    \FM_\sigma := \lim\limits_{L\to\infty} \frac{1}{L^{6g-6+3n}}\sum\limits_{\lambda \in \CM\CL_\BZ^{\sigma}(D\Sigma)} \delta_{\frac{1}{L}\lambda},
\end{equation}
and is the Lebesgue measure on each cone of the decomposition.

Now, given $\bar a\in \BA(\Sigma)$, by gluing $i^+(\bar a)$ together with $i^-(\bar a)$ we obtain a simple closed weighted multicurve $\hat{ a}$ of $D\Sigma$ that naturally appears as a symmetric measured lamination of $D\Sigma$. We then dispose of the doubling operator described bellow. For more details on this doubling process the reader can refer to \cite{MarieArcs}.
\begin{equation*}
    \begin{array}{ccccl}
    \hat{\cdot} &:& \BA(\Sigma)&\to&\CM\CL^s(D\Sigma) \\
    &&\alpha &\mapsto& \hat \alpha. \\
    \end{array}
\end{equation*} 
In particular, the doubling operator $\hat\cdot$ defined on $\BA(\Sigma)$ is an embedding which sends integer points to integers points. Note also that the geodesic representative of $\hat a$ corresponds exactly to the gluing of the two orthogeodesic representatives for $i^+(\bar a)$ and $i^-(\bar a)$. 

The same process also applies for multicurves, hence $\hat{\gamma}_0$ is a multicurve in $D\Sigma$. This curve is not filling but it can be completed into a filling multicurve by adding to it the equators of the genus created by the gluing, see Figure \ref{fig:make it filling}. If $\hat\gamma_e$ is the equator multicurve then the filling multicurve  $\gamma_*$ we just described decomposes as 
\[\gamma_* = \hat\gamma_0+\hat\gamma_e,\] moreover, $\hat\gamma_e$ can be seen as the double via $\hat\cdot$ of an arc system $\gamma_e\in\CA(\Sigma)$.

\begin{figure}[ht!]
    \centering
\begin{tikzpicture}
\path (0,0) node { \includegraphics[width=0.7\linewidth]{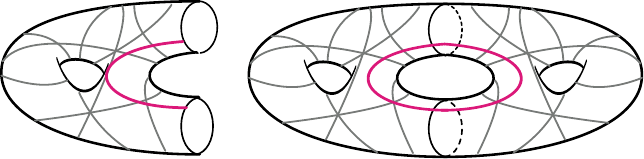}} ;
\draw
(-26mm,16mm) node {$\Sigma$} 
(-32mm,-8mm) node {\textcolor{mygrey}{$\gamma_0$}}
(-32.5mm,0mm) node {\textcolor{mypink}{$\gamma_e$}}
(26mm,16mm) node {$D\Sigma$} 
(32mm,-8mm) node {\textcolor{mygrey}{$\hat\gamma_0$}}
(35mm,-3mm) node {\textcolor{mypink}{$\hat\gamma_e$}}
;
\end{tikzpicture} 
    \caption{From $\hat \gamma_0$ to a filling multicurve $\gamma_*=\hat\gamma_0+\hat\gamma_e$}
    \label{fig:make it filling}
\end{figure} 

\begin{lem} \label{lem:change of complexity}
    With the notations above, 
    \[ \Delta_\Sigma(\gamma_0)\subset \{ \bar a \in \BA(\Sigma) | \iota(\hat a,\gamma_*)\le 2+4\iota( \gamma _0,\gamma_e) \}.\]
\end{lem}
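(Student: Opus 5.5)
Take $\bar a\in\Delta_\Sigma(\gamma_0)$, so ${\bf I}_{\gamma_0}(\bar a)=\iota(\bar a,\gamma_0)\le 1$. Since $\iota(\cdot,\cdot)$ is additive in the first slot,
\[
\iota(\hat a,\gamma_*)=\iota(\hat a,\hat\gamma_0)+\iota(\hat a,\hat\gamma_e),
\]
and I will bound the two terms separately.

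For the first term, take geodesic/orthogeodesic representatives, so that $\hat a$ and $\hat\gamma_0$ are the doubles of the orthogeodesics of $\bar a$ and of the geodesic $\gamma_0$. These doubles are geodesics in $D\Sigma$, and $\hat\gamma_0$ consists of $i^+(\gamma_0)$ and $i^-(\gamma_0)$. Their intersection points are exactly the points of $i^\pm(\bar a)\cap i^\pm(\gamma_0)$, and geodesics meet minimally. Hence
\[
\iota(\hat a,\hat\gamma_0)=2\,\iota(\bar a,\gamma_0)\le 2.
\]

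For the second term, the equator multicurve $\hat\gamma_e$ is the double of the arc system $\gamma_e$, so doubling gives
\[
\iota(\hat a,\hat\gamma_e)=2\,\iota_\Sigma(\bar a,\gamma_e),
\]
where $\iota_\Sigma$ is the intersection number of arc systems counted in the interior of $\Sigma$. Again I use orthogeodesic representatives, whose doubles are geodesic. It therefore remains to show
\[
\iota_\Sigma(\bar a,\gamma_e)\le 2\,\iota(\gamma_0,\gamma_e)\,\iota(\bar a,\gamma_0).
\]
By linearity it suffices to check this for a single arc $a$ of weight one.

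\textbf{Main step (expected obstacle).} Fix a simple arc $a$. The arcs of $\gamma_e$ are cut by $\gamma_0$ into segments. Since $\gamma_0$ fills $\Sigma$, the complementary regions of $\gamma_0$ are disks and boundary-parallel annuli, one per component of $\D\Sigma$. The arc $a$ crosses $\gamma_0$ at $k=\iota(a,\gamma_0)$ points, so it consists of $k+1$ subarcs. Each subarc lies in a single complementary region: the interior subarcs lie in disks, and the two end subarcs lie in the boundary annuli.

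A subarc inside a disk region has minimal intersection with each segment of $\gamma_e$ in that disk. After a homotopy, it meets each such segment at most once, and only if the segment separates its endpoints. So the total intersection of one subarc with $\gamma_e$ is at most the number of $\gamma_e$-segments in its region. Summed over regions, the number of segments is at most on the order of $\iota(\gamma_0,\gamma_e)+|\gamma_e|$.

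The end subarcs in the annuli can twist around the boundary, so they need separate care. Here I would use that the orthogeodesic representative of $a$ does not spiral. In each boundary annulus, $a$ crosses each $\gamma_e$-segment at most once.

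Summing over subarcs gives roughly $(k+1)\,\iota(\gamma_0,\gamma_e)$. Restricted to $\Delta_\Sigma(\gamma_0)$, and normalising by the weight so that the weighted count satisfies $\iota(\bar a,\gamma_0)\le 1$, this yields a bound of the form $\iota_\Sigma(\bar a,\gamma_e)\le 2\,\iota(\gamma_0,\gamma_e)$. The obstacle is exactly the homogeneity mismatch: the "$+1$" term is not controlled by $\iota(\bar a,\gamma_0)$.

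I would handle this as follows. Every essential arc meets $\gamma_0$ at least once, since $\gamma_0$ fills and its complementary regions contain no essential arcs. So $k\ge 1$, and therefore $k+1\le 2k$, which gives
\[
\iota_\Sigma(a,\gamma_e)\le 2k\,\iota(\gamma_0,\gamma_e).
\]
Applying linearity and doubling,
\[
\iota(\hat a,\hat\gamma_e)\le 4\,\iota(\gamma_0,\gamma_e)\,\iota(\bar a,\gamma_0)\le 4\,\iota(\gamma_0,\gamma_e).
\]
Adding the two estimates,
\[
\iota(\hat a,\gamma_*)\le 2+4\,\iota(\gamma_0,\gamma_e),
\]
which is the inclusion claimed in Lemma~\ref{lem:change of complexity}.
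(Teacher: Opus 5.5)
Your route is the paper's. Additivity and doubling give $\iota(\hat a,\gamma_*)=2\iota(\bar a,\gamma_0)+2\iota(\bar a,\gamma_e)$; you cut $a$ along $\gamma_0$ into pieces in the complementary regions, bound each piece by a multiple of $\iota(\gamma_0,\gamma_e)$, and trade $k+1$ pieces for $2k$. Your use of $k\ge 1$ plays the role of the paper's remark that each crossing with $\gamma_0$ lies on the boundary of two regions.

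\textbf{The per-piece estimate has a gap.} Your claim that interior subarcs lie in disks is false. A middle subarc can traverse a crown, entering and leaving through $\gamma_0$. This happens already for the figure eight in a pair of pants: there $\Sigma$ retracts onto $\gamma_0$, so there are no disk regions at all. An arc meeting $\gamma_0$ twice (e.g.\ the arcs with coefficient $2$ in Table~\ref{tab:figure 8}) therefore has its middle subarc in a crown. For such a piece your arguments do not apply:
\begin{itemize}
\item it can cross the end segments of $\gamma_e$ in that crown;
\item it can cross a chord of $\gamma_e$ twice. In the universal cover of the crown, a short chord of $a$ can meet two consecutive lifts of a $\gamma_e$-chord that goes the long way around the annulus.
\end{itemize}
So ``each segment at most once'' fails there. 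The paper's proof treats crown traversals (``each pair of intersections with $\gamma_0$'') separately from cell visits for this reason.

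Separately, ``roughly $(k+1)\iota(\gamma_0,\gamma_e)$'' is not what your count yields. Bounding a piece by the number of $\gamma_e$-segments in its region gives only $\iota(\gamma_0,\gamma_e)+|\gamma_e|$ per piece. That leads at best to a constant like $2+8\iota(\gamma_0,\gamma_e)$, not the stated one.

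\textbf{Fix: count endpoints on $\gamma_0$ rather than segments.} For a region $R$, let $p_R$ be the number of points of $\gamma_0\cap\gamma_e$ on its $\gamma_0$-side, counted with the side from which $R$ sees them. Then $\sum_R p_R=2\iota(\gamma_0,\gamma_e)$.
\begin{itemize}
\item A disk region is a convex geodesic polygon. It contains at most $p_R/2$ segments of $\gamma_e$, each met at most once by a subarc of $a$.
\item An end subarc of $a$ misses the end segments of $\gamma_e$, because their lifts are orthogonal to the same lift of the boundary geodesic. It meets each chord at most once, so it contributes at most $p_R/2\le\iota(\gamma_0,\gamma_e)$.
\item A middle subarc in a crown is simple, so its lift spans less than one period of the deck translation. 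Hence it meets each end segment at most once and each chord at most twice, giving at most $p_R\le 2\iota(\gamma_0,\gamma_e)$.
\end{itemize}
Summing over the two end pieces and the $k-1$ middle pieces gives $\iota(a,\gamma_e)\le 2k\,\iota(\gamma_0,\gamma_e)$. With this bound, the rest of your computation yields exactly $2+4\iota(\gamma_0,\gamma_e)$.
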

\begin{proof}
    First of all, the decomposition $\gamma_*=\hat\gamma_0+\hat\gamma_e$ implies that for any $\bar a \in \BA(\Sigma)$, $\iota(\hat a,\gamma_*)=\iota(\hat a,\hat \gamma_0)+\iota(\hat a, \hat\gamma_e)$ where by construction $\iota(\hat a,\hat \gamma_0)=2\iota( \bar a, \gamma_0)$ and $\iota(\hat a, \hat\gamma_e)=2\iota(\bar a,\gamma_e)$. 

    All curves (resp. arcs) are represented by their geodesic (resp. orthogeodesic) representatives and are then in relative positions (they realize the minimal amount of intersection). The curve $\gamma_0$ is filling in $\Sigma$ so it cuts $\Sigma$ into crowns around $\D \Sigma$ and cells homeomorphic to disks elsewhere. Let's study the intersections between $a$ and $\gamma_e$ that happens in the crowns and the one appearing in the cells separately.
    
\begin{figure}[!h]
\begin{subfigure}[c]{0.5\textwidth}
  \centering
  \includegraphics[scale=1]{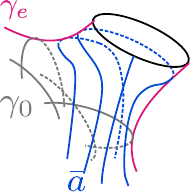}
  \caption{Intersections in the crowns}
  \label{fig:Crown Intersections}
\end{subfigure}%
\hfill
\begin{subfigure}[c]{.5\textwidth}
  \centering
  \includegraphics[scale=1]{ 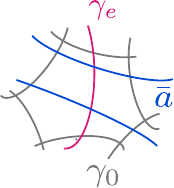}
  \caption{ Intersections in the cells}
  \label{fig:celles intersections}
\end{subfigure}
\caption{Intersections in the crowns and in the cells of $\Sigma\setminus\gamma_0$}
\label{fig:intersections}
\end{figure}

    In the crowns, since $\alpha$ is simple it cannot turn more than once around $\D\Sigma$ and each pair of intersection of $ \bar a$ with $\gamma_0$ leads to at most two intersections with $\gamma_e$ (see Figure \ref{fig:Crown Intersections}): $\iota(\bar a, \gamma_e)_{|crown}\le \iota(\bar a, \gamma_0)_{|crown}$. Outside of the crowns , each time $\bar a$ enters a cell, it crosses $\gamma_e$ at most $\frac{1}{2}\iota(\gamma_e, \gamma_0)$ times (see Figure  \ref{fig:celles intersections}), hence,  $\iota(\bar a, \gamma_e)_{|cells}\le \iota(\gamma_e, \gamma_0)\iota(\bar a,  \gamma_0)_{|cells}$ where $\iota(\gamma_e, \gamma_0)>1$.

    Moreover, $\iota(\bar a, \gamma_e)=\iota(\bar a, \gamma_e)_{|cells}+\iota(\bar a, \gamma_e)_{|crown}$ and $\iota(\bar a,  \gamma_0)_{|cells}+\iota(\bar a, \gamma_0)_{|crown}= 2\iota(\bar a, \gamma_0)$ (each intersection happens on the boundary of two different cells or crown) hence $\iota(\hat a,\gamma_*)=2\iota(\bar a, \gamma_0)+2\iota(\bar a, \gamma_e)\le 2\iota(\bar a,\gamma_0)(1+2\iota(\gamma_e, \gamma_0))$. This concludes the proof of the Lemma.
\end{proof}

We can now move to the main theorem we are interested in.

\begin{sat}\label{sat finite volume}
    Let $\Sigma$ be a compact surface with boundary such that $\Sigma=\Sigma_{\hyp}$ and let $\gamma_0\subset\Sigma$ be a filling multicurve. The set
    $$\Delta_{\Sigma}(\gamma_0)=\{\bar a\in\BA(\Sigma)\text{ with }\iota(\bar a,\gamma_0)\le 1\}$$
    has finite $\FM_{\BA(\Sigma)}$-measure.
\end{sat}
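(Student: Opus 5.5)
The plan is to push the problem into $\CM\CL^s(D\Sigma)$ through the doubling map and use the properness of intersection with a filling multicurve there. By Lemma \ref{lem:change of complexity},
$$\Delta_\Sigma(\gamma_0)\subset\{\bar a\in\BA(\Sigma)\ |\ \iota(\hat a,\gamma_*)\le C\},\qquad C=2+4\iota(\gamma_0,\gamma_e).$$
Since $\gamma_*$ fills $D\Sigma$, the function $\lambda\mapsto\iota(\lambda,\gamma_*)$ is positive on $\CM\CL(D\Sigma)\setminus\{0\}$. It is also continuous and homogeneous, so the sublevel set $B=\{\lambda\in\CM\CL^s(D\Sigma)\ :\ \iota(\lambda,\gamma_*)\le C\}$ is compact. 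Because $\FM_s$, the measure \eqref{eq:Thurston like measure}, is Radon, this gives $\FM_s(B)<\infty$. It then remains to compare $\FM_{\BA(\Sigma)}$ with the pushforward structure of $\FM_s$ under $\hat\cdot$.

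\textbf{Comparison of measures.} The doubling map $\BA(\Sigma)\to\CM\CL^s(D\Sigma)$ is injective and sends integral points to integral points. It is also homogeneous ($\widehat{t\bar a}=t\hat a$), and on each cone $\BA(\sigma)$ it is linear. The dimensions match: $\dim\BA(\Sigma)=3|\chi(\Sigma)|=6g-6+3n=\dim\CM\CL^s(D\Sigma)$. The key step is to show that $\hat\cdot$ maps each cone $\BA(\sigma)$, $\sigma\in\CA(\Sigma)$, linearly and injectively onto a full-dimensional cone of $\CM\CL^s(D\Sigma)$, and that the images of distinct cones overlap only in measure zero. Granted this, counting integral points gives
$$\FM_{\BA(\Sigma)}(U)\le c\cdot\FM_s(\hat U)$$
for every measurable $U\subset\BA(\Sigma)$, with $c$ a uniform constant coming from the index of lattices. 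In fact the image is the set of symmetric laminations without closed leaves on $\D\Sigma$ and meeting the equator. This set is an open piece of full measure, and on it $\hat\cdot$ matches integral points bijectively, so one should even get $c=1$.

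\textbf{Conclusion.} With the comparison in hand,
$$\FM_{\BA(\Sigma)}(\Delta_\Sigma(\gamma_0))\le c\,\FM_s(B)<\infty.$$

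\textbf{Expected obstacle.} The hard part is the measure comparison, and specifically the uniformity of $c$ across the infinitely many cones $\BA(\sigma)$. The danger is that lattice indices could grow, or that the images of cones could overlap with unbounded multiplicity. I would first handle this by invoking \cite{MarieArcs}, where doubling is treated, to identify $\hat\cdot$ with the restriction to a train-track chart of $\CM\CL^s(D\Sigma)$. Under that identification $\BZ$-points correspond exactly to symmetric integral multicurves, which forces $c=1$ and makes the counting definitions of the two measures agree directly. As a fallback I would compare the counts $|L\Delta_\Sigma(\gamma_0)\cap\BA_\BZ(\Sigma)|\le|LB\cap\CM\CL^s_\BZ(D\Sigma)|$, which follow from the injectivity of $\hat\cdot$ on integral points. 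Dividing by $L^{6g-6+3n}$ and passing to the limit, using Fatou and the fact that $\FM_s(\partial B)=0$ since $B$ is piecewise linear, gives the bound without computing any lattice indices.
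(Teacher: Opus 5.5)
Your proposal is correct, and your fallback is precisely the paper's proof: Lemma~\ref{lem:change of complexity}, then injectivity of $\hat\cdot$ on integral points to compare $|L\Delta_\Sigma(\gamma_0)\cap\BA_\BZ(\Sigma)|$ with $|LB\cap\CM\CL^s_\BZ(D\Sigma)|$, then compactness of $B$ because $\gamma_*$ fills, so the cone-by-cone identification of $\FM_{\BA(\Sigma)}$ with $\FM_s$ is not needed. Your way of passing to the limit, by lower semicontinuity on a slightly larger open set such as $\{\iota(\cdot,\gamma_0)<1+\epsilon\}$, is in fact a bit more careful than the equality the paper writes for the non-compact set $\Delta_\Sigma(\gamma_0)$.
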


\begin{proof}
Define $\Delta_{D\Sigma}(\gamma_*)$ as $\{ \bar a \in \BA(\Sigma) | \iota(\hat a,\gamma_*)\le 2+4\iota( \gamma _0,\gamma_e)$. By definition of $\FM_{\BA(\Sigma)}$ (see \eqref{eq measure simplex arc}) and $\FM_\sigma$ (see \eqref{eq:Thurston like measure}), we have\begin{align*}
    \FM_{\BA(\Sigma)}(\Delta_{D\Sigma}(\gamma_*)) &= \lim\limits_{L\to\infty} \frac{1}{L^{6g-6+2n}}|\{ \bar a \in \BA_\BZ(\Sigma) | \iota(\hat a,\gamma_*)\le 2+4\iota(\gamma _0,\gamma_e) \}| \\
    &\le \lim\limits_{L\to\infty} \frac{1}{L^{6g-6+2n}}|\{ \lambda \in \CM\CL^s_\BZ(D\Sigma) |\iota(\lambda,\gamma_*)\le 2+4\iota( \gamma _0,\gamma_e)  \}| \\
    & = \FM_\sigma(\{ \lambda \in \CM\CL^s(D\Sigma) | \iota(\lambda,\gamma_*)\le 2+4\iota(\hat \gamma _0,\gamma_e) \}).
\end{align*}
Where the last equality holds because by homogeneity and continuity of the intersection form, $\FM_\sigma(\D\{ \lambda \in \CM\CL^s(D\Sigma) | \iota(\lambda,\gamma_*)\le 2+4\iota( \gamma _0,\gamma_e) \})=0$.

However, the intersection number with $\gamma_*$ is positive, continuous and homogeneous on $\CM\CL^s(D\Sigma)$ because $\gamma_*$ is filling so it is proper and $\{ \lambda \in \CM\CL^s(D\Sigma) |\iota(\lambda,\gamma_*)\le 2+4\iota(\hat \gamma _0,\gamma_e) \}$ is compact, hence
$$\FM_\sigma(\{ \lambda \in \CM\CL^s(D\Sigma) |\iota(\lambda,\gamma_*)\le 2+4\iota(\hat \gamma _0,\gamma_e) \})<\infty,$$ 
this together with Lemma \ref{lem:change of complexity} ends the proof of Theorem \ref{sat finite volume}. 
\end{proof}

The reason why we do not apply the argument above in $\bar\BA$ is that to obtain the properness of the intersection number one need the local compacity of $\CM\CL^s$.

\subsubsection{When $\gamma_0$ has isolated simple closed components} Let's consider now the case where $\Sigma$ decomposes as $\Sigma_{\hyp}\sqcup\Sigma_{\ann} $, we still assume that the flip acts trivially on $\D\Sigma_{\hyp}$. Then, as seen in \eqref{eq I have a lot of nice cats} we have $\bar\BA(\Sigma)=\bar\BA(\Sigma_{\hyp})\times \BR_{\ge 0}^{|\pi_0(\Sigma_{\ann})|}$ and it appears that  if $\Leb^{|\pi_0(\Sigma_{\ann})|}$ is the standard Lebesgue measure on $\BR_{\ge 0}^{|\pi_0(\Sigma_{\ann})|}$ then, as seen in Lemma \ref{lem: measure desintegration}
$$\FM_{\BA(\Sigma)}=\FM_{\BA(\Sigma_{\hyp})}\otimes\Leb^{|\pi_0(\Sigma_{\ann})|}.$$ 

For any $0\le t \le 1$, define $\Delta_\Sigma^t(\gamma_0)=\{\bar a\in\BA(\Sigma)\text{ with }\iota(\bar a,\gamma_0)\le t\}$, it follows that  

\begin{align*}
    \FM_{\BA(\Sigma)}(\Delta_\Sigma(\gamma_0))&= \int_0^1  \FM_{\BA(\Sigma_{\hyp})}(\Delta_{\Sigma_{\hyp}}^{1-t}(\gamma_0^{\hyp}))t^{|\pi_0(\Sigma_{\ann})|} dt \\
    & \le \FM_{\BA(\Sigma_{\hyp})}(\Delta_{\Sigma_{\hyp}}(\gamma_0^{\hyp}))  \int_0^1  t^{|\pi_0(\Sigma_{\ann})|} dt. \notag
\end{align*}
The following Corollary is then a direct consequence of Theorem \ref{sat finite volume}.

\begin{kor}\label{cor finite volume}
    Let $\Sigma$ be a compact surface with boundary and let $\gamma_0\subset\Sigma$ be a filling multicurve. The set
    $$\Delta_{\Sigma}(\gamma_0)=\{\bar a\in\BA(\Sigma)\text{ with }\iota(\bar a,\gamma_0)\le 1\}$$
    has finite $\FM_{\BA(\Sigma)}$-measure. \hfill \qed
\end{kor}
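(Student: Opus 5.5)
The plan is to reduce to Theorem \ref{sat finite volume} by splitting off the annular part of $\Sigma$. I would use the product decomposition \eqref{eq I have a lot of nice cats}, $\bar\BA(\Sigma)=\bar\BA(\Sigma_{\hyp})\times\BR_{\ge0}^{|\pi_0(\Sigma_{\ann})|}$, together with the measure splitting $\FM_{\BA(\Sigma)}=\FM_{\BA(\Sigma_{\hyp})}\otimes\Leb^{|\pi_0(\Sigma_{\ann})|}$ from Lemma \ref{lem: measure desintegration}. Write $\bar a=(\bar a_{\hyp},\bar v)$ with $\bar v=(v_c)_{c\in\pi_0(\Sigma_{\ann})}$, and let $\gamma_0^{\hyp}=\gamma_0\cap\Sigma_{\hyp}$, which is a filling multicurve of $\Sigma_{\hyp}$.

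Next I would check that the intersection function splits additively. Since $\gamma_0$ fills $\Sigma$, each annular component $c$ contains exactly the core curve of that annulus as a component of $\gamma_0$. The unique arc crossing that annulus meets this core exactly once, and arcs in $\Sigma_{\hyp}$ do not meet it. Hence
$$\iota(\bar a,\gamma_0)=\iota(\bar a_{\hyp},\gamma_0^{\hyp})+\|\bar v\|.$$
In particular $\Delta_\Sigma(\gamma_0)\subset\Delta_{\Sigma_{\hyp}}(\gamma_0^{\hyp})\times\{\bar v\in\BR_{\ge0}^{k}:\|\bar v\|\le1\}$, where $k=|\pi_0(\Sigma_{\ann})|$.

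Fubini then gives
$$\FM_{\BA(\Sigma)}(\Delta_\Sigma(\gamma_0))\le\FM_{\BA(\Sigma_{\hyp})}\bigl(\Delta_{\Sigma_{\hyp}}(\gamma_0^{\hyp})\bigr)\cdot\Leb^k\{\|\bar v\|\le1\}=\FM_{\BA(\Sigma_{\hyp})}\bigl(\Delta_{\Sigma_{\hyp}}(\gamma_0^{\hyp})\bigr)\cdot\frac1{k!}.$$
This matches the displayed computation preceding the statement, up to the form of the integrand. Theorem \ref{sat finite volume} makes the first factor finite. If $\Sigma_{\hyp}=\emptyset$ the first factor is simply absent and the bound is the simplex volume $1/k!$; if $\Sigma_{\ann}=\emptyset$ the statement is exactly Theorem \ref{sat finite volume}.

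The only point needing care, and the one I expect to be the main obstacle, is the bookkeeping. First, the hyperbolic components of $\Sigma$ may be disconnected. This is harmless, since \eqref{eq product decompostion of arc complex} and the same product argument reduce the hyperbolic part to its connected components, or one applies Theorem \ref{sat finite volume} directly, as it allows disconnected $\Sigma$. Second, the measure on $\BA(\Sigma)$ is genuinely the product measure, which holds because no flip acts on $\D\Sigma_{\hyp}$, as assumed in this subsection.
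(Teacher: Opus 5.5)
Your argument is correct and is essentially the paper's: split $\BA(\Sigma)=\BA(\Sigma_{\hyp})\times\BR_{\ge0}^{k}$ with $\FM_{\BA(\Sigma)}=\FM_{\BA(\Sigma_{\hyp})}\otimes\Leb^{k}$, use that the intersection with $\gamma_0$ equals $\iota(\bar a_{\hyp},\gamma_0^{\hyp})+\|\bar v\|$, and reduce to Theorem~\ref{sat finite volume}. The only difference is in the last step. You bound via the product containment $\Delta_\Sigma(\gamma_0)\subset\Delta_{\Sigma_{\hyp}}(\gamma_0^{\hyp})\times\{\|\bar v\|\le 1\}$, whereas the paper slices along $t=\|\bar v\|$ and bounds $\FM_{\BA(\Sigma_{\hyp})}(\Delta^{1-t}_{\Sigma_{\hyp}}(\gamma_0^{\hyp}))$ by its value at $t=0$. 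Your version is cleaner and also sidesteps the paper's slip of writing the density $t^{k}$ instead of $t^{k-1}/(k-1)!$; that slip does not affect finiteness.
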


\section{Local types} \label{sec:local type}
Our ultimate goal is to be able to compare frequencies of curves in large genus, for this, we need to way to identify the curves we will compare in a way which is independent from the ambient surface $X_g$ which is going to be our "variable". For this we will use the notion of local type inspired from similar notions given by Anantharaman and Monk \cite{AM}. This will enable us to compare, in large genus, curves which are locally the same (\emph{ie}. of the same local type), but not globally the same (\emph{ie}. not of the same type).

\subsection{Local types and realizations}\label{sec local type def}
Let $\Sigma$ be a compact, possibly disconnected, oriented surface, all of whose components are either annuli or have negative Euler characteristic and admit boundary components. We will keep using the same decomposition of $\Sigma$ as $\Sigma_{\hyp}\sqcup\Sigma_{\ann}$.

Given $\gamma_0$ a multicurve in $\Sigma$ in the sens of Definition \ref{def: Curves}, $\gamma_0$ is said to be \emph{filling} if it cuts $\Sigma$ into topological disks and boundary parallel annuli. Equivalently, it has positive intersection number with every non boundary parallel simple closed curve of $\Sigma$ or also with every arc of~$\Sigma$.
\medskip

\begin{defi}\label{def: local type} Let $\gamma_0$ and $\Sigma$ be as above, a \emph{local type} is a triplet of the form $(\Sigma,\flip_\Sigma,\gamma_0)$ where $\flip_\Sigma$ is an involution of $\D\Sigma$ with the following properties: 
it exchanges both boundary components of every annular component of $\Sigma$ 
and some, but not all, boundary components of~$\Sigma_{\hyp}$. 

A local type is said to be \emph{essential} if the flip does not exchange any components of~$\D\Sigma_{\hyp}$. We will tend to denote such local types by $(\Sigma,\gamma_0)$.
\end{defi}

\begin{figure}[!h] 
    \centering
        \includegraphics[scale=1]{  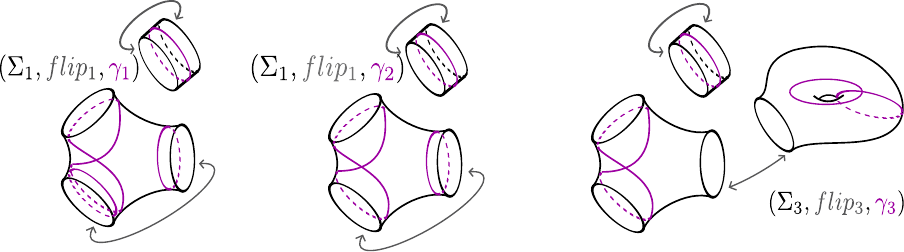}
    \caption{Examples of local types}
    \label{fig:LocalTypes}
\end{figure}

We want to see multicurves on surfaces as embeddings of local types.

\begin{defi} \label{def: Realization}
    A {\em realization} of a local type $(\Sigma,\flip_\Sigma,\gamma_0)$ in a closed surface $X$ is a class of $\pi_1$-injective  embedding
$$\phi:\Sigma\to X$$
with the property that the images of $\phi(\D_i\Sigma)$ and $\phi(\D_j\Sigma)$, two boundary components of $\Sigma$, are isotopic to each other if and only if $\flip_\Sigma(\D_i\Sigma)=\D_j\Sigma$. Two diffeomorphisms are in the same class if they differ pre or post composition by diffeomorphisms isotopic to the identity.

A curve $\gamma$ of $\Sigma$ is \emph{of local type} $(\Sigma,\flip_\Sigma,\gamma_0)$ if there is a realization $\phi$ of $(\Sigma,\flip_\Sigma,\gamma_0)$ such that $\phi(\gamma_0)=\gamma.$
\end{defi}

\begin{figure}[!h] 
    \centering
        \includegraphics[scale=0.8]{  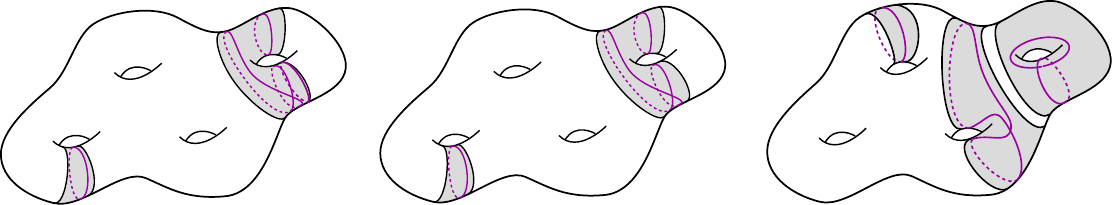}
    \caption{Realizations for the local types of Figure \ref{fig:LocalTypes}}
    \label{fig:Realisations}
\end{figure}

Let us add some comments that the reader should keep in mind.
\begin{enumerate}
\item Some local types may represent partially weighted multicurves, this will not change anything in the following, the choice to consider not weighted curves in general was made to avoid unnecessary notations. (In the first example of Figure \ref{fig:LocalTypes}, the two boundary parallels curves in the pair of pants will produce in $X$ a simple closed curve with weight $2$).
\item There might be different local types $(\Sigma,\flip_\Sigma,\gamma_0)$ and $(\Sigma',\flip_{\Sigma'},\gamma_0')$ for which there are realizations $\phi$ and $\phi'$ with $\phi(\gamma_0)=\phi'(\gamma_0')$, where equal here means that they are freely homotopic to each other. (See Figures  \ref{fig:LocalTypes} and \ref{fig:Realisations}, second and third examples.)
\item If $\Sigma',\Sigma''\in\pi_0(\Sigma)$ are distinct connected components of $\Sigma$ which have boundary components $\D_i\Sigma\in\D\Sigma'$ and $\D_j\Sigma\in\D\Sigma''$ such that $\flip_\Sigma(\D_i\Sigma)=\D_j\Sigma$, then both $\Sigma'$ and $\Sigma''$ are hyperbolic components of $\Sigma$. (See Figure \ref{fig:LocalTypes}, fourth example.)
\item The involution $\flip_\Sigma$ is uniquely determined by the realization $\phi$ of the local type $(\Sigma,\flip_\Sigma,\gamma_0)$. 
 \end{enumerate}
\medskip

Now, note that for any non filling curve $\gamma_0$ of a closed surface $X$ of genus $g\ge 2$, by taking  $\Sigma$ and $\flip_\Sigma$ as defined in the previous sections, the triplet $(\Sigma, \flip_\Sigma,\gamma_0)$ defines a local type with a natural realization in $X$ coming from the fact that $\gamma_0$ lives in $X$. We then dispose of the following proposition.

\begin{prop}
    Let $X$ be a closed surface of genus $g\ge 2$. For any non-filling multicurve $\gamma$ in $X$ there is a local type $(\Sigma,\flip_\Sigma,\gamma_0)$ and a realization $\phi:\Sigma \to X$ such that $\phi(\gamma_0)=\gamma$.
\end{prop}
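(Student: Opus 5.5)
The plan is to build the local type directly from the construction at the opening of Section \ref{sec frequency}. Given the multicurve $\gamma$ in $X$, I take $\Sigma\subset X$ to be the smallest $\pi_1$-injective subsurface containing $\gamma$ such that no annular component of $\Sigma$ is adjacent to an annular component of $X\setminus\Sigma$. This surface is unique up to isotopy, and $\gamma$ fills it. I then set $\gamma_0=\gamma$, viewed as a multicurve in $\Sigma$, and let $\phi:\Sigma\to X$ be the inclusion. Finally I take $\flip_\Sigma$ to be the involution \eqref{eq involution induced for Sigma}, which is induced by $\flip_\Gamma$, where $\Gamma$ is the multicurve of simple curves homotopic into $\D\Sigma$. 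With these choices, $\phi(\gamma_0)=\gamma$ holds tautologically.

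The verification then goes through the axioms of Definitions \ref{def: local type} and \ref{def: Realization} one at a time.

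\begin{itemize}
\item \emph{Shape of $\Sigma$.} Each component of $\Sigma$ has boundary, since $\gamma$ is non-filling and so $\Sigma\neq X$. Each component is either an annulus or has negative Euler characteristic, because it is $\pi_1$-injective and contains an essential curve, which rules out disks and M\"obius bands in the orientable setting.
\item \emph{$\gamma_0$ fills $\Sigma$.} This holds by the minimality of $\Sigma$.
\item \emph{$\phi$ is a $\pi_1$-injective embedding.} This holds by construction.
\item \emph{The flip condition.} I must show that $\phi(\D_i\Sigma)$ and $\phi(\D_j\Sigma)$ are isotopic exactly when $\flip_\Sigma(\D_i\Sigma)=\D_j\Sigma$. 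Two distinct boundary curves of $\Sigma$ are isotopic in $X$ exactly when they cobound an annulus. That annulus is either a component of $\Sigma_{\ann}$ or a component of $X\setminus\Sigma$ with both sides in $\Sigma$, and in both cases $\flip_\Gamma$ exchanges the two curves. Conversely, $\flip_\Sigma$ only pairs curves bounding such annuli. Here I use that $\Sigma$ is chosen so that no annulus of $\Sigma$ is adjacent to an annulus of the complement, so each isotopy class in $\Gamma$ is represented at most twice in $\D\Sigma$.
\item \emph{Annular components.} $\flip_\Sigma$ exchanges the two boundary components of every annular component of $\Sigma$, since these two curves are isotopic.
\end{itemize}

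The main obstacle is the requirement in Definition \ref{def: local type} that $\flip_\Sigma$ exchange ``some, but not all'' boundary components of $\Sigma_{\hyp}$. I would read this as the condition that $\flip_\Sigma$ does not pair every boundary curve of $\Sigma_{\hyp}$. Suppose it did pair all of them, and also paired all boundary curves of $\Sigma_{\ann}$. Then $Z=(X\setminus\CN(\Sigma))_{\hyp}$ would have no boundary, so it would be empty. In that case $X$ would be obtained from $\Sigma$ by gluing annuli, and $\gamma$ would fill $X$ up to those annuli. This contradicts the assumption that $\gamma$ is non-filling once one checks that the annuli of $X\setminus\Sigma$ cannot absorb an essential curve disjoint from $\gamma$. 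I expect this case analysis to be the only delicate point, since the rest is unwinding definitions. If the intended reading of ``some'' permits no exchanges at all, the essential case, then this step is vacuous and the argument closes as above.
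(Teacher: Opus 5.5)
Your construction is the paper's own: the paper takes $\Sigma$ and $\flip_\Sigma$ from Section~\ref{sec frequency}, uses the inclusion as the realization, and simply asserts that the triple is a local type. Your checks are fine for the shape of $\Sigma$, the filling property, $\pi_1$-injectivity, and the correspondence between isotopic boundary curves and $\flip_\Sigma$-orbits.

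\textbf{The step that fails is your last paragraph.} You argue that if $\flip_\Sigma$ paired every boundary curve of $\Sigma_\hyp$, then $\gamma$ would be filling, ``once one checks that the annuli of $X\setminus\Sigma$ cannot absorb an essential curve disjoint from $\gamma$''. That check is false. The core of every complementary annulus is a component of $\Gamma$, which is an essential simple closed curve disjoint from $\gamma$.

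Here is a concrete example. Take $X$ of genus $2$, let $c$ be a non-separating simple closed curve, and let $\gamma$ be a curve filling the two-holed torus $X\setminus\CN(c)$.
\begin{itemize}
\item $\gamma$ is non-filling, since it misses $c$.
\item $\Sigma=\Sigma_\hyp=X\setminus\CN(c)$ and $Z=\emptyset$.
\item $\flip_\Sigma$ exchanges both boundary curves, i.e.\ all of $\D\Sigma_\hyp$.
\end{itemize}
No other local type rescues this. Any realization $\phi$ of any local type with $\phi(\gamma_0)=\gamma$ has image isotopic to the subsurface filled by $\gamma$, which is $\Sigma$, so the same flip is forced.

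More generally, when $\Sigma_\hyp\neq\emptyset$, $\flip_\Sigma$ pairs all of $\D\Sigma_\hyp$ exactly when $Z=\emptyset$. This follows from the connectedness of $X$ and the condition that no annular component of $\Sigma$ abuts a complementary annulus. This situation is perfectly compatible with $\gamma$ being non-filling.

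So the clause ``but not all'' in Definition~\ref{def: local type} cannot be deduced from the non-filling hypothesis. Your fallback does not help either: whether ``some'' allows zero exchanges concerns the lower bound, not the clause ``not all''.

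The construction (yours and the paper's) actually proves one of two things:
\begin{itemize}
\item the statement under the extra hypothesis $Z\neq\emptyset$, equivalently that some component of $X\setminus\gamma$ is neither a disk nor an annulus; or
\item the statement as written, once ``but not all'' is dropped from the definition.
\end{itemize}
The second reading is how the paper tacitly uses the definition. For instance, it calls an annulus with its core curve a local type, even though there $\D\Sigma_\hyp=\emptyset$. You should state this restriction explicitly instead of asserting a contradiction that does not exist.
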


Following this link with the previous sections, we associate to each realization of a local type a multicurve $\Gamma_\phi=\Gamma$ in $X$ and a subsurface $Z_\phi=Z$ of $X$. Let's describe them in term of the local type and the realization. One can ffind an example in Figure \ref{fig:FullLocalType}.

Let $(\Sigma, \flip_\Sigma,\gamma_0)$ be a local type and $\phi$ a realization in a closed surface $X$.
\begin{itemize}
    \item $Z=Z_\phi$ is the hyperbolic part of $X\setminus\phi(\Sigma\setminus\D\Sigma)$. It is the same subsurface as $Z_{\phi(\gamma_0)}$ defined in \eqref{eq:defZ}.
    \item $\Gamma=\Gamma_\phi$ is the simple multicurve of $X$ given by $\phi(\D\Sigma)$. It is the same multicurve as $\Gamma_{\phi(\gamma_0)}$ defined in \eqref{eq:defGamma}. 
\end{itemize}

\begin{figure}[!h] 
    \centering
        \includegraphics[scale=0.83]{  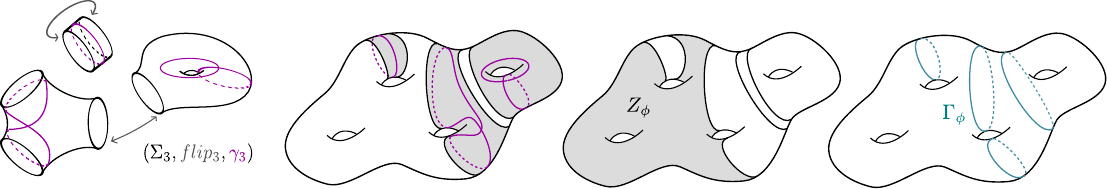}
    \caption{Local type, realization $Z_{\phi}$ and $\Gamma_{\phi}$}
    \label{fig:FullLocalType}
\end{figure}

By definition of a realization $|\Gamma_\phi|$ naturally identifies with $\Lfaktor{\D\Sigma}{\flip_\Sigma}$, hence, this curve can be seen as independent of $\phi$ (which describes the way it is embedded in $X$). Hence, we will write $\Gamma$ instead of $\Gamma_\phi$. Also, the multicurves $\Gamma_{\fix}$ and $\Gamma_{\ann}$ defined in Section \ref{sec:FrequencyCounting} for $\Gamma=\Gamma_{\phi(\gamma_0)}$ are also well defined independently of the realization. In particular, the cardinality of $\Gamma,\Gamma_{\fix}$ and $\Gamma_{\ann}$ depend only on the local type.

\subsection{Types and local types}
Local types can be seen as a way to describe the local shape of a curve in a surface. Hence, if two curves are of the same type (\emph{i.e.}\ they are in the same mapping class group orbit) in particular they are of the same local type. This is because the post-composition of a realization by a mapping class is still a realization. However. the reverse is false. For example, if $\phi_0$ and $\phi_1$ are two realizations of the same local type such that $Z_0$ and $Z_1$ don't have the same number of connected components then $\phi(\gamma_0)$ and $\phi_1(\gamma_0)$ cannot be of the same type, in the same way as separating and non-separating simple closed curves are not of the same type. 

\begin{prop} \label{prop:finitely many realization}
    Let $(\Sigma, \flip_\Sigma,\gamma_0)$ be a local type and $X$ a closed surface. There is finitely many types of curves in $X$ of local type $(\Sigma, \flip_\Sigma,\gamma_0)$.
\end{prop}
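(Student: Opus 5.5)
The plan is to reduce the statement to the classification of surfaces. We show that the type of $\phi(\gamma_0)$ is determined by finitely many combinatorial data attached to the realization $\phi$, and then check that each admissible choice of data gives at most one mapping class group orbit.

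Fix a realization $\phi:\Sigma\to X$ of $(\Sigma,\flip_\Sigma,\gamma_0)$. The complement $X\setminus\phi(\Sigma\setminus\D\Sigma)$ is a compact surface $W_\phi$, possibly disconnected. It consists of the annuli forced by $\flip_\Sigma$, which glue boundary components in the same $\flip_\Sigma$-orbit, together with the hyperbolic part $Z_\phi$. We record the following \emph{gluing data}:
\begin{enumerate}
\item the partition of the set $\Fix(\flip_\Sigma)$ (more precisely of the boundary components of $\Sigma$ not absorbed by the flip annuli) according to which connected component of $Z_\phi$ they bound;
\item the genus of each component of $Z_\phi$.
\end{enumerate}
The number of components of $Z_\phi$ is at most $|\D\Sigma|$. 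Since $\chi(X)=\chi(\Sigma)+\chi(Z_\phi)$ and annuli have zero Euler characteristic, the total genus of $Z_\phi$ is bounded in terms of $g$ and $\chi(\Sigma)$. Hence only finitely many gluing data can occur for a fixed local type and a fixed genus of $X$.

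The main step is to show that two realizations $\phi,\phi'$ with the same gluing data yield curves of the same type. Consider the components of $Z_\phi$ and $Z_{\phi'}$ matched by the data. They have the same genus and the same labelled boundary components, so by the classification of surfaces there are orientation preserving homeomorphisms between them respecting the boundary labels. The flip annuli are handled similarly, each by a homeomorphism of annuli. On $\Sigma$ itself we take $\phi'\circ\phi^{-1}$.

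These pieces must be glued into a global homeomorphism $f:X\to X$ with $f\circ\phi\simeq\phi'$. I expect this gluing to be the main obstacle. The piecewise maps must agree on the boundary curves $\phi(\D\Sigma)$, and orientations must be compatible. Agreement on the boundary is arranged by isotoping each piece near its boundary, since homeomorphisms of a circle preserving orientation are isotopic. Orientation compatibility holds because $\phi$ and $\phi'$ are both orientation compatible embeddings; if orientation reversing embeddings are permitted, the data should also record this, which is still a finite choice. Any mismatch between the two sides amounts to Dehn twists along $\Gamma$, and these do not affect the isotopy class of $\phi(\gamma_0)$ up to mapping classes. We then get $f(\phi(\gamma_0))=\phi'(\gamma_0)$, so both curves lie in one $\Map(X)$-orbit.

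Combining the two steps, the number of types of curves of local type $(\Sigma,\flip_\Sigma,\gamma_0)$ is at most the number of admissible gluing data, which is finite. Beyond the gluing issue above, the only remaining care is the bookkeeping of which boundary components are joined by flip annuli, but this is fixed by $\flip_\Sigma$ and requires no choice.
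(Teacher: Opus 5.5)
Your proof is correct, but it takes a different route from the paper's argument for this proposition.

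\textbf{The paper's route.} The paper argues in two lines. A realization $\phi$ is a $\pi_1$-injective embedding, so every curve of local type $(\Sigma,\flip_\Sigma,\gamma_0)$ has the same self-intersection number $\iota(\gamma_0,\gamma_0)$. In a fixed closed surface there are only finitely many $\Map(X)$-orbits of curves with a given self-intersection number, and this gives finiteness.

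\textbf{Your route.} You parametrize realizations by labelled gluing data: which component of $Z_\phi$ each boundary component not absorbed by a flip annulus bounds, together with the genera of those components. You note that $\chi(Z_\phi)=\chi(X)-\chi(\Sigma)$ leaves only finitely many possibilities. When the data agree, you build a homeomorphism $f$ of $X$ with $f\circ\phi=\phi'$, using the classification of surfaces and collar isotopies.

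This is essentially the mechanism the paper uses later in Lemma \ref{lem:step1DualGraph} and Proposition \ref{prop:NumberOfRealizationForAGraph}. There the dual graphs are unlabelled, which is why a possibly non-pure $\psi_\Sigma$ appears; your labelled data forces $\psi_\Sigma$ to be the identity.

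\textbf{Comparison.} The paper's proof is shorter, but it imports two facts without proof:
\begin{itemize}
\item finiteness of types with fixed self-intersection number;
\item invariance of minimal self-intersection under incompressible embeddings.
\end{itemize}
Your argument is self-contained and more informative. It shows at most one type per gluing datum, or two if orientation-reversing embeddings are allowed. This is exactly the bookkeeping used in the count over partitions of $|\chi_{\ns}|$ in the proof of Theorem \ref{sat dominant type}. It also yields the uniqueness of the non-separating realization, which the paper states without proof.

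\textbf{Minor point.} Your remark about Dehn-twist mismatches along $\Gamma$ is unnecessary. Once the collar isotopies make the pieces agree on the curves $\phi(\D\Sigma)$, the glued map already satisfies $f\circ\phi=\phi'$ exactly.
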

\begin{proof}
    If the genus of $X$ is too small maybe there is no realization of $(\Sigma, \flip_\Sigma,\gamma_0)$ in $X$ and the result is direct. Otherwise, all the curves which have the same local type have the same number of self intersection $\iota(\gamma_0,\gamma_0)$. Since there is finitely many different types of fixed intersection number, there is also finitely many types of local type $(\Sigma, \flip_\Sigma,\gamma_0)$.
\end{proof}

\begin{rmk*}
    Remark that this also proves that given a surface $X$, there is only finitely many local types that can be realized in $X$ and of fixed intersection number.
    Moreover, if one restricts to local types such that $\Sigma=\Sigma_{\hyp}$, for any $k>0$ there is only finitely many local types of self intersection $k$.
\end{rmk*}

Among those finitely many types of a given local type one will be of specific interest.

\begin{defi} \label{sep local type}
    Let $(\Sigma,\flip_\Sigma,\gamma_0)$ be a local type and $\phi : \Sigma \to X$ a realization into a closed surface $X$. The realization is said to be \emph{separating} if $Z_\phi$ is disconnected. Otherwise it is said to be \emph{non-separating}. See example in Figure \ref{fig:Sep adn non-sep real}.
\end{defi}

\begin{figure}[h!]
    \centering
    \includegraphics[width=\linewidth]{ 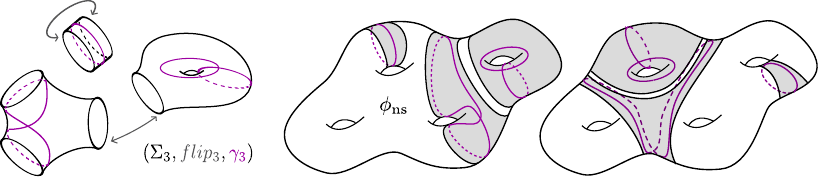}
    \caption{A non-separating and a separating realization of the same local type}
    \label{fig:Sep adn non-sep real}
\end{figure}

In the same way as there is only one type of non-separating simple closed curve there is, in some sens, a single non-separating realization a each local type.

\begin{prop}
    Given a local type $(\Sigma, \flip_\Sigma,\gamma_0)$ and a surface $X$ of genus big enough such that $(\Sigma, \flip_\Sigma,\gamma_0)$ can be realized in it. Up to post-composition by a mapping class, there is a unique non separating realization $\phi_{\ns}^g:\Sigma\to X$ of this local type into $X$. \qed
\end{prop}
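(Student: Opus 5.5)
The plan is to apply the classification of surfaces to the complement, in the same way one shows that any two non-separating simple closed curves differ by a homeomorphism.

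\emph{Setup.} Let $\phi,\psi:\Sigma\to X$ be two non-separating realizations of $(\Sigma,\flip_\Sigma,\gamma_0)$. First I would check that the complements $X\setminus\phi(\Sigma)$ and $X\setminus\psi(\Sigma)$ are abstractly homeomorphic, compatibly with the boundary gluing data. By definition of a realization, the complement of the interior of $\phi(\Sigma)$ is made of three kinds of pieces:
\begin{enumerate}
\item annuli, one for each $\flip_\Sigma$-orbit of size two in $\D\Sigma_\hyp$ (the curves of $\Gamma_{\flip}$);
\item annuli joining $\D_LZ$ and $\D_RZ$ along $\Gamma_\ann$, which are in fact absorbed into $\Sigma_\ann$;
\item the hyperbolic part $Z_\phi$.
\end{enumerate}
Which boundary components of $\phi(\Sigma)$ are joined by the annuli in (1) is dictated by $\flip_\Sigma$ alone. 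The boundary components of $Z_\phi$ are the $\phi$-images of the $\flip_\Sigma$-fixed components, together with the two sides of each $\Gamma_\ann$ curve. Their number $n_Z$ and the labelling, given by the corresponding component of $\D\Sigma$, therefore depend only on the local type.

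\emph{Topology of $Z_\phi$.} Since the realization is non-separating, $Z_\phi$ is connected. An Euler characteristic count, $\chi(X)=\chi(\Sigma)+\chi(Z_\phi)$, shows that $\chi(Z_\phi)=2-2g-\chi(\Sigma)$ is independent of $\phi$. A connected orientable surface is determined by its Euler characteristic and its number of boundary components, so $Z_\phi$ and $Z_\psi$ have the same genus. If $n_Z=0$ the complement is empty and there is nothing to do. The hypothesis that $g$ is large enough ensures that $Z$ is hyperbolic whenever it is nonempty.

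\emph{Building the homeomorphism.} Choose a homeomorphism $F_Z:Z_\phi\to Z_\psi$ that sends each boundary component $\phi(c)$ to $\psi(c)$, with the orientation induced from $X$. This is possible because the mapping class group of a connected surface with boundary surjects onto the permutations of its boundary components, and orientations can be matched. On the annuli of type (1), choose homeomorphisms matching the labelled boundaries. On $\phi(\Sigma)$, set $F=\psi\circ\phi^{-1}$. Along each gluing curve the three maps agree up to isotopy of circles. Using collars, I would adjust $F_Z$ and the annulus maps so that they agree exactly with $\psi\circ\phi^{-1}$ on the boundary. The glued map $F:X\to X$ is then an orientation-preserving homeomorphism with $F\circ\phi=\psi$, so $[F]\in\Map(X)$ and $\psi$ is its post-composition with $\phi$.

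\emph{Main obstacle.} The delicate point is the bookkeeping of labels and orientations when matching boundary components. One must check that the annular pieces in (1) glue to the correct pairs, and that the two sides of each $\Gamma_\ann$ curve land in $Z$ with consistent orientations. These must be compatible with the identification of $\Gamma$ with $\Lfaktor{\D\Sigma}{\flip_\Sigma}$, which holds independently of the realization, as noted before the statement. Everything else is the standard change-of-coordinates principle.
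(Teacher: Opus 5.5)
Your proof is correct and is the standard change-of-coordinates argument the paper has in mind (it states the proposition without proof). It is the same gluing of complementary pieces as in Lemma~\ref{lem:step1DualGraph}; the difference is that you use the connectedness of $Z_\phi$ to match the boundary components of $Z_\phi$ and $Z_\psi$ label by label, and this is exactly what removes the pre-composition $\psi_\Sigma$ appearing in that lemma. The one point you leave implicit is existence. For $g$ large it follows by gluing to $\Sigma$ the flip annuli and a connected surface with $n_Z$ boundary components and genus $g+\frac12(\chi(\Sigma)-n_Z)$, which is an integer because $\chi(\Sigma)\equiv n_Z \pmod 2$.
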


Note that we will tend to consider realization up to post-composition by mapping classes and then tend to talk about \textbf{the} non-separating realization, when the context is clear we will write $\phi_{\ns}$ instead of $\phi_{\ns}^g$ for this realization. 
We will extensively study it so it is worth noticing the following fact.

\begin{prop}   \label{prop: sym independant of g} 
    Given a local type $(\Sigma, \flip_\Sigma,\gamma_0)$,  there exists $g_0$ such that for all $g\ge g_0$, $$\sym(\phi_{\ns}^g(\gamma_0))= |\Stab_{\Map(\Sigma)}(\gamma_0)|.$$
\end{prop}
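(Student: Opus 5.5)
We work with $\gamma:=\phi_{\ns}^g(\gamma_0)$ and the objects $\Sigma$, $Z=Z_\phi$, $\Gamma$ and the homomorphism $\pi_*:\Stab_{\Map(X)}(\gamma)\to\Map(\Sigma_\hyp)\times\Map(Z)$ from \eqref{eq homomorphism}, with kernel the multitwist group $\BT$. By definition $\sym(\gamma)=[\Stab_{\Map(X)}(\gamma):G]$ with $G=\pi_*^{-1}(\{\Id\}\times\PMap(Z))$. Since $G\supset\BT$, the quotient $\Stab_{\Map(X)}(\gamma)/G$ is isomorphic to $\pi_*(\Stab_{\Map(X)}(\gamma))/(\{\Id\}\times\PMap(Z))$. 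The plan is to identify this quotient with $\Stab_{\Map(\Sigma)}(\gamma_0)$. We assume, as the definition of $\Map(\Sigma)$ suggests, that $\Stab_{\Map(\Sigma)}(\gamma_0)$ means mapping classes of $\Sigma$ preserving $\gamma_0$ and compatible with $\flip_\Sigma$. We also read the identity $\Id$ on the annular part correctly: annuli carry no mapping classes beyond the swap of boundaries, which is recorded in $\Map(\Sigma)$.

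\textbf{Step 1: the restriction map.} Every $f\in\Stab_{\Map(X)}(\gamma)$ preserves $\phi(\Sigma)$ up to isotopy, since $\Sigma$ is canonically determined by $\gamma$. Hence $f$ restricts to a mapping class $r(f)\in\Map(\Sigma)$ fixing $\gamma_0$ and commuting with $\flip_\Sigma$. The map $r$ is a homomorphism. Its kernel contains $G$: elements of $G$ are the identity on $\Sigma_\hyp$, and on $\Sigma_\ann$ they can only twist, which is trivial in $\Map$ of an annulus. It also contains the twists. Conversely, if $r(f)=\Id$, then $\pi_*(f)$ has trivial first coordinate. Its second coordinate fixes every component of $\D Z$, because each such component is isotopic to a boundary component of $\Sigma$ that is fixed. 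So $f\in G$. Therefore $\Stab_{\Map(X)}(\gamma)/G$ injects into $\Stab_{\Map(\Sigma)}(\gamma_0)$.

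\textbf{Step 2: surjectivity for large genus.} Given $h\in\Stab_{\Map(\Sigma)}(\gamma_0)$ respecting $\flip_\Sigma$, we need $F\in\Map(X)$ extending $h$. The action of $h$ on $\D\Sigma$ descends to a permutation of $\Gamma$, and hence to a permutation $p$ of the boundary components of $Z$. For the non-separating realization, $Z$ is connected. Take $g_0$ large enough that $Z$ has positive genus, or just that it is hyperbolic and connected; this is needed so that every boundary permutation is realized. Then $p$ can be realized by an orientation-preserving diffeomorphism of $Z$. One has to respect the pairing of $\D_LZ$ and $\D_RZ$: the components in $\D_LZ\sqcup\D_RZ$ coming from annuli must be permuted compatibly. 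This compatibility holds because $h$ respects $\flip_\Sigma$. Since $Z$ is connected, any permutation of its boundary components is realized by a diffeomorphism. We choose the boundary parametrizations to match those of $h$ along $\Gamma$, adjusting by twists as needed, and glue $h$ on $\Sigma$ with this diffeomorphism of $Z$ to obtain $F$. By construction $F(\gamma)=\gamma$ and $r(F)=h$. Combining Steps 1 and 2 gives an isomorphism $\Stab_{\Map(X)}(\gamma)/G\cong\Stab_{\Map(\Sigma)}(\gamma_0)$, hence the claimed equality of orders for $g\ge g_0$.

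\textbf{Main obstacle.} The delicate point is Step 2 together with the well-definedness of $r$. One must make sure the extension across the annuli $\CN(\Gamma)$ is compatible with orientations. One must also check that the hyperelliptic-type issues disappear, that is, that the kernel of the action on $\CM\CL$ is trivial once $g\ge3$. Finally, one must confirm that $\Stab_{\Map(\Sigma)}(\gamma_0)$ is finite. This follows because $\gamma_0$ fills $\Sigma_\hyp$ and the annular parts contribute only finitely many boundary swaps, so the formula is meaningful. The choice of $g_0$ is only needed to guarantee that a connected $Z$ exists and is hyperbolic, which holds as soon as a non-separating realization exists with $\chi(Z)<0$.
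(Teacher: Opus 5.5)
Your proof is correct and follows the paper's route. You restrict elements of $\Stab_{\Map(X)}(\gamma)$ to $\Sigma$, identify the kernel of this restriction with $G$, and use that $Z$ is connected for the non-separating realization to extend symmetries of $\Sigma$ across $X$.

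Your requirement that the target be the subgroup of $\Stab_{\Map(\Sigma)}(\gamma_0)$ commuting with $\flip_\Sigma$ is genuinely necessary, and it makes your version sharper than the paper's. Without it, the paper's claimed surjectivity fails for non-essential local types. For the figure-8 with $\flip_{1,2}$, the transposition of the two lobes does not commute with the flip; consistently, the appendix records $\sym=1$ while $|\Stab_{\Map(P)}(\gamma_0)|=2$. For essential local types every mapping class of $\Sigma$ commutes with the flip, so your reading and the literal one coincide there, and the equality holds as stated.
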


\begin{proof}
    Let us recall some definitions from Section \ref{sec : subgroup}. For a curve $\gamma$ in a surface $X$ we have $\sym(\gamma)=[\Stab_{\Map(X)}(\gamma): G]$ where $G=\pi_*^{-1}(\{\Id_{\Sigma_\hyp} \}\times \PMap(Z))$ with 
    \[ \pi_* :  \Stab_{\Map(X)}(\gamma) \to \Map(\Sigma_\hyp)\times \Map(Z). \]
    One can also consider the map $\varphi : \Stab_{\Map(X)}(\gamma) \to \Stab_{\Map(\Sigma)}(\gamma)$. Hence, $G$ is the kernel of this map and which is surjective as soon as $Z$ is connected and in that case
    \[ \sym(\gamma) = |\text{Im}(\varphi)|=|\Stab_{\Map(\Sigma)}(\gamma)|.      \]

    So for a local type $(\Sigma, \flip_\Sigma,\gamma_0)$ and $g$ big enough such that $\phi_{\ns}^g(\gamma_0)$ exists we have
    \[ \sym(\phi_{\ns}^g(\gamma_0)) = |\Stab_{\Map(\Sigma)}(\gamma_0)|.  \]
    \end{proof}

In the following, given a realization $\phi$ of a local type $(\Sigma, \flip_\Sigma,\gamma_0)$ we will write $\sym(\phi,\gamma_0)$ for $\sym(\phi(\gamma_0))$.

\subsection{Frequencies for local types} 
The reason why we introduce local types is to have a clear setting to compare frequencies in large genus. Indeed, given a local type an a sequence $\phi_g$ of realization into the surface $X_g$ of genus $g$, the sequence $\FC_g(\phi_g(\gamma_0))$ is well defined and can be studied as $g$ goes to infinity. This can be seen as the asymptotic frequency of a given curve $\gamma_0$. It is this kind of asymptotic that we will study in the next sections. To do so, we need to rewrite our expressions of the frequencies in term of local types. The only one we will need is the one of Theorem \ref{thm constant c in terms of intersection numbers}.

\begin{kor} \label{kor:formulationLocalType}
Let $X$ be a closed hyperbolic surface of genus $g$ and $(\Sigma, \flip_\Sigma,\gamma_0)$ a local type. Consider a realization $\phi \colon \Sigma \to X$ of this local type into $X$. With the notation fixed in Section \ref{sec local type def} and with ${\bf I}_{\gamma_0}:\BA(\Sigma,\flip_\Sigma)\to\BR_{\ge 0}$ the intersection function with $\gamma_0$. Setting
$$\Delta_\Sigma(\gamma_0) :=\{\bar a \in \BA(\Sigma,\flip_\Sigma) : {\bf I}_{\gamma_0}(\bar a)\le 1 \}$$
we have
$$ \FC_g(\phi(\gamma_0))=
\frac {2^{\chi(Z)+|\pi_0(Z)|}}{\sym(\phi,\gamma_0)}
\bigintsss\limits_{\Delta_\Sigma(\gamma_0)}
V_Z\left({\bf b}(\bar a)_{|\D Z_\phi}\right)
\cdot \product({\bf w} (\bar a))  d{\FM_{\BA(\Sigma,\flip_\Sigma)}}(\bar a)$$
where $V_Z(\cdot)$ is the Kontsevich volume polynomial associated to the surface $Z_\phi$, and where $\FM_{\BA(\Sigma,\flip_\Sigma)}$ is the measure provided by Proposition \ref{prop existence of thurston like measure on flip-invariant arc complex}.  \qed
\end{kor}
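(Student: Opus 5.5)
This is a direct translation of Theorem \ref{thm constant c in terms of intersection numbers}, applied to the multicurve $\phi(\gamma_0)\subset X$. The plan is to check that every object in that theorem, computed intrinsically from $\phi(\gamma_0)$ in $X$, coincides with the object built from the local type and the realization. I would proceed in three steps.

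\textbf{Step 1: the subsurface filled by $\phi(\gamma_0)$.} Since $\gamma_0$ fills $\Sigma$ and $\phi$ is a $\pi_1$-injective embedding, $\phi(\Sigma)$ is, up to isotopy, the smallest $\pi_1$-injective subsurface containing $\phi(\gamma_0)$. One then checks that no annular component of $\phi(\Sigma)$ is adjacent to an annular component of its complement. This holds because annular components of $\Sigma$ have their two boundary curves exchanged by $\flip_\Sigma$, so their images are parallel only through the annulus itself. Consequently, the surface attached to $\phi(\gamma_0)$ at the start of Section \ref{sec frequency} is $\phi(\Sigma)$. The same bookkeeping identifies the other objects:
\begin{itemize}
\item[(i)] $Z_\phi$ with $Z$ as in \eqref{eq:defZ};
\item[(ii)] $\Gamma_\phi=\phi(\D\Sigma)$ with $\Gamma$ as in \eqref{eq:defGamma};
\item[(iii)] the abstract involution $\flip_\Sigma$ with the involution \eqref{eq involution induced for Sigma} induced by $\flip_\Gamma$.
\end{itemize}
Item (iii) is exactly the content of the definition of a realization: two boundary components have isotopic images if and only if they are exchanged by $\flip_\Sigma$.

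\textbf{Step 2: pull the remaining data back to $\Sigma$.} The homeomorphism $\phi:\Sigma\to\phi(\Sigma)$ induces an identification $\BA(\Sigma,\flip_\Sigma)\cong\BA(\phi(\Sigma),\flip_{\phi(\Sigma)})$. This identification preserves integral points, so it carries the measure of Proposition \ref{prop existence of thurston like measure on flip-invariant arc complex} to the corresponding measure. It also intertwines three further pieces of data:
\begin{itemize}
\item[(a)] the boundary map ${\bf b}$ and its restriction to $\D Z_\phi$;
\item[(b)] the map ${\bf w}$ of \eqref{eq:defWSigma};
\item[(c)] the intersection form ${\bf I}_{\gamma_0}$. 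Intersection numbers of arcs with $\gamma_0$ in $\Sigma$ equal those with $\phi(\gamma_0)$ in $\phi(\Sigma)$, and $\pi_1$-injectivity guarantees that minimal position is preserved.
\end{itemize}
Hence the domain $\Delta_\Sigma(\gamma_0)$ corresponds to the domain of Theorem \ref{thm constant c in terms of intersection numbers}. Finally, $\sym(\phi,\gamma_0)$ is by definition $\sym(\phi(\gamma_0))$.

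\textbf{Step 3: substitute.} Substituting all of these identifications into Theorem \ref{thm constant c in terms of intersection numbers} (with the genus hypothesis $g\ge3$ inherited from it) gives the claimed formula.

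The only point requiring care, which I expect to be the main obstacle, is Step 1. There one must make sure that the minimal-subsurface convention really returns $\phi(\Sigma)$ together with its annular components, rather than some smaller or larger surface. For instance, distinct hyperbolic components of $\Sigma$ glued along a $\flip_\Sigma$-orbit must not create extra annuli, and boundary-parallel components of $\gamma_0$ must be handled correctly. These are settled by the explicit description of $\Gamma_{\ann}$, $\Gamma_{\flip}$ and $\Gamma_{\fix}$ given in Section \ref{sec:FrequencyCounting}.
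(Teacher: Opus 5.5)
Your proposal is correct and matches the paper, which gives no separate proof of this corollary: it is presented as a direct rewriting of Theorem \ref{thm constant c in terms of intersection numbers} for the multicurve $\phi(\gamma_0)$. The paper relies on exactly your identifications, namely $Z_\phi=Z_{\phi(\gamma_0)}$ and $\Gamma_\phi=\phi(\D\Sigma)=\Gamma_{\phi(\gamma_0)}$ (recorded in Section \ref{sec local type def}), together with the definition $\sym(\phi,\gamma_0)=\sym(\phi(\gamma_0))$; you are also right that the hypothesis $g\ge 3$ is implicitly inherited from that theorem.
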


\subsection{Graph dual to a realization}
Later on, it will be useful to rewrite Corollary \ref{kor:formulationLocalType} in terms of \emph{the graph dual to the realization $\phi:\Sigma\to X$} of the local type $(\Sigma,\flip_\Sigma,\gamma_0)$, there is an example in Figure \ref{fig:dualGraph}. This is a decorated graph given $G_\phi$ defined as follows:
\begin{itemize}
\item The vertex set $\CV$ of $G_\phi$ is the set of connected components of $X\setminus\CN(\Gamma_\phi)\equiv \Sigma_{\hyp}\sqcup Z\equiv \phi(\Sigma_{\hyp})\sqcup Z$. In particular we have
$$\CV=\CV_{\Sigma}\sqcup \CV_Z$$
where $\CV_\Sigma$ is the set of connected components of $\Sigma_{hyp}$ and $\CV_Z$ is the set of connected components of $Z$.
\item The edge set $E(G_\phi)$ is the set of components of the multicurve $\Gamma$. Recall that any component of $\Gamma$ corresponds to a pair of components of $\D(Z\sqcup\phi(\Sigma_{\hyp}))$. The endpoints of the associated edge of $G_\phi$ are the two (possibly equal) vertices corresponding to the subsurfaces containing those two components of $\D(Z\sqcup\phi(\Sigma_{\hyp}))$.
\item The vertices of $G_\phi$ are decorated with the genus $g(v)$ of the surface they represent.
\end{itemize}
Note that every connected component of $\Sigma_{\hyp}\cup Z$ has exactly $\deg(v)$ boundary components, where $v$ is the corresponding vertex of $G_\phi$. In particular, the Euler characteristic of the component of $\Sigma_{\hyp}\sqcup Z$ corresponding to $v$ is $\chi_v=(2-2g(v)-\deg(v))$. Note that
$$\chi(X)=\chi(\Sigma_{\hyp})+\chi(Z)=\chi(\Sigma)+\sum_{v\in \CV_Z}\chi_v.$$

\begin{figure}[!h] 
    \centering
        \includegraphics[scale=1]{  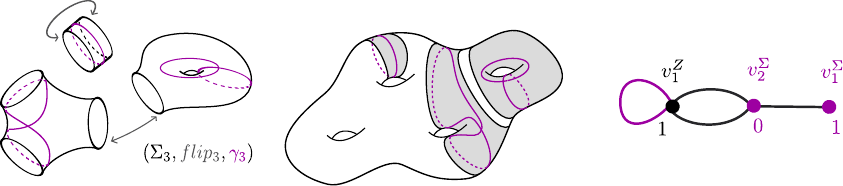}
    \caption{Example of dual graph}
    \label{fig:dualGraph}
\end{figure}

For later use we will need to bound the number of realizations of a given type having the same dual graph. Let's first make clear what we mean by \emph{the same dual graph}.

\begin{defi}
    Two realizations $\phi$ and $\phi'$ of a given local type $(\Sigma,\flip_\Sigma,\gamma_0)$ into a surface $X$ are said to have \emph{the same dual graph} if there is a graph isomorphism $I : G_\phi \to G_{\phi'}$ with
    $$I(\CV_\Sigma^\phi)=\CV_\Sigma^{\phi'}\text{ and }g'(I(v))=g(v)\text{ for all }v\in \CV_\phi.$$
    Here, $g(\cdot)$ and $g'(\cdot)$ are respectively the decorations of $G_\phi$ and $G_{\phi'}$, that is the functions sending each vertex to the genus of the associated subsurface.
\end{defi}

The following lemma asserts that realizations with the same dual graph differ by pre and post composition by diffeomorphisms of $\Sigma$ and $X$. 

\begin{lem} \label{lem:step1DualGraph}
     Let $(\Sigma,\flip_\Sigma,\gamma_0)$ be a local type and $X$ a surface of big enough genus such that $(\Sigma,\flip_\Sigma,\gamma_0)$ can be realized in it. If $\phi:\Sigma\to X$ and $\phi':\Sigma\to X$ are two realizations with the same dual graph, then there are diffeomorphisms $\psi_\Sigma:\Sigma\to\Sigma$ and $\psi_X:X\to X$ such that $\phi=\psi_X^{-1}\circ\phi'\circ\psi_\Sigma$. 
\end{lem}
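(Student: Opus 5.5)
The plan is to build $\psi_X$ piece by piece, using the graph isomorphism $I:G_\phi\to G_{\phi'}$ and the classification of compact surfaces. First we choose $\psi_\Sigma$. The isomorphism $I$ maps $\CV^\phi_\Sigma$ onto $\CV^{\phi'}_\Sigma$, but these two sets are the same set $\pi_0(\Sigma_{\hyp})$, so $I$ restricts to a permutation of the components of $\Sigma_{\hyp}$ that preserves genus. Since $I$ is a graph isomorphism, it also gives a bijection between edges, that is, between components of $\Gamma_\phi$ and of $\Gamma_{\phi'}$. Both of these are identified with $\D\Sigma/\flip_\Sigma$. Keeping track of half-edges, we get a bijection of $\D\Sigma_{\hyp}$ that is compatible with the permutation of components and commutes with $\flip_\Sigma$. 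Components with the same genus and the same number of boundary components are diffeomorphic by a diffeomorphism realizing any prescribed bijection of boundary components. So there is a diffeomorphism $\psi_\Sigma$ of $\Sigma_{\hyp}$ inducing this bijection, and we extend it over $\Sigma_{\ann}$ by matching annuli according to the edge bijection restricted to $\Gamma_{\ann}$. The main obstacle is already here. If $\psi_\Sigma$ is meant to be orientation preserving, we must make sure the edge/half-edge bijection coming from $I$ is consistent with the orientations of the boundary curves; when $I$ flips an edge (a loop or a doubled edge) we may have to adjust $I$ on half-edges. I would handle this by pointing out that for orientable compact surfaces the diffeomorphism can be chosen orientation preserving with any prescribed boundary bijection, and that the boundary orientations are then forced.

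Next we replace $\phi'$ by $\phi''=\phi'\circ\psi_\Sigma$. Now $\phi$ and $\phi''$ induce the same labelling: for each $c\in\D\Sigma$, the curves $\phi(c)$ and $\phi''(c)$ correspond under $I$. The graph isomorphism also matches the components of $Z_\phi$ with those of $Z_{\phi'}$, with equal genera, and matches their boundary components, since the edges attached to a $Z$-vertex are exactly its boundary curves. This data comes with compatible identifications with half-edges on the $\Sigma$ side.

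Then we define $\psi_X$ in three steps. On $\phi(\Sigma_{\hyp})$ we set $\psi_X=\phi''\circ\phi^{-1}$. On each annulus of $\CN(\Gamma_\phi)$ we map it to the corresponding annulus of $\CN(\Gamma_{\phi'})$, extending the given boundary maps; this is possible up to isotopy of the boundary parametrizations, after composing the pieces with boundary-preserving isotopies. On each component $Z_v$ of $Z_\phi$ we pick a diffeomorphism onto $Z_{I(v)}$ that realizes the prescribed bijection of boundary components. Such a map exists because both surfaces have the same genus $g(v)=g'(I(v))$ and the same number $\deg(v)$ of boundary components. We then isotope it near the boundary so that it agrees with the maps already chosen on the adjacent annuli. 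Gluing these pieces gives a diffeomorphism $\psi_X:X\to X$ with $\psi_X\circ\phi=\phi'\circ\psi_\Sigma$ up to isotopy, which is the claim. Apart from the orientation and half-edge bookkeeping in the first step, these are routine gluing arguments.
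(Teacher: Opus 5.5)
Your argument is correct and uses the same ingredients as the paper's proof: the pieces of $X\setminus\CN(\Gamma)$ are matched by $I$ (boundary curves via half-edges), the classification of compact surfaces is applied to each piece, and the resulting maps are glued along $\Gamma$. The only difference is the order: the paper first builds $\psi_X$, sending each $\phi(\Sigma_v)$ to the $\phi'$-image of the component corresponding to $I(v)$ and each boundary curve to the curve with the matching half-edge, and then simply sets $\psi_\Sigma=(\phi'|_{\phi'(\Sigma)})^{-1}\circ\psi_X|_{\phi(\Sigma)}\circ\phi$, so your separate construction of $\psi_\Sigma$, including the flip-compatibility and orientation bookkeeping you identified as the main obstacle, is not needed.
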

\begin{proof}
    Let $\phi$ and $\phi'$ be realizations with the same dual graph. The sets $\D (X\setminus\phi(\Sigma))$ and $\D (X\setminus\phi(\Sigma))$ have the same number of connected components because both sets are in bijection with the set of half edges of $G$. Furthermore, the complementary regions of these sets have the same topology (genus and number of boundary) when they correspond to the same vertex. Hence, there is diffeomorphism $\psi_X : X \to X$ such that
    \begin{itemize}
        \item $\psi_X(\phi(\D \Sigma))=\phi'(\D \Sigma)$ and if $\gamma\in\phi(\D\Sigma)$ correspond to the half edge $e$ in $G$ and the same for $\gamma'\in\phi'(\Sigma)$ then $\psi_X(\gamma)=\gamma'$, and
        \item $\psi_X(\phi(\Sigma))=\phi'(\Sigma)$ and if $\Sigma_v$ is the connected component of $\Sigma_\hyp$ associated to the vertex $v$ for the realization $\phi$ and $\Sigma_v'$ then one for $\phi'$ the $\psi_X(\phi(\Sigma_v))=\phi'(\Sigma_v')$.
    \end{itemize}
    Note that, by the first point, images under $\phi$ of annular components are sent to images under $\phi'$ of annular components.

    Let $\psi_\Sigma:\Sigma\to\Sigma$ be the diffeomorphism defined by 
    $  (\phi'_{\vert \phi'(\Sigma)})^{-1} \circ \psi_{X\vert \phi(\Sigma)}\circ \phi.     $
    By construction $\phi=\psi_X^{-1}\circ\phi'\circ\psi_\Sigma$. 
\end{proof}

Recall that we identify isotopic realizations. In particular $\Map(X)$ acts on the set of all realizations $\phi:\Sigma\to X$. It follows directly from Lemma \ref{lem:step1DualGraph} that there are at most $[\Map(\Sigma):\PMap(\Sigma)]$ many $\Map(X)$-orbits of realizations $\phi:\Sigma\to X$ with given dual graph. Let us record this fact:

\begin{prop}\label{prop:NumberOfRealizationForAGraph} 
        Let $(\Sigma,\flip_\Sigma,\gamma_0)$ be a local type and $X$ a surface of big enough genus such that $(\Sigma,\flip_\Sigma,\gamma_0)$ can be realized in it. For a given realization $\phi:\Sigma\to X$, there is (up to composition by $\Map(X)$) at most $[\Map(\Sigma):\PMap(\Sigma)]$ different realizations with the same dual graph as $\phi$.\qed
\end{prop}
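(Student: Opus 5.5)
The statement to prove is Proposition \ref{prop:NumberOfRealizationForAGraph}. It should follow from Lemma \ref{lem:step1DualGraph} by counting orbits.

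\textbf{Setup.} Fix a realization $\phi$ and let $\phi'$ be any realization with the same dual graph. By Lemma \ref{lem:step1DualGraph} there are diffeomorphisms $\psi_\Sigma$ of $\Sigma$ and $\psi_X$ of $X$ with $\phi'=\psi_X\circ\phi\circ\psi_\Sigma^{-1}$. We work modulo post-composition by $\Map(X)$, and realizations are identified up to isotopy. So the $\Map(X)$-orbit of $\phi'$ coincides with the $\Map(X)$-orbit of $\phi\circ\psi_\Sigma^{-1}$, and that orbit depends only on the class $[\psi_\Sigma]\in\Map(\Sigma)$.

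\textbf{Step 1: a surjection from $\Map(\Sigma)$.} The assignment $[\psi]\mapsto \Map(X)\cdot(\phi\circ\psi)$ is well defined, because isotopic diffeomorphisms of $\Sigma$ give isotopic realizations. By the setup, its image contains every $\Map(X)$-orbit of realizations with the same dual graph as $\phi$.

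\textbf{Step 2: it factors through $\Map(\Sigma)/\PMap(\Sigma)$.} Let $\psi\in\PMap(\Sigma)$. We need a mapping class $\Psi$ of $X$ with $\phi\circ\psi\simeq\Psi\circ\phi$, i.e.\ $\psi$ must extend over $X$.

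Since $\psi$ preserves each boundary component of $\Sigma$, we may isotope it to be the identity near $\D\Sigma$. Isotopies of $\Sigma$ need not fix the boundary pointwise, and $\PMap(\Sigma)$ does not see twists along the boundary, so this costs nothing: first rotate each boundary circle back to the identity, then straighten in a collar.

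Now transport $\phi\circ\psi\circ\phi^{-1}$ to $\phi(\Sigma)$ and extend it by the identity on $X\setminus\phi(\Sigma)$. This gives a diffeomorphism $\Psi$ of $X$ with $\Psi\circ\phi=\phi\circ\psi$. Hence $\phi\circ\psi$ and $\phi$ lie in the same $\Map(X)$-orbit.

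\textbf{Conclusion.} The map of Step 1 is constant on right cosets of $\PMap(\Sigma)$. Indeed, $\PMap(\Sigma)$ is normal, and for $\psi_1$ arbitrary and $\psi\in\PMap(\Sigma)$ we can write $\phi\circ\psi_1\circ\psi=\phi\circ\psi'\circ\psi_1$ with $\psi'=\psi_1\psi\psi_1^{-1}\in\PMap(\Sigma)$, and then apply Step 2 to $\psi'$. The surjection therefore descends to $\Map(\Sigma)/\PMap(\Sigma)$, which gives at most $[\Map(\Sigma):\PMap(\Sigma)]$ orbits.

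This index is finite, since $\PMap(\Sigma)$ is the kernel of the action on the finite set $\pi_0(\D\Sigma)$.

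\textbf{Main obstacle.} The only delicate point is Step 2: making $\psi$ the identity near $\D\Sigma$ so that it extends by the identity. It works because our convention quotients by diffeomorphisms that are not required to fix the boundary pointwise.
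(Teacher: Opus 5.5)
Your argument is correct and matches the paper's, which simply asserts that the bound follows directly from Lemma~\ref{lem:step1DualGraph}: up to $\Map(X)$, realizations with the same dual graph differ only by precomposition with some $\psi_\Sigma\in\Map(\Sigma)$. Your Step~2 (isotoping a pure mapping class to be the identity near $\D\Sigma$ and extending it by the identity over $X\setminus\phi(\Sigma)$) spells out why the $\PMap(\Sigma)$-part can be absorbed into $\Map(X)$, a step the paper leaves implicit.
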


\begin{figure}[!ht]
  \centering
{\includegraphics[width=0.3\textwidth]{  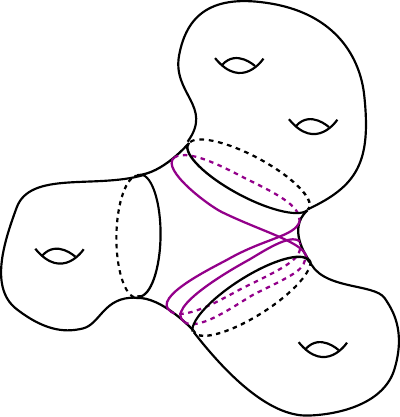}\label{fig:f1}}
  \hfill
{\includegraphics[width=0.3\textwidth]{  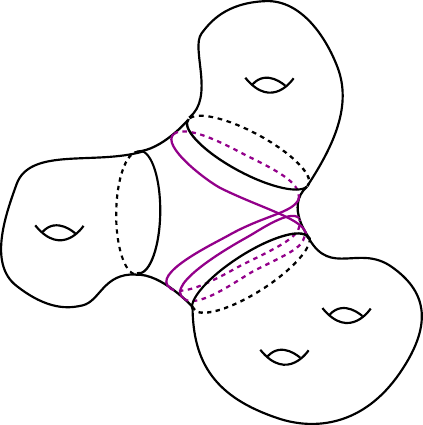}\label{fig:f2}}
 \hfill
{\includegraphics[width=0.3\textwidth]{  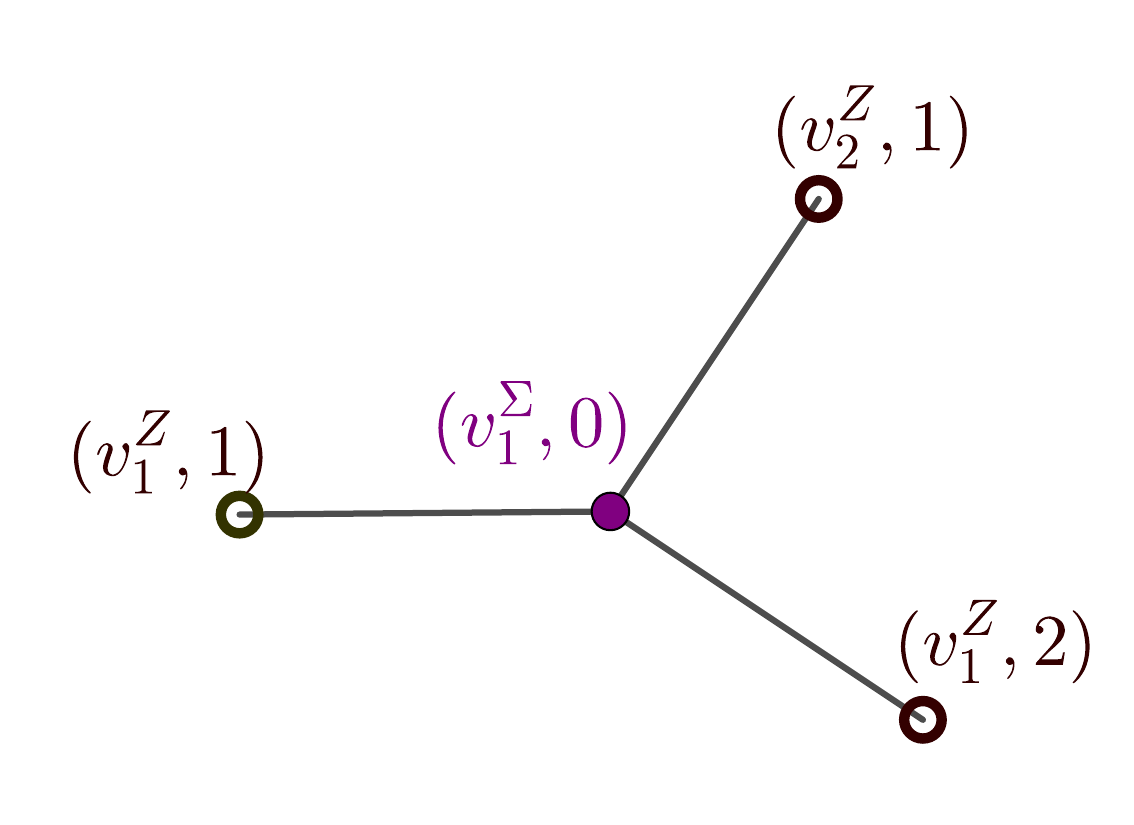}\label{fig:f3}}
  \caption{Different realizations with the same dual graph differing by a non-pure homeomorphism $\psi_\Sigma$}
\end{figure}

Now that it should be clear what is the dual graph $G_\phi$ of a realization $\phi:\Sigma\to X$, let us rewrite Corollary \ref{kor:formulationLocalType} using this notation. Via the bijection
\begin{center}
    \begin{tabular}{ccc}
$E(G_\phi)$&$\to$&$\Gamma$\\  $e$&$\mapsto$&$\gamma_e$,
\end{tabular}
\end{center}
we can identify $\BR^\Gamma$ and $\BR^{E(G_\phi)}$. In particular we can think of the map ${\bf w}$ as taking values in $\BR^{E(G_\phi)}$. To bring our notation in line with what is usual in the field, we define 
$$b_e(\bar a):={\bf w}(\bar a)_{\gamma_e}$$
for $e\in E(G_\phi)$ and $\bar a \in \BA(\Sigma,\flip_\Sigma)$. Also, noting that if $v\in \CV_Z$ is a vertex of $G_\phi$ associated to a connected component $Z_v$ of $Z$, then we have an identification between the set $\hal_v$ of half-edges emanating out of $v$ and the set of boundary components of $Z_v$. Given $\bar a\in\BA(\Sigma,\flip_\Sigma)$ set
$$\bar b _v(\bar a) := (b_e(\bar a))_{e\in \hal_v}.$$
With this notation, the following is a direct rewriting of Corollary \ref{kor:formulationLocalType}:

\begin{kor} \label{cor: cste in term of dual graph}  
Let $X$ be a closed hyperbolic surface of genus $g$ and $(\Sigma, \flip_\Sigma,\gamma_0)$ a local type. Consider a realization $\phi \colon \Sigma \to X$ of this local type into $X$. With the notation fixed above and with ${\bf I}_{\gamma_0}:\BA(\Sigma,\flip_\Sigma)\to\BR_{\ge 0}$ the intersection function with $\gamma_0$ set
$$\Delta_\Sigma(\gamma_0) :=\{\bar a \in \BA(\Sigma,\flip_\Sigma) : {\bf I}_{\gamma_0}(\bar a)\le 1 \}.$$
We then have
\begin{multline}\label{eq: constant in graph}
        \FC_g(\phi(\gamma_0)) = \\
    \frac{1}{\sym(\gamma_0,\phi)} 
    \int\limits_{\Delta_\Sigma(\gamma_0)}
    \left( \prod\limits_{\tiny v\in \CV_Z}2^{\chi_v+1}V_{g(v),\deg(v)}(\bar b_v (\bar a))\right) \cdot \left( \prod\limits_{\tiny e\in E} b_e(\bar a)\right)\
    d\FM_{\BA(\Sigma,\flip_\Sigma)(\bar a)}.
\end{multline} 
where $V_{g,n}$ is the Kontsevich polynomial for surfaces of genus $g$ and $n$ boundary components, and 
where $\FM_{\BA(\Sigma,\flip_\Sigma)}$ is the measure provided by Proposition \ref{prop existence of thurston like measure on flip-invariant arc complex}. 
\end{kor}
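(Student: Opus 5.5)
This is a direct rewriting of Corollary \ref{kor:formulationLocalType}, so the plan is to translate each factor of its integrand into the language of the dual graph $G_\phi$. The domain $\Delta_\Sigma(\gamma_0)$, the measure $\FM_{\BA(\Sigma,\flip_\Sigma)}$ and the factor $\sym(\phi,\gamma_0)$ are the same in both statements, so only the integrand and the power of $2$ in front need attention.

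The first step handles the product over edges. Under the identification $E(G_\phi)\leftrightarrow\Gamma$, $e\mapsto\gamma_e$, we have by definition
$$\product({\bf w}(\bar a))=\prod_{\gamma\in\Gamma}{\bf w}(\bar a)_\gamma=\prod_{e\in E}b_e(\bar a).$$

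The second step turns the Kontsevich factor into a product over $\CV_Z$. Write $Z=\bigsqcup_{v\in\CV_Z}Z_v$, where $Z_v$ has genus $g(v)$ and $\deg(v)$ boundary components. With the normalization of \cite{ABCGLW}, the Kontsevich polynomial is multiplicative (see the remark after \eqref{eq Konsevich polynomial}). Hence $V_Z=\prod_v V_{Z_v}$ with $V_{Z_v}=V_{g(v),\deg(v)}$. It remains to see that the variables match. The entries of ${\bf b}(\bar a)_{|\D Z}$ indexed by boundary components of $Z_v$ are, via $\hal_v\leftrightarrow\D Z_v$, the entries ${\bf w}(\bar a)_{\gamma_e}=b_e(\bar a)$ for $e\in\hal_v$. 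This holds because $\bar a\in\BA(\Sigma,\flip_\Sigma)$, so ${\bf b}(\bar a)$ is $\flip^\BR_\Sigma$-invariant and ${\bf w}$ reads off the common value on each $\flip$-orbit, which is exactly the weight on $\gamma_e$. For an edge joining two boundary components of $Z$ (the $\Gamma_{\ann}$ case), both half-edges carry the same value $b_e$, and this is consistent with the form $(\bar u,\bar v,\bar v)$ from Section \ref{sec:FrequencyCounting}. Thus ${\bf b}(\bar a)_{|\D Z_v}=\bar b_v(\bar a)$.

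The third step distributes the constant. We have $\chi(Z)=\sum_{v\in\CV_Z}\chi_v$ and $|\pi_0(Z)|=|\CV_Z|$, so
$$2^{\chi(Z)+|\pi_0(Z)|}=\prod_{v\in\CV_Z}2^{\chi_v+1}.$$
Substituting the three steps into Corollary \ref{kor:formulationLocalType} gives \eqref{eq: constant in graph}.

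No real obstacle is expected. The only point needing care is the bookkeeping in the second step, namely checking that the ordering and labeling of the variables of each $V_{g(v),\deg(v)}$ agree with $\hal_v$. This includes loops at a vertex of $\CV_Z$, where both half-edges are boundary components of the same $Z_v$ and each receives $b_e$. Since the Kontsevich polynomial is symmetric in its variables, the ordering does not matter once the labeling is fixed.
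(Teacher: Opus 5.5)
Your proposal is correct and is the paper's argument: the paper only calls this statement a direct rewriting of Corollary \ref{kor:formulationLocalType}. Your three steps are exactly the bookkeeping that rewriting consists of, namely identifying $\product({\bf w}(\bar a))$ with $\prod_{e\in E}b_e(\bar a)$, using multiplicativity of $V_Z$ together with ${\bf b}(\bar a)_{|\D Z_v}=\bar b_v(\bar a)$, and splitting $2^{\chi(Z)+|\pi_0(Z)|}=\prod_{v\in\CV_Z}2^{\chi_v+1}$.
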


Before moving on, note that the decomposition $\Gamma=\Gamma_{\flip}\sqcup\Gamma_{\ann}\sqcup\Gamma_{\fix}$ of $\Gamma$ induces a decomposition $E=E_{\flip}\sqcup E_{\ann}\sqcup E_{\fix}$ of the set of edges of  $G_\phi$. In terms of the involution $\flip$ and of the decomposition $\CV=\CV_\Sigma\sqcup\CV_Z$, these three sets of edges have the following descriptions:
\begin{itemize}
    \item[$E_{\flip}$:] Edges in $E_{\flip}$ correspond to $\flip$-orbits contained in $\D\Sigma_{\hyp}$. These are edges with both endpoints in $\CV_\Sigma$.
    \item[$E_{\ann}$:] Edges in $E_{\ann}$ correspond to $\flip$-orbits contained in $\D Z$, or equivalently contained in $\D\Sigma_{\ann}$. These are edges with both endpoints in $\CV_Z$.
    \item[$E_{\fix}$:] Edges in $E_{\fix}$ correspond to $\flip$-orbits which meet both $\D\Sigma_{\hyp}$ and $\D Z$. These are edges connecting $\CV_\Sigma$ and $\CV_Z$.
\end{itemize}

\section{Large genus asymptotics for frequencies}\label{sec asymptotic} 

 In this section we give different asymptotics for the quantity $\FC_g(\phi(\gamma_0))$ when the genus of $X$ grows. This is based on Aggarwal's results providing asymptotic values for the coefficients of the Kontsevich polynomial.

\subsection{Large genus asymptotics for intersection numbers}

The coefficients $\langle \tau_{d_1} \dots \tau_{d_n} \rangle_{g, n}$ of the Kontsevich polynomial can be computed recursively,
but the recursive relations are rather intricate.
Fortunately, when $g \to \infty$ and $n$ remains small compared to $g$, an asymptotic formula was conjectured by Delecroix--Goujard--Zograf--Zorich \cite{DGZZ} on the basis of numerical experiments, and was proved shortly thereafter by Aggarwal \cite{Aggarwal} through a combinatorial analysis of the Virasoro contraints.
This is the result we will use.
\begin{sat}[\cite{Aggarwal}] \label{Aggarwal}
Let $g \in \mathbb{Z}_{\geq 0}$, $n \in \mathbb{Z}_{\geq 1}$, $\bar{d} = (d_1, \dots, d_n) \in \mathbb{Z}_{\geq 0}^n$ with $|d| = d_1 + \cdots + d_n = 3g - 3 + n$.
Define $\varepsilon_{g,n}(\bar{d})$ by
\begin{equation} \label{eq:psiSim}
    \langle \tau_{d_1} \cdots \tau_{d_n} \rangle_{g, n}
    =
    \left(\dfrac{(6g-5+2n)!!}{ \prod_{i=1}^{n} (2d_i + 1)!!}
    \frac{1}{g! 24^g}\right)
    \left(1 + \varepsilon_n(g,\bar d) \right).
\end{equation}
Then for any fixed $n$, we have
\[
    \lim_{g \to \infty}
    \max_{\substack{\bar{d} = (d_1, \dots, d_n) \in \mathbb{Z}_{\geq 1}^n \\ |\bar{d}| = 3g-3+n}}
    |\varepsilon_{g,n}(\bar{d})|
    =
    0.
\]
\end{sat}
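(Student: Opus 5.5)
This theorem is Aggarwal's, and the paper cites it rather than proving it. The plan below therefore sketches how I would reconstruct Aggarwal's argument: a combinatorial analysis of the Virasoro constraints, i.e.\ the DVV recursion. I normalize the intersection numbers by the conjectured main term. Set
\[
\langle \tau_{d_1}\cdots\tau_{d_n}\rangle_{g,n}=\frac{(6g-5+2n)!!}{\prod_i (2d_i+1)!!\, g!\,24^g}\,\bigl(1+\varepsilon_{g,n}(\bar d)\bigr).
\]
Substituting this into the DVV relation for $\langle\tau_{d_1}\tau_{d_2}\cdots\tau_{d_n}\rangle$ gives three groups of terms on the right-hand side:
\begin{itemize}
\item the \emph{merge} terms $\langle \tau_{d_1+d_j-1}\prod_{i\ne 1,j}\tau_{d_i}\rangle_{g,n-1}$;
\item the \emph{genus-reduction} term $\langle\tau_a\tau_b\cdots\rangle_{g-1,n+1}$ with $a+b=d_1-2$;
\item the \emph{separating} terms, products $\langle\cdots\rangle_{g_1}\langle\cdots\rangle_{g_2}$ with $g_1+g_2=g$.
\end{itemize}
After normalization, the DVV relation becomes the statement that a weighted average of the quantities $1+\varepsilon$ at smaller complexity $2g-2+n$ equals $1+\varepsilon$ at the current one. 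The weights are explicit ratios of double factorials.

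The key steps, in order, are as follows.

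First, check that under the ansatz $\varepsilon\equiv0$ the merge and genus-reduction weights sum to exactly $1$ plus an error of order $O(n^2/g)$. This is a hypergeometric-type identity. Roughly, $\sum_{a+b=d-2}\frac{(2d+1)!!}{(2a+1)!!(2b+1)!!}$ is compared against $2^{d}$-type sums, using $(6g-5+2n)!!/(6g-7+2n)!!\approx 6g$.

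Second, bound the separating terms. Their normalized weight is summable, of size $O(1/g)$ overall, because the product of two main terms is tiny except when $g_1$ or $g_2$ is small. This is where an a priori crude bound is needed: intersection numbers are at most a constant times the main term, uniformly in $\bar d$. I would get this bound first by induction on $2g-2+n$, using only positivity of all DVV terms and the weight estimates.

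Third, run an induction showing $\max_{\bar d}|\varepsilon_{g,n}|\le C\,\delta_g$ for some $\delta_g\to0$. Each step expresses $\varepsilon$ at level $(g,n)$ as an average of $\varepsilon$'s at lower levels plus $O(1/g)$. Iterating along the DVV tree gives the result, provided the accumulated errors $\sum O(1/g')$ along a path stay small.

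I expect the main obstacle to be the handling of small $d_i$, especially $d_i=0,1$. For these, the merge weights are not uniformly close to their limit, and the recursion can keep $g$ fixed while changing $n$. This is also why the statement restricts to $d_i\ge1$; the string and dilaton equations could absorb $d_i=0,1$ in general. My first attempt would be to always apply DVV to the largest $d_1$, which is at least of order $g/n$. Then every lower-level term loses at most one unit of genus per few steps, and the number of steps before reaching small genus is controlled. The $\tau_0,\tau_1$ insertions created along the way would be removed with the string and dilaton equations. Keeping the error $\sum_{\text{steps}}O(n/g')$ uniformly convergent, rather than producing only $O(\log g)$ control, is the delicate point. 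If the direct induction loses too much, I would instead follow Aggarwal and interpret the recursion as a random walk on indices, showing it is absorbed into the regime $\varepsilon\approx0$ with high probability.
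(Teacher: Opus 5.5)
The paper does not prove this statement; it quotes Aggarwal. So the question is whether your reconstruction closes, and at its central step it does not.

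Write $D=6g+2n-5$. After your normalization, DVV becomes
\[
\langle \bar d\rangle_{g,n}=\sum_{j\ge 2}\frac{2d_j+1}{D}\,\langle d_1+d_j-1,\dots\rangle_{g,n-1}+\frac{12g}{D(D-2)}\sum_{a+b=d_1-2}\langle a,b,\dots\rangle_{g-1,n+1}+(\text{split terms}).
\]
All genus-reduction weights are equal, because the double factorials cancel exactly; no hypergeometric identity is involved. For $d_1\ge1$ the non-split weights sum to $1+\frac{n-4}{D}-\frac{(4n-14)(d_1-1)}{D(D-2)}$. For instance, $n=1$ gives $1+\frac{12g-15}{(6g-3)(6g-5)}\approx 1+\frac1{3g}$.

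So the per-step defect is genuinely of order $1/g$. Your Step 3 then only yields $\delta_g\le \frac Cg+\bigl(1+\frac Cg\bigr)\delta_{g-1}$, which does not even give boundedness. More fundamentally, the induction has no anchor. Unrolled down the DVV tree it ends at low genus, where normalized correlators are not close to $1$ (the normalized $\langle\tau_1^2\tau_0^3\rangle_0$ is $6/5$). Averaging such values plus $O(1/g')$ errors over $\Theta(g)$ steps cannot produce $\varepsilon\to0$.

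Choosing the largest entry makes this worse: it maximizes the genus-reduction weight, which is $\approx d_{\max}/3g\ge 1/n$. Your fallback sentence about the walk being ``absorbed into the regime $\varepsilon\approx0$'' is precisely the missing argument.

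One way to close it is an exact anchor at essentially the same genus. The one-point function $\langle\tau_{3g-2}\rangle_g=1/(24^g g!)$ has normalized value exactly $1$ for every $g$. Apply DVV to the \emph{smallest} entry instead. Since $d_{\min}\le (3g-3+n)/n$, merges carry total weight $\gtrsim 1-\frac1n$ and genus reduction $\lesssim\frac1n$. The number of points therefore drifts down and hits $n=1$ after $O(n)$ steps with exponential tails, while the genus drops only by $O(1)$. Stopping there:
\begin{itemize}
\item the accumulated weight defect is $\sum_{k\le n}O(k/g)=O(n^2/g)$;
\item the split terms cost $O(n^2/g^2)$ per step, since only $\min(g_1,g_2)\le 1$ matters, because of the factor $\frac{g!}{g_1!g_2!}\frac{(6g_1+2n_1-5)!!(6g_2+2n_2-5)!!}{(6g+2n-5)!!}$;
\item rare long excursions are controlled by an a priori bound.
\end{itemize}

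That a priori bound does not follow from ``positivity plus weight estimates'' either. The defect $\frac{n-4}{D}$ is of order one at low genus, so any $g$-uniform bound must grow with $n$ and needs a carefully chosen majorant. This is Aggarwal's $(3/2)^{n-1}$, which the paper quotes right after the statement.

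The order $n^2/g$ is the right one, as the dilaton equation shows: the normalized $\langle 1^{n-1},3g-2\rangle_{g,n}$ equals $\prod_{k=2}^{n}\bigl(1+\frac{k-4}{6g+2k-5}\bigr)$.

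Finally, small entries are not the obstruction you suggest. DVV with $d_1\in\{0,1\}$ is literally the string and dilaton equations. Aggarwal's estimate is uniform over $\BZ_{\ge0}^n$, which is what the paper actually uses in the next display, so the restriction to $\BZ_{\ge1}^n$ under the maximum is inessential.
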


\begin{rmk*}
Aggarwal's result is in fact stronger allowing $n$ to grow as long as $n = \mathrm{o}(\sqrt{g})$. 
Similar results were obtained later in \cite{Guo-Yang} through a combinatorial approach, and in \cite{resurgence} using a resurgence analysis.
Notably, the later provides an error term for fixed $n$,
which could strengthen our estimates in several instances.
As this paper is already heavy enough, we do not pursue these refinements here.
\end{rmk*}

The function $\varepsilon_{g,n}(\bar d)$ is bounded above and below uniformly in $\bar d$ by a function of $g$ going to $0$ as $g$ goes to infinity, this uniformity ensures that we have 
\begin{equation} \label{eq:asymprotic for volume polynomial}
    V_{g,n}(b_1,\cdots,b_n)
    \underset{g\to \infty}{\sim}
    \sum_{\substack{(d_1, \dots, d_n) \in \mathbb{Z}_{\geq 0}^n \\ d_1 + \cdots d_n = 3g-3+n}} 
    \dfrac{(6g-5+2n)!!}{g!24^g}
    \prod_{i=1}^{n} \frac{b_i^{2d_i}}{2^{d_i} d_i!(2d_i+1)!!},
\end{equation}  
uniformly in $\bar b$.
Now, it is clear that to understand the Kontsevich polynomial $V_{g,n}$ we need to understand the polynomial
$$\tilde V_{g,n}(b_1,\dots,b_n)=    \sum_{\substack{(d_1, \dots, d_n) \in \mathbb{Z}_{\geq 0}^n \\ d_1 + \cdots d_n = 3g-3+n}}
    \frac{(6g-5+2n)!!}{g!24^g\prod_{i=1}^n(2d_i+1)!!} \prod_{i=1}^{n} \frac{b_i^{2d_i}}{2^{d_i} d_i!}.
$$
Invoking the relation $(2k+1)!!=\frac{(2k+1)!}{2^k\cdot k!}$ we get a simpler expression for this latter polynomial:
$$\tilde V_{g,n}(b_1,\dots,b_n)=    \sum_{\substack{(d_1, \dots, d_n) \in \mathbb{Z}_{\geq 0}^n \\ d_1 + \cdots d_n = 3g-3+n}}
    \frac{(6g-5+2n)!!}{g!24^g} \prod_{i=1}^{n} \frac{b_i^{2d_i}}{(2d_i+1)!}.
$$
Denote by $[z^k]f=\frac{f^{(z)}(0)}{k!}$ the coefficient of $z^k$ in the Taylor expansion at $0$ of a, say analytic, function $f$. With this notation we get that
$$[z^{6g-6+3n}]\prod_{i=1}^n\frac{\sinh(b_iz)}{b_i}=\sum_{\substack{(d_1, \dots, d_n) \in \mathbb{Z}_{\geq 0}^n \\ d_1 + \cdots + d_n = 3g-3+n}}\prod_{i=1}^{n} \frac{b_i^{2d_i}}{(2d_i+1)!}.$$
We can thus write our polynomial as
\begin{equation} \label{eq:first version with Taylor expansion}
    \tilde V_{g,n}=\frac{(6g-5+2n)!!}{g! 24^g}
        \, [z^{6g-6+3n}] \prod_{i=1}^{n} \frac{\sinh(b_i z)}{b_i}.
\end{equation}
We get then from \eqref{eq:psiSim} that as soon as $g$ is large enough we have
    \begin{equation} \label{eq:Vasymp}
        V_{g,n}(\bar b)
        = 
        \left (
        \frac{(6g-5+2n)!!}{g! 24^g}
        \, [z^{6g-6+3n}] \prod_{i=1}^{n} \frac{\sinh(b_i z)}{b_i}
        \right)
        (1+\varepsilon_{n}(g,\bar b)),
    \end{equation}
where $(1+\varepsilon_{n}(g,\bar b))$ tends to $1$ uniformly in $\bar b$. For arbitrary $n$, we also have the following estimate that also extends to a polynomial formualtion.
\begin{sat}[\cite{Aggarwal}]
    Let $g \geq 0$, $n \geq 1$, $(d_1, \dots, d_n) \in \mathbb{Z}_{\geq 0}^n$ with $d_1 + \cdots + d_n = 3g-3+n$. We have
    \[
        \langle \tau_{d_1} \cdots \tau_{d_n} \rangle_{g,n}
        \leq
        \frac{(6g-5+2n)!!}{g! 24^g} \frac{1}{\prod_{i=1}^{n} (2d_i + 1)!!}
        \left( \frac{3}{2} \right)^{n-1}.
    \]
    \hfill $\blacksquare$
\end{sat}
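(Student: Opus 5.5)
The statement to prove is Aggarwal's uniform upper bound
\[
\langle \tau_{d_1}\cdots\tau_{d_n}\rangle_{g,n}\le \frac{(6g-5+2n)!!}{g!\,24^g}\,\frac{1}{\prod_{i}(2d_i+1)!!}\left(\frac32\right)^{n-1}
\]
for all admissible $\bar d$. The plan is to rescale the intersection numbers and prove the bound by induction along the Virasoro (DVV) recursion. Set
\[
a_{g,n}(\bar d)=\langle \tau_{d_1}\cdots\tau_{d_n}\rangle_{g,n}\,\frac{g!\,24^g\prod_i(2d_i+1)!!}{(6g-5+2n)!!}.
\]
The claim becomes $a_{g,n}(\bar d)\le (3/2)^{n-1}$. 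We induct on $2g-2+n$, which strictly decreases in every term of the recursion.

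The base cases are direct checks. For $\langle\tau_0^3\rangle_0=1$, the normalization gives $a=1\cdot 1/1!!=1$. For $\langle\tau_1\rangle_1=1/24$, we get $a=\frac{1}{24}\cdot 24\cdot 3/3!!=1$. The string and dilaton equations handle insertions of $\tau_0$ and $\tau_1$. Note that $d_1=0$ can also be treated inside DVV, so no separate argument is needed for it.

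The main step is the DVV recursion for $d_1$ (after ordering):
\begin{multline*}
(2d_1+1)!!\langle\tau_{d_1}\cdots\rangle_{g,n}=\sum_{j\ge2}\frac{(2d_1+2d_j-1)!!}{(2d_j-1)!!}\langle\tau_{d_1+d_j-1}\cdots\widehat{\tau_{d_j}}\cdots\rangle_{g,n-1}\\
+\tfrac12\sum_{a+b=d_1-2}(2a+1)!!(2b+1)!!\Big[\langle\tau_a\tau_b\cdots\rangle_{g-1,n+1}+\sum_{\text{splits}}\langle\tau_a\cdots\rangle_{g_1}\langle\tau_b\cdots\rangle_{g_2}\Big].
\end{multline*}
We substitute the inductive bounds $a\le (3/2)^{\text{(number of points)}-1}$ in each term and rewrite everything in terms of $a$. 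Three types of term appear.

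\begin{enumerate}
\item The $n-1$ point terms carry weight $\frac{(2d_j+1)}{(6g-5+2n)}$-type ratios. Since $\sum_j(2d_j+1)\le 6g-6+3n$, their total weight is at most roughly $1$, times $(3/2)^{n-2}$.
\item The genus-reducing term carries weight $\frac{(2d_1+1)\cdot 24 g}{(6g-5+2n)(6g-7+2n)}\cdot(\text{a sum over } a,b)$, bounded by $(3/2)^n$ times a factor less than about $2/3$.
\item The splitting terms are products with exponent $(n_1-1)+(n_2-1)=n-1$ and ratios of double factorials and binomials $\binom{g}{g_1}$.
\end{enumerate}

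The obstacle is arithmetic: we must show the total weight fits under $(3/2)^{n-1}$, i.e.
\[
\frac{2}{3}\,W_1+\frac32\,W_2+W_3\le 1 .
\]
Here $W_1,W_2,W_3$ are the normalized weights of the three term types, with $W_1+W_2\approx1$ heuristically, since the leading asymptotic equals $1$ exactly. The splitting contribution $W_3$ must be bounded explicitly by a tail sum over $(g_1,n_1)$. I would control $W_3$ by log-convexity of $k\mapsto(2k-1)!!$, which reduces it to the extreme splittings; these are small (of order $1/g$) except when one side has small genus. The factor $3/2$ is chosen to absorb exactly these boundary contributions. If the naive term-by-term bound is too lossy, the fallback is Aggarwal's refinement: strengthen the inductive hypothesis to a sharper intermediate bound depending on $(g,n)$ before deducing $(3/2)^{n-1}$.
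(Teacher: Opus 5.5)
The paper does not prove this bound; it simply quotes it from \cite{Aggarwal}. Your proposal does not prove it either. The step you label ``the obstacle'' is the whole content of the statement, and in the form you set it up it is false, so the induction does not close.

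\textbf{The weights.} Rewriting DVV in terms of your normalized $a$, each $(g,n-1)$ term has weight $\frac{2d_j+1}{6g+2n-5}$. Each of the $d_1-1$ genus-reducing terms has weight $\frac{12g}{(6g+2n-5)(6g+2n-7)}$. Hence
\[
W_1=\frac{6g+3n-7-2d_1}{6g+2n-5},\qquad W_2=\frac{12g(d_1-1)}{(6g+2n-5)(6g+2n-7)}.
\]
So your item 2 is wrong: there is no factor $2d_1+1$, and $W_2$ is not bounded by about $2/3$. It tends to $1$ when $d_1\sim 3g$.

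\textbf{Why the inequality fails.} Your heuristic $W_1+W_2\approx 1$ is correct, and it is exactly what breaks the argument. Under it, $\frac23W_1+\frac32W_2=\frac23+\frac56W_2$, which is at most $1$ only if $W_2\lesssim\frac25$, i.e.\ only if the removed index is at most about $6g/5$.

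\textbf{A concrete failure.} Take $g=5$, $n=2$, $\bar d=(7,7)$. String and dilaton do not apply, so DVV is forced. Then $W_1=\frac{15}{29}$ and $W_2=\frac{40}{87}$, giving
\[
\tfrac23W_1+\tfrac32W_2=\tfrac{30}{29}>1
\]
even before adding $W_3\ge 0$. Since $d_1=d_2$, no choice of ordering rescues this.

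\textbf{The structural reason.} The genus-reducing term raises the number of points by one. So the hypothesis $(3/2)^{n-1}$ charges an extra factor $3/2$ to a term that can carry almost all of the weight. The bound $(3/2)^{n-1}$ is therefore not self-reproducing under DVV, even though the true normalized values in large genus are close to $1$.

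\textbf{What is missing.} Closing the argument requires a genuinely stronger inductive statement, depending on $\bar d$ or $g$. Alternatively it requires a careful choice of which index to remove, a separate treatment of small $n$ (at least $n\le 2$, where every index fails as above), and explicit bounds on the splitting weights $W_3$ that are uniform in $n$. Your ``fallback'' names this need but does not supply any of it. As written the proposal is a heuristic outline rather than a proof; in the paper the bound is taken from \cite{Aggarwal}.
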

Let's record all of it for later in the way we need it.

\begin{kor}\label{kor useful aggarwal}
    For any $n \in \mathbb{Z}_{\geq 0}$ and any $(b_1, \dots, b_n) \in \mathbb{R}_{>0}^n$, 
    Moreover,
    \begin{equation} \label{eq:Vleq}
        V_{g,n}(b_1, \dots, b_n)
        \le 
        \left( \frac{3}{2} \right)^{n-1} 
        \frac{(6g-5+2n)!!}{g! 24^g}
        \, [z^{6g-6+3n}] \prod_{i=1}^{n} \frac{\sinh(b_i z)}{b_i},
    \end{equation}

    \begin{equation} \label{eq:asymptotic for volume with taylor coef}
        V_{g,n}(b_1, \dots, b_n)
        \sim
        \left(
        \frac{(6g-5+2n)!!}{g! 24^g}
        \, [z^{6g-6+3n}] \prod_{i=1}^{n} \frac{\sinh(b_i z)}{b_i}
        \right)
    \end{equation}
    uniformly in $(b_1, \dots, b_n) \in \mathbb{R}_{>0}^{n}$. \qed
\end{kor}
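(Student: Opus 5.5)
The plan is to get both estimates of Corollary~\ref{kor useful aggarwal} by expanding $V_{g,n}$ through its definition \eqref{eq Konsevich polynomial}. We then insert Aggarwal's bound, respectively his asymptotic, coefficient by coefficient. Finally we recognise the resulting sum as the Taylor coefficient already computed in \eqref{eq:first version with Taylor expansion}. The key structural fact is that \emph{every} coefficient of the resulting polynomial in $b_1,\dots,b_n$ is nonnegative. Termwise inequalities between coefficients therefore pass to inequalities between the polynomials evaluated at any $\bar b\in\BR_{>0}^n$.

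\textbf{The upper bound \eqref{eq:Vleq}.} Write
$$V_{g,n}(\bar b)=\sum_{|\bar d|=3g-3+n}\langle\tau_{d_1}\cdots\tau_{d_n}\rangle\prod_i\frac{b_i^{2d_i}}{2^{d_i}d_i!}.$$
Apply the second theorem of Aggarwal to each coefficient; this is legitimate since $b_i>0$ and all factors are nonnegative. This gives
$$V_{g,n}(\bar b)\le\left(\tfrac32\right)^{n-1}\frac{(6g-5+2n)!!}{g!24^g}\sum_{|\bar d|=3g-3+n}\prod_i\frac{b_i^{2d_i}}{2^{d_i}d_i!(2d_i+1)!!}.$$
The identity $2^{d}d!(2d+1)!!=(2d+1)!$ turns the sum into $\sum\prod_i b_i^{2d_i}/(2d_i+1)!$. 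By the expansion $\sinh(b z)/b=\sum_d b^{2d}z^{2d+1}/(2d+1)!$, this sum equals $[z^{6g-6+3n}]\prod_i \sinh(b_iz)/b_i$. Indeed, the exponent of $z$ is $\sum(2d_i+1)=2(3g-3+n)+n$. This proves \eqref{eq:Vleq}.

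\textbf{The asymptotic \eqref{eq:asymptotic for volume with taylor coef}.} Use \eqref{eq:psiSim} to write each coefficient as the Aggarwal main term times $(1+\varepsilon_{g,n}(\bar d))$. Set
$$\eta_g=\max_{|\bar d|=3g-3+n}|\varepsilon_{g,n}(\bar d)|.$$
Since all terms in the expansion are nonnegative, we get
$$(1-\eta_g)\tilde V_{g,n}(\bar b)\le V_{g,n}(\bar b)\le(1+\eta_g)\tilde V_{g,n}(\bar b)$$
for every $\bar b\in\BR^n_{>0}$. Here $\tilde V_{g,n}(\bar b)$ is exactly the expression in \eqref{eq:first version with Taylor expansion}. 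Hence, once $\eta_g\to0$, the ratio $V_{g,n}/\tilde V_{g,n}$ tends to $1$ uniformly in $\bar b$.

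\textbf{Main obstacle.} The one step needing care is that the maximum in Theorem~\ref{Aggarwal} is stated over $\bar d\in\BZ_{\ge1}^n$, whereas the Kontsevich polynomial also involves indices with some $d_i=0$. I would first check whether Aggarwal's original statement covers all of $\BZ_{\ge0}^n$ for fixed $n$; I believe it does, and then the uniformity is immediate.

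If it does not, I would handle the $d_i=0$ terms by the string equation $\langle\tau_0\tau_{d_2}\cdots\rangle_{g,n}=\sum_j\langle\cdots\tau_{d_j-1}\cdots\rangle_{g,n-1}$. This reduces such coefficients to ones with fewer marked points, to which Theorem~\ref{Aggarwal} can be applied inductively in $n$. One then has to verify that the main terms recombine correctly to $(6g-5+2n)!!/(g!24^g\prod(2d_i+1)!!)$ up to a factor $1+o(1)$. This recombination is a short computation with double factorials.

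With uniformity of $\varepsilon$ over all admissible $\bar d$ in hand, the positivity argument above completes the proof.
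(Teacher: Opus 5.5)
Your proposal is correct and is essentially the paper's own argument: apply Aggarwal's uniform asymptotic and his $(3/2)^{n-1}$ upper bound coefficient by coefficient, use that every coefficient of $V_{g,n}$ is nonnegative when $\bar b\in\BR_{>0}^n$, and identify the resulting sum with $[z^{6g-6+3n}]\prod_i\sinh(b_iz)/b_i$ via $2^d d!(2d+1)!!=(2d+1)!$.

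On your stated obstacle, the $\BZ_{\ge1}^n$ in the paper's statement of Theorem~\ref{Aggarwal} is a slip. Aggarwal's estimate holds uniformly over all $\bar d\in\BZ_{\ge0}^n$ with $|\bar d|=3g-3+n$, and this is what the paper tacitly uses when it says $\varepsilon_{g,n}$ is bounded uniformly in $\bar d$. Your string-equation fallback would also work, but it is not needed.
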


\subsection{Dominant embedding} 
In this section we first investigate which kind of local types and realizations are (asymptotically) dominant. That will justify why we restrict ourselves to non-separating realizations. Indeed, here we prove that the non-separating realization of a given local type is dominant over the separating ones. Before stating the precise result, we need some notation. 
Given a local type $(\Sigma,\flip_\Sigma,\gamma_0)$ and a closed surface $X$, Proposition \ref{prop:finitely many realization}  ensures that there is $m_g>0$ and finitely many realizations $\phi_1,\dots,\phi_{m_g}$ in $X$ such that 
$$\{\gamma\subset X\text{ of local type }(\Sigma,\flip_\Sigma,\gamma_0)\}=\bigsqcup_{i=1,\dots,m_g}(\Map(X)\cdot\phi_i(\gamma_0)).$$
With this notation we define
$$\FC_g(\Sigma,\flip_\Sigma,\gamma_0)=\sum_{i=1,\dots,m_g}\FC_g(\phi_i(\gamma_0)).$$
Our next goal is to prove the following:

\begin{sat}\label{sat dominant type}
    Let $(\Sigma,\flip_\Sigma,\gamma_0)$ be a local type and $X_g$ be a sequence of surfaces of genus $g$ with $g\to\infty$. Then we have
    $$\lim_{g\to\infty}
    \frac{\FC_{g}(\phi_{\ns}(\gamma_0))}{\FC_{g}(\Sigma,\flip_\Sigma,\gamma_0)}=1$$
    where $\phi_{\ns}$ is a (the) non-separating realization of $(\Sigma,\flip_\Sigma,\gamma_0)$. More precisely, if $\phi_1=\phi_{\ns},\phi_2,\dots,\phi_{m_g}$ are all mapping class group orbits of realizations of $(\Sigma,\flip_\Sigma,\gamma_0)$ in $X_g$, then
    \[ \dfrac{\sum\limits_{i=2}^{m_g} \FC_g(\phi_i(\gamma_0))}{\FC_g(\phi_{\ns}(\gamma_0))} = O\left(\dfrac{1}{g}\right) .  \]
\end{sat}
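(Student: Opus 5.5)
We compare the realizations through Corollary \ref{cor: cste in term of dual graph}. The domain $\Delta_\Sigma(\gamma_0)$, the measure $\FM_{\BA(\Sigma,\flip_\Sigma)}$ and the factor $\prod_e b_e(\bar a)$ depend only on the local type. By Proposition \ref{prop: sym independant of g}, and since $\sym$ is bounded by $|\Map(\Sigma)|$-type constants in general, the prefactor $1/\sym$ is bounded above and below independently of $g$. What changes with the realization is the product $\prod_{v\in\CV_Z}2^{\chi_v+1}V_{g(v),\deg(v)}(\bar b_v)$. For $\phi_{\ns}$ this is a single polynomial $2^{\chi(Z)+1}V_{g_0,n_0}$, with $n_0=|\D Z|$ and $g_0=g-c$, where $c$ and $n_0$ are constants fixed by the local type. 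So the plan is to bound, pointwise on $\Delta_\Sigma(\gamma_0)$, each product of Kontsevich polynomials attached to a separating dual graph by $O(1/g)$ times $V_{g_0,n_0}$. We then integrate this bound and sum over dual graphs, using Proposition \ref{prop:NumberOfRealizationForAGraph} to control the multiplicity of realizations sharing a dual graph.

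\textbf{Step 1 (reduction to generating functions).} We use \eqref{eq:Vleq} for the separating graphs and \eqref{eq:asymptotic for volume with taylor coef} for $\phi_{\ns}$. Each factor is then of the form $\frac{(6g_v-5+2n_v)!!}{g_v!24^{g_v}}[z^{6g_v-6+3n_v}]\prod\frac{\sinh(b z)}{b}$. On $\Delta_\Sigma(\gamma_0)$ all $b_e$ are bounded by a constant $B$, because ${\bf I}_{\gamma_0}\le1$ and every arc meets $\gamma_0$. Write $S_k(\bar b)=[z^k]\prod_i\sinh(b_iz)/b_i$. Then $S_k(\bar b)$ is comparable to the total weight $(\sum b_i)^{k}$ divided by $k!$, up to factors of the form $C^{n}$, with $n$ bounded. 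The precise statement to aim for is
\[
\frac{S_{k_1}(\bar b_1)S_{k_2}(\bar b_2)}{S_{k_1+k_2+m}(\bar b_1,\bar b_2)}\le C\,\frac{(k_1+k_2+m)!}{k_1!\,k_2!}\,\|\bar b\|^{-m}.
\]
Here $m$ is the degree deficit created by splitting. This should follow from comparing the Taylor coefficients of $\prod\sinh(b_iz)$ with those of $\exp(\|\bar b\|z)$ up to bounded factors.

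\textbf{Step 2 (factorial bookkeeping).} Suppose $Z$ splits into components of genera $g_v$ with $\sum g_v+\text{(loops)}=g_0$. Combining the double-factorial prefactors with Step 1 reduces the ratio to an explicit expression. For two components it looks like
\[
\binom{g_0}{g_1}\frac{(6g_1+\dots)!!(6g_2+\dots)!!}{(6g_0+\dots)!!}\cdot\frac{(6g_0+\dots)!}{(6g_1+\dots)!(6g_2+\dots)!}\cdot\|\bar b\|^{-m}.
\]
Using $(2k+1)!!\approx (2k)!/(2^kk!)$, this behaves like $\binom{g_0}{g_1}^{-1}$ times polynomial factors in $g$. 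The term with $\min(g_1,g_2)$ bounded is the worst case and gives $O(1/g)$. The remaining terms are summed as a binomial-reciprocal series, which is $O(1/g)$ in total. Since $\deg(v)$ and $|\CV_Z|$ are bounded by the local type, only finitely many graph shapes occur. Each shape carries one free genus-partition parameter, and we sum over it.

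\textbf{Step 3 (the negative power of $\|\bar b\|$).} The factor $\|\bar b\|^{-m}$ must be integrable on $\Delta_\Sigma(\gamma_0)$ against $\prod b_e\,d\FM$. Equivalently, the polynomials must be compared without division. We therefore keep the numerator as the integrand itself and bound the separating integrand by $O(1/g)$ times the integral of the $\phi_{\ns}$ integrand. Both integrals are computed as finite-dimensional moments of $\|\bar b\|$-type monomials over $\Delta_\Sigma$, which is finite by Corollary \ref{cor finite volume}. The exact integrals over bounded $\bar b$ cost only constants, uniform in $g$, in the ratio.

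We expect the main obstacle to be Step 2. It requires a careful, uniform-in-partition estimate of the double-factorial ratio, together with the check that the loss from $\|\bar b\|\le B$ (degree deficits scale like $B^{-m}$) does not overwhelm the $1/g$ gain. The deficit $m$ is bounded by the local type, so this should only cost constants, but the estimates have to be made explicit.
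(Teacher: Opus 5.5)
\textbf{There is a genuine gap, located in Steps 1--2.} Step 3 addresses a non-issue.

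\textbf{No degree deficit.} The $z$-exponent attached to a vertex $v\in\CV_Z$ is $-3\chi_v=6g(v)-6+3n(v)$. Euler characteristic is additive, so for every realization
\[
\sum_{v\in\CV_Z}(-3\chi_v)=-3\chi(Z)=-3(\chi(X)-\chi(\Sigma))=-3\chi_{\ns}.
\]
Equivalently $\sum_v g(v)=g_{\ns}+|\CV_Z|-1\ge g_{\ns}$: splitting $Z$ does not lose genus. Likewise $\deg V_Z=3|\chi(Z)|-|\D Z|$ does not depend on the realization. Hence $m=0$, and the worry about integrating $\|\bar b\|^{-m}$ disappears.

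\textbf{Step 1 loses an exponential factor.} With $m=0$ your inequality reads $S_{k_1}S_{k_2}\le C\binom{k_1+k_2}{k_1}S_{k_1+k_2}$, which throws away an exponentially large factor. Consequently the expression you write in Step 2 does not behave like $\binom{g_0}{g_1}^{-1}$. Using $(2k+1)!!=(2k+1)!/(2^kk!)$, the double-factorial ratio is of order $\binom{3g_0}{3g_1}\binom{6g_0}{6g_1}^{-1}$. Your whole expression is then of order $\binom{g_0}{g_1}\binom{3g_0}{3g_1}$, which is exponentially large in $g$, so the argument as written gives no bound at all. Separately, the two-sided comparability of $S_k(\bar b)$ with $\|\bar b\|^k/k!$ that you invoke fails uniformly on $\Delta_\Sigma(\gamma_0)$. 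The reason is cancellation in $\prod\sinh(b_iz)$ when some $b_i$ is small.

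\textbf{The missing idea: positivity, with no loss.} No pointwise comparison is needed. All Taylor coefficients of $\sinh$ are nonnegative, so for every $\bar a$
\[
\prod_{v\in\CV_Z}[t^{-3\chi_v}]\prod_{e\in\hal_v}\frac{\sinh(b_et)}{b_e}\le[t^{-3\chi(Z)}]\prod_{v\in\CV_Z}\prod_{e\in\hal_v}\frac{\sinh(b_et)}{b_e}.
\]
The right-hand side is exactly the integrand attached to $\phi_{\ns}$, so nothing is lost.

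\textbf{Where the $1/g$ comes from.} After applying \eqref{eq:Vleq}, at a bounded cost of $(3/2)^{|\D Z|}$, the whole gain must come from the prefactors. Write
\[
\frac{(6g-5+2n)!!}{g!}=\binom{6g-5+2n}{3g-3+n,\,2g-2+n,\,g}\frac{|\chi|!}{2^{3g-3+n}}.
\]
A product of multinomial coefficients is at most the multinomial of the sums. This gives
\[
\prod_v\frac{(6g(v)-5+2n(v))!!}{g(v)!\,24^{g(v)}}\le C\,\frac{(6g_{\ns}-5+2n_{\ns})!!}{g_{\ns}!\,24^{g_{\ns}}}\cdot\frac{\prod_v|\chi_v|!}{|\chi_{\ns}|!}.
\]

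\textbf{Summing over dual graphs.}
\begin{itemize}
\item There are finitely many shapes, since $|\CV_Z|\le|\Gamma|$.
\item There are boundedly many realizations per decorated graph, by Proposition \ref{prop:NumberOfRealizationForAGraph}.
\item For a shape with $k$ vertices in $\CV_Z$, you sum over compositions $c_1+\dots+c_k=|\chi_{\ns}|$. That is $k-1$ free parameters, not one. This sum is controlled by
\[
\sum\frac{\prod_i c_i!}{|\chi_{\ns}|!}\le\frac{4}{|\chi_{\ns}|}.
\]
\end{itemize}
Comparing with $\FC_g(\phi_{\ns}(\gamma_0))\sim\FC_\infty^{ns}(\Sigma,\flip_\Sigma,\gamma_0)/\sym$ from \eqref{eq:asymptotic for volume with taylor coef} then yields the $O(1/g)$.
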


We will rely on Theorem \ref{thm constant c in terms of intersection numbers} to prove Theorem \ref{sat dominant type}. The key is that, as we have seen previously, we understand how the Kontsevich polynomial behaves when the genus grows.

To make the computation easier to follow we will use the notations associated to the graph dual to a realization of a local type. Recall that we introduced in Section 6.4 the decorated graph dual to a realization of local type. 
Given a realization $\phi$ of a local type $(\Sigma,\gamma_0,\flip_\Sigma)$ into a surface $X$ of genus $g$, the set of vertices $\CV=\CV_\Sigma\sqcup \CV_Z$ corresponds to the connected components of $\Sigma_{\hyp}$ and $Z$, each vertex $v$ is decorated by the genus $g(v)$ of the associated surface and its degree $n(v)$ is the number of boundary components of this surface. Note that $2g(v) - 2 + n(v) > 0$ for all $v \in V$. Moreover, the set of edges is indexed by $\Gamma$ and its cardinality is independent from the realization or of the ambient surface $X$. 
Recall Corollary \ref{cor: cste in term of dual graph}:
\begin{equation*} 
    \mathfrak{c}_g(\phi(\gamma_0))
    =
    \frac{2^{\chi(Z)+|\CV_Z|}}{\sym(\gamma_0,\phi)}
    \bigintsss_{\Delta_{\Sigma}(\gamma_0)}
    \prod_{e \in E} b_e(\bar a)
    \prod_{v \in \CV_Z} V_{g(v), n(v)}(\bar{b}_v(\bar a))\ d\FM_{\BA}(\bar a)
\end{equation*}
where $\int_{\Delta_{\Sigma}(\gamma_0)}$ should be understood as a sum of integrals over arc systems on $\Sigma$,
and $\bar{b}_v$ and $b_e$ are defined in page \pageref{cor: cste in term of dual graph}. To make things easier notationwise we will tend to write $b_e$ for $b_e(\bar a)$ and the same for $\bar b_v$.
Recall that although $\prod_{e \in E} b_e(\bar a)$ looks to depend on the realization it does not, each edge $e$ corresponds to a unique curve $\eta\in\Gamma$ and then 
\[\prod\limits_{e \in E} b_e(\bar a)=\prod\limits_{\eta\in\Gamma}{\bf w}_\Sigma (\bar a)_\eta.\]
\begin{comment}
    Note that $|V|$ and $|E|$ can be bounded by some constants which depends only on the local type $\gamma$ (doesn't depend on $X$).
($|V|$ varies between $1$ (when $Z$ is connected) and $|\Gamma_\partial| + |\Gamma_{\ann}| + 1$ (when the graph is a tree). $E_0$ and $E_1$ are fixed.)
\end{comment}

\subsubsection{Non-separating realization} If $\phi_{\ns}$ is the non-separating realization  then $Z_{\ns}$ is connected, $\CV_{Z_{\ns}}$ contains one single element, denoted by $v_{\ns}$, and by Corollary \ref{kor useful aggarwal} we have
\begin{multline} \label{eq: asym non sep step 1}
    \mathfrak{c}_g(\phi_{\ns}(\gamma_0))
    =
    \frac{2^{\chi_{\ns}+1}}{\sym(\gamma_0,\phi_{\ns})}
    \int_{\Delta_{\Sigma}(\gamma_0)}
    (\prod_{e \in E_{\ns}} b_e)
    V_{g_{\ns}, n_{\ns}}(\bar{b}_{v_{\ns}})  \ d\FM_{\BA}(\bar a) \\
    \underset{g\to\infty}{\sim}
    \frac{2^{\chi_{\ns}+1}}{\sym(\gamma_0,\phi_{\ns})}
    \int_{\Delta_{\Sigma}(\gamma_0)}
    (\prod_{e \in E_{\ns}} b_e)
    \frac{(6g_{\ns}-5+2n_{\ns})!!}{g_{\ns}! 24^{g_{\ns}}}
    [t^{-3\chi_{\ns}}] \prod_{e \in \hal_{v_{\ns}}} \frac{\sinh(b_e t)}{b_e }  \ d\FM_{\BA}(\bar a) 
\end{multline}
where $g_{\ns} = g(v_{\ns})$, $n_{\ns} = n(v_{\ns})$, $\chi_{\ns} = -(2g_{\ns} - 2 + n_{\ns})=\chi(Z_{\ns})=\chi(X)-\chi(\Sigma)$, and $\hal_{v_{\ns}}$ is the set of half-edges adjacent to $v_{\ns}$.

Also, to make sure that the equations we will write afterward fit in one line let us define 
\begin{multline} \label{asymptotic non separating}
    \FC_\infty^{ns}(\Sigma,\flip_\Sigma,\gamma_0) =\\
    2^{\chi_{\ns}+1}
    \int_{\Delta_{\Sigma}(\gamma_0)}
    (\prod_{e \in E_{\ns}} b_e)
    \frac{(6g_{\ns}-5+2n_{\ns})!!}{g_{\ns}! 24^{g_{\ns}}}
    [t^{-3\chi_{\ns}}] \prod_{e \in \hal_{v_{\ns}}} \frac{\sinh(b_e t)}{b_e   }  \ d\FM_{\BA}(\bar a).
\end{multline}

\subsubsection{Separating realization} For any realization $\phi$ such that $Z$ is disconnected, it follows from \eqref{eq:Vleq} that
\begin{multline} \label{eq: one more}
    \mathfrak{c}_g(\phi(\gamma_0))
    \le \\
    \frac{2^{\chi(Z)+|\CV_Z|}}{\sym(\gamma_0,\phi)}
    \int_{\Delta_\Sigma(\gamma_0)}
    (\prod_{e \in E} b_e)
    \prod_{v \in \CV_Z}\left( \frac{3}{2} \right)^{n(v) - 1}
    \frac{(6g(v) - 5 + 2n(v))!!}{g(v)! 24^{g(v)}}
    [t^{-3\chi(v)}] \prod_{e \in \hal_v} \frac{\sinh(b_e t)}{b_e  }.
\end{multline}
The set $\Delta_\Sigma(\gamma_0)$ is independent from the realization, and for some fixed $\bar a \in \BA(\Sigma)$ what is dependent is
\begin{multline*}
         {2^{\chi(Z)+|\CV_Z|}}\prod_{v \in \CV_Z}  \left( \frac{3}{2} \right)^{n(v) - 1}
    \frac{(6g(v) - 5 + 2n(v))!!}{g(v)! 24^{g(v)}}
    [t^{-3\chi(v)}] \prod_{e \in \hal_v} \frac{\sinh(b_e t)}{b_e   } \\
    =
    \left( \frac{3}{2} \right)^{|\Gamma_{\fix}| + 2|\Gamma_{\ann}| - |\CV_Z|} 
    {2^{\chi(Z)+|\CV_Z|}}
    \prod_{v \in \CV_Z}
     \frac{(6g(v) - 5 + 2n(v))!!}{g(v)! 24^{g(v)}}
    [t^{-3\chi(v)}] \prod_{e \in \hal_v} \frac{\sinh(b_e t)}{b_e  }.
\end{multline*}
However, the Taylor expansion of $\sinh$ has only positive terms so
\begin{multline} \label{eq:bound sep nonsep}
    \prod\limits_{v\in \CV_Z} [t^{-3\chi(v)}]\prod\limits_{e\in\hal_v}  \frac{\sinh(b_e t)}{b_e  } \\
    \le [t^{-3\chi(Z)}] \prod\limits_{v\in \CV_Z} \prod\limits_{e\in\hal_v}  \frac{\sinh(b_e t)}{b_e  } 
    =  [t^{-3\chi_{\ns}}] \prod_{e \in \hal_{v_{\ns}}} \frac{\sinh(b_e t)}{b_e   }.
\end{multline}

What is left to study is $\prod_{v \in \CV_Z}
     \frac{(6g(v) - 5 + 2n(v))!!}{g(v)! 24^{g(v)}}$. This is the object of the next lemma.

\begin{lem} \label{lem:bound product}
    Let $(\Sigma,\flip_\Sigma,\gamma_0)$ be a local type, there is constant $C(\gamma_0)$ such that for any separating realization $\phi$ of the local type $(\Sigma,\flip_\Sigma,\gamma_0)$ in a surface $X$ 
    \[ \prod_{v \in \CV_Z} 
     \frac{(6g(v) - 5 + 2n(v))!!}{g(v)! 24^{g(v)}} 
     \le C(\gamma_0) 
     \frac{(6g_{\ns} - 5 + 2n_{\ns} )!!}{g_{\ns} ! 24^{g_{\ns} }}\cdot
     \dfrac{\prod_{v \in \CV_Z}|\chi(v)|!}{|\chi(v_{\ns})|!}.\]
\end{lem}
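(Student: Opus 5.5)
The plan is to rewrite each factor $\frac{(6g(v)-5+2n(v))!!}{g(v)!\,24^{g(v)}}$ in terms of $|\chi(v)|=2g(v)-2+n(v)$. Put $F(v)$ for this factor. Since $6g(v)-5+2n(v)=3|\chi(v)|+1-n(v)$, we have
\[
F(v)=\frac{(3|\chi(v)|+1-n(v))!!}{g(v)!\,24^{g(v)}}.
\]
Using $(2k-1)!!=\frac{(2k)!}{2^k k!}$ and Stirling's formula, $F(v)$ equals, up to multiplicative constants that are uniformly bounded in terms of $n(v)$, a simple expression in $|\chi(v)|$ alone. The only parameters that vary with the realization are the genera $g(v)$ of the vertices in $\CV_Z$. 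The degrees $n(v)$ are bounded by $|\D\Sigma|$, and the number of vertices $|\CV_Z|$ is bounded by $|\Gamma|$; both depend only on the local type.

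Concretely, the first step is to establish the two-sided estimate
\[
F(v)\asymp_{n(v)} \frac{(3|\chi(v)|)!}{|\chi(v)|!}\cdot\left(\frac{2}{3}\right)^{\frac32|\chi(v)|}\cdot\frac{\beta(v)}{2^{\frac32|\chi(v)|}}
\]
or an equivalent closed form. In any case I would aim for
\[
F(v)=A_{n(v)}(g(v))\cdot \lambda^{|\chi(v)|}\,\Gamma\!\left(2|\chi(v)|+c_{n(v)}\right),
\]
where $\lambda$ is an absolute constant and $A_n$ is bounded above and below by positive constants depending only on $n$. The point is that $(3k)!!/(\frac k2)!\,24^{k/2}$ grows like $\lambda^k (2k)!/k!$-type factorials, so that $F(v)\asymp \lambda^{|\chi(v)|}\,\frac{(2|\chi(v)|)!}{|\chi(v)|!}$ after absorbing polynomial factors in $|\chi(v)|$ and bounded powers coming from $n(v)$. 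The exponential factors $\lambda^{|\chi(v)|}$ multiply correctly because $\sum_{v\in\CV_Z}|\chi(v)|=|\chi(Z)|=|\chi(v_{\ns})|$.

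The second step is to compare $\prod_v F(v)$ with $F(v_{\ns})$. After the exponential factors cancel, the comparison reduces to
\[
\prod_v \frac{(2|\chi(v)|)!}{|\chi(v)|!}\le C\,\frac{(2|\chi(v_{\ns})|)!}{|\chi(v_{\ns})|!}\cdot\frac{\prod_v|\chi(v)|!}{|\chi(v_{\ns})|!},
\]
up to polynomial factors. Writing $k_v=|\chi(v)|$ and $K=\sum_v k_v$, this is equivalent to $\prod_v (2k_v)!\le C\,(2K)!\,(\prod_v k_v!/K!)^2$. That in turn is $\binom{2K}{2k_1,\dots}\ge C^{-1}\binom{K}{k_1,\dots}^2$, which holds because of the elementary inequality $\binom{2K}{2\bar k}\ge \binom{K}{\bar k}^2\cdot(\text{poly})^{-1}$; it can also be checked termwise by Stirling.

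The main obstacle is the polynomial (rather than exponential) loss. The Stirling comparison yields square-root factors $\sqrt{k_v}$ and powers of $|\chi|$ depending on $n(v)$ and on the shift $1-n(v)$ in the double factorial. These must be absorbed into a constant $C(\gamma_0)$ uniformly in the $g(v)$, including vertices of small genus. I would handle this by splitting the vertices into those with $k_v\le M$, which contribute bounded factors, and those with large $k_v$. For the latter, the gain $\binom{2K}{2\bar k}/\binom{K}{\bar k}^2\gtrsim \prod\sqrt{k_v}/\sqrt K$ compensates the loss. The fact that at most $|\Gamma|$ vertices occur keeps every resulting constant dependent only on the local type.
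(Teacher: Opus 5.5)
There is a genuine gap: the proposal never works at the scale on which this lemma lives. Write $F(v)=\frac{(6g(v)-5+2n(v))!!}{g(v)!\,24^{g(v)}}$, $k_v=|\chi(v)|$ and $K=\sum_{v\in\CV_Z}k_v=|\chi(v_{\ns})|$. Because the $k_v$ add up to $K$, the exponential and factorial growth of the two sides cancel identically once the factor $\prod_v k_v!/K!$ is taken into account. The lemma is therefore \emph{exactly} a statement about sub-exponential corrections and a uniform constant.

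Your first step is carried out ``after absorbing polynomial factors in $|\chi(v)|$'' and your second ``up to polynomial factors'', so neither establishes anything relevant. Your last paragraph only asserts that a gain of order $\prod_v\sqrt{k_v}/\sqrt K$ compensates a loss that you never compute. This compensation is not automatic. Your multinomial comparison is sharp: the ratio $\binom{2K}{2\bar k}/\binom{K}{\bar k}^2$ is comparable to $\prod_v\sqrt{k_v}/\sqrt K$. So if one had $F(v)\approx\lambda^{k}\frac{(2k)!}{k!}k^{a}$ up to constants, the lemma would fail for $a>\frac12$ along balanced splittings $k_v\approx K/|\CV_Z|$.

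Moreover, the only explicit formula you propose, $F(v)=A_{n(v)}\lambda^{|\chi(v)|}\Gamma(2|\chi(v)|+c_{n(v)})$ with $A_n$ bounded above and below, is false. $\Gamma(2k+c)$ grows like $k^{2k}$, whereas $F(v)$ grows like $k^{k}$; the formula also contradicts your next claim $F(v)\approx\lambda^k(2k)!/k!$. The $n(v)$-dependent powers of $|\chi(v)|$ you anticipate do not actually occur.

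The missing computation is short. With $k=|\chi(v)|$ and $n=n(v)$ one has $F(v)=\frac{(3k+1-n)!}{2^{(3k-n)/2}\,((3k-n)/2)!\,((k+2-n)/2)!\,24^{(k+2-n)/2}}$. By Stirling, the shift corrections $(3k)^{1-n}$ in the numerator and $(3k/2)^{-n/2}(k/2)^{1-n/2}$ in the denominator cancel as powers of $k$. One gets $c_n(3/2)^k(k-1)!\le F(v)\le C_n(3/2)^k(k-1)!$ for all admissible $k\ge1$, with constants depending only on $n$. For fixed $n$ the ratio converges, and $n(v)\le|\D Z|$ takes finitely many values, so no splitting into small and large vertices is needed.

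The left side is then comparable to $(3/2)^K\prod_v k_v!/\prod_v k_v$, and $F(v_{\ns})\prod_v k_v!/K!$ to $(3/2)^K\prod_v k_v!/K$. The lemma reduces to $K\le C\prod_v k_v$, which holds with $C=|\CV_Z|\le|\Gamma|$ because every $k_v\ge1$. The detour through $(2k)!/k!$ and $\binom{2K}{2\bar k}$ is unnecessary.

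The paper avoids asymptotics entirely. The exact identity $\frac{(6g-5+2n)!!}{g!}=\binom{6g-5+2n}{3g-3+n,\,2g-2+n,\,g}\frac{|\chi|!}{2^{3g-3+n}}$ isolates $|\chi(v)|!$. Super-multiplicativity of multinomial coefficients (Aggarwal's Lemma 2.3) then merges the product into a single multinomial with entries $3g_{\ns}-3+n_{\ns}$, $2g_{\ns}-2+n_{\ns}$, $g_{\ns}+|\CV_Z|-1$. The shift by $|\CV_Z|-1$ costs only a bounded factor. That route gives an exact inequality with an explicit constant. The repaired Stirling route additionally shows that the left side is of order $K/\prod_v k_v$ times $F(v_{\ns})\prod_v k_v!/K!$. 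So the lemma is sharp only when one vertex carries almost all of $\chi(Z)$.
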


\begin{proof}
     The proof closely follow the one of \cite[Lemma~4.1]{Barazer-Giacchetto-Liu}. Fix a separating realization $\phi$ and start by entering the following identities:
     \begin{enumerate}
         \item $\sum\limits_{v\in \CV_Z} n(v) = 2|\Gamma_{\ann}|+|\Gamma_{\fix}|=n_{\ns}$, \label{sum degrees}
         \item $\sum\limits_{v\in \CV_Z}2g(v)-2+n(v)=|\chi(X)-\chi(\Sigma)|=2g_{\ns}-2+n_{\ns}$, \label{sum eurler char}
         \item $\sum\limits_{v\in \CV_Z} g(v) =g_{\ns}+(|\CV_Z|-1)$. \label{sum genus}
     \end{enumerate}
     As a consequence 
     \[
     \sum\limits_{v\in \CV_Z}3g(v)-3+n(v)=3g_{\ns}-3+n_{\ns}
     \]
     and 
     \[
     \sum\limits_{v\in \CV_Z}6g(v)-5+2n(v)=6g_{\ns}-5+2n_{\ns}+(|\CV_Z|-1).
     \]
     Also, observe that if $S$ is a surface of genus $g$ with $n$ boundary components then using the relation $(2k+1)!!=\frac{(2k+1)!}{2^k\cdot k!}$ we get
     \begin{equation}
         \label{eq developping !!}
         \frac{(6g - 5 + 2n)!!}{g!} = \left( \begin{array}{cc}
            6g - 5 + 2n \\
            3g - 3 + n, 2g-2+n, g
       \end{array} \right) \cdot \dfrac{|\chi(S)|!}{2^{3g - 3 + n}}.
     \end{equation}
    where $\left( \begin{array}{cc}
            k \\
            k_1,k_2,k_3
       \end{array} \right)=\frac{k!}{k_1!k_2!k_3!}$ is the multinomial coefficient for $k=k_1+k_2+k_3$.
     Hence, by \cite[Lemma 2.3]{Aggarwal} we have 
     \begin{align*}
          \prod_{v \in \CV_Z} &
     \frac{(6g(v) - 5 + 2n(v))!!}{g(v)! 24^{g(v)}}\\
      &\le
       \left( \begin{array}{cc}
            6g_{\ns}-5+2n_{\ns}+(|\CV_Z|-1) \\
            3g_{\ns}-3+n_{\ns},2g_{\ns}-2+n_{\ns},g_{\ns}+(|\CV_Z|-1)
       \end{array} \right) \cdot \dfrac{\prod_{v \in \CV_Z} |\chi(v)|!}{2^{3g_{\ns}-3+n_{\ns}}24^{g_{\ns}+(|\CV_Z|-1)}} \\
       & = \dfrac{(6g_{\ns}-5+2n_{\ns}+(|\CV_Z|-1))!}{(3g_{\ns}-3+n_{\ns})!(2g_{\ns}-2+n_{\ns})!(g_{\ns}+(|\CV_Z|-1))!}
       \cdot \dfrac{\prod_{v \in \CV_Z} |\chi(v)|!}{2^{3g_{\ns}-3+n_{\ns}}24^{g_{\ns}+(|\CV_Z|-1)}}.
     \end{align*}
     Using \eqref{eq developping !!} for $Z_\ns$ we also have
     \[   
     \frac{(6g_{\ns} - 5 + 2n_{\ns} )!!}{g_{\ns} ! 24^{g_{\ns} }}\cdot
     \dfrac{\prod_{v \in \CV_Z}|\chi(v)|!}{|\chi(v_{\ns})|!} = \dfrac{(6g_{\ns}-5+2n_{\ns})!}{(3g_{\ns}-3+n_{\ns})!(2g_{\ns}-2+n_{\ns})!g_\ns!} \frac{\prod_{v \in \CV_Z}|\chi(v)|!}{24^{g_\ns}2^{3g_\ns-3+n_\ns}}.
     \]

     Noting that $1\le|\CV_Z|\le |\Gamma|$ (in the worse case each boundary of $\Sigma$ is glued to a different component of $Z$ and all the components of $\phi(\gamma_{\ann})$ are separating $Z$), we have

\begin{align*}
    \dfrac{(6g_{\ns}-5+2n_{\ns}+(|\CV_Z|-1))!}{24^{|\CV_Z|-1}(g_{\ns}+(|\CV_Z|-1))!} &\le \dfrac{(6g_{\ns} - 5 + 2n_{\ns} )!}{g_\ns!}\cdot\dfrac{(6g_{\ns}-5+2n_{\ns}+(|\CV_Z|-1))^{|\CV_Z|-1}}{g_\ns^{|\CV_Z|-1}} \\
    &\le \dfrac{(6g_{\ns} - 5 + 2n_{\ns} )!}{g_\ns!}\cdot\left(6+\dfrac{4|\Gamma_{\ann}|+2|\Gamma_{\fix}|+|\Gamma|-6}{g_{\ns}}\right)^{|\Gamma|} \\
    &\le \dfrac{(6g_{\ns} - 5 + 2n_{\ns} )!}{g_\ns!}\cdot\left(6+(4|\Gamma_{\ann}|+2|\Gamma_{\fix}|+|\Gamma|-6)\right)^{|\Gamma|}.
\end{align*}

This concludes the proof

\end{proof}

Note that $|\CV_Z|\le |\Gamma|$, by Lemma \ref{lem:bound product}, \eqref{eq: one more} and \eqref{eq:bound sep nonsep} there is a constant $C'(\gamma_0)$ depending only on the local type such that

\begin{multline}\label{eq:sepVSnsep}
    \FC_g(\phi(\gamma_0))
    \le 
    \dfrac{C'(\gamma_0)}{\sym(\phi,\gamma_0)}
    \dfrac{\prod_{v \in \CV_Z} (-\chi_v)!}{(-\chi_{\ns})!} 
    \FC_\infty^{ns}(\Sigma,\flip_\Sigma,\gamma_0)
    \\
      \le
     2^{C(\gamma_0)}
    \dfrac{\prod_{v \in \CV_Z} (-\chi_v)!}{(-\chi_{\ns})!} 
     \FC_\infty^{ns}(\Sigma,\flip_\Sigma,\gamma_0).
\end{multline}

We now dispose of the necessary content to prove Theorem \ref{sat dominant type}.

\subsubsection{Proof of Theorem \ref{sat dominant type}}
Recall that we have written
$$\FC_g(\Sigma,\flip_\Sigma,\gamma_0)=\sum_{i=1,\dots,m_g}\FC_g(\phi_i(\gamma_0)).$$
Assume that $\phi_{\ns}=\phi_1$. Regarding equation (\ref{eq:sepVSnsep}) what remains to do is to study 

\[
\sum\limits_{1=2}^{m_g}
\dfrac{\prod_{v \in \CV_{Z_i}} (-\chi_v)!}{(-\chi_{\ns})!}.
\]

Let $k$ be an integer between $2$ (minimal size of $\CV_Z$ for a separating realization) and $|\Gamma_{\fix}|+|\Gamma_{\ann}|$ (maximal size of $\CV_Z$ for a separating realization). There is only finitely many graphs with $k+|\pi_0(\Sigma_{\hyp})|$ vertices and $|\Gamma|$ edges. Fix such a graph and a closed surface of large enough genus $X$. We want to decorate the graph it in order to make it a possible decorated graph dual to a realization of $(\Sigma,\flip_\Sigma,\gamma_0)$ in $X$. First of all,  $|\pi_0(\Sigma_{\hyp})|$ of the vertices should be decorated with the genus of the connected components of $\Sigma$ (finitely many choices for this with a uniform upper bound depending only on the local type) the remaining ones should be decorated such that $\sum\limits_{v\in \CV_Z}2g(v)-2+n(v)=|\chi(X)|-|\chi(\Sigma)|=|\chi_{\ns}|$. We defined our dual graph as decorated by genus but since $n(v)$ is fixed by the graph we can think the decoration to be by the Euler characteristic. Hence, finding a decoration for the vertices in $\CV_Z$ is the same as finding a partition of $|\chi_{\ns}|$ by $c_1,\cdots,c_k\in\BZ_{\ge 1}$.
Hence, by Proposition \ref{prop:NumberOfRealizationForAGraph} there is a constant $B(\gamma_0)$, indenpendant from $g$ such that
\[
\sum\limits_{1=2}^{m_g}
\dfrac{\prod_{v \in \CV_{Z_i}} (-\chi_v)!}{(-\chi_{\ns})!} 
\le 
B(\gamma_0)
\sum\limits_{k=2}^{|\Gamma_\fix|+|\Gamma_{\ann}|} 
\sum_{\substack{(c_1, \dots, c_k) \in \mathbb{Z}_{\geq 1}^k \\ n_1 + \cdots + n_k = |\chi_{\ns}|}}
\frac{c_1! \cdots c_k!}{|\chi_{\ns}|!}.
\]

Now we can use the following lemma to conclude:
\begin{lem}[{\cite[Lemma~4.3]{Barazer-Giacchetto-Liu}}]
    Let $2 \le k \le n$ be integers.
    The following bound holds:
    \[
        \sum_{\substack{(n_1, \dots, n_k) \in \mathbb{Z}_{\geq 1}^k \\ n_1 + \cdots + n_k = n}} \frac{n_1! \cdots n_k!}{n!}
        \le
        \frac{4}{n}.
    \]
    \hfill $\blacksquare$
\end{lem}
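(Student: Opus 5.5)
The plan is to reduce everything to the case $k=2$ through a recursion in $k$. Denote the sum by
\[
T_k(n)=\sum_{\substack{(n_1,\dots,n_k)\in\BZ_{\ge1}^k\\ n_1+\dots+n_k=n}}\frac{n_1!\cdots n_k!}{n!},
\]
and set $T_1(n)=1$. Fix the last part $n_k=m$, which ranges over $1\le m\le n-k+1$. Since $\frac{m!\,(n-m)!}{n!}=\binom nm^{-1}$, the remaining sum over $(n_1,\dots,n_{k-1})$ is exactly $T_{k-1}(n-m)$. This gives
\[
T_k(n)=\sum_{m=1}^{n-k+1}\binom nm^{-1}T_{k-1}(n-m).
\]

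\emph{The case $k=2$.} Here $T_2(n)=\sum_{j=1}^{n-1}\binom nj^{-1}$. The two terms $j=1$ and $j=n-1$ contribute $2/n$. Each of the remaining $n-3$ terms satisfies $\binom nj\ge\binom n2$, so it is at most $2/(n(n-1))$. Hence
\[
T_2(n)\le\frac2n+\frac{2(n-3)}{n(n-1)}\le\frac2n+\frac2n=\frac4n .
\]
For $n=2,3$ there are no middle terms: $T_2(2)=1/2$ and $T_2(3)=2/3$, which also satisfy the bound. In particular $T_2(n)\le1$ for every $n\ge2$, since $4/n\le1$ once $n\ge4$.

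\emph{Auxiliary claim: $T_j(N)\le1$ for all $1\le j\le N$.} I prove this by induction on $j$, using the recursion. The cases $j=1,2$ hold by the previous paragraph. For $j\ge3$, every $T_{j-1}(N-m)$ in the recursion has $N-m\ge j-1$, so it is at most $1$ by the induction hypothesis. Therefore
\[
T_j(N)\le\sum_{m=1}^{N-j+1}\binom Nm^{-1}\le T_2(N)\le1 .
\]

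\emph{Conclusion for $k\ge3$.} Apply the same estimate once more, bounding each $T_{k-1}(n-m)$ in the recursion by $1$ using the auxiliary claim. This gives
\[
T_k(n)\le\sum_{m=1}^{n-k+1}\binom nm^{-1}\le\sum_{m=1}^{n-1}\binom nm^{-1}=T_2(n)\le\frac4n,
\]
which is the desired bound.

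The only genuine estimate in the argument is the $k=2$ bound, specifically controlling the middle binomial terms uniformly in $n$. Comparing each of them with the $j=2$ term, as above, is enough; the small values of $n$ are checked by hand. The recursion does the rest.
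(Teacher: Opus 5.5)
Your proof is correct and complete. Note that the paper does not prove this lemma itself: it imports it from \cite[Lemma~4.3]{Barazer-Giacchetto-Liu} and closes it with $\blacksquare$. There is therefore no in-text argument to compare against, and what you have written is a self-contained replacement for that citation.

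The individual steps all check out:
\begin{itemize}
\item The recursion $T_k(n)=\sum_{m=1}^{n-k+1}\binom nm^{-1}T_{k-1}(n-m)$ is exact.
\item The middle terms of $T_2(n)$ are correctly controlled by $\binom nj\ge\binom n2$ for $2\le j\le n-2$. Treating $n=2,3$ separately is genuinely needed, since for $n=2$ the two ``extreme'' terms coincide.
\item Whenever you invoke the auxiliary claim, the required range $n-m\ge k-1$ holds.
\end{itemize}

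Your argument actually proves the stronger monotonicity statement $T_k(n)\le T_2(n)$ for all $2\le k\le n$. This makes it transparent that the case $k=2$ carries the whole content of the lemma. It is also exactly what the application in the proof of Theorem~\ref{sat dominant type} needs: there $k$ stays bounded and only the order $1/n$ matters.

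If you wanted more, you could feed $T_{k-1}(n-m)\le 4/(n-m)$ into the recursion instead of $T_{k-1}\le 1$. Splitting off $m\in\{1,2,n-2\}$ and using $\binom nm\ge\binom n3$ for the remaining terms shows $T_k(n)=O(n^{-2})$ for $k\ge 3$. So the bound $4/n$ is sharp in order only for $k=2$, but that refinement is not used anywhere in the paper.
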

Together with equation (\ref{eq:sepVSnsep}) it implies that 
\begin{equation} \label{eq:FirstBoundNonSep}
    \sum\limits_{i=2}^{m_g} 
\FC_g(\phi_i(\gamma_0))
\le 
C'(\gamma_0)B(\gamma_0)
(|\Gamma_{\fix}|+|\Gamma_{\ann}|)
\dfrac{\FC_\infty^{ns}(\Sigma,\flip_\Sigma,\gamma_0)}{|\chi_{\ns}|}.
\end{equation}

Moreover, recall \eqref{eq: asym non sep step 1}, that is $\FC_g(\phi_{\ns}(\gamma_0))\underset{g\to\infty}{\sim} \dfrac{1}{\sym(\gamma_0,\phi_{\ns})}\FC_\infty^{ns}(\Sigma,\flip_\Sigma,\gamma_0)$ and $|\chi_\ns|\sim 6g$. The quantity $\sym(\phi_\ns,\gamma_0)$ being independent from $g$ by Proposition \ref{prop: sym independant of g}, it follows that
\[
    \sum\limits_{i=2}^{m_g} 
    \FC_g(\phi_i(\gamma_0))
    =
    \mathrm{O}\left( \frac{\FC_g(\mathrm{\phi_{\ns}(\gamma_0)})}{g} \right),
\]
which concludes the proof of Theorem \ref{sat dominant type}. \hfill \qed

\begin{rmk*} All simple curves are of same local type (an annulus with its core curve). In that case Delecroix-Goujard-Zograf-Zorich proved that the decay in Theorem \ref{sat dominant type} is exponential. As we just proved, for general local types we obtain a polynomial decay, we do not know if it is optimal or not.
\end{rmk*}

\subsection{From asymptotic to singular analysis question} \label{sec: singular analysis}

From now on we work only with essential local types, this is because we will prove later on that they are dominant in comparison with non-essential ones. In light of Theorem~\ref{sat dominant type}, we henceforth restrict our attention to non-separating realizations of essential local types.

Let $\phi_{\ns}$ be the non-separating realization of an essential local type $(\Sigma, \flip_\Sigma, \gamma_0)$ into a closed surface $X$. Since the involution $\flip_\Sigma$ does act only on annuli we will, in this section, mostly drop it from our notation. By Theorem~\ref{thm constant c in terms of intersection numbers}, 
\[
    \FC_g(\phi_{\ns}(\gamma_0))
    =
    \frac{2^{\chi(Z) + 1}}{\sym( \phi_{\ns},\gamma_0)}
    \int_{\Delta_\Sigma(\gamma_0)}
    \product({\bf w}_\Sigma(\bar{x})) \cdot V_Z({\bf b}_{|\D Z}(\bar x)) \, d \FM_{\BA(\Sigma, \flip_\Sigma)}(\bar{x})
\]

and since we work with the non-separating realization we have from \eqref{eq: asym non sep step 1} and Proposition~\ref{prop: sym independant of g}, 
\begin{equation} \label{asymp step 1}
    \FC_g(\phi_{\ns}(\gamma_0))
    \underset{g\to\infty}{\sim}
    \frac{1}{|\Stab_{\Map(\Sigma)}(\gamma_0)|}
    \FC_\infty^\ns(\Sigma,\gamma_0)
\end{equation}   
where, using the dual graph notations, 

\begin{multline}  \label{eq: asymp non sep step 2}
    \FC_\infty^{ns}(\Sigma,\gamma_0) \\
    =2^{\chi_{\ns}+1}
    \int_{\Delta_{\Sigma}(\gamma_0)}
    (\prod_{e \in E_{\ns}} b_e)
    \frac{(6g_{\ns}-5+2n_{\ns})!!}{g_{\ns}! 24^{g_{\ns}}}
    [t^{-3\chi_{\ns}}] \prod_{e \in \hal_{v_{\ns}}} \frac{\sinh(b_e t)}{b_e   }  \ d\FM_{\BA}(\bar a)\\
    =2^{\chi_{\ns}+1}
    \frac{(6g_\ns - 5 + 2n_\ns)!!}{g_\ns! 24^{g_\ns}} 
    \cdot [t^{-3\chi_\ns}]
    \int_{\Delta_\Sigma(\gamma_0)}
    \prod_{e \in E_\fix(\bar{x})}
    \sinh(b_e t)
    \prod_{e \in E_\ann(\bar{x})}
    \frac{\sinh(b_et)^2}{b_e}
    \, \ d\FM_{\BA}(\bar a).
\end{multline}

Because the local type is essential, for each $\sigma \in \CA(\Sigma)$, $\BA(\sigma)$ is a full-dimensional subspace of $\BR_{\geq 0}^{3 |\chi(\Sigma)|}$ \eqref{eq dim no flip},
and the measure $\FM_{\BA(\Sigma, \flip_\Sigma)}$ is the Lebesgue measure on $\BA(\sigma)$ \eqref{eq: measure=lebesgue}. Hence, we can write

\begin{equation} \label{asym step 1 bis}
    \FC_\infty^{ns}(\Sigma,\gamma_0) =2^{\chi_{\ns}+1}
    \frac{(6g_\ns - 5 + 2n_\ns)!!}{g_\ns! 24^{g_\ns}} \sum\limits_{\sigma\in\Sigma}\FC_\sigma(\gamma_0) 
\end{equation}
with

\[
    \FC_{\sigma}(\phi_{\ns}(\gamma_0))
    =
     [t^{-3\chi_\ns}]
    \int_{\Delta_\sigma(\gamma_0)}
    \prod_{e \in E_\fix(\bar{x})}
    \sinh(b_e t)
    \prod_{e \in E_\ann(\bar{x})}
    \frac{\sinh(b_et)^2}{b_e}
    \, d\bar x.
\] 

Hence, as the genus of the complement $Z$ goes to infinity, the portion $\FC_\sigma(\phi_\ns(\gamma_0))$ of the curve frequency $\FC(\phi_\ns(\gamma_0))$, arising from a maximal arc system $\sigma$, is asymptotically equivalent to a quantity that can be written as a coefficient of a generating function.
In what follows, we derive the asymptotics of the coefficients of this generating function.

We state and prove several results in a general setting to simplify notation.
It might be helpful to keep in mind that, in the applications, $r$ will represent the number of arcs in a maximal arc system on $\Sigma_\hyp$, $n = |\D\Sigma_\hyp|=|E_\fix|$ the number of boundary components of $\Sigma_\hyp$, $n' = |\pi_0(\Sigma_\ann)|=|E_\ann|$ the number of annular components of $\Sigma$, $\iota_i$ the intersection number between $\gamma_0$ and the $i$-th arc in a fixed maximal arc system on $\Sigma_\hyp$, and $(c_{i,j})_{i,j}$ the matrix that describe the incidence relations between the arcs and the boundary components of $\Sigma_\hyp$ (namely, $c_{i,j} = 1$ if the $i$-th arc is incident to the $j$-th boundary component of $\Sigma_\hyp$, and $= 0$ otherwise).

\subsubsection{A transform}

We begin by massaging the generating function to prepare for the singularity analysis.
Let $n \in \mathbb{Z}_{\geq 1}$, $n' \in \mathbb{Z}_{\geq 0}$, $r \in \mathbb{Z}_{\geq 1}$,
and let $A$ be a countable (infinite) set.
For each $\alpha \in A$, let
\begin{itemize}
    \item
        $\iota(\alpha) = (\iota_1(\alpha), \dots, \iota_r(\alpha)) \in \mathbb{Z}_{\geq 1}^r$,

    \item
        $\iota_0(\alpha) = \min_i \iota_i(\alpha)$,

    \item
        $\mu_0(\alpha) = |\{ i : \iota_i = \iota_0 \}|$,

    \item
        $(c_{i,j}(\alpha))_{1 \leq i \leq r, 1 \leq j \leq n}$ be a matrix such that
        \begin{itemize}
            \item
                $c_{i,j}(\alpha) \in \{ 0, 1, 2 \}$, for all $i,j$,

            \item
                $\sum_{j} c_{i,j}(\alpha) = 2$, for each $1 \leq i \leq r$,

            \item
                $\sum_{i} c_{i,j}(\alpha) > 0$, for each $1 \leq j \leq n$.
        \end{itemize}
\end{itemize}

Define the generating function
\begin{equation} \label{eq:varphi}
    \varphi_\alpha(z)
    =
    \int_{\Delta_{\bar{\iota}}}
    dx_1 \cdots dx_r \, dx_1' \cdots dx_{n'}'
    \prod_{j=1}^{n} \sinh(b_j z)
    \cdot
    \prod_{k=1}^{n'} \frac{\sinh(b_k' z)^2}{b_k'},
\end{equation}
where $\Delta_{\bar{\iota}} = \{ (x_1, \dots, x_r, x_1', \dots, x_{n'}') \in \mathbb{R}_{\geq 0}^{r + n'} : \iota_1(\alpha) x_1 + \cdots \iota_r(\alpha) x_r + x_1' + \cdots + x_{n'}' \leq 1 \}$,
$b_j = \sum_{i=1}^{r} c_{i,j}(\alpha) x_i$ for $1 \leq j \leq n$, and $b_k' = x_j'$ for $1 \leq k \leq n'$.
We are interested in the asymptotic behavior of the coefficient
\[
    [z^N] \, \varphi_\alpha(z)
\]
as $N \to \infty$. To this end, we introduce the following transform on power series.
For $d \in \mathbb{Z}$, let $\mathcal{B}_d : \mathbb{C} \llbracket z \rrbracket \to \mathbb{C} \llbracket z \rrbracket$ be given by
\[
    \mathcal{B}_d \sum_{i = 0}^{\infty} a_i z^i
    =
    \sum_{i = 0}^{\infty} (i+d)! \, a_i z^{i}.
\]
For $g(z) = \sum_{i \geq 0} a_i z^i$, we have
\begin{equation} \label{eq:B}
    [z^N] \, (\mathcal{B}_d \, g)(z)
    =
    (N+d)! \, [z^N] \, g(z).
\end{equation}
Hence understanding $[z^N] \, \varphi_\alpha(z)$ is equivalent to understanding $[z^N] \, (\mathcal{B}_d \, \varphi_\alpha)(z)$. The next result explains why we introduce the transform $\mathcal{B}_{d}$.

\begin{lem} \label{lem:K}

    For $z < 1/2$, we have
    \begin{multline} \label{eq:Bvarphi}
        (\mathcal{B}_r \, \varphi_\alpha)(z) \\
        =
        \frac{1}{2^{n + 2n'}}
        \bigg(
        \log \frac{1}{1 - (2z)^2}
        \bigg)^{n'}
        \sum_{\bar{\epsilon} \in \{ -1, 1 \}^{n}}
        \epsilon_1 \cdots \epsilon_n
        \prod_{i=1}^{r}
        \frac{1}{\iota_i - (\epsilon_1 c_{i, 1} + \cdots + \epsilon_n c_{i, n}) z}.
    \end{multline}
\end{lem}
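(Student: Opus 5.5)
The plan is to expand everything in the integrand of \eqref{eq:varphi} as a power series in $z$. We then integrate monomial by monomial over the weighted simplex $\Delta_{\bar\iota}$ using the Dirichlet formula. The factorial produced by that integration is exactly what $\mathcal B_r$ cancels. After cancellation the series factorizes into one geometric series per arc and one logarithmic series per annulus.

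First rewrite the hyperbolic factors. For the boundary factors, $\prod_{j=1}^n\sinh(b_jz)=2^{-n}\sum_{\bar\epsilon\in\{-1,1\}^n}\epsilon_1\cdots\epsilon_n\exp\big(z\sum_j\epsilon_jb_j\big)$. Since $b_j=\sum_ic_{i,j}x_i$, we have $\sum_j\epsilon_jb_j=\sum_i s_i(\bar\epsilon)x_i$ with $s_i(\bar\epsilon)=\epsilon_1c_{i,1}+\cdots+\epsilon_nc_{i,n}$. Hence each $\bar\epsilon$-term equals $\prod_{i=1}^r\sum_{k\ge0}(s_iz)^kx_i^k/k!$. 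For the annular factors, $\sinh(xz)^2/x=(\cosh(2xz)-1)/(2x)=\sum_{m\ge1}\frac{(2z)^{2m}x^{2m-1}}{2\,(2m)!}$. Note that a monomial in $x_i,x_k'$ of total degree $D$ comes with $z^{D+n'}$, because each annular factor has $z$-degree one more than its $x'$-degree.

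Next apply the Dirichlet integral over the simplex $\{\sum\iota_ix_i+\sum x_k'\le1\}$ of dimension $r+n'$:
\[
\int_{\Delta_{\bar\iota}}\prod_ix_i^{k_i}\prod_k(x_k')^{m_k}\,dx\,dx'=\frac{\prod_ik_i!\,\prod_km_k!}{\prod_i\iota_i^{k_i+1}\,\big(\sum k_i+\sum m_k+r+n'\big)!}.
\]
For the coefficient of $z^N$, the total $x$-degree is $N-n'$, so the denominator factorial is $(N+r)!$, independent of the individual exponents. By \eqref{eq:B}, applying $\mathcal B_r$ multiplies $[z^N]\varphi_\alpha$ by exactly $(N+r)!$, which removes this factorial.

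What remains factorizes. Each arc contributes $\sum_k\frac{(s_iz)^k}{k!}\cdot\frac{k!}{\iota_i^{k+1}}=\frac1{\iota_i-s_iz}$. Each annulus contributes $\sum_{m\ge1}\frac{(2z)^{2m}(2m-1)!}{2(2m)!}=\frac14\log\frac1{1-(2z)^2}$. Collecting the prefactors $2^{-n}$ and $4^{-n'}$ gives \eqref{eq:Bvarphi}.

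The main obstacle is justifying the term-by-term manipulations. I would first do the computation at the level of formal power series: the identity is coefficientwise, and each coefficient involves only finitely many monomials. Then I would check absolute convergence for $|z|<1/2$. Since $\sum_jc_{i,j}=2$, we have $|s_i|\le2$, so $|s_iz|<1\le\iota_i$ and each geometric series converges. Likewise $|2z|<1$ makes the logarithm converge. Tonelli, applied with all terms replaced by absolute values, then legitimizes the interchange of sum and integral.
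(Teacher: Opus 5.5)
Your proof is correct, and it reaches \eqref{eq:Bvarphi} by a different and more elementary route than the paper.

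\textbf{The paper's route.} The paper expands all the $\sinh$ factors, including the annular ones, into exponentials and applies $\mathcal{B}_r(\mathrm{e}^{cz})=r!/(1-cz)^{r+1}$ under the integral. This leaves an identity for the simplex integral of $(1-\langle\bar\epsilon,\bar b\rangle z-\cdots)^{-(r+1)}$. To prove it, the paper introduces a dilated simplex $\Delta^t_{\bar\iota}$ and an auxiliary parameter $\sigma$, writes the simplex integral as a convolution of one-variable functions of $t$, and compares Laplace transforms. For the annuli it uses $\mathcal{L}\{\sinh(\alpha t)^2/t\}(s)=\tfrac14\log\frac{1}{1-(2\alpha/s)^2}$.

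\textbf{Your route.} You work coefficient by coefficient. The Dirichlet formula is exactly the monomial case of the paper's convolution/Laplace factorization. Your observation that the denominator $(N+r)!$ does not depend on the individual exponents explains transparently why $\mathcal{B}_r$, rather than $\mathcal{B}_{r+n'}$, is the right transform.

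\textbf{What each buys.} Your argument needs neither Laplace uniqueness nor the auxiliary parameters. It also sidesteps a delicate point in the paper. Expanding $\sinh(b'_kz)^2/b'_k$ into exponentials produces terms that are individually not integrable near $b'_k=0$, so they must be kept grouped in the $\bar\epsilon'$-sum. Your series $\sum_{m\ge1}(2z)^{2m}x^{2m-1}/(2(2m)!)$ has no negative powers of $x'_k$. In exchange, the paper's route gives the more general identity \eqref{eq:K} with parameters $t,\sigma$, and it produces the closed forms directly without resumming series.

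One small remark: the Tonelli step is more than you need. Each $[z^N]\varphi_\alpha$ is a finite sum of monomial integrals, obtained by differentiating under the integral sign over a compact domain. So you already have an identity of formal power series. The statement for $|z|<1/2$ then follows because the right-hand side is holomorphic on that disc: its poles lie at $\pm\iota_i/2$ with $\iota_i\ge1$, and its logarithmic singularities at $\pm1/2$. Hence it equals its Taylor series there.
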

\begin{proof}
    We have
    \[
        \prod_{j=1}^{n}
        \sinh(b_j z)
        \cdot
        \prod_{k=1}^{n'}
        \frac{\sinh(b_k' z)^2}{b_k'}
        =
        \frac{1}{2^{n + 2n'}}
        \sum_{\bar{\epsilon} \in \{ -1, 1 \}^{n}, \bar{\epsilon}' \in \{ -1, 1 \}^{2n'}}
        \frac{\product(\bar{\epsilon}) \product(\bar{\epsilon}')}{\product(\bar{b}')}
        \mathrm{e}^{\langle \bar{\epsilon}, \bar{b} \rangle z + \langle \bar{\epsilon}', (\bar{b}, \bar{b}) \rangle z}
    \]
    where $\langle \bar{\epsilon}, \bar{b} \rangle = \epsilon_1 b_1 + \cdots + \epsilon_n b_n$.
    Hence, it follows from the fact
    \[
        \mathcal{B}_d (\mathrm{e}^z)
        =
        \frac{d!}{(1-z)^{d+1}},
        \qquad
        \forall d \in \mathbb{Z}_{\geq 0}
    \]
    that 
    \[
        \mathcal{B}_{r} \varphi_\alpha(z)
        =
        \frac{r!}{2^{n + 2n'}}
        \sum_{\bar{\epsilon} \in \{ -1, 1 \}^{n}, \bar{\epsilon}' \in \{ -1, 1 \}^{2n'}}
        \int_{\Delta_{\bar{\iota}}} 
        \frac{\product(\bar{\epsilon}) \product(\bar{\epsilon}')}{\product(\bar{b}')}
        \frac{dx_1 \cdots dx_r \, dx_1' \cdots dx_{n'}'}{(1 - \langle \bar{\epsilon}, \bar{b} \rangle z - \langle \bar{\epsilon}', (\bar{b}', \bar{b}') \rangle z )^{r + 1}}.
    \]
    Thus, it is sufficient to prove that for any $\bar{\epsilon} \in \{ -1, 1 \}^n$, we have
    \[
        \sum_{\bar{\epsilon}' \in \{ -1, 1 \}^{2n'}}
        \int_{\Delta_{\bar{\iota}}}
        \frac{d \bar{x} \, d \bar{x}' \, \product(\bar{b}')^{-1} \, \product(\bar{\epsilon}')}{(1 - \langle \bar{\epsilon}, \bar{b} \rangle z - \langle \bar{\epsilon'}, (\bar{b}', \bar{b}') \rangle z)^{r + 1}}
        =
        \frac{1}{r!}
        \bigg(
        \prod_{i=1}^{r} \frac{1}{\iota_i - \langle \bar{\epsilon}, \bar{c_i} \rangle z}
        \bigg)
        \bigg(
        \log \frac{1}{1 - (2z)^2}
        \bigg)^{n'}
    \]
    where $\bar{c}_i = (c_{i,1}, \dots, c_{i,n})$.
    It turns out to be easier to prove a more general formula:
    \begin{multline} \label{eq:K}
        \sum_{\bar{\epsilon}' \in \{ -1, 1 \}^{2n'}}
        \int_{\Delta_{\bar{\iota}}^t}
        \frac{d \bar{x} \, d \bar{x}' \, \product(\bar{b}')^{-1} \,  \product(\bar{\epsilon}')}{(\sigma - \langle \bar{\epsilon}, \bar{b} \rangle z - \langle \bar{\epsilon}', (\bar{b}', \bar{b}') \rangle z)^{r + 1}} \\
        =
        \frac{1}{r!} \frac{t^r}{\sigma}
        \bigg(
        \prod_{i=1}^{r} \frac{1}{\iota_i \sigma - \langle \bar{\epsilon}, \bar{c_i} \rangle t z}
        \bigg)
        \bigg(
        \log \frac{1}{1 - (2zt)^2}
        \bigg)^{n'}
    \end{multline}
    where $t > 0$, $\sigma \geq 1$, and
    \[
        \Delta_{\bar{\iota}}^t = \{ (x_1, \dots, x_r, x_1', \dots, x_{n'}') \in \mathbb{R}_{\geq 0}^{r + n'} : \iota_1 x_1 + \cdots \iota_r x_r + x_1' + \cdots + x_{n'}' \leq t \}.
    \]
    To this end, we will prove that the Laplace transforms of both sides of \eqref{eq:K} coincide.
    Write $I(t)$ for the left-hand side of \eqref{eq:K}.
    Let us compute the Laplace transform of $I(t)$:

    \[
        \mathcal{L}\{ I \} (s)
        =
        \sum_{\bar{\epsilon}' \in \{ -1, 1 \}^{2n'}}
        \int_0^\infty dt \mathrm{e}^{-st}
        \int_{\Delta_{\bar{\iota}}^{t}}
        \frac{d\bar{x} \, d\bar{x}' \, \product(\bar{b}')^{-1} \,  \product(\bar{\epsilon}')}{(\sigma - \langle \bar{\epsilon}, \bar{b} \rangle z - \langle \bar{\epsilon}', (\bar{b}', \bar{b}') \rangle z )^{r + 1}}.
    \]
    The rightmost integrand can be expressed as the Laplace transform of a function:
    from the fact
    \[
        \mathcal{L} \{ \tau \to \tau^r \mathrm{e}^{- \alpha \tau} \} (\sigma)
        =
        \frac{r!}{(\sigma + \alpha)^{r + 1}}
    \]
    where $\mathrm{Re}(\sigma) > - \alpha$, it follows that for $\mathrm{Re}(z) < 1/2$ 

    \[
        \frac{1}{(\sigma - \langle \bar{\epsilon}, \bar{b} \rangle z - \langle \bar{\epsilon}', (\bar{b}', \bar{b}') \rangle z )^{r + 1}}
        =
        \frac{1}{r!}
        \int_0^\infty d\tau
        \,
        \mathrm{e}^{-\sigma \tau} \tau^r \,
        \mathrm{e}^{(\langle \bar{\epsilon}, \bar{b} \rangle z + \langle \bar{\epsilon}', (\bar{b}', \bar{b}') \rangle z) \tau}.
    \]
    After switching the order of integration, we have
    \[
        \mathcal{L} \{ I \} (s)
        =
        \sum_{\bar{\epsilon}'}
        \frac{1}{r!}
        \int_0^\infty d\tau \,
        \mathrm{e}^{-\sigma \tau} \tau^r
        \int_0^\infty dt \,
        \mathrm{e}^{-st}
        \int_{\Delta_{\bar{\iota}}^t} d\bar{x} \, d\bar{x}' \,
        \frac{\product(\bar{\epsilon}')}{\product(\bar{b}')} \,
        \mathrm{e}^{(\langle \bar{\epsilon}, \bar{b} \rangle z + \langle \bar{\epsilon}', (\bar{b}', \bar{b}') \rangle z) \tau}.
    \]
    For the rightmost integral, we have (under the change of variables $x_i \to x_i/\iota_i$)
    \begin{multline} \label{eq:convo}
        \sum_{\bar{\epsilon}'}
        \int_{\Delta_{\bar{\iota}}^t} d\bar{x} \, d\bar{x}' \,
        \frac{\product(\bar{\epsilon}')}{\product(\bar{b}')} \,
        \mathrm{e}^{(\langle \bar{\epsilon}, \bar{b} \rangle z + \langle \bar{\epsilon}', (\bar{b}', \bar{b}') \rangle z) \tau} \\
        =
        \int_{\Delta_{\bar{1}}^{t}}
        \prod_{i=1}^{r}
        \frac{dx_i}{\iota_i} \,
        \mathrm{e}^{\langle \bar{\epsilon}, \bar{c}_{i} \rangle z \tau x_i / \iota_i}
        \cdot
        \prod_{k=1}^{n'}
        dx_k' \frac{4 \sinh(x_k' z \tau)^2}{x_k'}
    \end{multline}
    where $\Delta_{\bar{1}}^t = \{ (x_1, \dots, x_r, x_1', \dots, x_{n'}') \in \mathbb{R}_{\geq 0}^{r + n'} : x_1 + \cdots x_r + x_1' + \cdots + x_{n'}' \leq t \}$.
    Note that the right-hand side of \eqref{eq:convo} is the convolution of $r$ functions of the form
    \[
        x
        \longmapsto
        \frac{\mathrm{e}^{\langle \bar{\epsilon}, \bar{c}_{i} \rangle z \tau x / \iota_i}}{\iota_i},
        \qquad
        1 \leq i \leq r,
    \]
    $n'$ functions of the form $x \mapsto 4 \sinh(x z \tau)^2 / x$,
    and the constant function $x \mapsto 1$;
    all functions are defined on $\mathbb{R}_{> 0}$.
    By the property that the Laplace transform of the convolution of several functions is the product of the Laplace transforms of the individual functions, we obtain
    \[
        \mathcal{L} \{ I \} (s)
        =
        \frac{1}{r!}
        \int_0^\infty
        d\tau \,
        \mathrm{e}^{-\sigma \tau}
        \tau^r
        \bigg(
        \prod_{i=1}^{r}
        \frac{1}{\iota_i s - \langle \bar{\epsilon}, \bar{c_i} \rangle z \tau}
        \bigg)
        \bigg(
        \log \frac{1}{1 - (2 z \tau / s)^2}
        \bigg)^{n'}
        \frac{1}{s}
    \]
    where we use the fact
    \[
        \mathcal{L} \{ t \mapsto \sinh(\alpha t)^2 / t \} (s)
        =
        \frac{1}{4} \log \frac{1}{1 - (2 \alpha / s)^2}.
    \]
    Now by performing the change of variables $\tau \to st /\sigma$, we obtain
    \[
        \mathcal{L} \{ I \} (s)
        =
        \frac{1}{r!}
        \int_0^\infty
        dt \,
        \mathrm{e}^{-st}
        \,
        t^r
        \frac{1}{\sigma}
        \bigg(
        \prod_{i=1}^{r}
        \frac{1}{\iota_i \sigma - \langle \bar{\epsilon}, \bar{c}_i \rangle z t}
        \bigg)
        \bigg(
        \log
        \frac{1}{1 - (2 z t)^2}
        \bigg)^{n'}
    \]
    as claimed.
    This completes the proof.
\end{proof}

\subsubsection{Singularity analysis}\label{subsec singularity analysis}
If a function $f$ can be written as
\[
    f(z)
    =
    \sum_{i = -\infty}^{\infty} a_i (z - z_0)^i,
\]
we will write
\[
    [(z - z_0)^n] \, f(z)
    \coloneqq
    a_n.
\]
As we will see in the next section,
the asymptotic behavior of the coefficients $[z^N] \, \varphi_\alpha(z)$ can be estimated by analysing the singularities of $\mathcal{B}_r \varphi_\alpha$,
and the singularities closer to the origin contribute more significantly in the asymptotics.
For this reason, in the present section we will focus on the singularities of minimal modulus.

To isolate the part of $\mathcal{B}_r \varphi_\alpha$ that depends on $\alpha$, we define the function $\phi_\alpha$ by
\[
    (\mathcal{B}_r \, \varphi_\alpha)(z)
    =
    \frac{1}{2^{n + 2n'}}
    \bigg(
        \log \frac{1}{1 - (2z)^2}
    \bigg)^{n'}
    \phi_\alpha(z),
\]
or equivalently,
\begin{equation} \label{eq:phiAlpha}
    \phi_\alpha(z)
    \coloneqq
    \sum_{\bar\epsilon \in \{-1,1 \}^n}
    \product(\bar \epsilon)
    \prod_{i=1}^{r}
    \frac{1}{\iota_i - \langle \bar{\epsilon}, \bar{c}_i \rangle z}.
\end{equation}
This is a rational function whose poles lie in the set $\{ \pm \iota_i / 2 : 1 \leq i \leq r \}$.
The minimal possible modulus of the poles is $(\min_i \iota_i)/2$.
However, the presence of the factor $\epsilon_1 \cdots \epsilon_n$ introduce possible cancellation,
so it is not evident that such poles actually survive after the sum.
The next result shows that they do survive.

\begin{lem} \label{lem:Linxiao}
    For any $\alpha$, we have
    $[(1 - 2z/\iota_0(\alpha))^{-\mu_0(\alpha)}] \, \phi_{\alpha}(z) > 0$. 
\end{lem}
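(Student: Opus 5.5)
The plan is to compute the coefficient of $(1-2z/\iota_0)^{-\mu_0}$ in $\phi_\alpha$ explicitly, term by term in the sum \eqref{eq:phiAlpha}. The sum then reduces to a signed sum over $\pm1$ vectors, which we evaluate by extracting coefficients of a power series with nonnegative coefficients.

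\textbf{Step 1: which $\bar\epsilon$ contribute.} Since $\sum_j c_{i,j}=2$ and $c_{i,j}\in\{0,1,2\}$, we have $\langle\bar\epsilon,\bar c_i\rangle\in\{-2,0,2\}$. Hence the factor $1/(\iota_i-\langle\bar\epsilon,\bar c_i\rangle z)$ has a pole at $z=\iota_0/2$ only if $\iota_i=\iota_0$ and $\langle\bar\epsilon,\bar c_i\rangle=2$. The latter means $\epsilon_j=1$ for every boundary index $j$ with $c_{i,j}>0$.

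Let $I_0=\{i:\iota_i=\iota_0\}$, so $|I_0|=\mu_0$. Let $J_0$ be the set of boundary indices incident to some arc in $I_0$, and let $J_1$ be its complement. For a fixed $\bar\epsilon$, the summand has a pole of order exactly $\mu_0$ at $\iota_0/2$ precisely when $\bar\epsilon|_{J_0}\equiv 1$; otherwise the order is smaller. For such $\bar\epsilon$ we have $\prod_{i\in I_0}(\iota_0-2z)^{-1}=\iota_0^{-\mu_0}(1-2z/\iota_0)^{-\mu_0}$. The remaining factors are holomorphic at $\iota_0/2$, so they can be evaluated there. This gives
\[
[(1-2z/\iota_0)^{-\mu_0}]\,\phi_\alpha=\iota_0^{-\mu_0}\sum_{\bar\epsilon\in\{-1,1\}^{J_1}}\prod_{j\in J_1}\epsilon_j\prod_{i\notin I_0}\frac{1}{\iota_i-\frac{\iota_0}{2}s_i(\bar\epsilon)},
\]
where $s_i=\sum_j c_{i,j}\epsilon_j$ and we set $\epsilon_j=1$ for $j\in J_0$.

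\textbf{Step 2: positivity via expansion.} For $i\notin I_0$ we have $\iota_i>\iota_0\ge \frac{\iota_0}{2}|s_i|$, so we may expand
\[
\frac{1}{\iota_i-\frac{\iota_0}{2}s_i}=\frac{1}{\iota_i}\sum_{k\ge0}\Big(\frac{\iota_0 s_i}{2\iota_i}\Big)^k .
\]
This series converges absolutely and uniformly on $\{\pm1\}^{J_1}$. Since the $c_{i,j}$ are nonnegative, the product over $i\notin I_0$ becomes a formal power series $P$ in the variables $(\epsilon_j)_{j\in J_1}$ with nonnegative coefficients, and its coefficients are summable.

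Multiply $P$ by $\prod_{J_1}\epsilon_j$ and sum over $\{\pm1\}^{J_1}$. This kills every monomial except those in which each $\epsilon_j$, $j\in J_1$, appears to an odd power. Each surviving monomial contributes $2^{|J_1|}$ times its coefficient. The quantity is therefore $\ge 0$, and it is $>0$ as soon as one such odd monomial has a positive coefficient.

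\textbf{Step 3: exhibiting a positive monomial (the main point).} Each $j\in J_1$ satisfies $\sum_i c_{i,j}>0$, but no arc of $I_0$ touches $j$. So we can choose, for each $j\in J_1$, an arc $i_j\notin I_0$ with $c_{i_j,j}>0$. The arcs $i_j$ may coincide for different $j$.

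Take the term of $P$ obtained by choosing, for each arc $i$, the exponent $k_i=\#\{j:i_j=i\}$. In the product $\prod_i s_i^{k_i}$, pick from the factor $s_{i_j}$ the summand $c_{i_j,j}\epsilon_j$ for each $j$. This produces the monomial $\prod_{j\in J_1}\epsilon_j$ with strictly positive coefficient. All other contributions to that coefficient are nonnegative, so no cancellation can occur.

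The delicate point is only to be careful that the reduction is done in the genuine power series before summing (using $\epsilon_j^2=1$ only at the end), so that nonnegativity is preserved. The case $J_1=\emptyset$ is trivial, since then the coefficient is a product of positive numbers $1/(\iota_i-\iota_0 s_i/2)$ with $s_i=2$.
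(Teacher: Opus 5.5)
Your proof is correct. It runs on the same engine as the paper's argument, but it is organized differently. In both, each factor is expanded as a nonnegative combination of monomials in the $\epsilon_j$. The sign $\product(\bar\epsilon)$ and orthogonality on $\{\pm1\}^n$ then kill every monomial except those with all exponents odd. Finally, $\sum_i c_{i,j}>0$ is used to exhibit one surviving term.

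The paper expands globally and exactly, via the finite decomposition \eqref{eq:lin}. This gives the identity $\phi_\alpha(z)=2^n\sum_{\delta}\prod_i h_{\delta_{i,1}\delta_{i,2}}(\iota_i,z)$, with the sum over admissible exponent patterns $\delta$. Only then does it read off the leading Laurent coefficient at $\iota_0/2$. Since this is a finite sum of explicit rational functions, no convergence questions arise.

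You instead localize first: only the $\bar\epsilon$ with $\bar\epsilon|_{J_0}\equiv1$ reach pole order $\mu_0$. You then freeze the regular factors at $z=\iota_0/2$ and use the geometric series in $s_i$ in place of the functions $h_{\delta_1\delta_2}$. This costs the summability check you invoke (the coefficients of $P$ sum to $\prod_{i\notin I_0}(\iota_i-\iota_0)^{-1}$). In return it needs no explicit formulas, and your existence step only uses arcs outside $I_0$ for the boundaries in $J_1$.

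A side benefit of your route is that positivity of the factors with $\iota_i>\iota_0$ is automatic. The paper's write-up only invokes positivity of the coefficient of $1/(1-2z/\iota)$ in each $h_{\delta_1\delta_2}$. Its argument also tacitly needs $h_{\delta_1\delta_2}(\iota_i,\iota_0/2)>0$ for $\iota_i>\iota_0$, which does hold by \eqref{eq:h} but is left unstated. In exchange, the paper's route yields a structural identity for $\phi_\alpha$ itself, rather than only for its leading coefficient.
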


\begin{proof}
    Observe that for any $\epsilon_1, \epsilon_2 \in \{ -1, 1 \}$ and any $\iota$, we have
    \begin{equation} \label{eq:lin}
        \frac{1}{\iota - (\epsilon_1 + \epsilon_2) z}
        =
        \frac{\iota^2 - 2z^2}{\iota (\iota^2 - 4z^2)}
        +
        \frac{z}{\iota^2 - 4z^2} (\epsilon_1 + \epsilon_2)
        +
        \frac{2z^2}{\iota(\iota^2 - 4z^2)} \, \epsilon_1 \epsilon_2.
    \end{equation}
    Let
    \begin{align} \label{eq:h}
        \begin{split}
        h_{00}(\iota, z)
        & =
        \frac{\iota^2 - 2z^2}{\iota (\iota^2 - 4z^2)}
        =
        \frac{1}{4\iota(1 - 2z/\iota)}
        +
        \frac{1}{4\iota(1 + 2z/\iota)}
        +
        \frac{1}{2\iota}, \\
        %-
        %\frac{3\iota + 4z}{4\iota (\iota + 2z)}, \\
        h_{01}(\iota, z)
        =
        h_{10}(\iota, z)
        & =
        \frac{z}{\iota^2 - 4z^2}
        =
        \frac{1}{4\iota(1 - 2z/\iota)}
        -
        \frac{1}{4\iota(1 + 2z/\iota)}, \\
        h_{11}(\iota, z)
        & =
        \frac{2z^2}{\iota(\iota^2 - 4z^2)}
        =
        \frac{1}{4\iota(1 - 2z/\iota)}
        +
        \frac{1}{4\iota(1 + 2z/\iota)}
        -
        \frac{1}{2\iota}.
        %\frac{1}{4(\iota - 2z)} - \frac{\iota + 4z}{\iota(\iota + 2z)}
        \end{split}
    \end{align}
    Then we may rewrite \eqref{eq:lin} as
    \[
        \frac{1}{\iota - (\epsilon_1 + \epsilon_2) z}
        =
        \sum_{\delta_1, \delta_2 \in \{ 0, 1 \}} \epsilon_1^{\delta_1} \epsilon_2^{\delta_2} h_{\delta_1 \delta_2}(\iota, z).
    \]
    Since $c_{i,j}(\alpha) \in \{ 0, 1, 2 \}$ for all $i,j$, and $\sum_j c_{i,j}(\alpha) = 2$ for every $1 \leq i \leq r$,
    we have, for each $1 \leq i \leq r$, $\epsilon_1 c_{i, 1}(\alpha) + \cdots + \epsilon_n c_{i, n}(\alpha) = \epsilon_{j_{i, 1}} + \epsilon_{j_{i, 2}}$ for some $1 \leq j_{i, 1} \leq j_{i, 2} \leq n$.
    Hence,
    \[
        \phi_\alpha(z)
        =
        \sum_{\bar \epsilon \in \{-1,1 \}^n }
        \product(\bar \epsilon)
        \sum_{\bar\delta_{1}, \bar\delta_{ 2} \in \{ 0,1 \}^r}
        \prod_{i=1}^{r}
        \epsilon_{j_{i, 1}}^{\delta_{i, 1}} \epsilon_{j_{i, 2}}^{\delta_{i, 2}}
        h_{\delta_{i, 1} \delta_{i, 2}}(\iota_i(\alpha), z).
    \]
    Exchanging the order of summations gives
    \begin{equation} \label{eq:Linxiao}
        \phi_\alpha(z)
        =
        \sum_{\bar\delta_{1}, \bar\delta_{ 2} \in \{ 0,1 \}^r}
        \Bigg(
            \prod_{i=1}^{r}
            h_{\delta_{i, 1} \delta_{i, 2}}(\iota_i(\alpha), z)
        \Bigg)
        \sum_{\bar \epsilon \in \{-1,1 \}^n}
        \product(\bar \epsilon)
        \prod_{i=1}^{r}
        \epsilon_{j_{i, 1}}^{\delta_{i, 1}} \epsilon_{j_{i, 2}}^{\delta_{i, 2}}.
    \end{equation}
    The inner sum
    \begin{equation} \label{eq:delta}
        \sum_{\bar \epsilon \in \{-1,1 \}^n}
        \product(\bar \epsilon)
        \prod_{i=1}^{r}
        \epsilon_{j_{i, 1}}^{\delta_{i, 1}} \epsilon_{j_{i, 2}}^{\delta_{i, 2}}
    \end{equation}
    equals $2^n$ when every $\epsilon_\bullet$ appears with an even total exponent, and equals $0$ otherwise.
    Since $\sum_{i} c_{i,j}(\alpha) > 0$ for each $1 \leq j \leq n$, there exists at least one choice of $(\delta_{i, 1}, \delta_{i, 2})_{i=1}^{r}$ for which \eqref{eq:delta} does not vanish.
    The lemma follows from the fact that for all $(i,j) \in \{ 0,1 \}^2$, the coefficients of $1 / (1 - 2z/\iota)$ in $h_{ij}$ are positive.
\end{proof}

Since $\phi_\alpha$ is even when $n$ is even and odd when $n$ is odd, we have
\begin{equation}
    [(\iota_0(\alpha)/2 + z)^{-\mu_0(\alpha)}] \, \phi_{\alpha}(z)
    =
    (-1)^n
    [(\iota_0(\alpha)/2 - z)^{-\mu_0(\alpha)}] \, \phi_{\alpha}(z)
    \neq
    0.
\end{equation}

At this point we would basically be done if we were only interested in a single maximal arc system.
However, to obtain the asymptotics of the curve frequency $\FC(\phi_\ns(\gamma_0))$, we must consider infinitely many arc systems.
We address this issue using the finiteness result established in Section \ref{finiteness}.

Recall that $A$ is a countable (infinite) set consisting of $\alpha$.
Let
\begin{itemize}
    \item
    $I = I(A) = \{ \iota_i(\alpha) : \alpha \in A, 1 \leq i \leq r \}$,

    \item
    $\iota = \iota_0(A) = \min I$,

    \item
    $\mu_0 = \mu_0(A) = \max_\alpha |\{ i : \iota_i(\alpha) = \iota_0 \}|$,
\end{itemize}
and write
\begin{equation} \label{eq:phi}
    \phi(z)
    \coloneqq
    \sum_{\alpha \in A} \phi_{\alpha}(z)
\end{equation}
where $\phi_\alpha$ is defined by \eqref{eq:phiAlpha}.

Each simplex $\Delta_{\bar{\iota}(\alpha)}$ appearing in the definition \eqref{eq:varphi} of $\varphi_\alpha$ has volume
\[
    \mathrm{vol} \, \Delta_{\bar{\iota}(\alpha)}
    =
    \int_{\Delta_{\bar{\iota}(\alpha)}} d\bar{x} \, d\bar{x}'
    =
    \frac{1}{\iota_1(\alpha) \cdots \iota_r(\alpha)}.
\]
We will establish certain analytic property of $\phi$ under the following assumption
\begin{equation} \label{eq:V<oo}
    \sum_{\alpha \in A}
    \mathrm{vol} \, \Delta_{\bar{\iota}(\alpha)}
    =
    V
    <
    \infty
\end{equation}
which secretly corresponds to Theorem~\ref{sat finite volume}.

The goal is to show that $\phi_\alpha$'s singularities sum neatly to $\phi$'s singularities. 
In particular, the principal singularities of $\phi$, those closest to the origin, are located at $z = \pm \iota_0/2$, each with multiplicity $\mu_0$. We start with a lemma estimating the coefficients of the Laurent series of $\phi_\alpha(z)$ at $z = \pm \iota_0 / 2$.
\begin{lem} \label{lem:Laurent}
    For all $1 \leq m \leq \mu_0$, we have
    \[
        \left| [(1 \pm 2z/\iota_0)^{-m}] \, \phi_\alpha(z) \right|
        \leq
        C_{1} \prod_{i=1}^{r} \frac{1}{\iota_i(\alpha)},
    \]
    where
    \[
        C_{1}
        \coloneqq
        2^n
        \binom{2r-r}{r-1} 
        \big( \iota_0^{\mu_0} (\iota_0 + 1) \big)^r.
    \]
\end{lem}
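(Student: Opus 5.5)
Here is the plan. We bound the Laurent coefficients of each summand of $\phi_\alpha$ separately, then sum over the $2^n$ choices of $\bar\epsilon$; this sum is the source of the factor $2^n$ in $C_1$. Fix $\bar\epsilon$ and set $s_i=\langle\bar\epsilon,\bar c_i\rangle\in\{-2,-1,0,1,2\}$ (recall $\sum_j c_{i,j}=2$). The summand is
$$f(z)=\prod_{i=1}^r\frac{1}{\iota_i-s_iz}.$$
We only treat the point $z=\iota_0/2$, i.e. the coefficients of $(1-2z/\iota_0)^{-m}$; the point $-\iota_0/2$ follows from the symmetry $z\mapsto -z$, $\bar\epsilon\mapsto-\bar\epsilon$.

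Sort the factors into two groups. A factor is \emph{singular} if $\iota_i=\iota_0$ and $s_i=2$; only these vanish at $z=\iota_0/2$. Every other factor is holomorphic near $\iota_0/2$. Note that $\iota_i-s_iz=0$ at $z=\iota_0/2$ would force $s_i\iota_0=2\iota_i\ge 2\iota_0$, hence $s_i=2$ and $\iota_i=\iota_0$. Each singular factor equals $\iota_0^{-1}(1-2z/\iota_0)^{-1}$. Let $k$ be the number of singular factors; then $k\le\mu_0$. Writing $w=1-2z/\iota_0$, we get
$$f=\iota_0^{-k}w^{-k}\prod_{i\ \mathrm{regular}}\frac{1}{\iota_i-s_i\iota_0(1-w)/2}.$$
The coefficient of $w^{-m}$ is therefore $\iota_0^{-k}$ times the coefficient of $w^{k-m}$ in the regular product. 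That product has a Taylor expansion in $w$.

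To estimate a regular factor, write
$$\iota_i-\tfrac{s_i\iota_0}{2}+\tfrac{s_i\iota_0}{2}w=a_i+\beta_iw,\qquad a_i=\iota_i-\tfrac{s_i\iota_0}{2}.$$
When $\iota_i=\iota_0$ we have $a_i\ge\iota_0/2$, and when $\iota_i>\iota_0$ we have $a_i\ge\iota_i-\iota_0\ge1$. In both cases $a_i\ge c\,\iota_i/(\iota_0+1)$ for an absolute $c$; the crude form $a_i\ge\iota_i/(2(\iota_0+1))$ suffices, and it is here that the factor $(\iota_0+1)^r$ in $C_1$ enters. The Taylor coefficients satisfy
$$\Bigl|[w^p]\tfrac1{a_i+\beta_iw}\Bigr|=\frac{|\beta_i|^p}{a_i^{p+1}},\qquad |\beta_i|\le\iota_0.$$
We bound the ratio $|\beta_i|/a_i$ by $\iota_0\cdot O(\iota_0+1)$, independently of $\iota_i$. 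Each factor then contributes $a_i^{-1}\le C\,(\iota_0+1)/\iota_i$, times a power of $\iota_0$ controlled by $p\le k-m\le\mu_0$. The prefactor $\iota_0^{-k}$ is $\le\prod_{\mathrm{sing}}1/\iota_i$ exactly, since those $\iota_i$ equal $\iota_0$.

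It remains to control the coefficient of $w^{q}$, $q=k-m\le\mu_0-1$, in the product of the $r-k$ regular series. This coefficient is a sum over compositions of $q$ into at most $r$ parts. There are at most $\binom{q+r-1}{r-1}\le\binom{2r-1}{r-1}$ of them once $q\le r$ (note $\mu_0\le r$); this is the binomial in $C_1$, whose statement we read as $\binom{2r-1}{r-1}$. Each composition contributes at most
$$\prod_i\frac{1}{a_i}\Bigl(\frac{\iota_0}{a_i}\Bigr)^{p_i}\le \prod_i\frac{2(\iota_0+1)}{\iota_i}\,(2\iota_0(\iota_0+1))^{p_i}.$$
We absorb the powers into $(\iota_0^{\mu_0}(\iota_0+1))^r$ up to adjusting constants, possibly redistributing the factors of $2$. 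Summing over $\bar\epsilon$ gives the stated bound.

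The main obstacle is making the constants match $C_1$ exactly, in particular the powers of $2$ and the lower bound on $a_i$ when $\iota_i=\iota_0$ and $s_i=1$. I expect $C_1$ was chosen generously, so I would first verify that the crude bounds above fit inside it, and only if not rework the estimate $a_i\ge\iota_i/(\iota_0+1)$ more carefully. The essential content, namely that the dependence on $\alpha$ is only through $\prod_i 1/\iota_i(\alpha)$, is robust.
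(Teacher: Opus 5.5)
Your route is the paper's. For each $\bar\epsilon$ you expand every factor in $w=1-2z/\iota_0$. Only the factors with $\iota_i=\iota_0$ and $s_i=2$ are singular, and there are at most $\mu_0$ of them. You bound the coefficients of the other factors by a constant times $1/\iota_i$, count compositions, and collect $2^n$ from the $\bar\epsilon$-sum. The other sign follows from $z\mapsto -z$, $\bar\epsilon\mapsto-\bar\epsilon$, which is the parity of $\phi_\alpha$ that the paper invokes.

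Up to your last step everything is correct. It already gives $|[(1\pm 2z/\iota_0)^{-m}]\,\phi_\alpha|\le C\prod_i 1/\iota_i(\alpha)$ with $C$ depending only on $n,r,\iota_0,\mu_0$, which is all Lemma~\ref{lem:phiSing} uses. Your reading of the binomial as $\binom{2r-1}{r-1}$ is forced. With the literal $\binom{2r-r}{r-1}=r$ the inequality is false: take $n=1$, all $c_{i,1}=2$, $r=20$, seven $\iota_i=1$, thirteen $\iota_i=2$, and $m=1$. The coefficient is $\binom{18}{6}=18564$, while $C_1\prod_i 1/\iota_i=40\cdot 2^{7}$.

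The step that does not go through is the final absorption into $C_1$. Your estimates $a_i\ge\iota_i/(2(\iota_0+1))$ and $|\beta_i|/a_i\le 2\iota_0(\iota_0+1)$ make each composition carry $2^{r-k+q}(\iota_0+1)^{r-k+q}\iota_0^{q}$. For $\iota_0=1$ this is $4^{r-m}$. Once the binomial is spent on counting compositions, $C_1$ only leaves $(\iota_0^{\mu_0}(\iota_0+1))^r=2^r$. So for $r>2m$ your estimate does not yield the stated constant, and "redistributing factors of $2$" does not rescue it.

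The fix is the sharp factorwise bound $|\beta_i|^p/a_i^{p+1}\le\iota_0^{p}(\iota_0+1)/\iota_i$ for every regular factor.
\begin{itemize}
\item If $\iota_i>\iota_0$, use $|\beta_i|\le\iota_0$, $a_i\ge\iota_i-\iota_0\ge 1$, and $\iota_i-\iota_0\ge\iota_i/(\iota_0+1)$.
\item If $\iota_i=\iota_0$, note that $s_i=\langle\bar\epsilon,\bar c_i\rangle$ equals $2\epsilon_j$ or $\epsilon_j+\epsilon_{j'}$, so $s_i\in\{-2,0,2\}$. The case $s_i=1$ you worried about never occurs. A regular factor with $\iota_i=\iota_0$ therefore has $s_i\le 0$, hence $a_i\ge\iota_0$ and $|\beta_i|\le a_i$, so its coefficients are at most $1/\iota_i$.
\end{itemize}
Each $\bar\epsilon$-term is then at most $\binom{2r-1}{r-1}\iota_0^{q}(\iota_0+1)^{r-k}\prod_i 1/\iota_i\le\binom{2r-1}{r-1}(\iota_0^{\mu_0}(\iota_0+1))^r\prod_i 1/\iota_i$. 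Summing over $\bar\epsilon$ gives exactly $C_1$.

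For comparison, the paper's written proof follows the same lines but ends with $2^n\binom{2r-2}{r-1}(\iota_0(\iota_0+1)^2)^{\mu_0-1}$. That constant drops the factor $\iota_0+1$ contributed by each regular factor $(\iota_i-2z)^{-1}$ with $\iota_i>\iota_0$. For example, $n=1$, $r=3$, all $c_{i,1}=2$, $\iota=(10,11,11)$ gives coefficient $1/10$ against $12/1210$. Keeping $(\iota_0+1)^{r}$, as you and the statement do, is the correct bookkeeping.
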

\begin{proof}
    We prove ``$-$'' case; the ``$+$'' case follows then from the parity of $\phi_{\alpha}$, which is even when $n$ is even and odd when $n$ is odd.
    Since $c_{i,j}(\alpha) \in \{ 0,1,2 \}$ and $c_{i,1}(\alpha) + \cdots + c_{i,n}(\alpha) = 2$ for all $\alpha, i,j$,
    every factor $(\iota_i(\alpha) - \langle \bar{\epsilon}, \bar{c}_i(\alpha) \rangle z)^{-1}$ in \eqref{eq:phiAlpha} is of the form $(\iota \pm 2z)^{-1}$ or $\iota^{-1}$.
    When $\iota > \iota_0$, we can write
    \begin{align*}
        \frac{1}{\iota - 2z}
        & =
        \frac{1}{\iota - \iota_0}
        \sum_{k=0}^{\infty}
        \left( -\frac{\iota_0}{\iota - \iota_0} (1 - 2z/\iota_0) \right)^k, \\
        \frac{1}{\iota + 2z}
        & =
        \frac{1}{\iota + \iota_0}
        \sum_{k=0}^{\infty}
        \left( \frac{\iota_0}{\iota + \iota_0} (1 - 2z/\iota_0) \right)^k.
    \end{align*}
        Thus when $\iota > \iota_0$, for $k \in \mathbb{Z}_{\geq 0}$, we have 
    \begin{align*}
        \left| [(1 - 2z/\iota_0)^k] \, \frac{1}{\iota - 2z} \right|
        & =
        \frac{\iota_0^k}{(\iota - \iota_0)^{k+1}}
        \leq
        \frac{\iota_0^k (\iota_0 + 1))^{k+1}}{\iota} \\
        \left| [(1 - 2z/\iota_0)^k] \, \frac{1}{\iota + 2z} \right|
        & =
        \frac{\iota_0^k}{(\iota + \iota_0)^{k+1}}
        \leq
        \frac{\iota_0^{k}}{\iota}
    \end{align*}
    where in the first inequality we used the fact that
    \[
        \frac{1}{\iota - \iota_0}
        \leq
        \frac{\iota_0 + 1 }{\iota}.
    \]
    Therefore, for any $1 \leq m \leq \mu_0$, we have
    \begin{align*}
        \left| (1 - 2z/\iota_0)^{-m}] \, \phi_{\alpha} \right|
        & \leq
        \sum_{\bar{\epsilon}}
        \left|
        [(1 - 2z/\iota_0)^{-m}]
        \prod_{i=1}^{r}
        \frac{1}{\iota_i - \langle \bar{\epsilon}, \bar{c}_i \rangle z}
        \right| \\
        & \leq
        \sum_{\bar{\epsilon}}
        \sum_{\substack{(k_1, \dots, k_r) \in \mathbb{Z}_{\geq -1}^r \\ k_1 + \cdots + k_r = -m}}
        \left|
        [(1 - 2z/\iota_0)^{k_i}] \,
        \frac{1}{\iota_i - \langle \bar{\epsilon}, \bar{c}_i \rangle z}
        \right| \\
        & \leq
        2^n
        \binom{2r-m-1}{r-1} 
        \iota_0^{\mu_0 - m} (\iota_0 + 1)^{2\mu_0 - 2m}
        \prod_{i=1}^{r}
        \frac{1}{\iota_i(\alpha)} \\
        & \leq
        2^n
        \binom{2r-2}{r-1}
        \big( \iota_0 (\iota_0 + 1)^2 \big)^{\mu_0 - 1}
        \prod_{i=1}^{r}
        \frac{1}{\iota_i(\alpha)}
    \end{align*}
    where in the third inequality we use the inequalities established above and the identity
    \[
        \sum_{\substack{(k_1, \dots, k_r) \in \mathbb{Z}_{\geq -1}^r \\ k_1 + \cdots + k_r = -m}} 1
        =
        \binom{2r-m-1}{r-1}.
    \]
    Therefore, for any $1 \leq m \leq \mu_0$,
    \[
        \left| [(\iota_0/2 - z)^{-m}] \, \phi_\alpha(z) \right|
        \leq
        C_{1}
        \prod_{i=1}^{r} \frac{1}{\iota_i(\alpha)}
    \]
    where
    \[
        C_{1}
        =
        2^n
        \binom{2r-2}{r-1}
        \big( \iota_0 (\iota_0 + 1)^2 \big)^{\mu_0 - 1}.
    \]
    This completes the proof.

\end{proof}

The previous lemma, together with the finite volume assumption \eqref{eq:V<oo}, allows us to establish the following result on the analytic properties of $\phi$, especially near its singularities.
\begin{lem} \label{lem:phiSing}
    The function $\phi$ extends holomorphically to $\mathbb{C} \smallsetminus \{ \pm \iota / 2 : \iota \in I \}$,
    has poles of order $\mu_0$ at $\pm \iota_0/2$.
    For each $\zeta \in \{ \pm \iota_0/2 \}$, we can write
    \begin{equation} \label{eq:*cst}
        \phi(z)
        =
        \frac{\tilde{\phi}_{\zeta}(z)}{(1 - z/\zeta)^{\mu_0}},
    \end{equation}
    where $\tilde{\phi}_\zeta$ is holomorphic near $\zeta$ and $\tilde{\phi}_\zeta(\zeta) \neq 0$.
    In particular, we have
    \[
        \phi(z)
        =
        \frac{\tilde{\phi}_{\zeta}(\zeta)}{(1 - z/\zeta)^{\mu_0}} \, (1 + \mathrm{O}(1 - z/\zeta)),
        \quad
        \text{as }z \to \zeta.
    \]
    Furthermore, we have
    \begin{equation} \label{eq:*cstPari}
        \tilde{\phi}_{-\iota_0/2}(-\iota_0/2) = (-1)^n \tilde{\phi}_{\iota_0/2}(\iota_0/2) > 0.
    \end{equation}

\end{lem}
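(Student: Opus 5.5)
The plan is to prove the extension and the pole structure by uniform convergence of the series $\phi=\sum_{\alpha\in A}\phi_\alpha$ on compact subsets away from the candidate poles. Then we read off the principal parts at $\pm\iota_0/2$ term by term, using Lemma~\ref{lem:Laurent} for the bounds and Lemma~\ref{lem:Linxiao} for the positivity.

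\textbf{Holomorphic extension.} Fix a compact set $K\subset\mathbb{C}\smallsetminus\{\pm\iota/2:\iota\in I\}$. Since $I\subset\mathbb{Z}_{\ge1}$, this excluded set is discrete. Moreover $K$ is bounded, say $|z|\le R$. For each factor $(\iota_i-\langle\bar\epsilon,\bar c_i\rangle z)^{-1}$ in \eqref{eq:phiAlpha}, the denominator is $\iota_i$ or $\iota_i\pm 2z$.
\begin{itemize}
\item If $\iota_i\ge 4R$, then $|\iota_i\pm2z|\ge \iota_i/2$.
\item If $\iota_i<4R$, there are finitely many such values, all in $I$, and $|\iota_i\pm 2z|\ge \delta_K>0$ on $K$. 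Hence $|\iota_i\pm2z|^{-1}\le (4R/\delta_K)/\iota_i$.
\end{itemize}
So there is $C_K$ with $|\phi_\alpha(z)|\le 2^n C_K^r\prod_i\iota_i(\alpha)^{-1}=2^nC_K^r\,\mathrm{vol}\,\Delta_{\bar\iota(\alpha)}$ on $K$. By \eqref{eq:V<oo} the series converges normally on $K$, and Weierstrass gives holomorphy of $\phi$ off the excluded set.

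\textbf{Poles at $\pm\iota_0/2$.} Take a small circle $|z-\iota_0/2|=\rho$ containing no other excluded point. Note that only finitely many excluded points lie near $\iota_0/2$, because $I$ consists of integers. On the punctured disc, write each $\phi_\alpha$ as its Laurent series at $\iota_0/2$. The order of its pole is at most $\mu_0(\alpha)\le\mu_0$, since only factors with $\iota_i=\iota_0$ can be singular there. The coefficients of negative index $-m$, with $1\le m\le\mu_0$, are bounded by $C_1\prod_i\iota_i(\alpha)^{-1}$ by Lemma~\ref{lem:Laurent}, so they are summable over $\alpha$. The regular parts $\phi_\alpha-(\text{principal part})$ are holomorphic on the disc. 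On the boundary circle they are bounded by $\mathrm{const}\cdot\mathrm{vol}\,\Delta_{\bar\iota(\alpha)}$, using the compact-set estimate above and the principal-part bound. By the maximum principle this bound holds on the whole disc, so their sum is holomorphic there.

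Consequently $\phi(z)=\sum_{m=1}^{\mu_0}a_m(1-2z/\iota_0)^{-m}+\text{holomorphic}$, with $a_m=\sum_\alpha[(1-2z/\iota_0)^{-m}]\phi_\alpha$ absolutely convergent. For $m=\mu_0$:
\begin{itemize}
\item terms with $\mu_0(\alpha)<\mu_0$ contribute $0$;
\item terms with $\mu_0(\alpha)=\mu_0$ contribute a strictly positive number by Lemma~\ref{lem:Linxiao};
\item at least one such $\alpha$ exists by the definition of $\mu_0$ as a maximum.
\end{itemize}
Hence $a_{\mu_0}>0$. Setting $\tilde\phi_{\iota_0/2}(z)=(1-2z/\iota_0)^{\mu_0}\phi(z)$ gives a function holomorphic near $\iota_0/2$ with $\tilde\phi_{\iota_0/2}(\iota_0/2)=a_{\mu_0}>0$. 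The expansion $\phi=\tilde\phi_\zeta(\zeta)(1-z/\zeta)^{-\mu_0}(1+O(1-z/\zeta))$ then follows from Taylor expanding $\tilde\phi_\zeta$.

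\textbf{The point $-\iota_0/2$ and the main difficulty.} Each $\phi_\alpha$ satisfies $\phi_\alpha(-z)=(-1)^n\phi_\alpha(z)$: substituting $\bar\epsilon\to-\bar\epsilon$ changes $\product(\bar\epsilon)$ by $(-1)^n$. Hence $\phi$ has the same parity. Then $\tilde\phi_{-\iota_0/2}(z)=(1+2z/\iota_0)^{\mu_0}\phi(z)=(-1)^n\tilde\phi_{\iota_0/2}(-z)$, which gives \eqref{eq:*cstPari}. Note that \eqref{eq:*cstPari} as stated asserts positivity of $\tilde\phi_{\iota_0/2}(\iota_0/2)$; the sign of the other value is $(-1)^n$.

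The step I expect to be the main obstacle is the uniform control of the regular parts near $\pm\iota_0/2$. One must make sure the factors with $\iota_i$ equal to the finitely many values in $I$ close to $\iota_0$ do not spoil the summability. This is where the integrality of $I$ and Lemma~\ref{lem:Laurent}'s factor $\prod_i\iota_i^{-1}$, combined with \eqref{eq:V<oo}, are essential.
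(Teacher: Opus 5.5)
Your proof is correct and follows the paper's argument. The steps are the same: normal convergence of $\sum_\alpha\phi_\alpha$ on compacts via the bound by $\prod_i\iota_i(\alpha)^{-1}$ and \eqref{eq:V<oo}; summability of the principal-part coefficients via Lemma~\ref{lem:Laurent}; positivity of the top coefficient via Lemma~\ref{lem:Linxiao}; and parity for the point $-\iota_0/2$. Your maximum-principle control of the regular parts makes rigorous the paper's ``it suffices to show'' step, and your reading of \eqref{eq:*cstPari} is the right one: $\tilde\phi_{\iota_0/2}(\iota_0/2)>0$, while $\tilde\phi_{-\iota_0/2}(-\iota_0/2)$ has sign $(-1)^n$, consistent with the factor $(1+(-1)^{n+N})$ in Proposition~\ref{prop:BvarphiAsym}.
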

\begin{proof}

    To prove that $\phi$ extends holomorphically to $\mathbb{C} \smallsetminus \{ \pm \iota / 2 : \iota \in I \}$, it suffices to show that for every compact set $K \subset \mathbb{C} \smallsetminus \{ \pm \iota / 2 : \iota \in I \}$, $\phi$ converges uniformly on $K$. Because $K$ is compact and avoids $\pm \iota / 2$ for all $\iota \in I$, there exists $C_{2} > 0$ depending on $K$ such that
    \[
        \frac{1}{|\iota \pm 2z|}
        \leq
        \frac{C_{2}}{\iota},
        \qquad
        \forall z \in K, \ \forall \iota \in I.
    \]

    Since $c_{i,j}(\alpha) \in \{ 0,1,2 \}$ and $c_{i,1}(\alpha) + \cdots + c_{i,n}(\alpha) = 2$ for all $\alpha, i,j$,
    every factor $(\iota_i(\alpha) - \langle \bar{\epsilon}, \bar{c}_i(\alpha) \rangle z)^{-1}$ in \eqref{eq:phiAlpha} is of the form $(\iota_i(\alpha) \pm 2z)^{-1}$ or $\iota_i(\alpha)^{-1}$.
    Therefore,

    \[
        | \phi_\alpha(z) |
        \leq
        2^n
        C_{2}^r
        \prod_{i=1}^{r} \frac{1}{\iota_i(\alpha)},
        \qquad
        \forall \alpha \in A, \ \forall z \in K.
    \]
    Hence, the assumption \eqref{eq:V<oo} implies that
    \[
        | \phi(z) |
        \leq
        \sum_{\alpha \in A}
        | \phi_\alpha(z) |
        \leq
        2^n
        C_{2}^r
        V
        <
        \infty,
        \qquad
        \forall z \in K.
    \]
    Thus the series $\sum_\alpha \phi_\alpha(z)$ converges uniformly on $K$, establishing the first assertion.

    To prove that $\phi$ has poles of order $\mu_0$ at $\pm \iota_0 / 2$, it suffices to show that,
    for any $1 \leq m \leq \mu_0$,
    \begin{equation} \label{eq:a<oo}
        \sum_{\alpha \in A}
        |[(1 - 2z/\iota_0)^{-m}] \, \phi_\alpha(z)|
        <
        \infty
    \end{equation}
    and
    \begin{equation} \label{eq:a>0}
        \sum_{\alpha \in A}
        [(1 - 2z/\iota_0)^{-\mu_0}] \, \phi_\alpha(z)
        >
        0.
    \end{equation}
    By Lemma~\ref{lem:Laurent},
    \[
        \sum_{\alpha \in A}
        |[(1 - 2z/\iota_0)^{-m}] \, \phi_\alpha(z)|
        \leq
        C_{1} \sum_{\alpha \in A} \prod_{i=1}^{r} \frac{1}{\iota_i(\alpha)}
        \leq
        C_{1} V,
    \]
    where the constant $C_{1}$ is as in Lemma~\ref{lem:Laurent}.
    The bound \eqref{eq:a<oo} then follows from the assumption \eqref{eq:V<oo},
    and the bound \eqref{eq:a>0} follows from Lemma~\ref{lem:Linxiao}.
    Finally, \eqref{eq:*cstPari} follows from the fact that $\phi_\alpha$ is even when $n$ is even and odd when $n$ is odd for all $\alpha$.
\end{proof}
To simply notation, we write
\[ 
    \breve{\varphi}_{\alpha}(z)
    \coloneqq
    (\mathcal{B}_r \varphi_\alpha)(z),
    \qquad
    \text{and}
    \qquad
    \breve{\varphi}(z)
    =
    \sum_{\alpha \in A}
    \breve{\varphi}_{\alpha}(z).
\]
The following is the main result of this section.
\begin{kor} \label{cor:psiSing}
    %Suppose first that $n$ is even.
    For $\zeta \in \{ \pm \iota_0/2 \}$, let $\phi_{\zeta}(\zeta)$ be the non-zero constant defined by \eqref{eq:*cst}.
    Under the assumption \eqref{eq:V<oo}, we have
    \begin{enumerate}
        \item
            If $n' = 0$, then $\breve{\varphi}(z)$ extends holomorphically to
            $\mathbb{C} \smallsetminus \{ \pm \iota / 2 : \iota \in I \}$.
            Moreover, for each $\zeta \in \{ \pm \iota_0/2 \}$, we have
            \[
                \breve{\varphi}(z)
                =
                \frac{2^{-n} \phi_{\zeta}(\zeta)}{(1 - z/\zeta)^{\mu_0}} \, (1 + \mathrm{O}(1 - z/\zeta)),
                \qquad
                \text{as }z \to \zeta.
            \]

            \item
                If $n ' > 0$ and $\iota_0 > 1$, then $\breve{\varphi}(z)$ extends holomorphically to $\mathbb{C} \smallsetminus \{ x \in \mathbb{R} : |x| \geq 1/2 \}$.
            Moreover, for each $\zeta \in \{ \pm 1/2 \}$, we have
            \[
                \breve{\varphi}(z)
                =
                2^{-n-2n'}
                \phi(\zeta)
                \left( \log \frac{1}{1 - z/\zeta} \right)^{n'} (1 + \mathrm{O}(1 - z/\zeta)),
                \qquad
                \text{as }z \to \zeta.
            \]

            \item
                If $n ' > 0$ and $\iota_0 = 1$, then $\breve{\varphi}(z)$ extends holomorphically to $\mathbb{C} \smallsetminus \{ x \in \mathbb{R} : |x| \geq 1/2 \}$.
                Moreover,
                for each $\zeta \in \{ \pm 1/2 \}$, we have
                \[
                    \breve{\varphi}(z)
                    =
                    \frac{2^{-n-2n'} \phi_{\zeta}(\zeta)}{(1 - z/\zeta)^{\mu_0}} \left( \log \frac{1}{1 - z/\zeta} \right)^{n'} (1 + \mathrm{O}(1 - z/\zeta)),
                    \qquad
                    \text{as }z \to \zeta.
                \]
    \end{enumerate}
\end{kor}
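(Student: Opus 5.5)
The plan is to sum the identity of Lemma \ref{lem:K} over $\alpha\in A$. This gives
\[
\breve\varphi(z)=\frac{1}{2^{n+2n'}}\Big(\log\frac{1}{1-(2z)^2}\Big)^{n'}\phi(z),
\]
valid first for $|z|<1/2$. The justification is that $\mathcal B_r$ acts coefficientwise, and the sum $\sum_\alpha\varphi_\alpha$ converges absolutely in every coefficient. The bound for this convergence is $[z^N]\varphi_\alpha\le C^N\,\mathrm{vol}\,\Delta_{\bar\iota(\alpha)}$: on $\Delta_{\bar\iota}$ the variables $b_j,b'_k$ are bounded by $2$, since $\iota_i\ge1$ and $b_j\le 2\sum x_i$. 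Assumption \eqref{eq:V<oo} then makes the sum finite. Once the identity holds on $|z|<1/2$, analytic continuation extends it. By Lemma \ref{lem:phiSing}, $\phi$ is holomorphic off $\{\pm\iota/2:\iota\in I\}$. The factor $\log\frac1{1-(2z)^2}$ is holomorphic off the rays $\{x\in\BR:|x|\ge1/2\}$ with its principal branch. This gives the stated domains of holomorphy in all three cases; in cases (2) and (3) the poles of $\phi$ at $\pm\iota/2$ lie on those rays because $\iota\ge1$.

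Next come the local expansions near $\zeta=\pm\iota_0/2$ or $\pm1/2$. Case (1), $n'=0$, is immediate. There $\breve\varphi=2^{-n}\phi$, and \eqref{eq:*cst} gives $\phi(z)=\tilde\phi_\zeta(\zeta)(1-z/\zeta)^{-\mu_0}(1+\mathrm O(1-z/\zeta))$ since $\tilde\phi_\zeta$ is holomorphic at $\zeta$.

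In case (2), $\iota_0>1$, so every $\iota\in I$ satisfies $\iota\ge 2$ and $\phi$ is holomorphic on $|z|<1$, in particular at $\pm1/2$. One writes $\log\frac1{1-4z^2}=\log\frac1{1-z/\zeta}+\log\frac1{1+z/\zeta}$ with $\zeta=\pm1/2$. The second term tends to $\log\frac12$, a constant that is $\mathrm O(1)$ and hence a relative error of size $(\log\frac1{1-z/\zeta})^{-1}$. The same holds for $\phi(z)=\phi(\zeta)+\mathrm O(1-z/\zeta)$. Raising to the $n'$-th power gives the claimed form. Here the error term should honestly read $1+\mathrm O(1/\log\frac1{1-z/\zeta})$ rather than $\mathrm O(1-z/\zeta)$, and I would record this weaker error, which is what singularity analysis needs. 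We also need $\phi(\pm1/2)\neq0$ for the statement to be meaningful. I would either note this as an additional check or accept the leading constant possibly vanishing. I expect this to be the main subtlety. A first attempt would evaluate $\phi(1/2)$ via the decomposition in the proof of Lemma \ref{lem:Linxiao}, where the $h_{\delta\delta'}(\iota,1/2)$ are positive for $\iota\ge2$ and the inner sums are nonnegative.

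In case (3), $\iota_0=1$, so $\zeta=\pm\iota_0/2=\pm1/2$ coincides with the branch point of the logarithm. One multiplies the expansion \eqref{eq:*cst} of $\phi$ by the same expansion of the logarithm factor. The result is $2^{-n-2n'}\tilde\phi_\zeta(\zeta)(1-z/\zeta)^{-\mu_0}(\log\frac1{1-z/\zeta})^{n'}(1+o(1))$, with $\tilde\phi_\zeta(\zeta)\ne0$ by \eqref{eq:*cstPari}. Again the error term should be stated as $\mathrm O(1/\log)$. The remaining point is that these expansions hold uniformly in a slit neighbourhood (a $\Delta$-domain) of $\zeta$. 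That uniformity is what Flajolet–Sedgewick transfer requires, and it follows since all factors are explicit.
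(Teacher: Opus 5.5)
Your proposal is correct and follows the paper's own (one-line) route: sum Lemma~\ref{lem:K} over $\alpha$ to get $\breve{\varphi}=2^{-n-2n'}\bigl(\log\frac{1}{1-(2z)^2}\bigr)^{n'}\phi$, then read off the domains and local behaviour from Lemma~\ref{lem:phiSing}. Both of your refinements are also right. First, because $\log\frac{1}{1+z/\zeta}\to-\log 2$, the relative error in cases (2) and (3) is genuinely only $\mathrm{O}\bigl(1/\log\frac{1}{1-z/\zeta}\bigr)$ rather than the stated $\mathrm{O}(1-z/\zeta)$; this is harmless for the transfer in Proposition~\ref{prop:BvarphiAsym}, where it only contributes lower-order powers of $\log N$. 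Second, $\phi(\pm 1/2)\neq 0$ in case (2) does follow from the decomposition \eqref{eq:Linxiao}: all $h_{\delta\delta'}(\iota,1/2)$ are positive for $\iota\ge 2$, and for each $\alpha$ some inner sum \eqref{eq:delta} equals $2^n$.
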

\begin{proof}
    This follows directly from Lemma~\ref{lem:phiSing} and \eqref{eq:phiAlpha}.
\end{proof}

\subsubsection{Asymptotics}

We are now ready to carry out the asymptotic analysis of $[z^N] \, \breve{\varphi}(z)$.

\begin{prop} \label{prop:BvarphiAsym}
    Under the assumption \eqref{eq:V<oo}, we have, as $N \to \infty$:
    \begin{itemize}
        \item
            If $n' = 0$,
            \[
                [z^N] \, \breve{\varphi}(z)
                \sim
                (1 + (-1)^{n + N}) \, \phi_{\iota_0/2}(\iota_0/2)
                \, 
                2^{-n}
                \left( \frac{2}{\iota_0} \right)^N
                \,
                \frac{N^{\mu_0 - 1}}{(\mu_0 - 1)!}.
            \]

        \item
            If $n ' > 0$ and $\iota_0 > 1$,
            \[
                [z^N] \, \breve{\varphi}(z)
                \sim
                (1 + (-1)^{n + N}) \, \phi(1/2)
                \,
                2^{N-n - 2n'}
                \,
                \frac{n'\, (\log N)^{n'-1}}{N}.
            \]

        \item
            If $n ' > 0$ and $\iota_0 = 1$,
            \[ 
                [z^N] \, \breve{\varphi}(z)
                \sim
                (1 + (-1)^{n + N})
                \,
                \phi_{1/2}(1/2)
                \,
                2^{N - n - 2n'}
                \frac{N^{\mu_0 - 1}}{(\mu_0 - 1)!} (\log N)^{n'}.
            \]
    \end{itemize}
\end{prop}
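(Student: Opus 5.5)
The plan is to apply the transfer theorems of singularity analysis (Flajolet--Sedgewick, Chapter VI) to $\breve\varphi$, using the local expansions from Corollary~\ref{cor:psiSing}. The key input is that $\breve\varphi$ is $\Delta$-analytic at both dominant singularities $\zeta=\pm\iota_0/2$ (resp.\ $\pm1/2$). In case (1), $\breve\varphi$ is meromorphic on $\mathbb{C}\smallsetminus\{\pm\iota/2:\iota\in I\}$. Because $I\subset\mathbb{Z}_{\ge1}$ is discrete, the next possible singularities sit at modulus at least $(\iota_0+1)/2$, so $\breve\varphi$ is analytic in a disk of radius strictly larger than $\iota_0/2$ minus the two poles. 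In cases (2) and (3), $\breve\varphi$ is analytic on $\mathbb{C}$ minus the two real rays $|x|\ge1/2$, which is a union of two slit-$\Delta$-domains at $\pm1/2$. When $\iota_0>1$, the poles of $\phi$ lie outside the closed disk of radius $1/2$, so only the logarithmic factor is singular at $\pm1/2$. When $\iota_0=1$, the poles and the logarithm coincide at $\pm1/2$.

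Next, I will apply the transfer theorem separately at $\zeta$ and $-\zeta$ and add the two contributions. For a singular element $(1-z/\zeta)^{-\mu}(\log\frac1{1-z/\zeta})^{k}$ the standard estimate is
$$[z^N]\,\zeta^{-N}\,\frac{N^{\mu-1}(\log N)^k}{\Gamma(\mu)}\,(1+o(1)),$$
valid for $\mu\notin\mathbb{Z}_{\le0}$. For $\mu=0$ and $k\ge1$ one instead uses $[z^N](\log\frac1{1-z})^{k}\sim k(\log N)^{k-1}/N$. The error factors $(1+\mathrm O(1-z/\zeta))$ transfer to terms of lower order, namely $o(N^{\mu-1}(\log N)^k)$, respectively $o((\log N)^{k-1}/N)$, by the big-O transfer.

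This gives three computations. In case (1) with $\zeta=\iota_0/2$, the pole contributes $2^{-n}\phi_{\iota_0/2}(\iota_0/2)(2/\iota_0)^N N^{\mu_0-1}/(\mu_0-1)!$. The pole at $-\iota_0/2$ contributes the same expression multiplied by $(-1)^N$ from $\zeta^{-N}$ and by $(-1)^n$ from \eqref{eq:*cstPari}. Together these give the factor $1+(-1)^{n+N}$. Case (2) is identical with $\mu=0$ and $k=n'$, using that $\phi(-1/2)=(-1)^n\phi(1/2)$ by parity. Case (3) combines $\mu=\mu_0$ with $k=n'$.

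I expect the main obstacle to be the legitimacy of the $\Delta$-analyticity and of the uniform error terms, and in particular the analytic continuation across the circle $|z|=\iota_0/2$ away from $\pm\iota_0/2$. Lemma~\ref{lem:phiSing} gives locally uniform convergence of $\sum_\alpha\phi_\alpha$ on compacts avoiding the poles, hence a genuine meromorphic continuation. Since no other poles lie on that circle, this suffices. The only remaining point is to check that the $\mathrm O(1-z/\zeta)$ remainders from Corollary~\ref{cor:psiSing} hold in a full $\Delta$-neighbourhood and not only along a ray. This follows because $\tilde\phi_\zeta$ is holomorphic near $\zeta$.

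A final caveat concerns parity. When $n+N$ is odd, the leading terms cancel exactly, and the statement must be read as asserting that the coefficient is $o$ of the displayed rate. The $\sim$ claim is then understood along the parity class $n+N$ even.
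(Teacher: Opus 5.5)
Your proposal is essentially the paper's proof: Flajolet--Sedgewick's multi-singularity transfer (Theorem~VI.5) at the conjugate points $\pm\zeta$, and parity of $\phi$ giving the factor $1+(-1)^{n+N}$. For the exponent-zero case (2) you use the exact asymptotics $[z^N](\log\frac{1}{1-z})^{k}\sim k(\log N)^{k-1}/N$ (the paper's Stirling-number step) plus a big-O transfer for the remainder, and your remarks that $I\subset\mathbb{Z}_{\ge 1}$ isolates $\pm\iota_0/2$ and that $\sim$ should be read for $n+N$ even are welcome precisions. One caveat, shared with the paper's Corollary~\ref{cor:psiSing} and its proof sketch: near $\zeta=\pm\frac12$ one has $\log\frac{1}{1-(2z)^2}=\log\frac{1}{1-z/\zeta}-\log 2+\mathrm{O}(1-z/\zeta)$, so in cases (2)--(3) the relative remainder is $\mathrm{O}\bigl(1/\log\frac{1}{1-z/\zeta}\bigr)$ rather than $\mathrm{O}(1-z/\zeta)$ (holomorphy of $\tilde\phi_\zeta$ does not address this); this is harmless in case (3), but in case (2) it makes the big-O transfer of the remainder as large as the main term, so there you should expand $\bigl(\log\frac{1}{1-z/\zeta}-\log 2\bigr)^{n'}$ binomially and transfer each power exactly, the lower powers contributing only $\mathrm{o}\bigl(2^N(\log N)^{n'-1}/N\bigr)$.
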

\begin{proof}
    With Corollary~\ref{cor:psiSing} in place, the claim is a direct application of \cite[Theorem~VI.5]{Flajolet-Sedgewick}.
    Roughly speaking, this theorem asserts that if a function $f$ has a unique singularity $\zeta \in \mathbb{C}$ on the circle $|z| = |\zeta|$, and satisfies
    \begin{equation} \label{eq:VI5f}
        f(z) = \frac{1}{(1 - z/\zeta)^\alpha} \log \left( \frac{1}{1 - z/\zeta} \right)^\beta (1 + \mathrm{O}(1 - z/\zeta)),
        \qquad
        \text{as } z \to \zeta
    \end{equation}
    while remaining holomorphic in a domain slightly larger than $\{ z \in \mathbb{C} : |z| < |\zeta| \}$, then
    \begin{equation} \label{eq:VI5coef}
        [z^N] \, f(z)
        =
        \frac{1}{\zeta^N} \frac{N^{\alpha - 1}}{\Gamma(\alpha)} (\log N)^{\beta} (1 + \mathrm{O}(1/N)),
        \qquad
        \text{as } N \to \infty.
    \end{equation}
    This principle generalizes to functions with finitely many singularities on the circle of convergence.
    In such cases, the total asymptotic expansion is simply the sum of the contributions from each singularity.

    Care is needed in the case ``$n' > 0$ and $\iota_0 > 1$'', since \eqref{eq:VI5coef} falls to give the right asymptotics.
    In fact, in this case, the exponent $\alpha$ in \eqref{eq:VI5f} vanishes, a case not covered by \cite[Theorem~VI.2]{Flajolet-Sedgewick}).
    Although this can be handled by a careful reading of \cite[Chapter~VII]{Flajolet-Sedgewick}, we sketch a proof for this case for the reader's convenience.

    By Corollary~\ref{cor:psiSing}, $\breve{\varphi}$ can be written as
    \[
        \breve{\varphi}(z)
        =
        2^{-n-2n'} \phi(1/2) \left( \log \frac{1}{1 - z/2} \right)^{n'}
        +
        2^{-n-2n'} \phi(-1/2) \left( \log \frac{1}{1 + z/2} \right)^{n'}
        +
        f(z)
    \]
    where $f(z)$ is holomorphic in $\mathbb{C} \smallsetminus \{ x \in \BR : |x| \geq 1/2 \}$ and satisfies, for each $\zeta \in \{ \pm 1/2 \}$,
    \[
        f(z)
        =
        \mathrm{O}(\log((1 - z/\zeta)^{-1})^k (1 - z/\zeta)),
        \qquad
        \text{as } z \to \zeta.
    \]
    Accordingly, the coefficient $[z^N] \, \breve{\varphi}(z)$ can be decomposed into three parts.
    The contributions from the logarithmic singularities are
    \[
        [z^N] \, \left( \log \frac{1}{1 - z/\zeta} \right)^{n'}
        =
        \frac{s(N, n')}{\zeta^N} \frac{n'!}{N!}
    \]
    where $s(N, k)$ denotes the Stirling number of the first kind.
    The asymptotic behavior of these numbers is well-known (see, for example, \cite{Hwang}):
    \[
        s(N, n')
        =
        \frac{n'}{N} (\log N)^{k-1} (1 + \mathrm{O}(1/\log N)),
        \qquad
        \text{as } N \to \infty.
    \]
    Finally, the error term $[z^N] \, f(z)$ can be handled by the lemma below,
    which is the multiple singularity version of a special case of \cite[Theorem~VI.3]{Flajolet-Sedgewick}.
\end{proof}
\begin{lem}
    Let $R > 1$, and let $f$ be a holomorphic function in $\{ z \in \mathbb{C} : |z| \leq R \} \smallsetminus \{ x \in \mathbb{R} : |x| \geq 1 \}$.
    Assume that for each $\zeta \in \{ \pm 1 \}$, $f$ satisfies
    \[
        f(z)
        =
        \mathrm{O} \big( (1 - z/\zeta) \log( (1-z/\zeta)^{-1} )^k  \big),
        \qquad
        \text{as }z \to \zeta.
    \]
    Then, we have
    \[
        [z^N] \, f(z)
        =
        \mathrm{O}((\log N)^{k} / N^2),
        \qquad
        \text{as } N \to \infty.
    \]
\end{lem}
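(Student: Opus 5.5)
We will use the Cauchy integral formula on a contour of Flajolet--Sedgewick type, as in the proof of \cite[Theorem~VI.3]{Flajolet-Sedgewick}. Fix $1<r<R$ and a small angle $\theta>0$. For $N\ge 1$, write
\[
[z^N]\,f(z)=\frac{1}{2\pi i}\oint_{\mathcal{C}_N}\frac{f(z)}{z^{N+1}}\,dz ,
\]
where $\mathcal{C}_N$ runs inside $\{|z|\le r\}\smallsetminus\{x\in\BR:|x|\ge 1\}$. Around each $\zeta\in\{\pm1\}$ the contour follows the boundary of a Hankel-type indentation: a small circle $|z-\zeta|=1/N$, together with two rays from $\zeta$ at angles $\pm\theta$ away from the cut, extended until they meet the circle $|z|=r$. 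Outside these two indented regions, $\mathcal{C}_N$ is the circle $|z|=r$. Since $f$ is holomorphic on the slit domain, this contour is admissible.

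\textbf{The outer arc.} On $|z|=r$, the function $f$ is bounded by a constant independent of $N$. This portion therefore contributes $\mathrm{O}(r^{-N})$, which is $\mathrm{o}(N^{-2})$.

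\textbf{The small circles.} On $|z-\zeta|=1/N$, the hypothesis gives
\[
|f(z)|=\mathrm{O}\bigl(N^{-1}(\log N)^k\bigr),
\]
and $|z|^{-N-1}$ is bounded by $(1-1/N)^{-N-1}=\mathrm{O}(1)$. The circle has length $\mathrm{O}(1/N)$, so this portion contributes $\mathrm{O}((\log N)^k/N^2)$.

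\textbf{The rays.} This is the main step. On the ray near $\zeta$, parametrize $z=\zeta(1+t/N)$ with $t=e^{\pm i\theta}s$ and $1\le s\le \delta N$, for a fixed $\delta>0$. The hypothesis is asymptotic, so we choose $\delta$ small enough that the estimate on $f$ holds on this stretch. Then:
\begin{itemize}
\item $|f(z)|\le C\,(s/N)\bigl(\log(N/s)+\mathrm{O}(1)\bigr)^k\le C'(s/N)(\log N)^k$, since $s\ge 1$;
\item $|z^{-N-1}|\le |1+t/N|^{-N}\le e^{-c s}$, using $\cos\theta>0$ and the inequality $|1+t/N|\ge 1+(s\cos\theta)/(2N)$ for $s\le \delta N$;
\item $|dz|=ds/N$.
\end{itemize}
Integrating, the contribution is bounded by
\[
\frac{C'(\log N)^k}{N^2}\int_1^\infty s\,e^{-cs}\,ds=\mathrm{O}\bigl((\log N)^k/N^2\bigr).
\]
The remaining piece of each ray, from $|z-\zeta|=\delta$ out to $|z|=r$, lies in the region $|z|\ge 1+c_\delta$. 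There $f$ is bounded, so this piece contributes an exponentially small term.

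\textbf{Conclusion and expected difficulty.} Summing the contributions from both singularities gives the claimed bound $\mathrm{O}((\log N)^k/N^2)$. The one delicate point is keeping the logarithm uniform. We use $\log(N/s)\le\log N$ for $s\ge1$ to pull out the factor $(\log N)^k$, and we must fix $\delta$ small enough, independently of $N$, that the $\mathrm{O}$-hypothesis on $f$ applies along the whole ray segment.
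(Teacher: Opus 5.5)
Your proposal is correct and is essentially the argument the paper uses: the paper's proof sketch just says to adapt the Flajolet--Sedgewick Hankel contour (small circles of radius $1/N$, two rays, outer arc) to the two singularities $\pm 1$ and follow the proof of \cite[Theorem~VI.3]{Flajolet-Sedgewick}, with exponent $\alpha=-1$. You follow that route and supply the details the paper leaves to the reference, namely the split of each ray at $|z-\zeta|=\delta$, the uniform bound $\log(N/s)\le\log N$ for $s\ge 1$, and the exponential bounds on the outer pieces.
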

\begin{proof}[Proof sketch]
As outlined in \cite[Theorem~VI.5]{Flajolet-Sedgewick}):
it suffices to adapt the contour to the two singularities, then follow the proof of \cite[Theorem~VI.3]{Flajolet-Sedgewick} step by step (see also \cite[Theorem~2]{Flajolet-Odlyzko} for a more detailed proof).
\end{proof}

\subsection{Asymptotic for the frequencies}
Let's recall the context around the above process. 
We are interested in an essential local type $(\Sigma,\gamma_0)$ and more precisely in the frequency $\FC_g(\phi_{\ns}(\gamma_0))$ of its non-separating representation in large genus. Recall from \eqref{eq: asymp non sep step 2} and Proposition \ref{prop: sym independant of g} that this frequency is, as the genus grows, equivalent to
\begin{multline*}
\frac{2^{\chi_\ns+1}}{|\Stab_{\Map(\Sigma)}(\gamma_0)|} 
   \frac{(6g_\ns - 5 + 2n_\ns)!!}{g_\ns! 24^{g_\ns}} 
    \cdot [t^{-3\chi_\ns}] \sum\limits_{\sigma\in\CA(\Sigma)}
    \bigintsss_{\Delta_\sigma(\gamma_0)}
    \prod_{e \in E_\fix}
    \sinh(b_e t)
    \prod_{e \in E_\ann}
    \frac{\sinh(b_e t)^2}{b_e}
    \, d\bar{x}.
\end{multline*}

If we consider that 
\begin{itemize}
    \item $r$ is the number of arcs in a maximal arc system of $\Sigma_\hyp$,
    \item $n=|\D\Sigma_\hyp|=|E_\fix|$ is number of boundary component of $\Sigma_\hyp$,
    \item $n'=|\pi_0(\Sigma_\ann)|=|E_\ann|$ is the number of annular component of $\Sigma$,
    \item $A = \CA(\Sigma)$ the set of maximal arc systems in $\Sigma_\hyp$,
    \item $\iota(\alpha)$ the intersection number between the arc $\alpha \in \CA(\Sigma)$ and $\gamma_0$,
    \item and $(c_{i,j}(\alpha))_{i,j}$ the incidence matrix between the arc $\alpha$ and the boundary components of $\Sigma_\hyp$ ($c_{i,j} = 1$ if $\alpha_i$ is incident to the $j$-th component of $\partial \Sigma_\hyp$, and $= 0$ otherwise),
\end{itemize}
all what has been done above can be applied to obtain an asymptotic for $\FC_g(\phi_\ns(\gamma_0))$. However, since at this point the reader may not find completely clear how to come back to $\FC_g(\phi_\ns(\gamma_0))$ from what was done in the previous section, let's describe quickly the state of the game. For starters, we get from \eqref{asymp step 1} and \eqref{asym step 1 bis}) that
\[ \FC_g(\phi_\ns(\gamma_0)) \sim 
\dfrac{2^{\chi_\ns+1}(6g\ns-5+2n_\ns)!}{|\Stab_{\Map(\Sigma)}(\gamma_0)|24^{g_\ns}g_\ns!} 
\sum\limits_{\sigma\in\CA(\Sigma)}
\FC_\sigma(\gamma_0)\]
where $\FC_\sigma(\gamma_0)$ can be written as $[z^{3\chi_\ns}]\varphi_\sigma(z)$.
It then turns out the be easier to study $\CB_r\varphi_\sigma$ than $\varphi_\sigma$ itself, where $\CB$ is the transform defined in \eqref{eq:B}). In those terms we have 
\begin{equation} \label{eq:freq in gen function}
    \FC_g(\phi_\ns(\gamma_0)) \sim 
\dfrac{2^{\chi_\ns+1}(6g\ns-5+2n_\ns)!}{|\Stab_{\Map(\Sigma)}(\gamma_0)|24^{g_\ns}g_\ns!}\frac{1}{(-3\chi_\ns+r)!} 
\sum\limits_{\sigma\in\CA(\Sigma)}
[z^{3\chi_\ns}]\CB_r\varphi_\sigma(z).
\end{equation} 
Now, most of the previous section was dedicated to study \[\sum\limits_{\sigma\in\CA(\Sigma)}[z^{3\chi_\ns}]\CB_r\varphi_\sigma(z)=[z^{3\chi_\ns}]\sum\limits_{\sigma\in\CA(\Sigma)}\CB_r\varphi_\sigma(z).\] This function was called $\breve\varphi$, and Proposition~\ref{prop:BvarphiAsym} gives the asymptotic values of its Taylor coefficients. The next corollary the is a direct consequence this proposition and the following observation:$-3\chi_\ns=6g_{ns}-6+3n_{\ns} = 6g_{ns}-6 +3(n+2n') $ hence $-3\chi_\ns+n$ is always even. Also, the parity of the function $\phi$ defined in \eqref{eq:phi} is the same as the one of $n$ and then the same as $-3\chi_\ns$. 

\begin{kor} \label{cor:firstAsymFreq}  Let $(\Sigma,\gamma_0)$ be an essential local type with $r$ the number of arcs in a maximal arc system of $\Sigma_\hyp$, $n$ the number of boundary component of $\Sigma_\hyp$, and $n'$ the number of annular component of $\Sigma$. Denote by $\iota_0$ the minimal number of intersection between $\gamma_0$ and an arc of $\Sigma_\hyp$ and $\mu_0$ the maximal number on disjoint arcs reaching this minimum. There are functions $\phi$ and $\tilde{\phi}_{\iota_0/2}$ (defined in \eqref{eq:phi} and \eqref{eq:*cst}), both depending only on $(\Sigma,\gamma_0)$,  such that
\begin{equation} \label{eq:firstAsymFreq}
    \FC_g(\phi_\ns(\gamma_0)) \underset{g\to\infty}{\sim} \frac{2^{\chi_\ns+1}}{|\Stab_{\Map(\Sigma)}(\gamma_0)|}\cdot
    \frac{(6g_\ns - 5 + 2n_\ns)!!}{g_\ns! 24^{g_\ns}} 
   \cdot
   \frac{2^{1-3\chi_\ns-n_\ns}}{-3\chi_\ns\, (-3\chi_{\ns}+r)!} K(g,\gamma_0)
\end{equation}
where $K(g,\gamma_0)$ is equal to
\begin{enumerate}
    \item $\dfrac{\tilde{\phi}_{\iota_0/2}(\iota_0/2)}{\iota_0^{-3\chi_\ns}}\cdot\dfrac{(-3\chi_\ns)^{\mu_0}}{(\mu_0-1)!}$ when $n'=0$,
    \item $n'\phi(1/2) \, \log(-3\chi_\ns)^{n'-1}$ when $n'>0$ and $\iota_0>1$,
    \item $\dfrac{\tilde{\phi}_{1/2}(1/2)\, (-3\chi_\ns)^{\mu_0}}{(\mu_0-1)!}\log(-3\chi_\ns)^{n'}$ when $n'>0$ and $\iota_0=1$. \qed 
\end{enumerate}
\end{kor}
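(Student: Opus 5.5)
The corollary is a bookkeeping consequence of the asymptotic \eqref{eq:freq in gen function} combined with Proposition \ref{prop:BvarphiAsym}. The plan is to substitute the coefficient asymptotics of $\breve\varphi=\sum_{\sigma}\CB_r\varphi_\sigma$ into that formula and collect the resulting factors.

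\textbf{Step 1: fix the data.} Set $A=\CA(\Sigma)$, with $r$, $n$, $n'$, $\iota(\alpha)$ and $c_{i,j}(\alpha)$ as listed just before the statement. The hypotheses of Section \ref{sec: singular analysis} hold: entries of $c$ lie in $\{0,1,2\}$, each row sums to $2$ (an arc has two ends), and each column is nonzero (every boundary component meets a maximal arc system). Each $\iota_i(\alpha)\ge1$ because $\gamma_0$ fills $\Sigma$. The finite volume assumption \eqref{eq:V<oo} holds because $\sum_\alpha\mathrm{vol}\,\Delta_{\bar\iota(\alpha)}=\FM_{\BA(\Sigma)}(\Delta_\Sigma(\gamma_0))$, which is finite by Corollary \ref{cor finite volume}. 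I also need that $\iota_0(A)$ and $\mu_0(A)$ agree with the topological $\iota_0$ and $\mu_0$ of the statement. For $\mu_0$ this holds since any family of disjoint non-parallel arcs extends to a maximal arc system.

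\textbf{Step 2: apply Proposition \ref{prop:BvarphiAsym}.} Take $N=-3\chi_\ns$. Because $N\equiv n \pmod 2$, as observed just before the statement, the factor $1+(-1)^{n+N}$ equals $2$. This gives:
\begin{itemize}
\item if $n'=0$: $2\,\tilde\phi_{\iota_0/2}(\iota_0/2)\,2^{-n}(2/\iota_0)^N N^{\mu_0-1}/(\mu_0-1)!$;
\item if $n'>0$ and $\iota_0>1$: $2\,\phi(1/2)\,2^{N-n-2n'}\,n'(\log N)^{n'-1}/N$;
\item if $n'>0$ and $\iota_0=1$: the analogous expression with $(\log N)^{n'}N^{\mu_0-1}$.
\end{itemize}
Since $n_\ns=n+2n'$, every case carries the common factor $2^{1+N-n_\ns}$. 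Writing $N^{\mu_0-1}=N^{\mu_0}/N$ puts the three cases in the form $2^{1-3\chi_\ns-n_\ns}K(g,\gamma_0)/(-3\chi_\ns)$, with $K$ as listed in the statement.

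\textbf{Step 3: insert into \eqref{eq:freq in gen function}.} Dividing by $(N+r)!$ through \eqref{eq:B} produces the factor $1/(-3\chi_\ns+r)!$. Combining this with the prefactor $2^{\chi_\ns+1}/|\Stab_{\Map(\Sigma)}(\gamma_0)|$ and the double-factorial term yields \eqref{eq:firstAsymFreq}.

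\textbf{Main obstacle.} The delicate point is justifying the interchange $\sum_\sigma[z^N]=[z^N]\sum_\sigma$ and passing from the coefficient asymptotic to the frequency asymptotic. Coefficient extraction commutes with the sum because each $\varphi_\sigma$ has nonnegative Taylor coefficients, so monotone convergence applies; the same positivity gives convergence of $\breve\varphi$ near $0$.

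A second point to check is that Aggarwal's $\sim$ in \eqref{asymp step 1}, which is uniform in $\bar b$, survives the infinite sum over $\sigma$. This again follows by domination from positivity of all terms.

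Finally, the constants $\phi(1/2)$ and $\tilde\phi_{\iota_0/2}(\iota_0/2)$ depend only on $(\Sigma,\gamma_0)$, because the functions $\phi_\alpha$ do not involve $g$.
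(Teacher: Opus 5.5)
Your proposal is correct and follows the paper's own argument: set $N=-3\chi_\ns$ in Proposition~\ref{prop:BvarphiAsym}, use $N=6g_\ns-6+3(n+2n')\equiv n \pmod 2$ so that $1+(-1)^{n+N}=2$, and substitute into \eqref{eq:freq in gen function}; your bookkeeping of the common factor $2^{1+N-n_\ns}/N$ correctly reproduces the three cases of $K(g,\gamma_0)$. Your extra justifications are fine, with one correction: you need $\breve\varphi$ to converge near $0$ in order to identify $\sum_\sigma[z^N]\mathcal{B}_r\varphi_\sigma$ with a Taylor coefficient of the analytic function $\breve\varphi$, and that convergence comes from the finite-volume bound \eqref{eq:V<oo} through Lemma~\ref{lem:phiSing}, not from positivity of the coefficients alone.
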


The goal is now to reduce this in something more friendly, let's work first on 
$$ 2^{\chi_\ns+1}\cdot
    \frac{(6g_\ns - 5 + 2n_\ns)!!}{g_\ns! 24^{g_\ns}} 
   \cdot
   \frac{2^{1-3\chi_\ns-n_\ns}}{-3\chi_\ns\, (-3\chi_{\ns}+r)!}.   $$
The equality $\chi(X)=\chi(\Sigma)+\chi_\ns$ implies that $g_\ns = g +\frac{1}{2}(\chi(\Sigma)-n_\ns)\sim g$. The quantity above is then equivalent in large genus to 
\[   \dfrac{2^{4g+1}}{g\, 24^g}
\cdot 
\dfrac{(6g_\ns - 5 + 2n_\ns)!!}{g_\ns! (-3\chi_{\ns}+r)!}
\sim
\left( \dfrac{2}{3}\right)^g \, 
\frac{2}{g} \cdot
\dfrac{(6g_\ns - 5 + 2n_\ns)!!}{g_\ns! (-3\chi_{\ns}+r)!}.
\]
Recall that from \eqref{eq defi dim flip} we get $r=-3\chi(\Sigma_\hyp)=-3\chi(\Sigma)$ hence $-3\chi_\ns+r=-3\chi(X)=6g-6.$ We can rewrite the above quantity as
 \[ \left( \dfrac{2}{3}\right)^g \, 
\frac{2}{g} \cdot
\dfrac{(6g_\ns - 5 + 2n_\ns)!!}{g_\ns! |3\chi(X)|!}.
\]
Taking now into account that for all $k>0$, $(2k+1)!!=(2k+1)!/(2^k\, k!)$, we get
\begin{align*}
    \left( \dfrac{2}{3}\right)^g \, 
\frac{2}{g} \cdot
\dfrac{(6g_\ns - 5 + 2n_\ns)!!}{g_\ns! |3\chi(X)|!} &=
\left( \dfrac{2}{3}\right)^g \, 
\frac{2}{g} \cdot 
\dfrac{(6g_\ns-5+2n_\ns)!}{2^{3g_\ns-3+n_\ns}(3g_\ns-3+n_\ns)!}\cdot
\dfrac{2}{g_\ns!\, |3\chi(X)|!} \\
& \sim  
\left( \dfrac{1}{3\cdot 2^2}\right)^g \, 
\frac{2}{g} \cdot
\dfrac{(6g_\ns-5+2n_\ns)!}{(3g_\ns-3+n_\ns)!\, g_\ns!}\cdot
\dfrac{1}{ |3\chi(X)|!}. 
\end{align*}
Using that $g_\ns = g+\frac{1}{2}(\chi(\Sigma)-n_\ns)$ we have
\begin{itemize}
    \item $g_\ns! \sim g! \, g^{\frac{1}{2}(\chi(\Sigma)-n_\ns)}$,
    \item $3g_\ns-3+n_\ns =(3g-3)+\frac{1}{2}(3\chi(\Sigma)-n_\ns) $ and $(3g_\ns-3+n_\ns)! \sim (3g-3)!\, (3g)^{\frac{1}{2}(3\chi(\Sigma)-n_\ns)}$,
    \item $6g_\ns-5+2n_\ns  = (6g-6)+ (3\chi(\Sigma)-n_\ns+1) $ and $(6g_\ns-6+2n_\ns)! \sim (6g-6)!\, (6g)^{3\chi(\Sigma)-n_\ns+1}$.
\end{itemize}
So, if we write $f(g) \asymp h(g)$ when $f(g)/h(g)$ tends to a constant as $g$ goes to infinity then we have
\begin{align*}
    \dfrac{(6g_\ns-6+2n_\ns)!}{(3g_\ns-3+n_\ns)!\, g_\ns!} & \asymp
\dfrac{(6g-6)!}{(3g-3)!g!}\,
g^{(3\chi(\Sigma)-n_\ns+1)-(\frac{1}{2}(\chi(\Sigma)-n_\ns))-({\frac{1}{2}(3\chi(\Sigma)-n_\ns)})} \\
 & \asymp
 \dfrac{(6g-6)!}{(3g-3)!g!}\,
 g^{\chi(\Sigma)+1}. 
\end{align*}
As a consequence
\[   
 2^{\chi_\ns+1}\cdot
    \frac{(6g_\ns - 5 + 2n_\ns)!!}{g_\ns! 24^{g_\ns}} 
   \cdot
   \frac{2^{1-3\chi_\ns-n_\ns}}{-3\chi_\ns\, (-3\chi_{\ns}+r)!} 
   \asymp
   \left(  \dfrac{1}{12} \right)^g \frac{g^{\chi(\Sigma)}}{g!(3g-3)!}.
\]
Applying now Stirling formulae, we get first
\[  g! \, (3g-3)! 
\sim \frac{1}{(3g)^3}g!\cdot(3g)!
\sim \frac{2\pi \sqrt{3}}{27} \cdot
\frac{g^{4g-2}}{e^{4g}}
3^{3g}
\]
and then
\begin{equation} \label{eq:common part reduced}
    2^{\chi_\ns+1}\cdot
    \frac{(6g_\ns - 5 + 2n_\ns)!!}{g_\ns! 24^{g_\ns}} 
   \cdot
   \frac{2^{1-3\chi_\ns-n_\ns}}{-3\chi_\ns\, (-3\chi_{\ns}+r)!} 
   \asymp
   \frac{1}{2^{2g}}\left( \dfrac{e}{3g}\right)^{4g} g^{\chi(\Sigma)+2}.
\end{equation}     
Using the same process as above we have:
\begin{itemize}
    \item $\dfrac{(-3\chi_\ns)^{\mu_0}}{\iota_0^{-3\chi_\ns}(\mu_0-1)!}\asymp\dfrac{g^{\mu_0}}{\iota_0^{6g}},$
    \item  $\log(-3\chi_\ns)^{n'-1}\sim \log(g)^{n'-1}$,
    \item  $\dfrac{(-3\chi_\ns)^{\mu_0}}{(\mu_0-1)!}\, \log(-3\chi_\ns)^{n'}\asymp g^{\mu_0}\log(g)^{n'}.$
\end{itemize}
This together with \eqref{eq:common part reduced} give the following final asymptotic formulae.

\begin{sat}\label{thm:final asym}Let $(\Sigma,\gamma_0)$ be an essential local type with $n'$ annular components. If $\iota_0$ is the minimal number of intersection between $\gamma_0$ and an arc of $\Sigma_\hyp$ and $\mu_0$ the maximal number of disjoint arcs reaching this minimum then 
    \[
        \mathfrak{c}_{g}(\phi_{\ns}(\gamma_0))
        \underset{g\to\infty}{\asymp }
        \dfrac{1}{2^{2g}}
        \left( \frac{e}{3g}\right)^{4g}g^{\chi(\Sigma)+2}
        \left\lbrace
\begin{array}{lr}
\dfrac{g^{\mu_0}}{\iota_0^{6g}} \quad &\text{if }n'=0,\\ \\
\log(g)^{n'-1} \quad &\text{if }n'>0\text{ and }\iota_0>1,\\ \\
g^{\mu_0}\log(g)^{n'} \quad &\text{if }n'>0\text{ and }\iota_0=1.\\
\end{array}\right.   \]
\qed
\end{sat}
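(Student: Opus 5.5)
The plan is to assemble the asymptotic from three pieces already in place: the reduction \eqref{asymp step 1} of $\FC_g(\phi_\ns(\gamma_0))$ to $\FC_\infty^\ns(\Sigma,\gamma_0)$, the generating-function rewriting \eqref{eq:freq in gen function}, and the coefficient asymptotics of Proposition~\ref{prop:BvarphiAsym}. Concretely, I will start from Corollary~\ref{cor:firstAsymFreq}. Its right-hand side splits as a product of the $g$-dependent prefactor
$$2^{\chi_\ns+1}\frac{(6g_\ns-5+2n_\ns)!!}{g_\ns!24^{g_\ns}}\cdot\frac{2^{1-3\chi_\ns-n_\ns}}{-3\chi_\ns(-3\chi_\ns+r)!}$$
and the factor $K(g,\gamma_0)$. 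The constant $1/|\Stab_{\Map(\Sigma)}(\gamma_0)|$ does not depend on $g$, by Proposition~\ref{prop: sym independant of g}. It can therefore be absorbed into the $\asymp$. The proof then amounts to estimating each factor up to multiplicative constants.

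For the prefactor I will use \eqref{eq:common part reduced}, which gives $\asymp 2^{-2g}(e/3g)^{4g}g^{\chi(\Sigma)+2}$. To justify it, I will use $-3\chi_\ns+r=6g-6$, which holds because $r=3|\chi(\Sigma)|$ for an essential local type by \eqref{eq defi dim flip}. I will also use $g_\ns=g+\tfrac12(\chi(\Sigma)-n_\ns)$ and the identity $(2k+1)!!=(2k+1)!/(2^kk!)$. Shifting each factorial by a bounded amount costs a fixed power of $g$, and these powers combine into $g^{\chi(\Sigma)+1}$. Stirling's formula applied to $g!(3g)!$ then produces the exponential part, and the extra factor $1/(-3\chi_\ns)\asymp 1/g$ is accounted for in the final exponent.

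Next I will treat $K(g,\gamma_0)$ case by case, using $-3\chi_\ns=6g+O(1)$.
\begin{itemize}
\item If $n'=0$, then $(-3\chi_\ns)^{\mu_0}/\iota_0^{-3\chi_\ns}\asymp g^{\mu_0}\iota_0^{-6g}$. Since $-3\chi_\ns$ and $6g$ differ by a bounded integer, the ratio $\iota_0^{6g}/\iota_0^{-3\chi_\ns}$ is constant.
\item If $n'>0$ and $\iota_0>1$, then $\log(-3\chi_\ns)^{n'-1}\sim\log(g)^{n'-1}$.
\item If $n'>0$ and $\iota_0=1$, then $K\asymp g^{\mu_0}\log(g)^{n'}$.
\end{itemize}
In each case the multiplying constant must be nonzero. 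This follows from Lemma~\ref{lem:Linxiao} and Lemma~\ref{lem:phiSing}, which give $\tilde\phi_{\iota_0/2}(\iota_0/2)>0$, and, in the second case, from $\phi(1/2)>0$. I expect the last point to follow from the positivity of the partial-fraction coefficients, since $\phi$ is analytic at $1/2$ when $\iota_0>1$. Finally, $1+(-1)^{n+N}=2$ here, because $-3\chi_\ns+n$ is even.

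The main obstacle is not the bookkeeping but ensuring that Proposition~\ref{prop:BvarphiAsym} applies to the index set $A=\CA(\Sigma)$. This requires the finite-volume hypothesis \eqref{eq:V<oo}, which I will deduce from Theorem~\ref{sat finite volume} and Corollary~\ref{cor finite volume}. It also requires the interchange $\sum_\sigma[z^N]=[z^N]\sum_\sigma$, which is justified by the locally uniform convergence proved in Lemma~\ref{lem:phiSing}. In addition, the $g$-uniform Aggarwal asymptotic \eqref{eq:asymptotic for volume with taylor coef} must be passed inside the infinite sum. Since the error is uniform in $\bar b$ and all integrands are nonnegative, dominated convergence on $\Delta_\Sigma(\gamma_0)$, which has finite measure, will handle this step.
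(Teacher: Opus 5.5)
Your proposal follows the paper's own argument: start from Corollary~\ref{cor:firstAsymFreq}, reduce the common prefactor to \eqref{eq:common part reduced} using $-3\chi_\ns+r=6g-6$, $g_\ns=g+\tfrac12(\chi(\Sigma)-n_\ns)$ and Stirling, and then read off the three cases of $K(g,\gamma_0)$ from $-3\chi_\ns=6g+O(1)$. Your additional checks fill in details the paper leaves implicit: finite volume via Theorem~\ref{sat finite volume}, exchanging the sum over $\CA(\Sigma)$ with coefficient extraction, and nonvanishing of the constants. On that last point, the cleanest justification of $\phi(1/2)>0$ is not positivity of partial-fraction coefficients (those of $h_{01}$ have mixed signs) but that in \eqref{eq:Linxiao} every value $h_{\delta\delta'}(\iota,1/2)$ is positive for $\iota\ge 2$.
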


As a direct consequence of this theorem and Theorem \ref{sat dominant type} we have the following corollary.

\begin{kor}\label{kor final asyp loc type}
    Let $(\Sigma,\gamma_0)$ be an essential local type with $n'$ annular components. If $\iota_0$ is the minimal number of intersection between $\gamma_0$ and an arc of $\Sigma_\hyp$ and $\mu_0$ the maximal number of disjoint arcs reaching this minimum then
    \[
        \mathfrak{c}_{g}(\Sigma,\gamma_0)
        \underset{g\to\infty}{\asymp }
        \dfrac{1}{2^{2g}}
        \left( \frac{e}{3g}\right)^{4g}g^{\chi(\Sigma)+2}
                \left\lbrace
\begin{array}{lr}
\dfrac{g^{\mu_0}}{\iota_0^{6g}} \quad &\text{if }n'=0,\\ \\
\log(g)^{n'-1} \quad &\text{if }n'>0\text{ and }\iota_0>1,\\ \\
g^{\mu_0}\log(g)^{n'} \quad &\text{if }n'>0\text{ and }\iota_0=1.\\
\end{array}\right.   \]
\qed
\end{kor}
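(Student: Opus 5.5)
The corollary combines Theorem \ref{thm:final asym} with Theorem \ref{sat dominant type}, so the plan is to assemble these two results.

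By Proposition \ref{prop:finitely many realization}, for each $g$ there are finitely many $\Map(X_g)$-orbits of realizations $\phi_1=\phi_{\ns},\phi_2,\dots,\phi_{m_g}$ of the essential local type $(\Sigma,\gamma_0)$ in $X_g$. By definition,
$$\FC_g(\Sigma,\gamma_0)=\FC_g(\phi_{\ns}(\gamma_0))+\sum_{i=2}^{m_g}\FC_g(\phi_i(\gamma_0)).$$

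Theorem \ref{sat dominant type} controls the separating realizations: their total contribution is $O(1/g)\cdot\FC_g(\phi_{\ns}(\gamma_0))$. Hence
$$\FC_g(\Sigma,\gamma_0)=\FC_g(\phi_{\ns}(\gamma_0))\bigl(1+O(1/g)\bigr),$$
and the ratio $\FC_g(\Sigma,\gamma_0)/\FC_g(\phi_{\ns}(\gamma_0))$ tends to $1$.

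Theorem \ref{thm:final asym} then says that $\FC_g(\phi_{\ns}(\gamma_0))$ divided by the displayed right-hand side tends to a positive constant. That constant is the one appearing in Corollary \ref{cor:firstAsymFreq}. It is built from $\tilde\phi_{\iota_0/2}(\iota_0/2)$, $\phi(1/2)$ or $\tilde\phi_{1/2}(1/2)$ according to the case, together with $|\Stab_{\Map(\Sigma)}(\gamma_0)|$. All of these depend only on the local type, and they are nonzero by Lemma \ref{lem:phiSing} and Lemma \ref{lem:Linxiao}. Multiplying the two limits gives $\FC_g(\Sigma,\gamma_0)\asymp$ the same expression, with the same constant, in each of the three cases $n'=0$; $n'>0$ with $\iota_0>1$; and $n'>0$ with $\iota_0=1$.

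The only point needing care is that the definition of $\asymp$ requires a genuine limit of the ratio, not just two-sided bounds. This holds here because Theorem \ref{thm:final asym} is itself derived from an honest asymptotic equivalence (Corollary \ref{cor:firstAsymFreq}). The main obstacle is therefore not in this corollary. The real work lies in the uniform domination estimate of Theorem \ref{sat dominant type}, which we take as given.
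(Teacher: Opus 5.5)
Your proposal is correct and follows the paper's own argument: the paper states this corollary as a direct consequence of Theorem \ref{thm:final asym} and Theorem \ref{sat dominant type}, and you combine them the same way, multiplying the ratio limit $\FC_g(\Sigma,\gamma_0)/\FC_g(\phi_{\ns}(\gamma_0))\to 1$ by the positive limit supplied by Theorem \ref{thm:final asym}. One small correction to your attribution: in the case $n'>0$, $\iota_0>1$ the constant is $\phi(1/2)$, and its positivity is not stated in Lemma \ref{lem:phiSing} or Lemma \ref{lem:Linxiao}; it follows instead from the positivity of the functions $h_{\delta_1\delta_2}(\iota,z)$ for $0<z<\iota/2$, which are used in the proof of Lemma \ref{lem:Linxiao}, and since you take Theorem \ref{thm:final asym} as given this does not affect your argument.
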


Remark that the common part in the asymptotic above is the frequency for a simple closed curve obtained by Delecroix--Goujard--Zograf--Zorich \eqref{eq DGZZ asymptotic2}.

The results in the two following sections are corollaries of the above asymptotic results.

\subsection{Inner versus Outer simple components}

We want to compare local types corresponding to the same curve with self intersection together with a simple closed curve. This simple closed curve can either be the boundary of the subsurface filled by the curve (inner simple component) or be the core of an annulus of the local type (outer simple component). We start by comparing those two cases in the case of the figure $8$ in a pair of pants.

\begin{ex}  We consider the figure 8 plus one simple component, either inner or outer.

\begin{figure}[ht!]
    \centering
    \begin{subfigure}[t]{0.3\textwidth}
		\centering
		\includegraphics[scale=0.7]{ 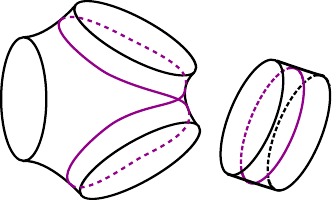}
		\caption{Essential local type with outer simple component}\label{fig:8a}		
	\end{subfigure}
	\quad
    \begin{subfigure}[t]{0.3\textwidth}
		\centering
		\includegraphics[scale=0.7]{ 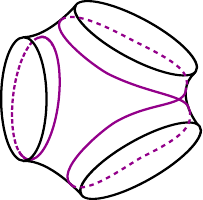}
		\caption{Essential local type with inner simple component}\label{fig:8b}		
	\end{subfigure}
    \quad 
    \begin{subfigure}[t]{0.3\textwidth}
		\centering
		\includegraphics[scale=0.7]{ 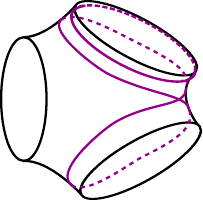}
		\caption{Essential local type with inner simple component}\label{fig:8c}		
	\end{subfigure}
    \caption{Comparing the figure 8 essential local type with inner or outer simple closed component}
    \label{fig:InnerVSOuter}
\end{figure}

\begin{center}
    \begin{tabular}{c||c|c|c|l}
    Local type & $\iota_0$ & $\mu_0$ & $n'$ & \qquad $\FC_g \asymp$ \\
    \hline 
    Figure \ref{fig:8a}&  1 & 2 & 1 & $\dfrac{1}{2^{2g}}\left( \dfrac{e}{3g}\right)^{4g}\cdot g^2\log(g)$ \rule[-17 pt]{0pt}{40 pt} \\
        \hline
    Figure \ref{fig:8b} & 2 & 3 & 0 & $\dfrac{1}{2^{2g}}\left( \dfrac{e}{3g}\right)^{4g}\cdot \dfrac{g^3}{2^{6g+1}}$ \rule[-17 pt]{0pt}{40 pt}\\
    \hline
    Figure \ref{fig:8c} & 1 & 1 & 0 & $\dfrac{1}{2^{2g}}\left( \dfrac{e}{3g}\right)^{4g}\cdot g$ \rule[-17 pt]{0pt}{40 pt}
\end{tabular}
\end{center}

The local type of the figure \ref{fig:8a} (\textit{ie.} the one with outer simple component) is dominant over the local types with inner simple components in large genus.

\end{ex}

In order to phrase a clear statement we need to define a bit of vocabulary.

\begin{defi}
    Let $(\Sigma,\flip_\Sigma,\gamma_0)$ be a local type and $\delta\in\gamma_0$ be a simple closed curve (recall that $\gamma_0$ is a multicurve), such a curve is called a \emph{simple component}. 
    
    This simple component is said to be an \emph{outer simple component} if it is the core of a component of $\Sigma_\ann$. 
    It is an \emph{inner simple component} if it is parallel to a boundary component of $\Sigma_\hyp$. 

    Two essential local types $(\Sigma,\gamma_0)$ and $(\Sigma',\gamma_0')$ \emph{differ from inner-outer components} if 
    \begin{itemize}
        \item $\Sigma_\hyp=\Sigma'_\hyp$,
        \item there is $\delta$ a union of inner components of $\gamma_0$ such that $(\Sigma_\hyp,\gamma_0\cap\Sigma_\hyp-\delta)=(\Sigma_\hyp,\gamma_0'\cap\Sigma_\hyp)$, and
        \item $(\Sigma,\gamma_0')$ has $|\delta|$ more annuli components than $(\Sigma,\gamma_0)$.
    \end{itemize}
\end{defi}

If two local types differ by an owner-outer component it basically means that some inner simple components of the first local type are turned into outer simple component to create the second local type. For example, the local types of Figure \ref{fig:8b} and Figure  \ref{fig:8a} differ from an inner-outer component, the one from Figure \ref{fig:8c} and Figure \ref{fig:8a} also.

\begin{kor}
    Let $(\Sigma,\gamma_0)$ be an essential local type. If $(\Sigma_1,\gamma_1)$ is an essential local type such that $(\Sigma_1,\gamma_1)$ and $(\Sigma,\gamma_0)$ differ from inner-outer components then $$\FC_g(\Sigma_1,\gamma_1)=\mathrm{o}(\Sigma,\gamma_0).$$ 
    ie. If the number of simple component is fixed, for a given set of non-simple components, the dominant local type is the one for which all the simple components are outer components.
\end{kor}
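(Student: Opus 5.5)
The plan is to deduce the statement from Corollary \ref{kor final asyp loc type}, by comparing the correction factors attached to the two local types. First we record how the invariants change when inner simple components become outer ones.

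Write $(\Sigma,\gamma_0)$ for the local type with $k=|\delta|\ge1$ outer components (i.e.\ $n'$ annuli) and $(\Sigma_1,\gamma_1)$ for the one where these are inner, with $n_1'=n'-k$ annuli. Both share the same hyperbolic part $\Sigma_\hyp$. Annuli have Euler characteristic zero, so $\chi(\Sigma_1)=\chi(\Sigma)$. Hence the common prefactor $\frac1{2^{2g}}(\frac e{3g})^{4g}g^{\chi(\Sigma)+2}$ is identical for both. The invariants $\iota_0,\mu_0$ are computed from arcs in $\Sigma_\hyp$ and their intersection with $\gamma\cap\Sigma_\hyp$. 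An inner component parallel to a boundary curve of $\Sigma_\hyp$ meets every arc ending on that boundary once per endpoint. Therefore $\iota(\alpha,\gamma_1)=\iota(\alpha,\gamma_0)+(\text{number of endpoints of }\alpha\text{ on the boundaries carrying }\delta)$. This gives $\iota_0(\gamma_1)\ge\iota_0(\gamma_0)$, and if $\iota_0(\gamma_1)=\iota_0(\gamma_0)$ then $\mu_0(\gamma_1)\le\mu_0(\gamma_0)$.

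Next we compare case by case using the three regimes of Corollary \ref{kor final asyp loc type}, writing $F_1$ for the correction factor of $(\Sigma_1,\gamma_1)$ and $F$ for that of $(\Sigma,\gamma_0)$.

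\emph{Case $n'_1=0$.} Then $F_1=g^{\mu_0(\gamma_1)}\iota_0(\gamma_1)^{-6g}$, while $n'=k>0$.
\begin{itemize}
\item If $\iota_0(\gamma_0)=1$, then $F=g^{\mu_0}\log(g)^{n'}$. If $\iota_0(\gamma_1)\ge2$, exponential decay wins. If $\iota_0(\gamma_1)=1$, then $\mu_0(\gamma_1)\le\mu_0$ and the factor $\log(g)^{n'}\to\infty$ wins.
\item If $\iota_0(\gamma_0)>1$, then $F=\log(g)^{n'-1}$, and $\iota_0(\gamma_1)\ge\iota_0(\gamma_0)\ge2$ forces $F_1$ to decay exponentially.
\end{itemize}

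\emph{Case $n_1'>0$.} The same comparison applies, with the log exponents $n_1'$ and $n_1'-1$ strictly smaller than $n'$ and $n'-1$. The delicate subcase is $\iota_0(\gamma_0)=1<\iota_0(\gamma_1)$. There $F_1=\log(g)^{n_1'-1}$ and $F=g^{\mu_0}\log(g)^{n'}$ with $\mu_0\ge1$, so again $F_1=o(F)$.

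In every case, then, $F_1=o(F)$. Note that $\asymp$ hides constants that are fixed once the local types are fixed, so $\asymp$ bounds combined with $F_1/F\to0$ indeed yield $\FC_g(\Sigma_1,\gamma_1)=o(\FC_g(\Sigma,\gamma_0))$.

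The main obstacle is the regime where $\iota_0(\gamma_0)=\iota_0(\gamma_1)=1$ with equal $\mu_0$ and equal annulus counts. This cannot actually occur, since $n'>n_1'$, but the monotonicity of $\mu_0$ must be justified carefully. One must check that a disjoint family of arcs with $\iota(\cdot,\gamma_1)=\iota_0$ also has $\iota(\cdot,\gamma_0)=\iota_0$. This follows from the pointwise inequality $\iota(\alpha,\gamma_1)\ge\iota(\alpha,\gamma_0)$ together with the equality of the minima. I would also verify that the inner components, being boundary parallel, are in minimal position with the orthogeodesic arcs, so that the additivity formula for intersection numbers holds.
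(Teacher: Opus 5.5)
Your proposal is correct and follows essentially the same route as the paper's proof: both rest on $\chi(\Sigma_1)=\chi(\Sigma)$, the strict drop in the number of annuli, $\iota_0$ non-decreasing and $\mu_0$ non-increasing when $\iota_0$ is unchanged, and then a case-by-case comparison of the correction factors in the final asymptotic formula. The differences are minor: you treat all $|\delta|$ simple components at once instead of reducing to moving a single one, and you justify the monotonicity via the explicit count $\iota(\alpha,\gamma_1)=\iota(\alpha,\gamma_0)+\#\{\text{endpoints of }\alpha\text{ on the boundaries carrying }\delta\}$ rather than the paper's surgery description.
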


\begin{proof} It is enough to show it for $(\Sigma,\gamma_0)$ having one more outer simple component and one less inner simple component (the rest of the curve is the same).

Let $\iota_0,\mu_0$ and $n'$ be as in Theorem \ref{thm:final asym} for $(\Sigma,\gamma_0)$, and $\iota_{1},\mu_{1}$, $n'_1$ the same quantities for the local type $(\Sigma_1,\gamma_1)$. Note that $\Sigma_\hyp=\Sigma_{1,\hyp}$ then $\chi(\Sigma)=\chi(\Sigma_1)$. So, to compare $\FC_g(\phi_\ns(\gamma_0))$ and $\FC_g(\phi_\ns(\gamma_1))$ what we need to compare is the part of the asymptotic in the bracket on Theorem \ref{thm:final asym}.

Passing from the local type $(\Sigma,\gamma_0)$ to the one $(\Sigma_1,\gamma_1)$ can be seen as a surgery: one of the annular component of $\Sigma$ is glued to a boundary component on $\Sigma_{\hyp}$ which has no inner simple component on $(\Sigma_0,\gamma_0)$. Hence, it is clear that $n'_1=n'-1$ and $\iota_{1}\ge\iota_0$. Note that in the case where $\iota_0=\iota_{1}$ we necessarily have $\mu_{1}\le\mu_0$ (every familly of disjoint arcs in $\Sigma_{1,\hyp}$ with all arcs meeting $\gamma_1$ exactly $\iota_0$ times is also a family of arcs in $\Sigma_\hyp$ for which all arc meet $\gamma_0$ exactly $\iota_0$ times).

Applying Theorem \ref{thm:final asym} we can write 
\[ \FC_g(\phi_\ns(\gamma_0))\asymp
\dfrac{1}{2^{2g}}
\left( \frac{e}{3g}\right)^{4g}g^{\chi(\Sigma)+2} 
\cdot K(g) 
\qquad  
\FC_g(\phi_\ns(\gamma_1))\asymp
\dfrac{1}{2^{2g}}
\left( \frac{e}{3g}\right)^{4g}g^{\chi(\Sigma)+2}
H(g),  \]
where $K$ and $H$ are given by Table \ref{table}. In all the cases $H(g)=\mathrm{o}(K(g))$, this concludes the proof.

\begin{table}[ht!]
    \centering

\begin{tabular}{|c|c|c|c|c|c|}
\hline

&\multicolumn{3}{|c|}{ $\iota_0=1$} & \multicolumn{2}{c|}{$\iota_0>1$} \\ \hline

$ K(g)=$ &\multicolumn{3}{|c|}{$g^{\mu_0}\cdot\log(g)^{n'}$} & \multicolumn{2}{c|}{$\log(g)^{n'-1}$} \\ \hline \hline

&\multirow{2}{*}{$\iota_{1}=\iota_0=1$} & \multicolumn{2}{c|}{$\iota_{1}>1$} & \multirow{2}{*}{$n'_1=n'_1-1=0$} & \multirow{2}{*}{$n'_1>0$} \\ \cline{3-4}

& & $n'_1=n'-1=0$ & $n'_1>0$ &  &  \\ \hline

$ H(g)=$ & $g^{\mu_{1}}\log(g)^{n'-1}$ & $\dfrac{g^{\mu_{1}}}{\iota_{1}^{6g}}$ & $\log(g)^{n'-2}$ & $\dfrac{g^{\mu_{1}}}{\iota_{1}^{6g}}$ & $\log(g)^{n'-2}$ \\ \hline
\end{tabular}

    \caption{Asymptotic for the frequency depending on the value of $\iota_0,\iota_{0,1},n'$.}
    \label{table}
\end{table}
\end{proof}

\subsection{Essential versus non-essential local type}

As we saw in Theorem \ref{sat dominant type}, non-separating realizations are dominant. This is what basically allows us to restrict ourselves to study the asymptotic behavior of frequencies in that case. We will next argue that, essentially, one can restrict oneself to frequencies of essential local types.

\begin{sat} \label{thm: essential negligeable bis}
    Given a compact surface with boundary $\Sigma$ and a filling multicurve $\gamma_0\subset \Sigma$, let $(\Sigma,\gamma_0)$ be the associated essential local type and $(\Sigma,\flip_\Sigma,\gamma_0)$ a non-essential local type. If $\phi^e_{\ns}$ denotes the non-separating realization of $(\Sigma,\gamma_0)$ and $\phi_{\ns}$ the one of $(\Sigma,\flip_\Sigma,\gamma_0)$ then

\begin{equation} \label{essential is dominant}
    \FC_g(\phi_{\ns}(\gamma_0))\; = \mathrm{O}( \FC_g(\phi^e_{\ns}(\gamma_0))).
\end{equation}
Moreover, if $\Sigma$ has no annular components and retract on $\gamma_0$ then 
\begin{equation} \label{eq:essential dominant retract}
    \FC_g(\phi_{\ns}(\gamma_0))\; = \mathrm{O}\left(\frac{1}{g} \FC_g(\phi^e_{\ns}(\gamma_0))\right). 
\end{equation}
\end{sat}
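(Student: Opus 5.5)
We plan to compare the two frequencies directly through Corollary \ref{cor: cste in term of dual graph}, written for both non-separating realizations. In both cases $Z$ is connected, so there is a single vertex $v_{\ns}$ in $\CV_Z$. The differences are the following. In the non-essential type the flip pairs some boundary components of $\Sigma_{\hyp}$; call the set of such pairs $\Gamma_{\flip}$, with $k=|\Gamma_{\flip}|\ge 1$. Each pair becomes an edge with both ends in $\CV_\Sigma$. The complement then has $2k$ fewer boundary components and Euler characteristic $\chi_{\ns}'=\chi(X)-\chi(\Sigma)=\chi_{\ns}$, so its genus is $g_{\ns}'=g_{\ns}+k$. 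The domain of integration is $\Delta_\Sigma(\gamma_0)\cap\bar\BA(\Sigma,\flip_\Sigma)$. This is the intersection of $\Delta_\Sigma(\gamma_0)$ with the codimension-$k$ linear condition ${\bf b}(\bar a)_c={\bf b}(\bar a)_{\flip(c)}$, and it carries the measure $\FM_{\BA(\Sigma,\flip_\Sigma)}$ of dimension $3|\chi(\Sigma)|-k$. The integrand acquires an extra factor $\prod_{e\in E_{\flip}}b_e$ of degree $k$.

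First we replace $V_{g_{\ns}',n_{\ns}-2k}$ by its Aggarwal form using Corollary \ref{kor useful aggarwal}. This gives the prefactor $(6g_{\ns}'-5+2n_{\ns}')!!/(g_{\ns}'!\,24^{g_{\ns}'})$ times $[t^{-3\chi_{\ns}}]\prod_{e\in\hal_{v_\ns}}\sinh(b_et)/b_e$, with half the factors of the essential case removed. Since $2g_{\ns}'+n_{\ns}'=2g_{\ns}+n_{\ns}$, an application of \eqref{eq developping !!} shows that the ratio of the two prefactors is bounded by $C\,g^{-k}\cdot 24^{-k}$ times a ratio of multinomials of order $g^{k}$; we expect this ratio to be bounded. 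For the integrals, we bound each one on each $\BA(\sigma,\flip_\Sigma)$ by the generating-function technique of Section \ref{sec: singular analysis}. The $t$-coefficient of $\prod\sinh(b_et)$ over the flipped simplex has the form $[z^N]$ of a function whose dominant singularity is still at $\pm\iota_0/2$, or at $\pm1/2$ in the annular case. The flipped simplex has lower dimension, so after applying $\CB_{r-k}$ instead of $\CB_r$ we lose a factor $(N+r)!/(N+r-k)!\asymp g^{k}$. We must also check that the order of the pole is at most $\mu_0$. Finiteness for the sum over $\sigma$ comes from Corollary \ref{cor finite volume}, since the flipped domain is a subset of the original one. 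Combining these estimates gives $\FC_g(\phi_{\ns}(\gamma_0))=\mathrm{O}(\FC_g(\phi^e_{\ns}(\gamma_0)))$.

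The main obstacle is the generating-function step. The linear constraints ${\bf b}_c={\bf b}_{\flip(c)}$ destroy the product structure used in Lemma \ref{lem:K}, so the Laplace-transform computation does not apply verbatim. As a first attempt, we would parametrize the slice by a Fourier (or Dirac) representation of the constraint, or more crudely bound the integrand monotonically: all Taylor coefficients of $\sinh$ are positive, so we can dominate $\sinh(b_ct)\sinh(b_{\flip(c)}t)$ by the corresponding quantity for the essential integrand and control the loss of dimension via the Fubini decomposition along the fibres of ${\bf b}$. For \eqref{eq:essential dominant retract}, when $\Sigma=\Sigma_{\hyp}$ retracts onto $\gamma_0$, every arc meets $\gamma_0$ and the minimizing arcs realizing $\mu_0$ can be chosen with endpoints on distinct boundary components. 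We then expect the flip constraint to cut at least one dimension out of the pole order, or equivalently to remove one factor $N$ in Proposition \ref{prop:BvarphiAsym}, which gives the extra $1/g$. We would try to verify this by tracking, in the analogue of Lemma \ref{lem:Linxiao}, which $\delta$-patterns survive once the signs $\epsilon_c$ and $\epsilon_{\flip(c)}$ are identified.
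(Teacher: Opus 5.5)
You set up the right comparison: both non-separating realizations through Corollary \ref{cor: cste in term of dual graph}, and your prefactor estimate is correct (the ratio tends to $4^{-k}$). The gap is that you never control the flipped integral
$[t^{-3\chi_\ns}]\int_{\Delta_\Sigma(\gamma_0)}\prod_{E_\flip}b_e\prod_{E_\fix}\sinh(b_et)\,d\FM_{\BA(\Sigma,\flip_\Sigma)}$, and the steps you sketch towards it fail as stated.

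\emph{No replacement for Lemma \ref{lem:K}.} That lemma uses that each $\Delta_\sigma$ is a simplex, which gives the convolution structure. The slices $\{b_c=b_{\flip(c)}\}$ are general polytopes. You give no substitute for Lemma \ref{lem:K}, nor uniform analogues of Lemmas \ref{lem:Linxiao}--\ref{lem:phiSing} over the infinitely many $\sigma$.

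\emph{Domination cannot be pointwise.} The two integrals are against measures of dimensions $r-k$ and $r$. Moreover, in the flipped integrand a flipped pair contributes only the polynomial $b_e$, not $\sinh(b_ct)\sinh(b_{\flip(c)}t)$.

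\emph{The $\CB_{r-k}$ count is off.} It ignores that $\prod_{E_\flip}b_e$ has degree $k$. The total homogeneity is still $r$, so there is no automatic factor $g^{k}$ either way. What happens is governed by the $\iota_0$-arcs with both endpoints on non-flipped boundaries.

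\emph{Finiteness is not inherited from Corollary \ref{cor finite volume}.} A codimension-$k$ slice of a finite-volume set can have infinite $(r-k)$-volume. For instance, the slice $x_1=x_2$ of $\{\sum_i\iota_ix_i\le 1\}$ has volume of order $1/((\iota_1+\iota_2)\prod_{i\ge 3}\iota_i)$ rather than $1/\prod_i\iota_i$. This is exactly why Section \ref{finiteness} excludes flips on $\Sigma_\hyp$.

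For \eqref{eq:essential dominant retract} you only state an expectation. The missing fact is combinatorial. When $\Sigma$ retracts onto $\gamma_0$, one has $\iota_0=1$, and $\mu_0$ disjoint arcs each crossing $\gamma_0$ once must be the arcs dual to all edges of the $4$-valent graph $\gamma_0$. Such a family therefore reaches every boundary component, including the flipped ones, and that is what drops $\mu$. It is not a ``distinct endpoints'' property.

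The paper sidesteps the flipped analysis entirely:
\begin{enumerate}
\item It glues annuli to $\Sigma$ along the flipped pairs to get $\Sigma'$ with $\chi(\Sigma')=\chi(\Sigma)$.
\item It performs one surgery across each annulus, producing finitely many filling multicurves $\gamma_i\subset\Sigma'$.
\item A boundedly-many-to-one surgery map with bounded length increase gives $N_X(\phi_\ns(\gamma_0),L)\le K\sum_i N_X(\widehat\phi_\ns(\gamma_i),L+\ell C_X)$.
\item It then needs only Theorem \ref{thm:final asym} for the essential types $(\Sigma',\gamma_i)$, together with $\iota_i\ge\iota_0$ and $\mu_i\le\mu_0$, with strict inequality in the retract case.
\end{enumerate}
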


The proof of Theorem \ref{thm: essential negligeable bis}
will rely on the existence of specific essential local types obtained by surgery that will make the link between the frequency for an essential local type $(\Sigma,\gamma_0)$ and the one of a corresponding non-essential local type $(\Sigma,\flip_\Sigma,\gamma_0)$. This will rely on the fact that the frequency is initially coming from a counting of orbit points.
Recall that for any multicurve $\gamma$ is a closed genus $g$ surface $X$, the constant $\FC_g(\gamma)$ is related to the number $N_X(\gamma,L)=\#\{\delta\in\Map(X)\cdot\gamma |\ell_X(\delta)\le L \}$ of curves of type $\gamma$ with length at most $L$ as follows: 
\[  \lim\limits_{L \to\infty} \dfrac{N_X(\gamma,L)}{L^{6g-6}} = \dfrac{\FC_g(\gamma)}{b_g}\FM_{Thu}(\{\lambda\in\CM\CL(X) | \ell_X(\lambda)\le 1 \}).\]

\begin{lem} \label{lem:surgery-reduction-non-essential}
Let $(\Sigma,\flip_\Sigma,\gamma_0)$ be a non-essential local type, and let $\ell=|\Gamma_\flip|$
be the number of two-element orbits of $\flip_\Sigma$ in
$\D\Sigma_{\hyp}$. Let $\Sigma'$ be the surface obtained from $\Sigma$ by
attaching annuli
\[
A_1,\ldots,A_\ell
\]
associated to the elements of $\Gamma_\flip$. Denote by
\[
\beta_1,\ldots,\beta_\ell
\]
the core curves of these annuli. Then there are finitely many multicurves
\[
\gamma_1,\ldots,\gamma_m\subset \Sigma'
\]
and a constant $K\ge 1$, depending only on the local type
$(\Sigma,\flip_\Sigma,\gamma_0)$, with the following properties.
\begin{enumerate}
\item
Each $\gamma_i$ is obtained from $\gamma_0$ by performing one admissible
surgery across each annulus $A_j$. In particular, each $\gamma_i$ fills
$\Sigma'$, and hence $(\Sigma',\gamma_i)$ is an essential local type.
Moreover, for every closed hyperbolic surface $X$ and every
non-separating realization
\[
\phi_{\ns}:\Sigma\to X
\]
of $(\Sigma,\flip_\Sigma,\gamma_0)$, let
\[
\gamma_{\ns}=\phi_{\ns}(\gamma_0).
\]
The realization $\phi_{\ns}$ extends over the annuli $A_j$ to a map
\[
\widehat\phi_{\ns}:\Sigma'\to X,
\]
where $\widehat\phi_{\ns}(A_j)$ is the annular complementary component of
$X\setminus\gamma_{\ns}$ associated to $A_j$. Then there is a constant
$C_X>0$, depending only on the hyperbolic metric of $X$, such that, for
every $L>0$,
\[
N_X(\gamma_{\ns},L)
\le
K\sum_{i=1}^m
N_X\bigl(\widehat\phi_{\ns}(\gamma_i),L+\ell C_X\bigr).
\]

\item
For every $i=1,\ldots,m$, let $\iota_i,\mu_i$ be the invariants associated
to the essential local type $(\Sigma',\gamma_i)$, and let $\iota_0,\mu_0$
be the corresponding invariants associated to $(\Sigma,\gamma_0)$. Then
\[
\chi(\Sigma')=\chi(\Sigma)  \qquad\text{and}\qquad \iota_i\ge \iota_0.
\]
Moreover, if $\iota_i=\iota_0$, then
\[
\mu_i\le \mu_0,
\]
and if $\Sigma_\hyp$ retracts on $\gamma_0$ then 
\[
\mu_i < \mu_0.
\]
\end{enumerate}
\end{lem}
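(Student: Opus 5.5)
The plan is to treat the two parts separately: part (1) by a purely topological surgery construction combined with a length comparison, part (2) by tracking arcs of $\Sigma'_\hyp$ back to arcs of $\Sigma_\hyp$.

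\textbf{Construction of the surgeries.} Fix a non-separating realization $\phi_{\ns}$. For each orbit $\{c_j,c_j'\}\in\Gamma_\flip$ the two boundary curves $\phi_{\ns}(c_j),\phi_{\ns}(c_j')$ are isotopic in $X$, so they bound an annulus $\widehat\phi_{\ns}(A_j)$ in $X\setminus\phi_{\ns}(\Sigma)$; this gives the extension $\widehat\phi_{\ns}:\Sigma'\to X$. Up to the action of $\Map(\Sigma)$, fix once and for all, for each $j$, a point of $\gamma_0$ adjacent to $c_j$ and one adjacent to $c_j'$. An \emph{admissible surgery} across $A_j$ cuts $\gamma_0$ at these two points and reconnects the four endpoints by two parallel arcs crossing $A_j$, with a choice among finitely many reconnection patterns and orientations. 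Performing one surgery per annulus gives finitely many multicurves $\gamma_1,\dots,\gamma_m\subset\Sigma'$. Each $\gamma_i$ fills $\Sigma'$: complementary regions in $\Sigma_\hyp$ were disks or crowns, and the crowns along $c_j,c_j'$ are joined through $A_j$ and cut into disks by the crossing arcs. Hence $(\Sigma',\gamma_i)$ is an essential local type. Since $A_j$ is an annulus, $\chi(\Sigma')=\chi(\Sigma)$ is immediate.

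\textbf{Counting inequality.} Consider the map $\delta=\psi(\gamma_{\ns})\mapsto \widehat{\psi\circ\phi_{\ns}}(\gamma_i)$ for $\psi\in\Map(X)$. Because the surgery data is chosen canonically up to the finite group $\Stab_{\Map(\Sigma)}(\gamma_0)$ and the annuli are determined by $\delta$, this map is well defined up to a bounded choice. Conversely, $\widehat\psi(\gamma_i)$ determines $\delta$ up to at most $K$ possibilities: one recovers $\Sigma$ and undoes the surgery, and the ambiguity is the choice of the arcs crossing the annuli, which are determined up to Dehn twists along $\beta_j$. Those twists are exactly why I pick the representative of minimal length. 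For the length bound, I take the geodesic of $\delta$ and replace it near each annulus by the two crossing arcs. These arcs can be chosen, after twisting, of length at most a constant $C_X$ given by the diameter bound of the relevant collar region, uniform since $\beta_j$ ranges over curves isotopic to boundary curves of bounded length in a fixed fundamental domain. Then $\ell_X(\widehat\psi(\gamma_i))\le\ell_X(\delta)+\ell C_X$.

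\textbf{Main obstacle.} The uniformity of $C_X$ over all of $\Map(X)\cdot\gamma_{\ns}$ is the hard step, because the collars of $\psi(\beta_j)$ can be long. To handle it, I would use the freedom in the twist: choose the twist to minimize length, and bound the excess by the length of the shortest arc joining the two cut points across the annulus. That shortest arc is controlled by the length of $\delta$ near the cut points rather than by the collar, absorbing any part that is not bounded into $\ell_X(\delta)$. If this does not work cleanly, the fallback is to allow $C_X$ to multiply, giving a bound of the form $N_X(\gamma_i,cL+C)$, which suffices asymptotically.

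\textbf{Part (2).} Every essential arc $a'$ in $\Sigma'_\hyp$ meets $\bigcup A_j$ in crossing segments. Cutting along the $\beta_j$ produces essential arcs of $\Sigma_\hyp$; if none appear, $a'$ lies in $\Sigma_\hyp$, which is impossible for an essential arc, since $\Sigma'_\hyp\supset\Sigma_\hyp$ has fewer boundary components. Each of these arcs meets $\gamma_0$ at least $\iota_0$ times, and the surgery only adds intersections, because the new arcs cross every segment passing through $A_j$. This gives $\iota(a',\gamma_i)\ge\iota_0$, so $\iota_i\ge\iota_0$. If equality holds, every minimal arc restricts to a single minimal arc of $\Sigma_\hyp$ avoiding the annuli, and disjoint non-parallel families restrict injectively, which gives $\mu_i\le\mu_0$. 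If $\Sigma_\hyp$ retracts on $\gamma_0$, every boundary crown is a thin annulus. Hence some minimal arc of $\Sigma_\hyp$ ending on $c_j$ must cross the surgered strand, so it is no longer minimal, and the inequality becomes strict: $\mu_i<\mu_0$.
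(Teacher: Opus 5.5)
There is a genuine gap in the counting inequality of part (1), exactly at the step you flag. You fix the cut points, and hence the pair of crown sides, once and for all. You then assert that the crossing arcs have length at most $C_X$ because $\beta_j$ ``ranges over curves of bounded length''. That premise is false: the cores $\psi(\widehat\phi_{\ns}(\beta_j))$, $\psi\in\Map(X)$, have unbounded $\ell_X$-length. When such a core is long, the complementary region of the geodesic $\delta$ containing it is a thin band around it, and two \emph{prescribed} sides of the two crowns may lie far apart along the core. So the shortest arc joining them is not bounded independently of $\delta$. Twisting does not change this, and nothing ties that arc to the length of $\delta$ ``near the cut points''.

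Your fallback $N_X(\gamma_i,cL+C)$ with $c>1$ is not the statement, and it is useless downstream. It costs a factor $c^{6g-6}$ in the frequency comparison of Theorem~\ref{thm: essential negligeable bis}, whereas for $\iota_i=\iota_0$ the gain there is at most polynomial in $g$.

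The missing idea, which is what the paper does, is to let the sides vary. The list $\gamma_1,\dots,\gamma_m$ runs over all pairs (side of one crown, side of the other), finitely many up to twists about $\beta_j$. For each $\delta$ the surgery arc is then chosen geometrically. The region has area at most $\area(X)$ and its core has length at least $\sys(X)$, so it cannot contain a wide collar. Concretely, let $b$ be the core and $h_1(t),h_2(t)$ the lengths of the perpendiculars from $b(t)$ to the two crowns. Fermi coordinates give $\area(X)\ge\int_0^{\ell_X(b)}(\sinh h_1(t)+\sinh h_2(t))\,dt$. Hence at some $t$ both $h_i(t)\le\sinh^{-1}(\area(X)/\sys(X))$. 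Surgering along that perpendicular, on whichever sides it lands, gives the additive bound $L+\ell C_X$.

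A second problem is the surgery itself. Reconnecting by two \emph{parallel} arcs is a band sum. The band joins, through the two cuts, the complementary regions of $\gamma_0$ on the far side of the two cut strands. If these are the same disk, you create an annulus whose core crosses $\beta_j$ exactly once, so it is essential and non-peripheral in $\Sigma'$. Then $\gamma_i$ need not fill $\Sigma'$; your filling argument only looks at the double-crowned region. The paper uses two crossing (``non-parallel'') copies of the surgery arc. Then the far-side regions only gain triangles attached along an interval, and the double-crowned annulus is cut into disks. Each surgery also adds exactly one self-intersection, so $\delta$ is recovered from $\xi$ by resolving boundedly many crossings.

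Part (2) follows the paper's route, with two corrections.
\begin{enumerate}
\item Minimal arcs for $\gamma_i$ avoid $B=\bigcup_j\beta_j$ because of the outermost-piece argument: an essential arc of $\Sigma'_\hyp$ in minimal position that meets $B$ has at least two essential pieces in $\Sigma_\hyp$, hence at least $2\iota_0$ intersections. Your reason, that ``the new arcs cross every segment passing through $A_j$'', fails for arcs crossing $\beta_j$ away from the surgery arcs.
\item In the retract case, strictness holds because minimal arcs ending on the boundary components glued to the $A_j$ are not arcs of $\Sigma'_\hyp$ at all. Meanwhile, every family of $\mu_0$ minimal arcs meets every boundary component.
\end{enumerate}
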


\begin{proof}
We first construct the finite list of multicurves. For each
$j=1,\ldots,\ell$, the annulus $A_j$ is attached to the two boundary
components of $\Sigma_{\hyp}$ which are exchanged by the corresponding
element of $\Gamma_\flip$.

Let us spell out the local picture. For a realization
$\phi_{\ns}:\Sigma\to X$, it can be extended to a realization $\hat\phi_\ns:\Sigma'\to X$ by send the annuli $A_j\subset\Sigma'$ to the annuli component of $X\setminus\phi_\ns(\Sigma)$ corresponding to the right curve of $\Gamma_\flip$. The image of $A_j$ is an annular in a complementary
component of
\[
X\setminus \gamma_{\ns}.
\]
This component is a finite-sided double-crowned annulus. Its two crowns
are finite concatenations of sides, and each side is a subsegment of
$\gamma_{\ns}$ between consecutive intersection points of the curve. The
core curve of this annulus is $\widehat\phi_{\ns}(\beta_j)$.

An admissible surgery arc across $A_j$ is a properly embedded arc in this
double-crowned annulus which joins one side of one crown to one side of
the other crown and crosses the core curve $\beta_j$ exactly once. Surgery
along such an arc means that one replaces the two chosen boundary
subsegments of the crowns by two non-parallel copies of the surgery arc as illustrated in Figure \ref{fig:SurgeryCrown}, and
then smooths the result.

 \begin{figure}[ht!]
    \centering
    \includegraphics[width=0.3\linewidth]{ 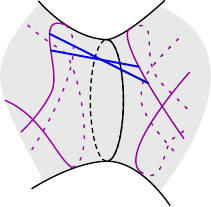}
    \caption{Local surgery for the constructions of the $\gamma_i$}
    \label{fig:SurgeryCrown}
\end{figure}

For a fixed $j$, there are only finitely many admissible surgery arcs up
to isotopy relative to $\gamma_0$ and powers of the Dehn twist about
$\beta_j$. Indeed, one only has to choose the side of the first crown and
the side of the second crown on which the endpoints of the surgery arc
lie. There are only finitely many such sides. Representatives which differ
by a power of the Dehn twist about $\beta_j$ give curves in the same
mapping-class-group orbit in $\Sigma'$.

Choosing one admissible surgery arc for each
$j=1,\ldots,\ell$ and performing the surgeries simultaneously gives only
finitely many mapping-class-group types of multicurves in $\Sigma'$.
Choose one representative of each such type and call the resulting list
\[
\gamma_1,\ldots,\gamma_m.
\]
This list depends only on the local type
$(\Sigma,\flip_\Sigma,\gamma_0)$.

Since $\gamma_0$ fills $\Sigma$, the complement
$\Sigma\setminus\gamma_0$ consists of disks and boundary-parallel annuli.
After attaching the annuli $A_j$, the only new possible non-boundary
parallel annular complementary regions are the annuli around the cores
$\beta_j$. Performing one admissible surgery across each $A_j$ cuts each
of these double-crowned annuli. Therefore each $\gamma_i$ fills
$\Sigma'$. Thus $(\Sigma',\gamma_i)$ is an essential local type when
$\Sigma'$ is equipped with the essential flip.

We now prove the counting estimate. Fix a closed hyperbolic surface $X$
and a non-separating realization
\[
\phi_{\ns}:\Sigma\to X.
\]
Let
\[
\gamma_{\ns}=\phi_{\ns}(\gamma_0).
\]
For each $j$, the image $\widehat\phi_{\ns}(A_j)$ is an annular
complementary component of $X\setminus\gamma_{\ns}$ with core curve
$\widehat\phi_{\ns}(\beta_j)$.

We first claim that there is a constant $w_X>0$, depending only on
$X$, such that every annular complementary component arising from an
element of the orbit $\Map(X)\cdot\gamma_{\ns}$ contains a transverse
crossing arc of length at most $w_X$. This is because, the area of the collar of 
$\beta_j$ of width $w_X$ is 
\[
2\ell_X(\beta_j) \sinh (w_X).
\]
Therefore, if $A_j$ contains the collar of $\beta_j$ of width $w_X$ then  
\[
\area(X) \ge \area(A_j) \ge 2 \sys(X) \sinh (w_X)
\]
where systole $\sys(X)$ is the length of the shortest curve in $X$. Hence, we can take 
\[
w_X=\sinh^{-1}\left(\frac{\area(X)}{2\sys(X)}\right).
\]
Choose a constant
\[
C_X>2w_X.
\]
The factor $2$ accounts for the two parallel copies of the crossing arc
used in the surgery; the arbitrarily small smoothing error is absorbed by
the strict inequality in the choice of $C_X$.

Let
\[
\delta\in \Map(X)\cdot\gamma_{\ns} \qquad{with}\qquad \ell_X(\delta)\le L.
\]
For each of the $\ell$ annular complementary components corresponding to
$\Gamma_\flip$, choose a crossing arc of length at most $w_X$ and perform
the admissible surgery. Since the annuli are pairwise disjoint, the
surgeries can be performed simultaneously. The resulting multicurve,
after replacing it by its geodesic representative, has length at most
\[
L+\ell C_X.
\]
By construction, it lies in the mapping class group orbit of one of the
multicurves $\widehat\phi_{\ns}(\gamma_i)$.

Thus we obtain a map at the level of curves
\[
F:
\{\delta\in\Map(X)\cdot\gamma_{\ns}\mid \ell_X(\delta)\le L\}
\longrightarrow
\coprod_{i=1}^m
\{\xi\in\Map(X)\cdot\widehat\phi_{\ns}(\gamma_i)
\mid \ell_X(\xi)\le L+\ell C_X\}.
\]

All the elements $\xi$ in the image of $F$ have at most $i(\gamma_0,\gamma_0)+|\Gamma_\flip|$ self-intersections. Hence, there is at most K (independent from $\xi$) ways to resolve simultaneously $\Gamma_\flip$ self intersections of $\xi$. However, for any $\delta$ such that $F(\delta)=\xi$, $\delta$ can be obtained by resolving at most $|\Gamma_\flip\vert$ singularities. Thus, $F$ is at most $K$ to one.

It follows that
\[
N_X(\gamma_{\ns},L)
\le
K\sum_{i=1}^m
N_X\bigl(\widehat\phi_{\ns}(\gamma_i),L+\ell C_X\bigr),
\]
which proves the first assertion.

Since $\Sigma'$ is
obtained from $\Sigma$ by attaching $\ell$ annuli along boundary
components of $\Sigma_{\hyp}$, we have
\[
\chi(\Sigma')=\chi(\Sigma).
\]
We now compare $\iota_i,\mu_i$ with $\iota_0,\mu_0$.  Let
\[
B=\beta_1\cup\cdots\cup\beta_\ell.
\]
Cutting $\Sigma'_{\hyp}$ along $B$, and then collapsing the collar annuli
between the two copies of $B$ and the old boundary components of
$\Sigma_{\hyp}$, recovers $\Sigma_{\hyp}$.

Let $\omega\subset\Sigma'_{\hyp}$ be an essential simple arc. Put
$\omega$ in minimal position with respect to $B$ and with respect to
$\gamma_i$. Cutting $\omega$ along $B$ and using the above identification
with $\Sigma_{\hyp}$ gives a finite collection of proper arcs in
$\Sigma_{\hyp}$. Discard the boundary-parallel components and denote the
remaining essential components by
\[
\omega_1,\ldots,\omega_q.
\]
Since $\omega$ is essential, we have $q\ge 1$. Moreover, if
$\omega\cap B\neq\emptyset$, then $q\ge 2$. Otherwise all but one of the
pieces obtained by cutting along $B$ would be boundary-parallel in
$\Sigma_{\hyp}$, and an outermost such boundary-parallel piece would give
an isotopy of $\omega$ reducing $|\omega\cap B|$, contradicting the
minimality of the position of $\omega$ with respect to $B$.

By construction of the surgery, after cutting along $B$, the curve
$\gamma_i$ restricts to $\gamma_0$ on the original surface
$\Sigma_{\hyp}$, up to boundary-collar pieces coming from the surgery.
Those collar pieces can only add intersections. Therefore
\[
\iota(\omega,\gamma_i)
\ge
\sum_{r=1}^q \iota(\omega_r,\gamma_0).
\]
Since each $\omega_r$ is an essential arc in $\Sigma_{\hyp}$, we have
\[
\iota(\omega_r,\gamma_0)\ge \iota_0
\]
for every $r$. Hence
\[
\iota(\omega,\gamma_i)
\ge
q\iota_0
\ge
\iota_0.
\]
Taking the minimum over all essential arcs
$\omega\subset\Sigma'_{\hyp}$ gives $\iota_i\ge \iota_0$.

Suppose now that
\[
\iota_i=\iota_0.
\]
Let $\omega\subset\Sigma'_{\hyp}$ be an essential arc satisfying
\[
\iota(\omega,\gamma_i)=\iota_i=\iota_0.
\]
The inequalities above force $q=1$. Since $\gamma_0$ fills $\Sigma$, we
have $\iota_0>0$. Therefore $\omega$ cannot cross any component of $B$:
if $\omega\cap B\neq\emptyset$, then $q\ge 2$, and hence
\[
\iota(\omega,\gamma_i)\ge 2\iota_0>\iota_0,
\]
a contradiction. Thus every $\iota_i$-minimizing arc for $\gamma_i$ is
represented by an arc contained in the original surface $\Sigma_{\hyp}$,
and this arc intersects $\gamma_0$ exactly $\iota_0$ times.

Now take a collection of disjoint, pairwise non-parallel essential arcs
\[
\omega_1,\ldots,\omega_{\mu_i}\subset\Sigma'_{\hyp}
\]
such that
\[
\iota(\omega_r,\gamma_i)=\iota_i=\iota_0
\]
for every $r$. By the previous paragraph, each $\omega_r$ is disjoint
from $B$ and hence gives an essential arc in $\Sigma_{\hyp}$ intersecting
$\gamma_0$ exactly $\iota_0$ times. These arcs remain disjoint in
$\Sigma_{\hyp}$. They also remain pairwise non-parallel in
$\Sigma_{\hyp}$: if two of them were parallel in $\Sigma_{\hyp}$, the
parallelism region would still be a parallelism region after attaching
the annuli $A_j$, contradicting that the arcs were non-parallel in
$\Sigma'_{\hyp}$.

Therefore the arcs
\[
\omega_1,\ldots,\omega_{\mu_i}
\]
form a collection of $\mu_i$ disjoint, pairwise non-parallel essential
arcs in $\Sigma_{\hyp}$, each intersecting $\gamma_0$ exactly
$\iota_0$ times. By definition of $\mu_0$, this implies
\[
\mu_i\le \mu_0.
\]
Also, if $\Sigma$ retracts on $\gamma_0$ then $\iota_0=1$ and any arc system with $\mu_0$ arcs all meeting $\gamma_0$ once has arcs starting on each boundary of $\Sigma_\hyp$ and hence on each element of $B$. This ensures that $\mu_i<\mu_0$ for any $i$
\end{proof}

We are know ready to prove Theorem \ref{thm: essential negligeable bis}

\medskip

\textit{Proof of Theorem \ref{thm: essential negligeable bis}}
For any multicurve $\gamma$ in a closed genus $g$ surface $X$, 
\[  \lim\limits_{L \to\infty} \dfrac{N_X(\gamma,L)}{L^{6g-6}} = \dfrac{\FC_g(\gamma)}{b_g}\FM_{Thu}(\{\lambda\in\CM\CL(X) | \ell_X(\lambda)\le 1 \}).\]

Lemma \ref{lem:surgery-reduction-non-essential} ensures that 
\[   \dfrac{\FC_g(\phi_\ns(\gamma_0))}{\FC_g(\phi_\ns^e(\gamma_0))} \le
\lim\limits_{L\to\infty} 
\dfrac{\sum_{i=1}^m K(\gamma_0)\cdot  N\bigl(\phi_\ns^i(\gamma_i),L+|\Gamma_\flip| C(X)\bigr)}{N_X(\phi_\ns^e(\gamma_0),L)} 
=K(\gamma_0)
\lim\limits_{L\to\infty}
\dfrac{\sum_{i=1}^m N\bigl(\phi_\ns^i(\gamma_i),L\bigr)}{N_X(\phi_\ns^e(\gamma_0),L)}\]
where the second equality comes from the fact that $N(\gamma_i, L)$ grows like a polynomial with $L$ and hence 
\[
N(\gamma_i, L) \underset{L\to\infty}{\sim }N(\gamma_i, L+|\Gamma_\flip| C(X)).
\]
By definition of the frequency we get

\begin{equation} \label{eq:Step1}
    \dfrac{\FC_g(\phi_\ns(\gamma_0))}{\FC_g(\phi_\ns^e(\gamma_0))} \le 
    K(\gamma_0) \cdot \dfrac{\sum_{i=1}^m \FC_g(\phi_\ns^i(\gamma_i))}{\FC_g(\phi_\ns^e(\gamma_0))}.
\end{equation}
The local types  $(\gamma_i,\Sigma')$ and $(\gamma_0,\Sigma)$ are essential. Therefore, we can apply Theorem \ref{thm:final asym} to both these curves. They have the same number of annuli components so Theorem \ref{thm:final asym} and the second part of Lemma \ref{lem:surgery-reduction-non-essential} concludes the proof of the Theorem.\qed

\medskip

We are unfortunately not able to quantify the difference between essential and non-essential versions of the same pair $(\Sigma,\gamma_0)$ in general. However, we compute can compute them for figure-$8$ in genus 2, see Appendix \ref{app: figure 8} for details. It appears that the essential \eqref{eq:essential8} one has frequency $1/48$ while the non-essential ones \eqref{eq:nonEssential8-1}\eqref{eq:nonEssential8-2} have frequencies $1/3072$ and $1/2880$. There is a factor 60 between the essential an non-essential frequencies in that case.

\section{Maximum frequency among curve of self-intersection number \texorpdfstring{$K$}{K}}  \label{sec int K}

In this section we investigate, for fixed $K$, what typical very long curves with $K$ self-intersections look like in a surface $X$ of large genus $g$. Since one has to start somewhere, let us start with an example.

\subsection{Comparing two examples}
A curve with $K\ge 2$ self-intersections can fill a subsurface as small as a pair of pants, or as large as a sphere with $K+2$ holes. Their plenty of example of such local types, let's fixe some of them as illustrated in Figure \ref{fig:4Int} (A) and (C), we denote them $(S_{0,3},\gamma^K_{\min})$ and $(S_{0,K+2},\gamma^K_{\max})$.
We will also consider a local type $(S_{0,K+1},\gamma^K_{\max-1})$ where a the curve fills a sphere with $K+1$ holes. In Figure \ref{fig:4Int} you see them all depicted for $K=4$.

\begin{figure}[ht!]
    \centering
    \begin{subfigure}[t]{0.31\textwidth}
		\centering
		\includegraphics[scale=0.5]{ 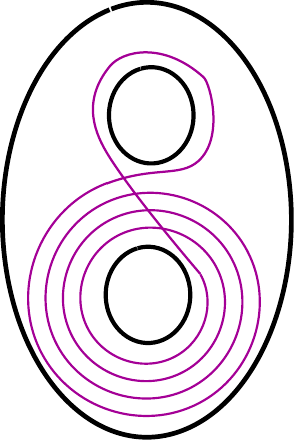}
		\caption{$\gamma^4_{\min}$: A curve with 4 self-intersection in the pair of pants}\label{fig:4a}		
	\end{subfigure}
    \quad 
    \begin{subfigure}[t]{0.31\textwidth}
		\centering
		\includegraphics[scale=0.5]{ 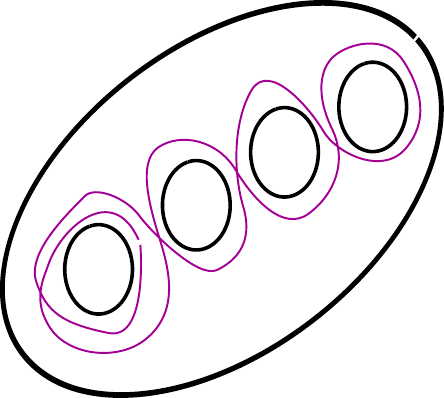}
		\caption{$\gamma^4_{\max-1}$: A curve with 4 self-intersections in 5-th holed sphere}\label{fig:4c}		
	\end{subfigure}
            \quad 
    \begin{subfigure}[t]{0.31\textwidth}
		\centering
		\includegraphics[scale=0.5]{ 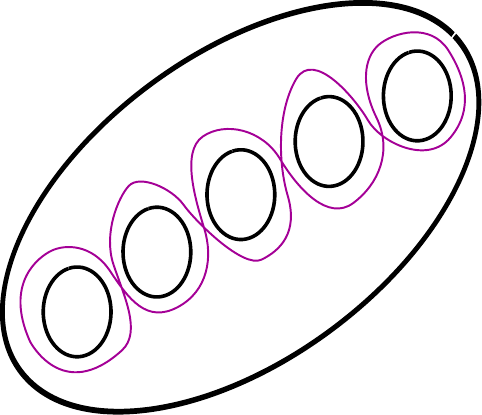}
		\caption{$\gamma^4_{\max}$: A curve with 4 self-intersections in 6-th holed sphere}\label{fig:4b}		
	\end{subfigure}
    \caption{Three local types with 4 self-intersections}
    \label{fig:4Int}
\end{figure}

In all three cases $(S_{0,3},\gamma^K_{\min})$, $(S_{0,K+1},\gamma^K_{\max-1})$, and $(S_{0,K+2},\gamma^K_{\max})$, there are arcs which meet the curve $\gamma^K_{\bullet}$ in exactly one point. It follows that in all cases $\iota_0=1$. However, in $(S_{0,3},\gamma^K_{\min})$ this arc is unique, while in $(S_{0,K+1},\gamma^K_{\max-1})$ and $(S_{0,K+2},\gamma^K_{\max})$ there are many such arcs. It follows that $\mu_0=1$ in $(S_{0,3},\gamma^K_{\min})$ and direct inspection shows that $\mu_0=2K-3$ in $(S_{0,K+1},\gamma^K_{\max-1})$ and $\mu_0=2K$ in $(S_{0,K+2},\gamma^K_{\max})$. When we apply Theorem \ref{thm:final asym} we get that the frequency $\FC_g$ of $(S_{0,K+2},\gamma^K_{\max})$ is much larger than that of $(S_{0,K+1},\gamma^K_{\max-1})$ which in turn is much larger $(S_{0,3},\gamma^K_{\min})$. Table \ref{tab:4int} summarizes all of this:

\begin{table}[H]
    \centering
    \begin{tabular}{|c|c|c|c|l|}
    \hline
         & $\iota_0$ & $\mu_0$ & $\chi$ & \qquad $\FC_g(\phi_{\ns}(\cdot))\asymp$\\ \hline
        $(S_{0,3},\gamma^K_{\min})$ & 1 & 1 & $-1$ & $\dfrac{1}{2^{2g}} 
        \left( \dfrac{e}{3g}\right)^{4g} \cdot g^2$ 
        \rule[-17 pt]{0pt}{40 pt} \\ \hline
        $(S_{0,K+2},\gamma^K_{\max-1})$ & 1 & $2K-3$ & $-K+1$ & $\dfrac{1}{2^{2g}}
        \left( \dfrac{e}{3g}\right)^{4g} \cdot
        g^{K}$ \rule[-17 pt]{0pt}{40 pt} \\ \hline
        $(S_{0,K+2},\gamma^K_{\max})$ & 1 & $2K$ & $-K$ & $\dfrac{1}{2^{2g}}
        \left( \dfrac{e}{3g}\right)^{4g} \cdot
        g^{K+2}$ \rule[-17 pt]{0pt}{40 pt} \\ \hline
    \end{tabular}
    \caption{Theorem \ref{thm:final asym} for the local types of Figure \ref{fig:4Int}}
    \label{tab:4int}
\end{table}

It follows that, for large $g$, there are many more very long curves of type $\gamma^K_{\max}$ than curves of type $\gamma^K_{\max-1}$ of what there are many more than curves of type $\gamma^K_0$.

\subsection{General typical local types}
Having discussed an example, we now return to our original question: 
\begin{center}
    {\em How do very long curves with $K$ self-intersections generally look like in surfaces of large genus}?
\end{center} 
Said differently, what are the local types $(\Sigma,\flip_\Sigma,\gamma_0)$ with $K$ self-intersections and largest frequency $\FC(\Sigma,\flip_\Sigma,\gamma_0)$. For the sake of concreteness--also because this paper is already long enough as it is--we suppose that $\gamma_0$ has no simple components. This assumption implies that for non essential local types, the asymptotic frequency is given by 
\begin{equation} \label{Eq:Simplified}
    \FC_g(\Sigma,\gamma_0)\asymp 
    \dfrac{1}{2^{2g}}
        \left( \frac{e}{3g}\right)^{4g}
    \dfrac{g^{\mu_0+\chi(\Sigma)+2}}{\iota_0^{6g}}, 
\end{equation}
and is asymptotically bounded above by this same quantity for non-essential local types. The advantage of expression \eqref{Eq:Simplified} is that the dependence on the genus and on $\gamma_0$ are decoupled. It follows that to maximize $\FC_g$, we need to maximize the expression 
\begin{equation} \label{Eq:Sigma-dependence}
\frac{g^{\chi(\Sigma) + \mu_0+2}}{\iota_0^{6g}}.  
\end{equation} 
Since there are local types like the above discussed $(S_{0,K+2},\gamma^K_{\max})$ with $K$ self-intersections and $\iota_0=1$, we get that $\FC_g(\Sigma,\gamma_0)$ will be for large $g$ far of maximal if $\iota_0  > 1$. So we may assume $\iota_0=1$ which removes the exponential factor and leaves us to maximize the quantity $\chi(\Sigma) + \mu_0$.

\begin{lem} \label{lem:BoundChiPlusMu0}
    Let $(\Sigma,\gamma_0)$ be an essential local type with $\Sigma=\Sigma_\hyp$. If $\iota_0=1$ then
    \begin{equation} \label{Eq:Claim} 
        \chi(\Sigma) + \mu_0 \le \iota(\gamma_0,\gamma_0). 
    \end{equation} 
with equality if and only if $\Sigma$ retracts on $\gamma_0$.
\end{lem}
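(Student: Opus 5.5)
We argue combinatorially through the $4$-valent graph $\gamma_0$ and the arcs of $\Sigma$ that meet it exactly once. Write $K=\iota(\gamma_0,\gamma_0)$ and regard $\gamma_0$ (in minimal position) as a $4$-valent graph with $K$ vertices and $2K$ edges. Since $\gamma_0$ fills $\Sigma$ and $\Sigma=\Sigma_\hyp$, the complement $\Sigma\setminus\gamma_0$ consists of $|\D\Sigma|$ crowns (one per boundary component) and some number $D\ge 0$ of open disks. An Euler characteristic count then gives
\[
\chi(\Sigma)=K-2K+D=D-K .
\]
Consequently \eqref{Eq:Claim} is equivalent to $\mu_0\le 2K-D$. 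Moreover, $\Sigma$ retracts onto $\gamma_0$ exactly when $D=0$. The claim therefore reduces to showing $\mu_0\le 2K-D$, with equality iff $D=0$.

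Next we describe arcs with $\iota(\alpha,\gamma_0)=1$. Such an arc, in minimal position, leaves one crown, crosses a single edge of $\gamma_0$, and enters the region on the other side, which must also be a crown since the arc ends on $\D\Sigma$. Hence each such arc is determined, up to isotopy, by an edge $e$ of $\gamma_0$ whose two sides both lie in crowns; call such edges \emph{boundary edges}. This edge is well defined: two once-crossing arcs through different edges cannot be isotopic, since the edge crossed is an isotopy invariant of arcs in minimal position relative to a filling curve. Non-parallel arcs in a family realizing $\mu_0$ therefore cross pairwise distinct edges. Conversely, the arcs dual to distinct boundary edges can be drawn disjointly, each as a short segment across its edge extended within the two crowns to the boundary. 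This gives
\[
\mu_0=\#\{\text{boundary edges}\},
\]
with one caveat. We must rule out that a dual arc is inessential, or that two distinct dual arcs are parallel. The first would require the arc together with boundary to bound a disk, forcing $\gamma_0$ to have a monogon or bigon, which contradicts minimal position. For the second, a parallelism region between two such arcs would be a rectangle meeting $\gamma_0$ in a single arc crossing both, again forcing a bigon. This is the step I expect to need the most care, since it is exactly where minimal position of $\gamma_0$ is used.

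Finally we count edges. Each of the $D$ disks has at least one edge on its boundary, and the sides of such an edge cannot be two crowns, so the edge is not a boundary edge. The key point is to get at least $D$ distinct non-boundary edges rather than merely one. I would argue this with a matching: assign each disk an edge on its boundary, arranged injectively. Such an assignment exists because a Hall-type condition holds: any $s$ disks have at least $s$ boundary edges in total, as each disk is a polygon with at least $3$ sides (minimal position excludes monogons and bigons) and each edge borders at most $2$ disks, giving $3s/2\ge s$ edges. This yields $\mu_0\le 2K-D$. When $D=0$, every edge has crowns on both sides, so all $2K$ edges are boundary edges and $\mu_0=2K$, giving equality. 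When $D>0$, the inequality is strict, which completes the equality case.
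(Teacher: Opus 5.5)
Your proof is correct and follows essentially the paper's route. Both arguments rest on three ingredients:
\begin{itemize}
\item the Euler count $\chi(\Sigma)=K-2K+D$ for the $4$-valent graph $\gamma_0$;
\item the observation that arcs meeting $\gamma_0$ once are dual to edges with crowns on both sides;
\item the fact that each disk face has at least $3$ sides while each edge borders at most two faces.
\end{itemize}
Your exact identification of $\mu_0$ with the number of such edges handles the inequality and the equality case at once. The paper instead proves $F-n\le E-\mu_0$ and treats equality separately via the dual arc system.

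Two small repairs are worth making. First, the Hall matching is unnecessary and does not by itself give strictness. Double counting directly yields at least $\lceil 3D/2\rceil>D$ edges bordering disks when $D>0$, and that is what makes the inequality strict. Second, the essentiality and non-parallelism of the dual arcs do not come from a bigon. Essentiality follows from parity: a closed curve cannot cross the boundary of a disk exactly once. Non-parallelism follows from the absence of null-homotopic subloops in minimal position: the unique strand of $\gamma_0$ inside a parallelism rectangle must then stay within a single edge, which forces the two edges to coincide.
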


\begin{proof}
    To see this consider an arc system $\alpha$ (a priori not maximal) which consists of $\mu_0$ components, all of which meet $\gamma_0$ exactly once. Also, let $\tilde\Sigma$ be the surface obtained from $\Sigma$ by capping off all $n$ boundaries, but remember the location of these boundaries as the boundaries of small disks in $\tilde\Sigma$. We consider 
$\gamma_0$ as a $4$--regular graph $G$ in $\tilde\Sigma$ with $K=\iota(\gamma_0,\gamma_0)$ vertices giving a cell 
decomposition of $\tilde\Sigma$. Let $E$ be the number of edges of $G$ and $F$ be the 
number of faces of this decomposition. 

We visualize $\alpha$ as a set of arcs transverse to edges of $G$; each arc in $\alpha$
intersecting exactly one edge in $G$. This means $\mu_0 \le E$. 
Also, $F \ge n=\vert\D\Sigma\vert$ because $\gamma_0$ fills $\Sigma$ and hence each boundary component 
of $\Sigma$ lies in a different face. In fact, we have 
\begin{equation} \label{Eq:More-edges} 
     F - n \le E -\mu_0.
\end{equation} 
This is because each face of $G$ that does not contains a boundary component has 
at least 2 edges (in fact 3 since mono-gons and bi-gons have to contain a boundary component) 
and these edges are not crossing any arc in $\alpha$. Also each edge is counted at most twice. 
This proves \eqref{Eq:More-edges} which can be rewritten as 
\[ 
      - E +F \le - \mu_0 + n.
\]
Now, we have 
\[
          \chi(\tilde\Sigma) = K - E + F \le K - \mu_0 + n.  
\]
Therefore, 
\begin{equation} \label{eq:ineq euler char}
    \chi(\Sigma) = \chi(\tilde\Sigma) -n \le K - \mu_0. 
\end{equation}        
Which implies \eqref{Eq:Claim}.

Note that following the proof of \eqref{Eq:More-edges} we have $F-n\le(2/3)(E-\mu_0)$. Hence, we have equality in \eqref{eq:ineq euler char}, if and only if we have equality in \eqref{Eq:More-edges} which is possible only when $F-n=E-\mu_0=0$. However,  it is clear that $\Sigma$ retracts on $\gamma_0$ if and only if $\Sigma\setminus\gamma_0$ has no disk components, which is equivalent to $F=n$. If $F=n$, $\Sigma$ can be seen as a ribbon graph over $G$, hence the arc system dual to $G$ has $E$ arcs all crossing $\gamma_0$ once and non-homotopic one to the other so $E\le\mu_0$ and then $E=\mu_0$. Which concludes the study of the equality case. 

\end{proof}

We are now ready to prove the following:
\begin{sat} \label{Thm:K-bound}
 If $(\Sigma,\gamma_0)$ is an essential local type such that $\gamma_0$ has $K$ self-intersections and no outer simple components then for any sequence of realizations $\phi_g$
\[
\FC_g(\phi_g(\gamma_0))=\mathrm{O}(\FC_\infty^K)\text{ where }\FC_\infty^K=\dfrac{1}{2^{2g}}\left( \frac{e}{3g}\right)^{4g} g^{K+2}. 
\]
Furthermore, we have $\FC_g(\phi_g(\gamma_0))\asymp\FC_\infty^K$ $\Sigma$ retracts to $\gamma_0$ and $\phi_g$ is non-separating, and $\FC_g(\phi_g(\gamma_0))=\mathrm{O}(\frac 1g\FC_\infty^K)$ if any of these two conditions fails.
\end{sat}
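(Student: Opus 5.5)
The plan is to reduce to the non-separating realization, apply Theorem \ref{thm:final asym}, and then bound the exponent using Lemma \ref{lem:BoundChiPlusMu0}.

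\textbf{Step 1: reduction to non-separating realizations.} For a realization $\phi_g$ that is not $\phi_{\ns}$, Theorem \ref{sat dominant type} gives
\[
\FC_g(\phi_g(\gamma_0))\le\sum_{i\ge2}\FC_g(\phi_i(\gamma_0))=\mathrm{O}\Big(\tfrac1g\FC_g(\phi_{\ns}(\gamma_0))\Big).
\]
So it suffices to prove that $\FC_g(\phi_{\ns}(\gamma_0))=\mathrm{O}(\FC_\infty^K)$ always holds, with $\asymp$ exactly when $\Sigma$ retracts to $\gamma_0$ and $\mathrm{O}(\frac1g\FC_\infty^K)$ otherwise. Then a separating $\phi_g$ gains a factor $1/g$, which gives the final claim.

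\textbf{Step 2: the case without annular components.} Since $\gamma_0$ has no outer simple components, $\Sigma$ has no annular components, so $n'=0$. Theorem \ref{thm:final asym} then gives
\[
\FC_g(\phi_{\ns}(\gamma_0))\asymp\frac{1}{2^{2g}}\Big(\frac{e}{3g}\Big)^{4g}\frac{g^{\chi(\Sigma)+\mu_0+2}}{\iota_0^{6g}}.
\]
We split into two cases.
\begin{itemize}
\item If $\iota_0\ge2$, the factor $\iota_0^{-6g}$ decays exponentially. Since $\chi(\Sigma)+\mu_0$ is a constant of the local type, the frequency is $\mathrm{O}(\frac1g\FC_\infty^K)$. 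In this case $\Sigma$ does not retract to $\gamma_0$: the remark at the end of Lemma \ref{lem:surgery-reduction-non-essential} shows that retraction forces $\iota_0=1$, since an arc dual to an edge meets $\gamma_0$ once.
\item If $\iota_0=1$, Lemma \ref{lem:BoundChiPlusMu0} gives $\chi(\Sigma)+\mu_0\le K$, with equality if and only if $\Sigma$ retracts on $\gamma_0$. Both sides are integers, so failure of equality means $\chi(\Sigma)+\mu_0\le K-1$. This yields the factor $1/g$.
\end{itemize}

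\textbf{Main obstacle: inner simple components.} Lemma \ref{lem:BoundChiPlusMu0} is stated for $\gamma_0$ without simple components, but here $\gamma_0$ may still have inner simple components parallel to $\D\Sigma$. These do not change $\Sigma$ or $K$, but they can increase $\iota_0$ or decrease $\mu_0$: an arc hitting a boundary with a parallel simple component picks up extra intersections. Removing them therefore only decreases $\iota_0$, or keeps it and increases $\mu_0$, which increases the quantity to be bounded. The same monotonicity argument as in the inner-versus-outer corollary (Table \ref{table}) reduces the bound to the case without simple components.

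For the equality case with such components present, I would argue as follows. If $\Sigma$ retracts on $\gamma_0$, the extremal arc system dual to $G$ uses every boundary component, so any inner component forces $\mu_0<E$ or $\iota_0>1$, and hence a loss of a factor $g$. I would check this carefully. Once this is settled, the statement follows by combining Steps 1 and 2.
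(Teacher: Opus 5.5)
Your proposal is correct and follows the paper's own argument. You reduce to the non-separating realization via Theorem \ref{sat dominant type}, apply Theorem \ref{thm:final asym} with $n'=0$, and then conclude by exponential decay when $\iota_0\ge 2$ and by Lemma \ref{lem:BoundChiPlusMu0} together with integrality of $\chi(\Sigma)+\mu_0$ when $\iota_0=1$.

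Your extra treatment of inner simple components goes slightly beyond the paper, which simply assumes in that section that $\gamma_0$ has no simple components. The sketch you give is sound: $\iota(\cdot,\gamma_0)$ is additive over components, and when the non-simple part is a spine, every boundary component is reached by an edge-dual arc, so any inner component costs at least a factor $g$.
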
 

\begin{proof}
We get from Theorem \ref{sat dominant type} that to prove the claimed upper bound, it is enough to study the case where all the realizations are non-separating. For such realizations we get from \eqref{Eq:Simplified}, the earlier discussion, and \eqref{Eq:Sigma-dependence} that 
$$\FC_g(\phi_g(\gamma_0))\asymp 
    \dfrac{1}{2^{2g}}
        \left( \frac{e}{3g}\right)^{4g}
    \dfrac{g^{\mu_0+\chi(\Sigma)+2}}{\iota_0^{6g}(\mu_0-1)!}\le \dfrac{1}{2^{2g}}\left( \frac{e}{3g}\right)^{4g} g^{K+2}=\FC_\infty^K,$$
with equality if and only if $\Sigma$ retracts to $\gamma_0$. Otherwise, the power of $g$ is at least one less or $\iota_0>1$.
\end{proof}

\begin{rmk*}
    From the theorem above we have a strategy to produce dominant local types of given number of self intersections: the core curve of a 4-valent ribbon graph with $K$ vertices will always be a dominant type among the curves with $K$ self-intersections. We do not know how the implied multiplicative constants depend on the specific ribbon graph. See Figure \ref{fig:RB4Int} and \ref{fig:Curves4Int} for the construction of such curves for $K=4$.
    \begin{figure}[ht!]
        \centering
        \includegraphics[width=0.6\linewidth]{ 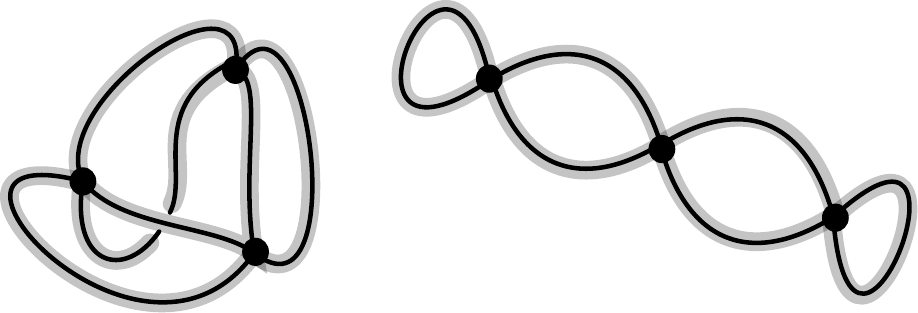}
        \caption{Two 4-valent ribbon graphs with 3 vertices (vertices are oriented clockwise)}
        \label{fig:RB4Int}
    \end{figure}

    \begin{figure}[ht!]
        \centering
        \includegraphics[width=0.5\linewidth]{ 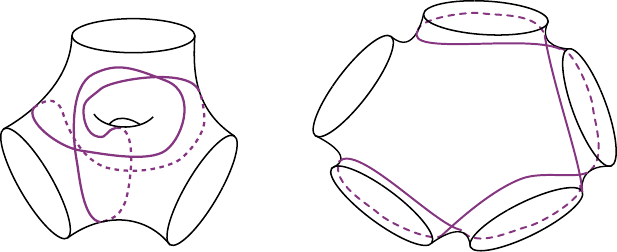}
        \caption{Curves with self-intersection 3 corresponding to the core curve of the ribbon graphs of Figure \ref{fig:RB4Int}}
        \label{fig:Curves4Int}
    \end{figure}

\end{rmk*}

\subsection{Global asymptotic frequency at fixed intersection number}
We can now treat the global frequency of curves of intersection number $K>0$ fixed. Given $g\ge 3$ and $K>0$ there are finitely many types of multicurves with $K$ self-intersections. We still consider only multicurves with no simple components. Denote by $\FC_g(\iota=K)$ the total frequency of such curves, meaning the sum of the frequency over all possible types. Our goal is to describe the asymptotics of $\FC_g(\iota=K)$ as $g$ grows, identifying the specific types of multicurves with the greatest contribution. This would follow directly from Theorem \ref{Thm:K-bound} if it were true that independently of $g$ there are only finitely many types of multicurves with $K$-self-intersections. That is unfortunately not the case. What is true, is that there are only finitely many types of {\em non-separating} multicurves with $K$-self-intersection. Indeed, there is at most one type for every ribbon graph with $K$ vertices, all of them of degree $4$. Denoting $\FC_g(\iota=K,\ns)$ be the sum of the frequencies of the finitely many types of non-separating curves and $\FC_g(\iota=K,\sep)$ the one over the other one we will prove the following:

\begin{sat} \label{thm:main type fixed int number} 
With the same notation as above, we have
\[  \dfrac{\FC_g(\iota=K,\sep)}{\FC_g(\iota=K,\ns)} = \mathrm{O}\left(\frac{1}{g}\right) .   \]
\end{sat}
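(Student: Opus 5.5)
The plan is to split the set of types of curves with $K$ self-intersections according to the local type they realize, and to compare separating and non-separating contributions local type by local type. Every curve $\gamma\subset X$ with $\iota(\gamma,\gamma)=K$ (and no simple components) is of some local type $(\Sigma,\flip_\Sigma,\gamma_0)$ with $\iota(\gamma_0,\gamma_0)=K$. By the Remark after Proposition \ref{prop:finitely many realization}, there are, independently of $g$, only finitely many such local types with $\Sigma=\Sigma_\hyp$. Call this finite list $\mathcal{T}_K$. For each local type we have, as in Theorem \ref{sat dominant type}, a decomposition of $\FC_g(\Sigma,\flip_\Sigma,\gamma_0)$ into the non-separating realization $\phi_\ns$ and separating realizations $\phi_2,\dots,\phi_{m_g}$. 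That theorem gives
$$\sum_{i\ge2}\FC_g(\phi_i(\gamma_0))=O\!\left(\tfrac1g\,\FC_g(\phi_\ns(\gamma_0))\right),$$
with a constant depending only on the local type. Summing over the finite set $\mathcal{T}_K$ therefore bounds the total separating-realization contribution by $O(1/g)$ times the total non-separating-realization contribution.

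The next step is to relate ``non-separating realization'' to ``non-separating curve''. If $\phi$ is a non-separating realization, $X\setminus\phi(\gamma_0)$ need not be connected, because $\Sigma\setminus\gamma_0$ may contain disks. Conversely, if $\gamma$ is a non-separating curve of local type $(\Sigma,\flip_\Sigma,\gamma_0)$, then $\Sigma\setminus\gamma_0$ has no disk components, so $\Sigma$ retracts onto $\gamma_0$ and the realization is non-separating. I will use this to write
$$\FC_g(\iota=K,\sep)\le \sum_{\mathcal{T}_K}\sum_{i\ge 2}\FC_g(\phi_i(\gamma_0))+\sum_{\mathcal{T}_K^{\mathrm{bad}}}\FC_g(\phi_\ns(\gamma_0)),$$
where $\mathcal{T}_K^{\mathrm{bad}}$ consists of the local types with non-separating realization for which $\Sigma$ does not retract onto $\gamma_0$ or which are non-essential. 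The denominator $\FC_g(\iota=K,\ns)$ contains $\FC_g(\phi_\ns(\gamma_0))$ for every retracting essential local type, and at least one exists (the core curve of a $4$-valent ribbon graph with $K$ vertices, e.g. $\gamma^K_{\max}$). By Theorem \ref{Thm:K-bound}, each such term is $\asymp\FC_\infty^K$, so $\FC_g(\iota=K,\ns)\gtrsim\FC_\infty^K$.

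It remains to bound each piece by $O(\frac1g\FC_\infty^K)$. For the bad non-separating realizations, Theorem \ref{Thm:K-bound} directly gives $O(\frac1g\FC_\infty^K)$ for essential, non-retracting local types. For non-essential ones, Theorem \ref{thm: essential negligeable bis} bounds them by $O(\FC_g(\phi^e_\ns(\gamma_0)))$, which is $O(\frac1g\FC_\infty^K)$ if the essential version is not retracting. When it is retracting, the sharper bound \eqref{eq:essential dominant retract} gives $O(\frac1g\FC_\infty^K)$. For the separating realizations, the sum $\sum_{i\ge2}$ is $O(\frac1g\FC_g(\phi_\ns(\gamma_0)))$ by Theorem \ref{sat dominant type}, and $\FC_g(\phi_\ns(\gamma_0))=O(\FC_\infty^K)$ by Theorem \ref{Thm:K-bound} together with Theorem \ref{thm: essential negligeable bis} in the non-essential case.

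The main obstacle I expect is the bookkeeping between curve types and local types. The separating types of curves are not finitely many uniformly in $g$, so I must make sure each separating curve type is counted within some $\FC_g(\Sigma,\flip_\Sigma,\gamma_0)$ over the fixed finite list $\mathcal{T}_K$, and that overlaps between different local types realizing the same curve only cause overcounting, which is harmless for an upper bound. Since the number of local types is bounded independently of $g$, adding the $O(1/g)$ estimates over $\mathcal{T}_K$ keeps the constant uniform, and this yields the theorem.
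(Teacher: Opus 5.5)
Your proposal is correct and follows the paper's proof: reduce to the finitely many local types with $K$ self-intersections, control separating realizations by Theorem \ref{sat dominant type}, and identify non-separating curves with non-separating realizations of essential retracting local types (Lemma \ref{lem carac non sep}); the remaining non-separating realizations are then bounded by Theorem \ref{Thm:K-bound} and Theorem \ref{thm: essential negligeable bis} together with \eqref{eq:essential dominant retract}. Your use of inequalities to absorb overlaps between local types, and your explicit handling of non-essential, non-retracting local types (via \eqref{essential is dominant} combined with Theorem \ref{Thm:K-bound}), are slightly more careful than the paper.

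The one thing to fix concerns the direction of the characterization you use. Both your numerator inequality and your lower bound on $\FC_g(\iota=K,\ns)$ rely on the implication ``non-separating realization of an essential local type with $\Sigma$ retracting onto $\gamma_0$ $\Rightarrow$ non-separating curve''. This is the converse of the implication you actually argued. It is true, because $X\setminus\phi_\ns(\gamma_0)$ is the connected surface $Z$ with the boundary collars of $\Sigma\setminus\gamma_0$ attached, but you should state and justify it.
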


A first step towards this proof is to clarify the distinction between non-separating curves and curves which arise as the non-separating realization of some local type.

\begin{lem} \label{lem carac non sep}
    Let $\gamma$ be the non-separating realization of a local type $(\Sigma,\flip_\Sigma,\gamma_0)$. The curve $\gamma$ is non-separating if and only if the local type is essential and $\Sigma$ retracts on $\gamma_0$.
\end{lem}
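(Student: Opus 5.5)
We would prove the two implications separately, translating "$\gamma$ is non-separating" into a statement about the connectivity of $X\setminus\gamma$, which decomposes along $\phi(\Sigma)$. Write $\gamma=\phi_{\ns}(\gamma_0)$ with $\phi_{\ns}:\Sigma\to X$ the non-separating realization. The complement $X\setminus\gamma$ is the union of three kinds of pieces: the components of $\phi(\Sigma_\hyp)\setminus\gamma$, the annuli $\phi(\Sigma_\ann)\setminus\gamma$, and the (connected, by non-separation) hyperbolic part $Z$ together with the annuli of $\CN(\Gamma)$. The last kind are glued to $\phi(\Sigma)$ along $\Gamma$.

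Assume first that the local type is essential and that $\Sigma$ retracts on $\gamma_0$. Then $\Sigma_\hyp\setminus\gamma_0$ has no disk components: every component is a boundary-parallel half-open annulus, as in the equality case of Lemma \ref{lem:BoundChiPlusMu0}. Each such annulus contains a boundary component of $\Sigma_\hyp$. Since the flip is trivial on $\D\Sigma_\hyp$, that boundary component is a curve of $\Gamma_{\fix}$ and is therefore adjacent to $Z$. Similarly, each annular component of $\Sigma$ cut along its core gives two annuli, each adjacent to $Z$ through a curve of $\Gamma_\ann$. Hence every component of $X\setminus\gamma$ meets the connected set $Z$, so $X\setminus\gamma$ is connected.

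Conversely, suppose $\Sigma$ does not retract on $\gamma_0$. Then some component $D$ of $\Sigma_\hyp\setminus\gamma_0$ is a disk. Its closure is bounded by arcs of $\gamma_0$ and contains no boundary component of $\Sigma$, so $D$ is a component of $X\setminus\gamma$ distinct from the rest, and $\gamma$ separates. Note that this already uses that $\gamma$ is in minimal position, so the count of complementary regions is well defined for the homotopy class, as discussed in the introduction.

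Now suppose the type is non-essential, so the flip exchanges two boundary components $c,c'$ of $\Sigma_\hyp$. Then $X\setminus\phi(\Sigma)$ contains an annulus $A$ joining $\phi(c)$ and $\phi(c')$ and disjoint from $Z$. If retraction fails we are already done. Otherwise the components of $\Sigma_\hyp\setminus\gamma_0$ containing $c$ and $c'$, glued along $A$, form a double-crowned annulus. This annulus meets no other boundary component, hence is not adjacent to $Z$, and it is a separate component of $X\setminus\gamma$. So $\gamma$ is separating.

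The step we expect to be the main obstacle is justifying that these complementary pieces really are distinct components for the minimal-position representative. In particular we must show that the crowns around $c$ and $c'$ are different components of $\Sigma_\hyp\setminus\gamma_0$; we would argue this from the retraction, since each annular component contains exactly one boundary curve. We must also show that $Z$ cannot reach the disk or the double crown through any other route. This reduces to noting that the only interfaces between $Z$ and $\phi(\Sigma)$ are the curves of $\Gamma_{\fix}$ and $\Gamma_\ann$.
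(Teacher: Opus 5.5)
Your proposal is correct and is essentially the paper's proof. The three cases match: a disk component when $\Sigma$ does not retract onto $\gamma_0$, a double-crowned annulus cut off from $Z$ when the flip pairs two boundary components of $\Sigma_\hyp$, and every complementary piece attached to the connected $Z$ in the essential retracting case. The distinctness issues you flag are exactly what the paper leaves implicit, and they hold for two reasons. First, $Z\neq\emptyset$, because a local type's flip never exchanges all of $\D\Sigma_\hyp$. Second, ``not retracting $\Rightarrow$ a disk component'' holds under that section's standing assumption that $\gamma_0$ has no simple components.
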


\begin{proof}
    By definition, no separating realization can nor produce non-separating curves. 
    Also, if a local type is not essential then there are crowns in $X\setminus\phi(\gamma_0)$ for any realization. So, no realization of a non-essential local type is a non-separating curve.
    If $\Sigma$ does not retracts on $\gamma_0$ then $\Sigma\setminus\gamma_0$ has disks in its connected components so no realization will produce a non-separating curve.
    
    Finally, if $\Sigma$ retracts to $\gamma_0$ for an essential local type then $\Sigma\setminus\gamma_0$ is an union of annuli, then for the non-separating realization $X\setminus \phi_\ns(\gamma_0)$ is a single connected component corresponding to $X\setminus\phi_\ns(\Sigma)$ to which the annular components of $X\setminus \phi_\ns(\gamma_0)$ are glued: it is connected and $\phi_\ns(\gamma_0)$ is a non-separating curve.
\end{proof}

\begin{comment}
$$\dfrac{\FC_g(\iota=K)}{\FC_g(\iota=K,\nsl) } = 1+\mathrm{O}\left(\frac{1}{g}\right) 
    \qquad   \dfrac{\FC_g(\iota=K, \sep )}{\FC_g(\iota=K,\ns) } = \mathrm{O}\left(\frac{1}{g}\right)
    \qquad $$
    
\end{comment}

\begin{proof}[Proof of Theorem \ref{thm:main type fixed int number}]
Given $K>0$ there are only finitely many local types without simple components and with self-intersection number $K$. Denote them by \[(\Sigma_1,\flip_1,\gamma_1)\cdots(\Sigma_{n_K},\flip_{n_K},\gamma_{n_K}).\]
Fix now $X$ a surface of large genus $g$. For all $i\le n_k$ there are finitely many realizations of $(\Sigma_i,\flip_i,\gamma_i)$. Denote this number by $m_g^i$ and assume that the first one is the non-separating one (following the notations from section 7). Hence
\[ \FC_g( \iota=K,\sepl) = \sum\limits_{i=1}^{n_K} \sum\limits_{j=2}^{m_g^i} \FC_g(\phi_j(\gamma_i)),  \]
is the total frequency of curves which are not of non-separating local type. It follows from \eqref{eq:FirstBoundNonSep} that for all $i$ there is is a constant $C_i$ independent of $g$ such that 
\[  \FC_g( \iota=K,\sepl) \le \frac{C_i}{g}\cdot \sum\limits_{i=1}^{n_K} \FC_\infty^\ns(\gamma_i).\]
If we use \eqref{eq:Vasymp} instead of Corollary \ref{kor useful aggarwal} \eqref{eq:asymptotic for volume with taylor coef} in \eqref{eq: asym non sep step 1} we have that for every $i\le n_K$
\[  \left| 1-\|\varepsilon_i(g,\cdot)\|_\infty \right| \cdot  \FC_\infty^\ns(\gamma_i)
\le \FC_g(\phi_\ns(\gamma_i))
\le \left| 1+\|\varepsilon_i(g,\cdot)\|_\infty \right| \cdot  \FC_\infty^\ns(\gamma_i)\]
where $\|\cdot\|_\infty$ is the supremum norm. Writing $\varepsilon(g)=\max\limits_i \|\varepsilon_i(g,\cdot)\|_\infty\xrightarrow[g\to\infty]{}0$ and replacing the constants $C_i$ by their maximum we get
\begin{equation} \label{step 1 sep}
    \FC_g( \iota=K,\sepl) \le \frac Cg\sum_{i=1}^{n_K}\FC_g(\phi_\ns(\gamma_i)).
\end{equation}
By Lemma \ref{lem carac non sep} if $(\Sigma_1,\flip_1,\gamma_1),\cdots,(\Sigma_{p_k},\flip_{p_k},\gamma_{p_k})$ are exactly the essential local types for which $\Sigma_i$ retracts on $\gamma_i$ (there is at least one) then we have 
\[ \FC_g(\iota=K,\ns)= \sum\limits_{i=1}^{p_k}\FC_g(\phi_\ns(\gamma_i)) \quad \text{and} \quad   \FC_g(\iota=K,\sep)= \sum\limits_{i=p_k}^{n_k}\FC_g(\phi_\ns(\gamma_i))+\FC_g(\iota=k,\sepl).   \]
Theoreom \ref{Thm:K-bound} ensures thus that 
\[  \FC_g(\iota=K,\ns) \asymp \FC_\infty^K.\]

The sum $\sum\limits_{i=p_k}^{n_k}\FC_g(\phi_\ns(\gamma_i))$ decomposes into the sum over to types of local types. Those for which $\Sigma_i$ retracts on $\gamma_i$ but are not essential, by \eqref{eq:essential dominant retract} their frequency grow like $\mathrm{O}(\frac{1}{g}\FC_\infty^K)$. The second one are the one where $\Sigma_i$ does not retract on $\gamma_i$, the same bound comes from Theorem \ref{Thm:K-bound}. This together with \eqref{step 1 sep} ends the proof.
\end{proof}

Noting that $\FC_g(\iota=K)=\FC_g(\iota=K,\sep)+\FC_g(\iota=K,\ns)$ we get directly from Theorem \ref{thm:main type fixed int number} and \ref{Thm:K-bound} the asymptotic for $\FC_g(\iota=K)$.

\begin{sat} \label{kor asymp fix int} For any $k\ge0$, as $g$ goes to infinity
    \[   \FC_g(\iota=K) \asymp   \dfrac{1}{2^{2g}}\left( \frac{e}{3g}\right)^{4g} g^{K+2}. \]
    \qed
\end{sat}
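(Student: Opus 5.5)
The plan is to write $\FC_g(\iota=K)=\FC_g(\iota=K,\ns)+\FC_g(\iota=K,\sep)$ and deduce the asymptotic from the two results just proved. The lower and upper bounds will be obtained separately.

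For the non-separating part, Lemma \ref{lem carac non sep} identifies non-separating curves with $K$ self-intersections as the non-separating realizations $\phi_\ns(\gamma_i)$ of the essential local types $(\Sigma_i,\gamma_i)$ for which $\Sigma_i$ retracts onto $\gamma_i$. There are finitely many such local types, independently of $g$. At least one exists for every $K\ge 1$: take the core curve of any $4$-valent ribbon graph with $K$ vertices, as in the remark following Theorem \ref{Thm:K-bound}.

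\begin{itemize}
\item \emph{Lower bound.} For each of these local types, Theorem \ref{Thm:K-bound} gives $\FC_g(\phi_\ns(\gamma_i))\asymp\FC_\infty^K=\frac{1}{2^{2g}}\left(\frac{e}{3g}\right)^{4g}g^{K+2}$. Summing over finitely many types, we get $\FC_g(\iota=K,\ns)\asymp\FC_\infty^K$. In particular, $\FC_g(\iota=K)\ge\FC_g(\iota=K,\ns)\ge c\,\FC_\infty^K$ for some $c>0$ and all large $g$.
\item \emph{Upper bound.} Theorem \ref{thm:main type fixed int number} gives $\FC_g(\iota=K,\sep)=\mathrm{O}(g^{-1})\FC_g(\iota=K,\ns)$. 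Hence
\[\FC_g(\iota=K)=\FC_g(\iota=K,\ns)\bigl(1+\mathrm{O}(1/g)\bigr)\asymp\FC_\infty^K.\]
This is the claimed statement.
\end{itemize}

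The step needing the most care is the upper bound. The separating curves of each local type are realized in a number of ways that grows with $g$, so the count is not uniform in $g$. This is exactly where Theorem \ref{thm:main type fixed int number} does the work: it controls the whole separating sum uniformly through \eqref{eq:FirstBoundNonSep}. We only need to check that its hypotheses match ours, namely that we consider multicurves without simple components.

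The case $K=0$ is different, since then the local type is the annulus with simple closed curves. There the statement is the Delecroix--Goujard--Zograf--Zorich asymptotic \eqref{eq DGZZ asymptotic2}, which we simply quote. It is consistent with Theorem \ref{thm:final asym}, with exponent $g^{2}$.
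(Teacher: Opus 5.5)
Your proposal is correct and follows the paper's own argument: you split $\FC_g(\iota=K)$ into its non-separating and separating parts, obtain $\FC_g(\iota=K,\ns)\asymp\FC_\infty^K$ from Theorem \ref{Thm:K-bound} via Lemma \ref{lem carac non sep}, and absorb the separating part using Theorem \ref{thm:main type fixed int number}. One small correction: for the existence step, replace ``any $4$-valent ribbon graph'' by a specific example such as $\gamma^K_{\max}$, because not every such graph works (one vertex with two loops in interleaved cyclic order yields two simple curves crossing once).
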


\begin{appendix}
    \section{Computations for the figure 8} \label{app: figure 8}

\subsection{0 or 1 intersection in the genus 2 surface} In this appendix we produce explicit computation of frequencies of curves in genus 2.

We have stated our results for genus grater than 3 but we kept track where modifications are needed in genus 2. Hence, let $X$ be a surface of genus 2 and $\gamma_0$ a non-filling multicurve of $X$, define $K=\ker(\Map(X_2)\actson\CM\CL(X_2))$, and following \eqref{eq:CstGenus2} and \eqref{eq:CstGenus2bis} 
\begin{align*}
    k_1(\gamma_0) & = \dfrac{|K|}{|K \cap \Stab_{\Map(X)}(\gamma_0)|} \\
    k_2(\gamma_0) & = \dfrac{|K|}{|K \cap G|}. 
\end{align*}
Hence, Theorem \ref{thm constant c in terms of intersection numbers} can be extended to the genus 2 case.

\begin{kor} \label{cor constant in genus 2}
    Let $\gamma_0$ be a multicurve in a closed surface $X$ of genus $2$, let $\Sigma\subset X$ be the smallest subsurface containing $\gamma_0$, and $Z$ be the union of the hyperbolic components of $X\setminus\Sigma$. Let $\flip_\Sigma$ also be the involution \eqref{eq involution induced for Sigma}, ${\bf I}_{\gamma_0}$ be the linear form \eqref{eq intersection with gamma0}, and ${\bf b}$ the boundary map on $\Sigma$ and $\bf w$ its quotient by the flip as defined in \eqref{eq:defWSigma}. Finally, set
$$\Delta_\Sigma(\gamma_0) :=\{\bar a \in \BA(\Sigma,\flip_\Sigma) : {\bf I}_{\gamma_0}(\bar a)\le 1 \}.$$
With this notation we have
$$ \FC_g(\gamma_0)=
\frac {k_1\cdot k_2\cdot2^{\chi(Z)+|\pi_0(Z)|}}{\sym(\gamma_0)}
\int\limits_{\Delta_\Sigma(\gamma_0)}
V_Z\left({\bf b}(\bar a)_{|\D Z}\right) \product({\bf w}(\bar a)) \, d{\FM_{\BA(\Sigma,\flip_\Sigma)}}(\bar a)$$
where $V_Z(\cdot)$ is the Kontsevich volume polynomial associated to the surface $Z$, and where $\FM_{\BA(\Sigma,\flip_\Sigma)}$ is the measure provided by Proposition \ref{prop existence of thurston like measure on flip-invariant arc complex}. 
\end{kor}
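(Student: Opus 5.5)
The plan is to rerun the proof of Theorem \ref{thm constant c in terms of intersection numbers} in genus $2$ and track exactly where the assumption $g\ge 3$ was used. Only two places depend on it. The first is formula \eqref{eq formula for c(gamma) in terms of thurston measure 0}, which expresses $\FC_g(\gamma_0)$ as the Thurston measure of the quotient. In genus $2$ the hyperelliptic involution acts trivially on $\CM\CL(X)$, and by \cite[pp.146-148]{book} the formula acquires the multiplicative factor \eqref{eq:CstGenus2}, namely $k_1(\gamma_0)$. The remaining steps of Section \ref{sec frequency} are insensitive to the genus. These are the definition of $\CM\CL_{\gamma_0}(X)$ (a full-measure, stabilizer-invariant set), the homomorphism $\pi_*$, and the subgroup $G$ with $\sym(\gamma_0)=[\Stab_{\Map(X)}(\gamma_0):G]$.

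The second place is the application of Lemma \ref{lem:measure of a fund dom} that yields \eqref{eq0}. Its statement is general and involves $|H/\Ker(H\actson\CX)|$. In genus $\ge 3$ we used that this kernel is trivial, but in genus $2$ the kernel is given by \eqref{eq:CstGenus2bis}, which is $k_2(\gamma_0)$. So I would rewrite \eqref{eq0} with $k_2$ inserted. I would also check the analogous point inside the proof of Theorem \ref{thm Kasra formula}, where one asserts that $\pi_*^{-1}(\Stab_{\pi_*(G)}(\sigma\cup\zeta))$ acts with trivial kernel on $\CM\CL(X)$. In genus $2$ the hyperelliptic element may lie in this group. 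I would either absorb this into the same bookkeeping as $k_2$ (the kernel of the $G$-action is $K\cap G$) or note that the fundamental domain built from adapted train tracks is chosen for $G/(K\cap G)$. I will carry out the latter so that no further factor appears.

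After these corrections, Theorem \ref{thm Kasra formula} holds with the extra factor $k_1k_2$. Everything from there on is independent of the genus and passes through verbatim. This includes Lemma \ref{lem solutions switch equation adapted train track}, Lemma \ref{lem intersection given by linear form}, the counting in Section \ref{sec:FrequencyCounting} giving Theorem \ref{thm frequency counting}, and the passage to the integral using Norbury's Theorem \ref{thm KN} together with the finiteness in Corollary \ref{cor finite volume}. In particular the dimension count $3|\chi(X)|=\dim\BA(\Sigma,\flip_\Sigma)+\deg V_Z+|\Gamma|$ is unchanged. Together these give the displayed formula.

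The main obstacle is the kernel bookkeeping: making sure the two factors are not double counted, and that no additional kernel contribution arises from the stabilizers of the arc systems in the proof of Theorem \ref{thm Kasra formula}. I would handle this by working throughout with the effective quotient groups $\Stab/(K\cap\Stab)$ and $G/(K\cap G)$, following \cite{book}.
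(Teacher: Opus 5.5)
Your route is the paper's: insert \eqref{eq:CstGenus2} in \eqref{eq formula for c(gamma) in terms of thurston measure 0}, then use \eqref{eq:CstGenus2bis} as the kernel in Lemma~\ref{lem:measure of a fund dom}. The second identification is wrong, however, and this is where the argument fails.

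Write $S=\Stab_{\Map(X)}(\gamma_0)$ and $K=\Ker(\Map(X)\actson\CM\CL(X))=\{1,\iota\}$. The group $H=S/G$ consists of cosets of elements of $S$. So an element of $K$ can enter $\Ker(H\actson\CX)$ only if it stabilizes $\gamma_0$. Conversely, suppose $hG$ acts trivially on $\CX$. Take a generic $\lambda$, whose $\Map(X)$-stabilizer lies in $K$. Then $h\lambda=g\lambda$ for some $g\in G$, so $h\in GK$, i.e. $h\in G(K\cap S)$. Hence
\[
\Ker(H\actson\CX)\cong (K\cap S)/(K\cap G),
\]
of order $k_2/k_1$, not $k_2$. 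Combined with the factor $k_1$ from the first step, Lemma~\ref{lem:measure of a fund dom} yields $k_2$, not $k_1k_2$. Your count therefore uses the hyperelliptic involution twice whenever $\iota\notin S$.

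This case does occur. For the essential figure-eight of the appendix, $\iota$ exchanges the two pants of the theta decomposition. So $K\cap S=K\cap G=\{1\}$ and $H\cong\BZ/2$ acts effectively, while $k_1=k_2=2$.

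You can check this independently by integrating the count over $\CM_2$. There the only effect of $\iota$ is that $\iota(\gamma_0)\neq\gamma_0$ lies in the orbit and has the same length on every $X$. In Fenchel--Nielsen coordinates the leading terms are
\[
2\cdot\tfrac12\int_{\max(\ell_1+\ell_3,\ell_2)\le 1}\ell_1\ell_2\ell_3\,d\ell=\tfrac1{48}
\quad\text{and}\quad
\int_0^1\ell\,V_{1,2}(\ell,\ell)\,d\ell=\tfrac1{288},
\]
for the figure-eight and the non-separating simple curve respectively. This gives a ratio of $6$; both curves have connected $Z$. The formula with $k_1k_2$ gives $12$, while the factor $k_2$ gives $6$.

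Your handling of the kernel in the proof of Theorem~\ref{thm Kasra formula} is also not right. If $\iota\in G$, then $\pi_*(\iota)$ fixes every weighted arc system in the image of $\pi$, so it lies in every $\Stab_{\PMap(Z)}(\zeta)$. It also acts trivially on $\CM\CL(\tau_{\sigma\cup\zeta})$. So each term of that theorem must be multiplied by $|K\cap G|$. Passing to fundamental domains for $G/(K\cap G)$ does not remove this factor, because the denominators $|\Stab_{\PMap(Z)}(\zeta)|$ still count $\pi_*(\iota)$.

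In genus $2$ this only happens for the separating simple curve. There it is cancelled by a problem in the step you call verbatim. $N_Z$ vanishes at non-admissible boundary vectors; here these are odd $w$, since $Z$ is two one-holed tori. So replacing $N_Z$ by $2^{\chi(Z)+|\pi_0(Z)|}V_Z$ at every lattice point overcounts by $2$. This is why Mirzakhani's $1/48$ does not reveal the issue.

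Done consistently, the bookkeeping gives the uniform factor $|K|/\sym(\gamma_0)$ in front of the sum of Theorem~\ref{thm Kasra formula}. This agrees with the displayed constant when $\iota\in S\setminus G$, but is off by a factor $2$ when $\iota\notin S$. So your argument, like the paper's remark it follows, does not establish the formula with $k_1k_2$.
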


Record that $V_{0,3}=1$, $V_{1,1}(b)=\dfrac{b^2}{48}$ and $V_{1,2}(b_1,b_2)=\dfrac{(b_1^2+b_2^2)^2}{192}$.
See eg. \cite[Section 2.2]{Delecroix-Liu}.

\medskip

\noindent\textbf{Simple curve} Let $\gamma_0$ be a simple closed curve in $X_2$. Then we have two frequencies to compute, the one where $\gamma_0$ is separating and the one where it's not.

First of all, note that $K$ is the subgroup of mapping class group generated  by the hyper-elliptic involution then it is of cardinality 2 and since simple closed curves are measured laminations they are stabilized by $K$ and then $k_1=1$ for any simple closed curve.
To compute $k_2$ one needs to decide if $G$ contains the hyper-elliptic involution or not.
Recall that for a simple curve $G$ is given by $G=\pi_*^{-1}(\PMap(Z))$ where $\pi_*$ is defined by 
\[ \pi_* :  \Stab_{\Map(X)}(\gamma_0) \to \Map(Z)   .\]

\noindent \underline{Non-separating case, $Z$ is a twice punctured torus.} 
In that case $G$ does not contain the hyper-elliptic involution and we have 
$$\sym(\gamma_0)=2,\ \ \chi(Z)=-2,\text{ and }k_2(\gamma_0)=2/1.$$
It follows that
$$\FC^{\ns}_2(\text{simple}) = \frac{k_2\cdot2^{\chi(Z)+1}}{\sym(\gamma_0)} \bigintsss_{b=0}^1 V_{1,2}(b,b) b db =\frac{1}{2}  \bigintsss_{b=0}^1 \frac{(b^2+b^2)^2}{192}bdb = \frac{1}{576}.$$

\noindent \underline{Separating case, $Z$ is the union of two once punctured torus.}  
Noting that 
$$\sym(\gamma_0)=2,\ \ \chi(Z)=-2,\text{ and }k_2(\gamma_0)=2/2,$$
we get
$$\FC_2^{\text{sep}}(\text{simple}) = \frac{k_22^{\chi(Z)+2}}{\sym(\gamma_0)} \bigintsss_{b=0}^1 V_{1,1}(b)V_{1,1}(b) b db =\frac{1}{2}  \bigintsss_{b=0}^1 \left(\frac{b^2}{48}\right)^2bdb = \frac{1}{27648}.$$
Combining these two calculations we recover Mirzakhani's ratio between separating and non-separating simple curves in genus 2:
\[ \frac{\FC_2^{\text{sep}}(\text{simple})}{\FC^{\ns}_2(\text{simple})}  =\frac{1}{48}. \]
Moreover, we have 
\begin{equation}\label{eq:TotalFreqSimple}
    \FC_2(\text{simple} )=\frac{1}{576}+\frac{1}{27648}=\frac{49}{27648}.
\end{equation}

\noindent\textbf{Figure 8: one intersection.} A curve (not multicurve) with only one self intersection will always fill a pair or pants \cite{Rivin}[Theorem 1.1], so it is the image by some realization of the local type $(P,\gamma_0)$ where $P$ is a pair of pants and $\gamma_0$ a figure 8 as in Figure \ref{fig:arcs in pants}.
\begin{figure}[!ht]
    \centering
    \includegraphics[scale=0.7]{  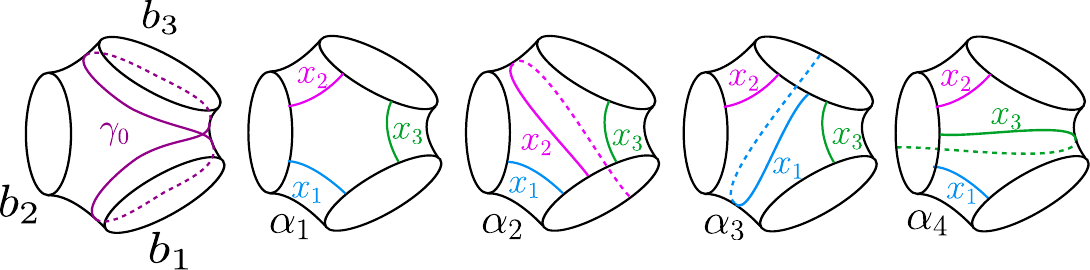}
    \caption{Filling curve and arc systems in a pair of pants}
    \label{fig:arcs in pants}
\end{figure}
Note that once the flip is fixed there is, up to $\Map(X_2)$, only one realization of the local type which is the non-separating one. 
\begin{table}[!ht]
    \centering
    \[\begin{array}{c|c|c|c|c}
     & \Delta_\bullet(\gamma_0) & b_1 & b_2 &b_3 \\ \hline \hline
     \alpha_1& \{x_1+x_2+2x_3 \le 1  \} & x_1+x_3      & x_1+x_2     & x_2+x_3\\
     \alpha_2& \{x_1+2x_2+2x_3 \le 1  \}& x_1+2x_2+x_3 & x_1         & x_3\\
     \alpha_3& \{2x_1+x_2+2x_3 \le 1  \}& x_3          &x_2          & 2x_1+x_2+x_3\\
     \alpha_4& \{x_1+x_2+2x_3 \le 1  \}&  x_1          &x_1+x_2+2x_3 &x_2
\end{array}
\]
    \caption{Decomposition of $\Delta_\Sigma(\gamma_0)$ for the figure $8$}
    \label{tab:figure 8}
\end{table}
The pair of pants is also particular since it has finite mapping class group and we have only four different arc systems, which are described in Figure \ref{fig:arcs in pants}. We give the decomposition of $\Delta_\Sigma(\gamma_0)$ in Table \ref{tab:figure 8}.

We want to compute the gobal contribution of the curves with one intersection, so, we need to compute the one the 3 possible local types (whose only possible realization is the non-separating one). To do so, we will need the values of $k_2,k_1$ and $\sym$ for each of them. Those values are gathered in Table \ref{tab:diff cst fir fig 8}.

\begin{table}[ht!]
    \centering
    \begin{tabular}{|m{4cm}|m{2cm}|m{2cm}|m{2cm}|m{2cm}|}
    \hline
        Non-separating realisation & flip & $k_1$ & $k_2$ & $\sym$ \\ \hline \hline
        \includegraphics[scale=0.6]{ 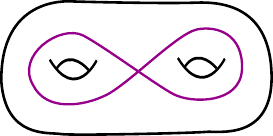} & none & $\frac{2}{1}=2$ & $\frac{2}{1}=2$  &  2\\ \hline
        \includegraphics[scale=0.6]{ 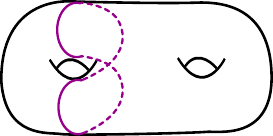} & $\flip_{1,3}$&  $\frac{2}{2}=1$  & $\frac{2}{1}=2$ & 2 \\ \hline
        \includegraphics[scale=0.6]{ 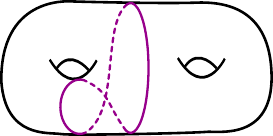} & $\flip_{1,2}$& $\frac{2}{1}=2$ & $\frac{2}{1}=2$  & 1 \\ \hline
    \end{tabular}
    \caption{Values of $k_1,k_2$ and $\sym$ for the different local types with one self-intersection in genus 2.}
    \label{tab:diff cst fir fig 8}
\end{table}

\underline{Realization with no flip, $\dim\BA(\Sigma)=3$, $Z$ is a pair of pants}
\begin{itemize}
    \item $\BA(\alpha_1)=\{(x_1,x_2,x_3) \in\BR_{>0}^3 \}$ and $\FM_{\BA(\alpha_1)}=dx_1dx_2dx_3$
    \begin{align*}
            \FC_{\alpha_1}(\gamma_0)&=\int_{\Delta_{\alpha_1}(\gamma_0)} V_{0,3}(x_1+x_3,x_1+x_2,x_2+x_3)(x_1+x_3)(x_1+x_2)(x_2+x_3)dx_1dx_2dx_3 \\
            &=\int_{\Delta_{\alpha_1}(\gamma_0)} (x_1+x_3)(x_1+x_2)(x_2+x_3)dx_1dx_2dx_3 = \frac{1}{180}.
        \end{align*}
    \item  $\BA(\alpha_2)=\{(x_1,x_2,x_3) \in\BR_{>0}^3 \}$ and $\FM_{\BA(\alpha_2)}=dx_1dx_2dx_3$
    \begin{align*}
            \FC_{\alpha_2}(\gamma_0)&=\int_{\Delta_{\alpha_2}(\gamma_0)} V_{0,3}(x_1+2x_2+x_3,x_1,x_3)(x_1+2x_2+x_3)x_1x_3 dx_1dx_2dx_3 \\
            &=\int_{\Delta_{\alpha_2}(\gamma_0)} (x_1+2x_2+x_3)x_1x_3 dx_1dx_2dx_3 = \frac{1}{1440}.
        \end{align*}
    \item by symmetry, $\alpha_3$ contributes the same as $\alpha_2$
    \item $\BA(\alpha_4)=\{(x_1,x_2,x_3) \in\BR_{>0}^3 \}$ and $\FM_{\BA(\alpha_4)}=dx_1dx_2dx_3$
    \begin{align*}
            \FC_{\alpha_4}(\gamma_0)&=\int_{\Delta_{\alpha_4}(\gamma_0)} V_{0,3}(x_1,x_1+x_2+2x_3,x_2)x_1(x_1+x_2+2x_3)x_2  dx_1dx_2dx_3 \\
            &=\int_{\Delta_{\alpha_4}(\gamma_0)} x_1(x_1+x_2+2x_3)x_2 dx_1dx_2dx_3 = \frac{1}{288}.
        \end{align*}
\end{itemize}

The local type we study here is the first one in Table \ref{tab:diff cst fir fig 8}, so, we have:
\begin{align} 
    \FC_2(\gamma_0) &=\frac {k_1\cdot k_2\cdot2^{\chi(Z)+|\pi_0(Z)|}}{\sym(\gamma_0)}
    (\FC_{\alpha_1}(\gamma_0)+2\FC_{\alpha_2}(\gamma_0)+\FC_{\alpha_4}(\gamma_0)) \label{eq:essential8} \\
    &=2(\FC_{\alpha_1}(\gamma_0)+2\FC_{\alpha_2}(\gamma_0)+\FC_{\alpha_4}(\gamma_0))=\frac{1}{48}. \notag
\end{align}

\underline{Flip along $b_1$ and $b_3$, $\dim\BA(\Sigma,\flip_\Sigma)=2$, $Z$ is a once-punctured torus.}
In this situation the following holds:
\begin{itemize}
    \item $\BA(\alpha_1,\flip_\Sigma)=\{(x_1,x_2,x_3)\in\BR_{>0}^3 | x_1=x_2 \}$, $\FM_{\BA(\alpha_1,\flip_\Sigma)}=dx_1dx_3$
    \begin{align*}
            \FC_{\alpha_1}(\gamma_0)=\int_{\Delta_{\alpha_1}(\gamma_0)} V_{1,1}(2x_1)2x_1(x_1+x_3)dx_1dx_3 
            =\int_{\Delta_{\alpha_1}(\gamma_0)} \frac{4x_1^2}{48}2x_1(x_1+x_3)dx_1dx_3 = \frac{1}{9216}.
        \end{align*}
    \item $\BA(\alpha_2,\flip_\Sigma)=\{(x_1,x_2,x_3)\in\BR_{>0}^3 | x_1=x_2=0 \}$ has dimension $1$ so $\FM_{\BA(\alpha_2,\flip_\Sigma)}=0$ and $\FC_{\alpha_2}=0$.
    \item By symmetry, $\alpha_3$ contributes the same as $\alpha_2$.
    \item $\BA(\alpha_4,\flip_\Sigma)=\{(x_1,x_2,x_3)\in\BR_{>0}^3 | x_1=x_2 \}$, $\FM_{\BA(\alpha_4,\flip_\Sigma)}=dx_1dx_3$
    \begin{align*}
            \FC_{\alpha_4}(\gamma_0)&=\int_{\Delta_{\alpha_4}(\gamma_0)} V_{1,1}(2x_1+2x_3)x_1(2x_1+2x_3)dx_1dx_3 \\
            &=\int_{\Delta_{\alpha_4}(\gamma_0)} \frac{(2x_1+2x_3)^2}{48}x_1(2x_1+2x_3)dx_1dx_3 = \frac{1}{4608}.
        \end{align*}
\end{itemize}
The local type we study here is the second one in Table \ref{tab:diff cst fir fig 8}, so, we have:
\begin{align}
    \FC_2(\gamma_0,\flip_{1,3}) &=\frac {k_1\cdot k_2\cdot2^{\chi(Z)+|\pi_0(Z)|}}{\sym(\gamma_0)}
    (\FC_{\alpha_1}(\gamma_0)+2\FC_{\alpha_2}(\gamma_0)+\FC_{\alpha_4}(\gamma_0)) \label{eq:nonEssential8-1}\\
    &=1\cdot(\FC_{\alpha_1}(\gamma_0)+\FC_{\alpha_4}(\gamma_0))=\frac{1}{3072}. \notag
\end{align}

\underline{Flip along $b_1$ and $b_2$, $\dim\BA(\Sigma,\flip_\Sigma)=2$, $Z$ is a once-punctured torus.}
In this situation we have:
\begin{itemize}
    \item $\BA(\alpha_1,\flip_\Sigma)=\{(x_1,x_2,x_3)\in\BR_{>0}^3 | x_3=x_2  \}, \FM_{\BA(\alpha_1,\flip)}=dx_1dx_2$ 
    \begin{align*}
            \FC_{\alpha_1}(\gamma_0)&=\int_{\Delta_{\alpha_1}(\gamma_0)} V_{1,1}(2x_2)2x_2(x_1+x_2)dx_1dx_2 \\
            &=\int_{\Delta_{\alpha_1}(\gamma_0)} \frac{4x_2^2}{48}2x_2(x_1+x_2)dx_1dx_2 = \frac{7}{174 960}.
        \end{align*}
    \item $\BA(\alpha_2,\flip_\Sigma)=\{(x_1,x_2,x_3)\in\BR_{>0}^3 | x_2=x_3=0 \}$ has dimension $1$ so $\FC_{\alpha_2}=0$.
    \item $\BA(\alpha_3,\flip_\Sigma)=\{x_1,x_2,x_3)\in\BR_{>0}^3 | x_3=x_2  \} \sim \BR^2, \FM_{\BA(\alpha_3,\flip)}=dx_1dx_2$ 
\begin{align*}
            \FC_{\alpha_1}(\gamma_0)&=\int_{\Delta_{\alpha_3}(\gamma_0)} V_{1,1}(2x_2+2x_2)(2x_1+2x_2)x_2dx_1dx_2 \\
            &=\int_{\Delta_{\alpha_3}(\gamma_0)} \frac{4(x_1+x_2)^2}{48}2(x_1+x_2)x_2dx_1dx_2 =\frac{131}{2799360}.
        \end{align*}
    \item $\BA(\alpha_4,\flip_\Sigma)=\{(x_1,x_2,x_3)\in\BR_{>0}^3 | x_2=x_3=0 \}$ has dimension 1 so $\FC_{\alpha_4}=0$.
\end{itemize}

The local type we study here is the second one in Table \ref{tab:diff cst fir fig 8}, and thus we have:
\begin{align}
    \FC_2(\gamma_0,\flip_{1,2}) &=\frac {k_1\cdot k_2\cdot2^{\chi(Z)+|\pi_0(Z)|}}{\sym(\gamma_0)}(\FC_{\alpha_1}(\gamma_0)+\FC_{\alpha_2}(\gamma_0)+\FC_{\alpha_3}(\gamma_0)+\FC_{\alpha_4}(\gamma_0))\label{eq:nonEssential8-2}\\
     &=4\cdot(\FC_{\alpha_1}(\gamma_0)+\FC_{\alpha_3}(\gamma_0))= \frac{1}{2880}   .\notag
\end{align}

The local type obtained by flipping the boundaries $2$ and $3$ is the same as the one we just computed, that's why there is only $3$ possible local types. By combining \eqref{eq:essential8}, \eqref{eq:nonEssential8-1} and \eqref{eq:nonEssential8-2} the total contribution of the figure $8$ is given by 
\begin{equation} \label{eq:TotalFreq8}
     \FC_2(\text{figure 8} ) = \FC_2(\gamma_0)+\FC_2(\gamma_0,\flip_{1,3})+\FC_2(\gamma_0\flip_{1,2})= \frac{1}{48}+\frac{1}{3072}+\frac{1}{2880}= \frac{991}{46080}.
\end{equation}

\noindent \textbf{Comparison.} We are then able to compare the frequency of the figure $8$ \eqref{eq:TotalFreq8} and of the simple closed curve in genus $2$ \eqref{eq:TotalFreqSimple},
\[ \frac{\FC_2(\text{simple})}{\FC_2(\text{figure 8})} = \frac{245}{2973} \simeq \frac{1}{12}.  \]

\noindent Hence, the figure $8$ is almost $12$ times more likely than the simple closed curve in genus~$2$.

\vfill

\end{appendix}

\section*{Table of notation}

{
    \centering
    \begin{tabular}{|l||l|l|}\hline
        $\chi$ & Euler characteristic & \\ \hline
         $X$, $X_g$ & Closed hyperbolic surface (of genus $g$) & \\ \hline
         $\Map(X)$ & Mapping class group of $X$ & \\ \hline
         $\CM\CL(X)$ & Space of measured laminations of $X$ & Section \ref{sec:ML} \\ \hline
         $\CM\CL_\BZ(X)$ & set of simple integral weighted multicurves  & Section \ref{sec:Thurston Measure} \\ \hline
         $\FM_{\Thu}$ & Thurston measure on $\CM\CL$ & Eq. \eqref{eq def thurston measure} \\ \hline
         $\Sigma$ or $Z$ & Surfaces with boundary & \\ \hline
         $\Sigma_\hyp$ & Connected components of $\Sigma$ with  negative Euler characteristic & \\ \hline
         $\Sigma_\ann$ & Connected components of $\Sigma$ homeomorphic to an annulus & \\ \hline
         $\D\Sigma$ & Set of boundary components of $\Sigma$ & \\ \hline
         $\PMap(\Sigma)$ & Pure mapping class group of $\Sigma$ & \\ \hline
         $\flip_\Sigma$ & involution of $\D\Sigma$ & Section \ref{sec:flips}\\ \hline
         $\flip_\Sigma^\BR$ & involution of $\BR^{\D\Sigma}$ induced by $\flip_\Sigma$ & Section \ref{sec:flips}\\ \hline
         $\CA(\Sigma)$ & Set of maximal arc systems of $\Sigma$ & Section \ref{sec:arc complex}\\ \hline
         $\BA(\Sigma)$ & Arcs complex of $\Sigma$ & Section \ref{sec:arc complex}\\ \hline
         ${\bf b}_\Sigma$ & Boundary map on $\CA(\Sigma)$ & Eq. \eqref{eq boundary map}\\ \hline
         $N_Z(\cdot)$ & lattice point counting function & Eq. \eqref{eq number of integral arc systems with given boundary values}\\   \hline
         $\FM_{\BA(\Sigma)}$ & Natural measure on $\BA(\Sigma)$ & Section \ref{sec:measure in arcs}\\ \hline
         $\FC_g(\gamma_0)$ & Frequency of the curve $\gamma_0$ of $X_g$ & Definition \ref{def:frequency} \\ \hline
         $(\Sigma,\flip_\Sigma,\gamma_0)$ & Local type & Section \ref{sec:local type}\\ \hline
         $(\Sigma,\gamma_0)$ & Essential local type & Definition \ref{def: local type}\\ \hline
         $\sep$ & Refers to separating curves &\\ \hline
         $\ns$ & Refers to a non-separating curves & \\  \hline
         $\nsl$ & Refers to curves of non-separating local type & \\  \hline
         $\sepl$ & Refers to curves of separating local type & \\  \hline
         $V_Z(\bar b)$ & Kontsevich polynomial & Eq. \eqref{eq Konsevich polynomial} \\  \hline
         $f(g) \sim h(g)$ & $f(g)/h(g)\xrightarrow[g\to \infty]{ }1$ & \\  \hline
         $f(g) \asymp h(g)$ & $f(g)/h(g)\xrightarrow[g\to \infty]{ }c$ where $c>0$& \\ \hline
         $f(g) = \mathrm{O}( h(g))$ & $f(g)/h(g) \le c$ where $c>0$ for $g$ large enough & \\ \hline
         $f(g) = \mathrm{o}( h(g))$ & $f(g)/h(g) \xrightarrow[g\to \infty]{ }0$ & \\ \hline

    \end{tabular}}

\end{document}